\DeclareFontFamily{OT1}{pzc}{}
\DeclareFontShape{OT1}{pzc}{m}{it}{<-> s * [1.100] pzcmi7t}{}
\DeclareMathAlphabet{\mathpzc}{OT1}{pzc}{m}{it}
    \patchcmd{\section}{\scshape}{\large\bfseries}{}{}
    \renewcommand{\@secnumfont}{\bfseries}
\numberwithin{equation}{section}
\newtheorem{theorem}{Theorem}[section]
\newtheorem*{theorem*}{Theorem}
\newtheorem{corollary}[theorem]{Corollary}
\newtheorem{lemma}[theorem]{Lemma}
\newtheorem{proposition}[theorem]{Proposition}
\theoremstyle{definition}
\newtheorem*{question*}{Question}
\newtheorem*{conjecture*}{Conjecture}
\newtheorem{definition}[theorem]{Definition}
\newtheorem{remark}[theorem]{Remark}
\newtheorem{example}[theorem]{Example}
\def\mono{\rightarrowtail}
\def\epi{\twoheadrightarrow}
\def\RR{\mathbb{R}}
\def\ZZ{\mathbb{Z}}
\def\VR{\mathscr{VR}}
\def\vr{\scalebox{0.8}[1.0]{\tt VR}}
\def\sk{\mathsf{sk}}
\def\CC{\mathcal{C}}
\def\KK{\mathbb{K}}
\def\MH{\mathsf{MH}}
\def\MH{\mathsf{MH}}
\def\UU{\mathcal{U}}
\def\VV{\mathcal{V}}
\DeclareMathOperator{\colim}{\mathrm{colim}}
\def\DD{\mathcal{D}}
\def\FF{\mathcal{F}} 
\def\Ch{\text{\rm \v{C}}} 
\def\SS{\mathsf{SS}}
\def\SC{\mathsf{SC}}
\def\sSet{\mathsf{sSet}}
\def\SCpx{\mathsf{SCpx}}
\def\Mat{\mathsf{Mat}}
\def\Ell{\mathsf{Ell}}
\def\LH{\mathsf{LH}}
\def\diam{\mathsf{diam}}
\DeclareMathOperator{\diag}{\mathsf{diag}}
\DeclareMathOperator{\hocolim}{\mathrm{hocolim}}
\def\WW{\mathpzc{w}}
\def\ww{\mathtt{w}}
\def\PMet{\mathsf{PMet}}
\def\Met{\mathsf{Met}}
\def\Kol{\mathsf{Kol}}
\def\Set{\mathsf{Set}}
\def\Pro{\mathsf{Pro}}
\def\GG{\mathcal{G}}
\def\Hom{\mathsf{Hom}} 
\def\HH{\mathcal{H}}
\def\Dist{\mathsf{Dist}}
\def\Fun{\mathsf{Fun}}
\let\oldtocsection=\tocsection 
\let\oldtocsubsection=\tocsubsection 
\renewcommand{\tocsection}[2]{\hspace{0mm}\oldtocsection{#1}{#2}}
\renewcommand{\tocsubsection}[2]{\hspace{1em}\oldtocsubsection{#1}{#2}}
\title{On $\ell_p$-Vietoris-Rips complexes}
\author{Sergei O. Ivanov} 
\address{
Beijing Institute of Mathematical Sciences and Applications (BIMSA)}
\email{ivanov.s.o.1986@gmail.com, ivanov.s.o.1986@bimsa.cn}
\author{Xiaomeng Xu} 
\email{xiaomeng.x.xu@gmail.com}
\begin{document}

\begin{abstract} We study the concepts of the $\ell_p$-Vietoris-Rips simplicial set and the $\ell_p$-Vietoris-Rips complex of a metric space, where $1\leq p \leq \infty.$ This theory unifies two established theories: for $p=\infty,$ this is the classical theory of Vietoris-Rips complexes, and for $p=1,$ this corresponds to the blurred magnitude homology theory. We prove several results that are known for the Vietoris-Rips complex in the general case: (1) we prove a stability theorem for the corresponding version of the   persistent homology; (2) we show that, for a compact Riemannian manifold and a sufficiently small scale parameter, all the ``$\ell_p$-Vietoris-Rips spaces'' are homotopy equivalent to the manifold; (3) we demonstrate that the $\ell_p$-Vietoris-Rips spaces are invariant (up to homotopy) under taking the metric completion. Additionally, we show that the limit of the homology groups of the $\ell_p$-Vietoris-Rips spaces, as the scale parameter tends to zero, does not depend on $p$; and that the homology groups of the $\ell_p$-Vietoris-Rips spaces commute with filtered colimits of metric spaces.  
\end{abstract}

\maketitle

\setcounter{tocdepth}{1}
\tableofcontents

\section{Introduction}

The Vietoris-Rips complex of a metric space was introduced by Vietoris to define the homology theory of a metric space \cite{vietoris1927hoheren}. Later, Rips used this complex to study hyperbolic groups. The work of Rips was popularized by Gromov in  \cite{gromov1987hyperbolic}. Hausmann proved that for a compact 
Riemannian manifold $M$ and a sufficiently small scale parameter $r,$ the geometric realization of the Vietoris-Rips complex $\vr_{<r} M$ is homotopy equivalent to $M$ \cite{hausmann1994vietoris}. He also showed that the Vietoris-Rips complex is invariant up to homotopy under taking the metric completion $|\vr_{<r} X| \simeq |\vr_{<r}\hat X|.$ 
Adamaszek and Adams identified the homotopy type of the Vietoris-Rips complex of the circle for any scale parameter  $r$  \cite{adamaszek2017vietoris}. They proved that all odd spheres $S^1,S^3,S^5,\dots$ appear as homotopy types of the Vietoris-Rips complex $\vr_{<r} S^1$ for some $r.$ 
Carlsson-Carlsson-De Silva applied the Vietoris-Rips complex in topological data analysis \cite{carlsson2006algebraic}, and Chazal with his coauthors proved the stability theorem for the Vietoris-Rips complex 
\cite{chazal2009proximity} (see also \cite{chazal2014persistence}).

The magnitude function of a  compact metric space was introduced by  Leinster \cite{leinster2013magnitude}. It was treated as an analog of the Euler characteristic of a topological space. The corresponding (co)homology theory was developed by Hepworth and Willerton for the case of graphs \cite{hepworth2017categorifying}, and extended to all metric spaces by Leinster, Shulman \cite{leinster2021magnitude} and Hepworth  \cite{hepworth2022magnitude}   under the name of magnitude (co)homology theory. Otter introduced a persistent  version of the magnitude homology under the name of blurred magnitude homology \cite{otter2018magnitude}. She proved that the homology groups of the Vietoris-Rips complex  and the blurred magnitude homology  have the same limit, as the scale parameter tends to zero. 
 Govc and Hepworth \cite{govc2021persistent} showed that the magnitude function of a  finite metric space can be also recovered from the blurred magnitude homology. 
 
In an unpublished preprint \cite{cho2019quantales}, Cho used the language of enriched categories over quantales to develop a theory that includes the Vietoris-Rips complex and the blurred homology theory as  special cases. For each $1\leq p\leq \infty,$ a metric space $X$ and a real number $r>0,$ he defines a simplicial set which we call the \emph{$\ell_p$-Vietoris-Rips simplicial set}. 
For $p=\infty,$ the geometric realization of the simplicial set is homotopy equivalent to the geometric realization of the  classical Vietoris-Rips complex, and for $p=1,$ its homology is the blurred magnitude homology.

We provide an alternative definition of the $\ell_p$-Vietoris-Rips simplicial sets in terms of the \emph{$\ell_p$-weight} of a tuple of points in a metric space. Our definition is shown to be equivalent to the definition of Cho (Proposition \ref{prop:Cho}). 
Since simplicial complexes are more convenient for data analysis, we also define $\ell_p$-Vietoris-Rips complexes in terms of $\ell_p$-weights of finite subsets. We develop a theory that works in both of these two settings 
---  the setting of simplicial sets and the setting of simplicial complexes. 
In particular, we prove the stability theorem, the result about Riemannian manifolds, the result about metric completions, the result about limit homology and commutativity with filtered colimits for both of these two settings. 
More precisely, we introduce a general framework that includes both of these two settings and prove the results in this general framework. 

The general framework is based on a general concept we introduce called a \emph{distance matrix norm}. Roughly speaking, a distance matrix norm $\nu$ is a norm defined on the ``space'' of all distance matrices of all tuples. A distance matrix norm $\nu$ defines a $\nu$-weight of a tuple of points in a metric space, which is used to define the $\nu$-Vietoris-Rips simplicial set. 
We show that the theory of both $\ell_p$-Vietoris-Rips simplicial sets and $\ell_p$-Vietoris-Rips complexes can be reduced to the theory of $\nu$-Vietoris-Rips simplicial sets for appropriate distance matrix norms $\nu$, and prove all relevant statements in the general context of any distance matrix norm. 

\subsection{Definitions of the $\ell_p$-Vietoris-Rips spaces} Let us define the main characters of the paper: the $\ell_p$-Vietoris-Rips simplicial sets $\VR^p_{<r}X$ and $\VR^p_{\leq r}X$ and the $\ell_p$-Vietoris-Rips complexes $\vr^p_{<r} X$ and $\vr^p_{\leq r} X$ of a metric space $X$. 
The definition is based on the concept of the $\ell_p$-weight of a tuple of points in a metric space. It is defined by the formula
\begin{equation}
   \WW_p(x_0,\dots,x_n) = \underset{0\leq i_0< \dots < i_m\leq n }\max \| d(x_{i_0},x_{i_1}) ,\dots, d(x_{i_{m-1}},x_{i_m}) \|_p, 
\end{equation}
where $\|-\|_p$ is the $\ell_p$-norm, $d$ is the distance and the maximum is taken over all strictly increasing subsequences $i_0< \dots < i_m$ of $0,\dots,n$. 
For $p=1,$ using the triangle inequality, we obtain 
\begin{equation}
\WW_1(x_0,\dots,x_n)=d(x_0,x_1)+\dots + d(x_{n-1},x_n). 
\end{equation}
For $p=\infty,$ the $\ell_p$-norm is equal to the diameter of the set of points in the tuple 
\begin{equation}
\WW_\infty(x_0,\dots,x_n) = \diam(\{x_0,\dots,x_n\}). 
\end{equation}
For any $r>0,$ the $\ell_p$-Vietoris-Rips simplicial set $\VR^p_{<r} X$ is defined so that its $n$-simplices are tuples $(x_0,\dots,x_n)$ such that $\WW_p(x_0,\dots,x_n)<r,$ and the face and degeneracy maps are defined by deletions and doublings. Similarly we  define $\VR^p_{\leq r} X$ by replacing $<$ with $\leq.$

The definition of the  $\ell_p$-Vietoris-Rips complexes $\vr^p_{< r} X$ and $\vr^p_{\leq r} X$ is based on the concept of the $\ell_p$-weight of a finite non-empty subset of a metric space. It is defined by the formula  
\begin{equation}
\ww_p(\{x_0,\dots,x_n\})= \min_{\pi\in \Sigma_{[n]}} \WW_p(x_{\pi(0)},\dots,x_{\pi(n)}),
\end{equation}
where $\Sigma_{[n]}$ denotes the group of permutations of the set $\{0,\dots,n\}.$ 
Thus the $\ell_p$-Vietoris-Rips complex $\vr^p_{<r} X$ consists of finite non-empty subsets of $X$ whose $\ell_p$-weight is less than $r.$ The non-strict version of the simplicial complex $\vr^p_{\leq r} X$ is defined similarly. 

The simplicial complex  $\vr^\infty_{<r}X$ is the classical Vietoris-Rips complex and the geometric realization of the simplicial set $\VR^\infty_{<r} X$ is homotopy equivalent to the geometric realization of $\vr^\infty_{<r} X.$ Moreover, it is easy to see that the $\ell_p$-Vietoris-Rips complex forms a sub-complex in the classical Vietoris-Rips complex
$\vr^p_{<r} X \subseteq \vr_{< r}X$ and $\vr^p_{\leq r} X \subseteq \vr_{\leq r}X.$
The blurred magnitude homology are the homology of the $\ell_1$-Vietoris-Rips simplicial set. 

\subsection{Distance matrix norms} In order to unify the cases of simplicial sets and simplicial complexes, we define a general notion of distance matrix norm. 
For $n\geq 0,$ we denote by $\Dist_{[n]}$ the subset of the set of real square matrices of order $n+1$ consisting of  all  distance matrices of an $(n+1)$-tuple in a metric space. A distance matrix norm $\nu$ consists of maps  $\nu:\Dist_{[n]}\to \RR_{\geq 0},$ for each $n\geq 0,$ satisfying a  list of axioms (Definition \ref{def:SM}). 
The $\nu$-weight of a tuple of points is defined as 
\begin{equation}
\WW_\nu(x_0,\dots,x_n)=\nu(D(x_0,\dots,x_n)),
\end{equation}
where $D(x_0,\dots,x_n)=(d(x_i,x_j))_{i,j}$ is the distance matrix. 
Then the $\nu$-Vietoris-Rips simplicial set $\VR^\nu_{<r}X$ is the simplicial set consisting of tuples whose $\nu$-weight is less than $r.$ 

We construct two families of distance matrix norms $\nu_p$ and $\nu_p^{\sf sym}$ and show that the $\ell_p$-Vietoris-Rips simplicial set is the $\nu_p$-Vietoris-Rips simplicial set; and the geometric realization of the $\ell_p$-Vietoris-Rips complex is homotopy equivalent to the geometric realization of the $\nu_p^{\sf sym}$-Vietoris-Rips simplicial set. Thus, both of the theories can be reduced to the theory of $\nu$-Vietoris-Rips simplicial sets. We also give some other interesting examples of distance matrix norms (Subsection \ref{subsection:cyclic_versions}).

\subsection{Stability theorem} 
For any distance matrix norm $\nu,$ we prove a version of the stability theorem for the ``$\nu$-persistent homology''. 
Namely, we show that the interleaving distance between persistent modules $H_n(\VR_{<*}^{\nu} X)$ and $H_n(\VR_{<*}^{\nu} Y)$ is not greater than $2C_{n+2}(\nu)\cdot d_{GH}(X,Y),$ where $d_{GH}$ is the Gromov-Hausdorff distance and $C_n(\nu)$ denotes some constant  (Theorem \ref{theorem:stability})
\begin{equation}
d_{\sf int}( H_n(\VR_{<*}^{\nu} X), H_n(\VR_{<*}^{\nu} Y)) \leq 2 C_{n+2}(\nu) 
 \cdot   d_{GH}( X,Y ).
\end{equation}
As corollaries, we obtain the inequality for $\ell_p$-Vietoris-Rips simplicial sets
\begin{equation}
 d_{\sf int}( H_n(\VR_{<*}^{p} X), H_n(\VR_{<*}^{p} Y)) \leq 2 
(n+2)^{\frac{1}{p}}\cdot  d_{GH}( X,Y ),
\end{equation}
for $\ell_p$-Vietoris-Rips complexes 
\begin{equation}
 d_{\sf int}( H_n(\vr_{<*}^{p} X), H_n(\vr_{<*}^{p} Y)) \leq 2 
(n+2)^{\frac{1}{p}}\cdot  d_{GH}( X,Y ),   
\end{equation}
and for the blurred magnitude homology 
\begin{equation}
 d_{\sf int}( \MH_{n,< *}(X), \MH_{n,< *} ( Y) ) \leq 2 
(n+2)\cdot  d_{GH}( X,Y ).   
\end{equation}

\subsection{Riemannian manifolds}

Hausmann showed that for a compact Riemannian manifold $M$ and a sufficiently small scale parameter $r>0$ the geometric realization of the Vietoris-Rips complex $\vr_{<r} M$ is homotopy equivalent to $M$ \cite{hausmann1994vietoris}. We generalize this theorem by showing that  the geometric realization of $\VR^\nu_{<r} M$ is homotopy equivalent to $M$ for a sufficiently small $r>0$ and any $\nu.$ In particular, we obtain that the blurred magnitude homology of $M$ is isomorphic to the ordinary homology of $M$ for a sufficiently small $r.$ 
Moreover, we prove a more general version of this theorem for geodesic spaces (Theorem \ref{th:manifold}).

For any $1\leq p \leq \infty,$ we define $r_p(M)$ as the infimum of $r>0$ such that the geometric realization of  $\VR^p_{<r}M$ is not homotopy equivalent to $M.$ We compute this number for the circle of perimeter one $S^1$: 
\begin{equation}
r_p(S^1) = \frac{2^{\frac{1}{p}}}{2+2^{\frac{1}{p}}}.  
\end{equation} In particular, we obtain $r_\infty(S^1)=\frac{1}{3},$ which coincides with the result of 
Adamaszek and Adams   \cite{adamaszek2017vietoris}. 

\subsection{Metric completions and shortly contractible spaces} 
We prove a general theorem (Theorem \ref{th:embedding}) which states that under certain conditions an embedding of metric spaces $A\hookrightarrow X$ induces a weak equivalence of simplicial sets
\begin{equation}
\VR^\nu_{<r} A \overset{\sim}\longrightarrow \VR^\nu_{<r} X    
\end{equation}
for any distance matrix norm $\nu$ and any $r>0.$ As a corollary we show that the homotopy type of the strict $\nu$-Vietoris-Rips simplicial set does not change after taking the metric completion:
\begin{equation}
|\VR^\nu_{<r} X| \simeq  |\VR^\nu_{<r} \hat X|.    
\end{equation}
In particular, we obtain 
$\MH_{n,<r}(X)\cong \MH_{n,<r}(\hat X).$

We say that a metric space $X$ is \emph{shortly contractible} if there exists a contracting homotopy to the point $h:X\times [0,c]\to X,$ which is a short map (Definition \ref{def:shortly_contractible}). For example, bounded convex subsets, bounded star domains and the semicircle are shortly contractible.  As a corollary of the general theorem, we obtain that for a shortly contractible metric space  $X$ the simplicial set $\VR^\nu_{<r}X$ is weakly contractible. In particular, we have  
 $\MH_{n,<r}(X)=0$ for $n\geq 1.$ 

Note that these results hold only for the strict versions of the simplicial sets $\VR^\nu_{<r} X$ and do not hold for the non-strict versions $\VR^\nu_{\leq r}X.$ There is an example of a metric space $X$ such that 
$\MH_{1,\leq r}(X)\not\cong \MH_{1,\leq r}(\hat X)$ (Remark \ref{remark:non-iso-for-closed}), and there is also an example of a shortly contractible metric space $X$ such that $\MH_{1,\leq r}(X)\neq 0$ 
(Remark \ref{remark:shortly_contr}).

\subsection{Pro-simplicial sets and limit homology} 

Despite the fact that for different distance matrix norms $\nu$ the simplicial sets $\VR^\nu_{<r} X$ are usually different, all of them are similar for small $r.$ This observation can be formalized in terms of pro-simplicial sets, which can be understood as ``formal cofiltered limits'' of simplicial sets. 

We denote by $\VR^{\nu,n}_{<r} X$ the $n$-skeleton of the $\nu$-Vietoris-Rips simplicial set. 
Then the functor 
$(0,\infty)\to {\sf sSet}  $ 
sending $r$ to $\VR^{\nu,n}_{<r}X$ defines a pro-simplicial set that we denote by $\VR^{\nu,n}_{\sf pro} X.$ We prove that this pro-simplicial set does not depend on $\nu$
\begin{equation}
\VR^{\nu,n}_{\sf pro} X \cong \VR^{\infty,n}_{\sf pro} X.     
\end{equation}
In particular, we obtain that the limits of homology groups coincide
\begin{equation}
 \lim_{r\to 0} H_n(\VR^\nu_{<r} X)\cong \lim_{r\to 0} H_n(\vr_{<r} X).    
\end{equation}
 
\subsection{Filtered colimits of metric spaces} 

A disadvantage of the ordinary magnitude homology is that it does not commute with filtered colimits of metric spaces and short maps. However, we show that the strict versions of the $\nu$-Vietoris-Rips simplicial sets $\VR^\nu_{<r}X$ commute with filtered colimits of metric spaces up to homotopy. More precisely, for any filtered category $I$ and any $I$-diagram $X_i$ in the category of metric spaces and short maps the map
\begin{equation}
\colim (\VR^\nu_{<r} X_i) \overset{\sim}\longrightarrow  \VR^\nu_{<r} (\colim X_i) 
\end{equation}
is a weak equivalence (Theorem  \ref{th:filtered_colimits}). In particular, the strictly blurred magnitude homology commutes with filtered colimits of metric spaces and short maps
\begin{equation}
\colim \MH_{n,<r}(X_i)\cong \MH_{n,<r}(\colim X_i).
\end{equation}

\subsection{Acknowledgements}

We are grateful to Emily Roff for useful discussions about filtered colimits of metric spaces (see Section 
\ref{sec:filtered_colimits}).

\section{Vietoris-Rips spaces} 

This section is devoted to unifying the notion of ``Vietoris-Rips spaces'' through the concept of distance matrix norms. We define a distance matrix norm, and for any distance matrix norm $\nu,$ left infinite interval $L\subseteq \RR$ and metric space $X,$ we associate a simplicial set $\VR^\nu_L X.$ 
We will show that the study of both $\ell_p$-Vietoris-Rips simplicial sets and $\ell_p$-Vietoris-Rips complexes can be reduced to the study of the simplicial set $\VR^\nu_L X$ for a general $\nu,$ and the blurred magnitude homology theory can be thought of as the study of the $\ell_1$-Vietoris-Rips simplicial set. 
Throughout the article, we denote by $X,Y$ some metric spaces, by $p$ an element of $[1,\infty],$ and by $L\subseteq \RR$ a left infinite interval, which is one of the sets $(-\infty,r),(-\infty,r]$ or $\RR.$  

\subsection{Distance matrix norms} For $n\geq 0,$ we denote by $\Mat_{[n]}(\RR)$ the set of real square matrices $A=(A_{i,j})_{i,j\in [n]}$ of order $n+1$, whose entries are indexed by the set $[n]=\{0,\dots,n\}.$ 
Consider a simplicial set $\Mat_{[\bullet]}(\RR),$ whose $n$-th component is 
$\Mat_{[n]}(\RR),$
the face map $\partial_i$ is defined by deleting the $i$-th row and the $i$-th column of a matrix, and the degeneracy map $s_i$ is defined by doubling the $i$-th row and the $i$-th column of a matrix. In other words, we can say that the simplicial set  $\Mat_{[\bullet]} (\RR): \Delta^{op} \to {\sf Set}$ is defined so that, its $n$-simplices are maps $A:[n]\times [n]\to \RR$ and for a monotone map $f:[m]\to [n],$ the map $f^*:\Mat_{[n]}(\RR)\to \Mat_{[m]}(\RR)$ defined by $f^*A=A (f\times f).$ 
We consider a simplicial subset of  ``distance matrices''
\begin{equation}
 \Dist_{[\bullet]} \subseteq \Mat_{[\bullet]}(\RR)
\end{equation}
consisting of matrices $D$ such that $D_{i,j}\geq 0,$ $D_{i,i}=0,$ $D_{i,j}=D_{j,i}$ and $D_{i,j}+D_{j,k}\geq D_{i,k}$ for $i,j,k\in [n].$ Here we allow $D_{i,j}=0$ for $i\neq j.$ In other words $\Dist_{[n]}$ consists of all  pseudo-metrics on the set $\{0,\dots,n\}.$ It is easy to see that $\Dist_{[\bullet]}$ is a simplicial subset of $\Mat_{[\bullet]}(\RR)$. Moreover, for each $n$ the set $\Dist_{[n]}$ is a convex cone in $\Mat_{[n]}(\RR)$ i.e. for any $\alpha,\alpha'\in \RR_{\geq 0}$ and $D,D'\in \Dist_{[n]}$ we have $\alpha D+\alpha'D'\in \Dist_{[n]}.$

\begin{definition}\label{def:SM}
By a \emph{distance matrix norm}, we mean a collection of maps 
\begin{equation}
\nu:\Dist_{[n]} \to \RR_{\geq 0}, \hspace{1cm} n\geq 0 
\end{equation}
 such that for any $D,D'\in \Dist_{[n]}$ and any $\alpha\in \RR_{\geq 0},$ we have
\begin{enumerate} 
    \item  $\nu(\alpha\cdot  D)=\alpha\cdot \nu(D);$
    \item $\nu(D+D')\leq \nu(D)+\nu(D');$
    \item If $D_{i,j}\leq D'_{i,j}$ for all $i,j\in [n],$ then $\nu(D)\leq \nu(D');$
    \item $\nu( \begin{smallmatrix}
       0 & 1 \\
       1 & 0
    \end{smallmatrix} )=1;$
    \item  
    $\nu(\partial_i D) \leq  \nu(D)$  and $ \nu(s_i D) = \nu(D).$
\end{enumerate}
\end{definition} 
The property (3) is called monotone; the property (4) is called normalization; the property (5) is called simpliciality. 

We denote by $E_n\in \Dist_{[n]}$  the matrix such that $(E_n)_{i,j}=1,$ for any $i\neq j,$ and $(E_n)_{i,i}=0,$ for any $i.$ For a distance matrix norm $\nu,$ we consider an increasing sequence of constants $C_n(\nu)$
\begin{equation}
1=C_1(\nu)\leq C_2(\nu) \leq C_3(\nu)\leq  \dots 
\end{equation}
defined by the formula
\begin{equation}
C_n(\nu) = \nu(E_n).
\end{equation}
The sequence is increasing because $\partial_0 E_{n+1}=E_n$ and $\nu(\partial_0D)\leq \nu(D).$

Let $\Sigma_{[n]}$ be the group of permutations of $[n].$ For a matrix $D\in \Dist_{[n]}$ and $\pi\in \Sigma_{[n]},$ we denote by $D^\pi$ a matrix such that $D^\pi_{i,j}=D_{\pi(i),\pi(j)}.$ A distance matrix norm $\nu$ is called \emph{symmetric} if 
\begin{equation}
\nu(D)=\nu(D^\pi) 
\end{equation}
for all $D\in \Dist_{[n]}$ and  $\pi\in \Sigma_{[n]}$.
For any distance matrix norm $\nu,$ we define a collection of maps $\nu^{\sf sym}:\Dist_{[n]}\to \RR$ as 
\begin{equation}
\nu^{\sf sym}(D) = \min_{\pi\in \Sigma_{[n]}} \nu(D^\pi).
\end{equation}

\begin{lemma} For any distance matrix norm $\nu,$ the collection of maps 
$\nu^{\sf sym}$ is a symmetric distance matrix norm. 
\end{lemma}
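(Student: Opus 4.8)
The plan is to verify the five axioms of Definition~\ref{def:SM} for $\nu^{\sf sym}$, treating them one at a time and reducing each to the corresponding property of $\nu$. The key observation to set up first is the compatibility of the $\Sigma_{[n]}$-action with the operations involved: for $D,D'\in\Dist_{[n]}$, $\alpha\in\RR_{\geq 0}$ and $\pi\in\Sigma_{[n]}$ one has $(\alpha D)^\pi=\alpha D^\pi$, $(D+D')^\pi=D^\pi+D'^\pi$, and the entrywise order is preserved, $D_{i,j}\leq D'_{i,j}$ for all $i,j$ implies $D^\pi_{i,j}\leq D'^\pi_{i,j}$ for all $i,j$. Also the action is a group action, $(D^\pi)^\sigma=D^{\pi\sigma}$ (or $D^{\sigma\pi}$, depending on convention — I would fix the convention $D^\pi_{i,j}=D_{\pi(i),\pi(j)}$ as in the excerpt and check that $(D^\pi)^\sigma=D^{\sigma\pi}$), so that as $\pi$ ranges over $\Sigma_{[n]}$ so does $\pi\sigma$ for any fixed $\sigma$; this is what makes $\nu^{\sf sym}$ symmetric, since $\nu^{\sf sym}(D^\sigma)=\min_\pi\nu((D^\sigma)^\pi)=\min_\pi\nu(D^{\pi\sigma})=\min_{\pi'}\nu(D^{\pi'})=\nu^{\sf sym}(D)$. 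Note also that $\nu^{\sf sym}$ takes values in $\RR_{\geq 0}$ since each $\nu(D^\pi)$ does and the minimum over the finite set $\Sigma_{[n]}$ is attained.

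Next I would dispatch the easy axioms. Homogeneity (1): $\nu^{\sf sym}(\alpha D)=\min_\pi\nu(\alpha D^\pi)=\min_\pi\alpha\,\nu(D^\pi)=\alpha\min_\pi\nu(D^\pi)=\alpha\,\nu^{\sf sym}(D)$, using $\alpha\geq 0$ to pull the scalar out of the minimum. Monotonicity (3): if $D_{i,j}\leq D'_{i,j}$ entrywise then $D^\pi_{i,j}\leq D'^\pi_{i,j}$ entrywise for every $\pi$, hence $\nu(D^\pi)\leq\nu(D'^\pi)$ for every $\pi$, and taking minima gives $\nu^{\sf sym}(D)\leq\nu^{\sf sym}(D')$. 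Normalization (4): the only permutation of $[1]=\{0,1\}$ other than the identity swaps the two indices, which leaves $\left(\begin{smallmatrix}0&1\\1&0\end{smallmatrix}\right)$ unchanged, so $\nu^{\sf sym}\left(\begin{smallmatrix}0&1\\1&0\end{smallmatrix}\right)=\nu\left(\begin{smallmatrix}0&1\\1&0\end{smallmatrix}\right)=1$.

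For subadditivity (2), the point is that the minimum of a sum is at most the sum, but the two minima are over independently chosen permutations, so I would argue: pick $\pi$ achieving $\nu^{\sf sym}(D+D')=\nu((D+D')^\pi)=\nu(D^\pi+D'^\pi)\leq\nu(D^\pi)+\nu(D'^\pi)$, and then bound each term from below by its own minimum, $\nu(D^\pi)\geq\nu^{\sf sym}(D)$ and $\nu(D'^\pi)\geq\nu^{\sf sym}(D')$, giving $\nu^{\sf sym}(D+D')\leq\nu^{\sf sym}(D)+\nu^{\sf sym}(D')$. The main obstacle is axiom (5), simpliciality, where the permutation groups $\Sigma_{[n]}$ and $\Sigma_{[n-1]}$ (for $\partial_i$) or $\Sigma_{[n+1]}$ (for $s_i$) are genuinely different, and one must relate minima over these different groups. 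For the degeneracy part $\nu^{\sf sym}(s_iD)=\nu^{\sf sym}(D)$ I would exhibit a bijection-like correspondence: there is a natural injection $\Sigma_{[n]}\hookrightarrow\Sigma_{[n+1]}$ and an identification of $s_i D$ under the relevant conjugated permutations with $s_j$ of a conjugate of $D$, using that $s_i$ doubles a row/column; combining $\nu(s_j D')=\nu(D')$ from axiom~(5) for $\nu$ with a careful bookkeeping of how permutations interact with the doubled index should give equality in both directions. For the face part $\nu^{\sf sym}(\partial_i D)\leq\nu^{\sf sym}(D)$ I would take $\pi\in\Sigma_{[n]}$ achieving $\nu^{\sf sym}(D)=\nu(D^\pi)$, observe that $D^\pi$ contains, among its principal submatrices obtained by deleting one index, a matrix of the form $(\partial_j D)^{\pi'}$ for a suitable $j$ and $\pi'\in\Sigma_{[n-1]}$ — concretely, deleting from $D^\pi$ the index that $\pi$ sends to $i$ — so that by axiom~(5) for $\nu$ one gets $\nu((\partial_i D)^{\pi'})\leq\nu(D^\pi)=\nu^{\sf sym}(D)$, and hence $\nu^{\sf sym}(\partial_i D)\leq\nu((\partial_i D)^{\pi'})\leq\nu^{\sf sym}(D)$. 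The bookkeeping matching principal submatrices of $D^\pi$ with permuted faces of $D$ is the one genuinely fiddly step; everything else is formal manipulation of minima.
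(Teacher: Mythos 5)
Your treatment of (1), (3), (4), and the face part of (5) matches what the paper intends, and the face argument (delete index $\pi^{-1}(i)$ from $D^\pi$ to recover a permuted $\partial_i D$, then apply the face axiom for $\nu$) is correct. The degeneracy equality $\nu^{\sf sym}(s_iD)=\nu^{\sf sym}(D)$ is where the paper actually works: its proof gets the direction $\nu^{\sf sym}(s_iD)\geq\nu^{\sf sym}(D)$ essentially for free by applying the already-proved face inequality to $\partial_is_iD=D$, and for the other direction it writes down an explicit $\pi'\in\Sigma_{[n+1]}$ with $s^i\pi'=\pi s^{\pi^{-1}(i)}$, from which $\nu^{\sf sym}(D^{s^i})\leq\nu(D^{s^i\pi'})=\nu(D^{\pi s^{\pi^{-1}(i)}})=\nu(s_{\pi^{-1}(i)}(D^\pi))=\nu(D^\pi)=\nu^{\sf sym}(D)$. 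Your sketch of this step (``bijection-like correspondence'', ``careful bookkeeping'') gestures at the $\leq$ direction but is too vague to count as a proof, and misses the $\partial_is_i=\mathrm{id}$ trick that disposes of the $\geq$ direction in one line; you should import that.

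The serious problem is subadditivity. Your chain of inequalities is a non sequitur: from $\nu^{\sf sym}(D+D')\leq\nu(D^\pi)+\nu(D'^\pi)$ together with $\nu(D^\pi)\geq\nu^{\sf sym}(D)$ and $\nu(D'^\pi)\geq\nu^{\sf sym}(D')$ you cannot conclude $\nu^{\sf sym}(D+D')\leq\nu^{\sf sym}(D)+\nu^{\sf sym}(D')$; the second pair of inequalities points the wrong way. And this is not a repairable oversight: the inequality genuinely fails. Take $\nu=\nu_1$ (so $\nu_1(D)=\sum_kD_{k,k+1}$ and $\nu_1^{\sf sym}(D)$ is the length of the shortest Hamiltonian path), and on $[2]$ take
\begin{equation*}
D=\begin{pmatrix}0&1&2\\1&0&1\\2&1&0\end{pmatrix},\qquad
D'=\begin{pmatrix}0&2&1\\2&0&1\\1&1&0\end{pmatrix}.
\end{equation*}
Then $\nu_1^{\sf sym}(D)=\nu_1^{\sf sym}(D')=2$ (optimal orders $0,1,2$ and $0,2,1$ respectively), but $D+D'$ has off-diagonal entries $3,3,2$, and its shortest Hamiltonian path has length $5>4$. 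In fact $\nu_1^{\sf sym}$ is \emph{super}additive, since the permutation optimal for $D+D'$ is suboptimal for $D$ and $D'$ separately. The paper itself dismisses properties (1)--(4) as ``obvious'' and gives no argument for (2), so this gap is not unique to your write-up; but any honest proof of the lemma would have to address it, and as it stands axiom (2) is simply false for $\nu^{\sf sym}$ in general.
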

\begin{proof}
The properties (1)-(4) are obvious. The property  $\nu^{\sf sym}(\partial_i D) \leq \nu^{\sf sym}(D)$ is also obvious. Let us prove that for any matrix $D\in \Dist_{[n]},$ we have $\nu^{\sf sym}(s_i D)= \nu^{\sf sym}(D).$ Since $\partial_i s_iD=D,$ we have $\nu^{\sf sym}(s_i D) \geq  \nu^{\sf sym}(D).$ 

Let us prove that $\nu^{\sf sym}(s_i D) \leq  \nu^{\sf sym}(D).$ Here we will treat the matrix $D$ as a map $D:[n]\times [n]\to \RR$ and any map $f:[m]\to [n]$ we set $D^f=D(f\times f).$  Then $\partial_k D = D^{\partial^k}$ and $s_kD=D^{s^k},$ where $\partial^k : [n-1]\to [n]$ and $s^k:[n+1]\to [n]$ are the coface and  codegeneracy maps. Take $\pi\in \Sigma_{[n]}$ such that  $\nu^{\sf sym}(D)=\nu(D^\pi).$ Then there exists $\pi'\in  \Sigma_{[n+1]}$ such that 
$s^{i}   \pi' = \pi s^{\pi^{-1}(i)}.$
Indeed, it can be defined by the formula
\begin{equation}
 \pi'(k) = 
 \begin{cases}
 \partial^{i+1}   \pi     s^{\pi^{-1}(i)}(k), & k\neq \pi^{-1}(i)+1,\\
 i+1, & k=\pi^{-1}(i)+1.
 \end{cases}
\end{equation}
Then we obtain 
$\nu^{\sf sym}( D^{s^i} )
\leq \nu(D^{s^i  \pi'}) = \nu(D^{\pi  s^{\pi^{-1}(i)}} )\   =\nu(D^{\pi}),$
and hence $\nu^{\sf sym}( s_iD )\leq \nu^{\sf sym}(D).$
\end{proof}

Two main examples of distance matrix norms for us are the following. For $p\in [1,\infty],$ we set 
\begin{equation}
\nu_p(D) = \max_{0\leq i_0<\dots<i_m\leq n} \| D_{i_0,i_1},D_{i_1,i_2},\dots, D_{i_{m-1},i_m} \|_p,
\end{equation}
where $\|-\|_p$ denotes the $\ell_p$-norm. Since the diagonal of a distance matrix $D$ is trivial, we have $(s_iD)_{i,i+1}=0.$ Using this we can prove that $\nu_p(s_iD)=\nu_p(D).$ All other conditions of a distance matrix norm are obvious. The second example that we need is the symmetric version of this distance matrix norm  
\begin{equation}
    \nu_p^{\sf sym}(D) = \min_{\pi\in \Sigma_{[n]}} \nu_p(D^\pi).
\end{equation}
The sequences of constants $C_n$ for these examples are  
\begin{equation}
C_n(\nu_p) =C_n(\nu_p^{\sf sym}) =  n^{\frac{1}{p}},  
\end{equation}
where $n^{\frac{1}{\infty}}=1.$

\subsection{Vietoris-Rips simplicial sets}

Let $X$ be a metric space and $\nu$ be a distance matrix norm. For  a tuple of points $(x_0,\dots,x_n)\in X^{n+1},$ we consider the distance matrix  $D(x_0,\dots,x_n)=(d(x_i,x_j))_{i,j\in [n]}.$ Then the \emph{$\nu$-weight} of the tuple is defined as  
\begin{equation}
\WW_\nu(x_0,\dots,x_n) = \nu(D(x_0,\dots,x_n)).
\end{equation}
\begin{lemma}\label{lemma:W_nu} Let $\nu$ be a distance matrix norm,  $\sigma=(x_0,\dots,x_n)\in X^{n+1}$ be a tuple and $0\leq i\leq n$. Set $\tilde \sigma=\{x_0,\dots,x_n\}.$ Then the following holds.  
\begin{enumerate}
    \item 
$\WW_\nu(\partial_i \sigma) \leq  \WW_\nu(\sigma)$ and 
$\WW_\nu(s_i\sigma) = \WW_\nu(\sigma).$
\item $\WW_\nu(x_0,x_1) = d(x_0,x_1).$
\item $\WW_\nu(\sigma)\leq \WW_\nu(x_0,\dots,x_i)+\WW_\nu(x_i,\dots,x_n).$
\item $\WW_\nu(\sigma)\leq \sum_j d(x_j,x_{j+1}).$
\item $\diam(\tilde \sigma) \leq  \WW_\nu(\sigma) \leq C_n(\nu)\cdot \diam(\tilde \sigma).$
\end{enumerate}
\end{lemma}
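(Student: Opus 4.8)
The plan is to derive all five statements directly from the axioms (1)--(5) of a distance matrix norm together with the convex-cone structure of $\Dist_{[n]}$, using the translation between distance matrices of tuples and pseudo-metrics. The guiding observation is that $\WW_\nu(\sigma)=\nu(D(\sigma))$ and that face/degeneracy operations on tuples correspond to face/degeneracy operations on their distance matrices: $D(\partial_i\sigma)=\partial_i D(\sigma)$ and $D(s_i\sigma)=s_i D(\sigma)$. Granting this, statement (1) is immediate from axiom (5). Statement (2) follows from axiom (4): the distance matrix $D(x_0,x_1)$ equals $d(x_0,x_1)\cdot\bigl(\begin{smallmatrix}0&1\\1&0\end{smallmatrix}\bigr)$, so homogeneity (axiom (1)) and normalization give $\WW_\nu(x_0,x_1)=d(x_0,x_1)\cdot\nu\bigl(\begin{smallmatrix}0&1\\1&0\end{smallmatrix}\bigr)=d(x_0,x_1)$ — with the degenerate case $d(x_0,x_1)=0$ handled by homogeneity alone.

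For statement (3), the key step is a pointwise comparison of distance matrices. Write $\sigma'=(x_0,\dots,x_i)$ and $\sigma''=(x_i,\dots,x_n)$. I would construct, on the index set $[n]$, the pseudo-metric obtained by "concatenating through $x_i$": define a matrix $D'$ on $[n]$ whose upper-left block (indices $0,\dots,i$) is $D(\sigma')$, whose lower-right block (indices $i,\dots,n$) is $D(\sigma'')$, and whose cross terms are $D'_{j,k}=d(x_j,x_i)+d(x_i,x_k)$ for $j\le i\le k$. One checks $D'\in\Dist_{[n]}$ (the triangle inequality holds because every cross-distance routes through $i$) and that $D(\sigma)_{j,k}\le D'_{j,k}$ entrywise by the triangle inequality in $X$. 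Monotonicity (axiom (3)) then gives $\WW_\nu(\sigma)\le\nu(D')$. Finally $D'$ is the image, under the codegeneracy-type map that glues the two index blocks along $i$, of the block-diagonal pseudo-metric on $D(\sigma')\sqcup D(\sigma'')$; an alternative and cleaner route is to embed $D(\sigma')$ and $D(\sigma'')$ into $\Dist_{[n]}$ as pseudo-metrics $\bar D'$, $\bar D''$ supported (in the sense of: finite only) on the respective index blocks and zero elsewhere, note $D'\le \bar D'+\bar D''$ entrywise where $\bar D',\bar D''$ are genuine pseudo-metrics obtained by the codegeneracy extensions of $D(\sigma'),D(\sigma'')$, and apply subadditivity (axiom (2)) together with simpliciality (axiom (5)) to conclude $\nu(D')\le\nu(\bar D')+\nu(\bar D'')=\WW_\nu(\sigma')+\WW_\nu(\sigma'')$. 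I expect this to be the main obstacle: one must exhibit the right pseudo-metrics in $\Dist_{[n]}$ so that both subadditivity and the degeneracy-invariance of $\nu$ can be brought to bear without leaving the cone of pseudo-metrics.

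Statement (4) follows from (3) by an immediate induction on $n$, peeling off one edge at a time: $\WW_\nu(x_0,\dots,x_n)\le\WW_\nu(x_0,x_1)+\WW_\nu(x_1,\dots,x_n)=d(x_0,x_1)+\WW_\nu(x_1,\dots,x_n)$, using (2) for the base case. For statement (5), the lower bound $\diam(\tilde\sigma)\le\WW_\nu(\sigma)$ comes from combining (1) and (2): for any $j,k$, applying face maps to delete all indices except $j,k$ shows $d(x_j,x_k)=\WW_\nu(x_j,x_k)\le\WW_\nu(\sigma)$ by repeated use of $\WW_\nu(\partial_i\sigma)\le\WW_\nu(\sigma)$, and taking the supremum over $j,k$ gives $\diam(\tilde\sigma)\le\WW_\nu(\sigma)$. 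For the upper bound, set $\delta=\diam(\tilde\sigma)$; then $D(\sigma)_{j,k}\le\delta=\delta\cdot(E_n)_{j,k}$ for all $j\ne k$ (and both sides vanish on the diagonal), so monotonicity and homogeneity give $\WW_\nu(\sigma)\le\nu(\delta\cdot E_n)=\delta\cdot\nu(E_n)=C_n(\nu)\cdot\diam(\tilde\sigma)$.
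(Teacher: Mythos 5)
Your proposal is correct and follows essentially the same route as the paper's proof: for the critical part (3), the matrix you call $D'$ is exactly the paper's auxiliary matrix $A$, and your identification of it as a sum of two degeneracy-extensions of $D(x_0,\dots,x_i)$ and $D(x_i,\dots,x_n)$ (so that both subadditivity and simpliciality of $\nu$ apply) is precisely the key identity the paper exhibits, namely $A = s_n\cdots s_i\,D(x_0,\dots,x_i) + s_0\cdots s_0\,D(x_i,\dots,x_n)$. The only difference is that you phrase the decomposition as an inequality $D'\leq \bar D'+\bar D''$ where the paper observes the exact equality; both suffice.
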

\begin{proof}
(1). Follows from the simpliciality of $\nu.$

(2). Follows from $\nu(D(x_0,x_1) ) = \nu(d(x_0,x_1) \cdot E_1)=d(x_0,x_1)\cdot \nu(E_1)=d(x_0,x_1).$

(3). Consider a matrix $A$ defined by 
\begin{equation}
A_{k,l} = 
\begin{cases}
d(x_k,x_l), & k,l \leq i, \text{ or } i\leq k,l, \\ 
d(x_k,x_i)+d(x_i,x_l), & k<i<l, \text{ or } l<i<k.
\end{cases}    
\end{equation}
Then $D(\sigma)_{k,l}\leq A_{k,l}$ and 
\begin{equation}
A= s_n\dots s_{i+1}s_{i} D(x_0,\dots,x_i) + s_0\dots s_0D(x_i,\dots,x_n).    
\end{equation}
In particular, we obtain  $A\in \Dist_{[n]}.$
Then $\nu(D(\sigma))\leq \nu(A)\leq \nu(D(x_0,\dots,x_i))+\nu(D(x_i,\dots,x_n)).$

(4). Follows from (2) and (3).

(5). (1) and (2) imply that $d(x_k,x_l)=\WW_\nu(x_k,x_l)\leq \WW_\nu(\sigma).$ Then $\diam(\tilde \sigma) \leq  \WW_\nu(\sigma).$  On the other hand $D(\sigma)_{i,j}\leq \diam(\tilde \sigma)\cdot (E_n)_{i,j}.$ Then $\WW_\nu(\sigma) \leq C_n(\nu)\cdot \diam(\tilde \sigma).$
\end{proof}

For a left infinite interval $L\subseteq \RR$ (which is either $L=(-\infty,r],$ or $L=(-\infty,r),$ or $L=\RR$),  we define the  $\nu$-Vietoris-Rips simplicial set $\VR^\nu_L X$ so that its $n$-simplices are defined by the formula   
\begin{equation}
(\VR^\nu_L X)_n = \{(x_0,\dots,x_n)\mid \WW_\nu(x_0,\dots,x_n) \in L\},
\end{equation}
and face and degeneracy maps are defined by the formulas 
\begin{equation}
\begin{split}
\partial_i(x_0,\dots,x_n) &= (x_0,\dots,x_{i-1},x_{i+1},\dots,x_n), \\ 
s_i(x_0,\dots,x_n) &= (x_0,\dots,x_{i},x_{i},\dots,x_n). 
\end{split}
\end{equation}
Lemma \ref{lemma:W_nu}(1) implies that this simplicial set is well defined. We will also use the following notations
\begin{equation}
\VR^\nu_{<r} X = \VR^\nu_{(-\infty,r)} X, \hspace{1cm} \VR^\nu_{\leq r} X = \VR^\nu_{(-\infty,r]} X, 
\end{equation}
and the following notation for the $n$-skeleton 
\begin{equation}
    \VR^{\nu,n}_L X = \sk_n( \VR^\nu_L X).
\end{equation}

The $\ell_p$-weight of a tuple $\sigma=(x_0,\dots,x_n)$ of points of $X$ is defined as  $\WW_p(\sigma)=\WW_{\nu_p}(\sigma).$ Therefore 
\begin{equation}
    \WW_p(x_0,\dots,x_n) = \underset{0\leq i_0< \dots < i_m\leq n }\max \| d(x_{i_0},x_{i_1}) ,\dots, d(x_{i_{m-1}},x_{i_m}) \|_p. 
\end{equation} 
Then the $\ell_p$-Vietoris-Rips simplicial set is defined by $\VR^{p}_L X = \VR^{\nu_p}_L X.$ 
Note that for $p=1,$ we have
$\WW_1(x_0,\dots,x_n) = d(x_0,x_1)+\dots + d(x_{n-1},x_n).$
Therefore 
\begin{equation}\label{eq:VR^1}
(\VR^1_L X)_n = \{(x_0,\dots,x_n) \mid d(x_0,x_1)+\dots+d(x_{n-1},x_n)\in L \}.   
\end{equation}
For $p=\infty,$ we have
$W_\infty(x_0,\dots,x_n) = \diam(\{x_0,\dots,x_n\}).$ 
Therefore we obtain 
\begin{equation}
(\VR^\infty_L X)_n = \{(x_0,\dots,x_n) \mid \diam (\{x_0,\dots,x_n\}) \in L \}.
\end{equation}
Lemma \ref{lemma:W_nu}(4,5) implies that for any distance matrix norm $\nu,$ there are inclusions
\begin{equation}\label{eq:inclusion-to-infty}
\VR^1_{L}X \subseteq \VR^\nu_L X \subseteq \VR^\infty_L X, \hspace{1cm} 
\VR^{\infty,n}_{C_n(\nu)^{-1} \cdot L}  X \subseteq  \VR^{\nu,n}_L X.
\end{equation}

\subsection{Relation to blurred magnitude homology} 

The $\ell_1$-Vietoris-Rips simplicial set $\VR^1_{\leq r} X$ is the simplicial set used in the blurred magnitude homology theory. In 
\cite[Def. 15 (4)]{otter2018magnitude} (see also \cite{govc2021persistent}, \cite{cho2019quantales}) for a metric space $X,$ they define its $[0,\infty]^{op}$-nerve as 
\begin{equation}
N(X)(r)_n = \{ (x_0,\dots,x_n)\mid d(x_0,x_1) + \dots + d(x_{n-1},x_n) \leq r \}
\end{equation}
and define the blurred magnitude homology of $X$ as the homology of this persistent simplicial set 
$\MH_{n,\leq r}(X) = H_n(N(X)(r)).$ 
Using \eqref{eq:VR^1}, we obtain $\VR^1_{\leq r} X = N(X)(r)$ and 
\begin{equation}
\MH_{n,\leq r}(X) = H_n(\VR^1_{\leq r} X).
\end{equation} 

The magnitude homology of $X$ can be defined as the reduced homology of the quotient simplicial set
\begin{equation}
\MH_{n,r} (X) = \tilde H_n(\VR^1_{\leq r} X/\VR^1_{< r} X).
\end{equation}
We will also consider the strict version of the blurred magnitude homology
\begin{equation}
\MH_{n,<r}(X) = H_n(\VR^1_{<r} X).
\end{equation}
This strict version will be called strictly blurred magnitude homology, and the non-strict version will be called non-strictly blurred magnitude homology.  It is easy to see that there is a long exact sequence connecting all these three types of magnitude homology
\begin{equation}\label{eq:long_exact_magnitude}
\dots \to \MH_{n+1,r}(X)  \to     \MH_{n,<r}(X) \to \MH_{n,\leq r}(X) \to \MH_{n,r}(X) \to \dots.  
\end{equation}

\subsection{Vietoris-Rips complexes}

Assume that $\nu$ is a symmetric distance matrix norm, and $X$ is a metric space. Since $\nu$ is symmetric, we have 
\begin{equation}
\WW_\nu(x_0,\dots,x_n)=\WW_\nu(x_{\pi(0)},\dots,x_{\pi(n)})   \end{equation}
for any $\pi\in \Sigma_{[n]}.$ Let $\{x_0,\dots,x_n\}\subseteq X$ be a finite set such that $x_0,\dots,x_n$ are distinct points. We define its $\nu$-weight as
\begin{equation}
\ww_\nu(\{ x_0,\dots,x_n \})= \WW_\nu(x_0,\dots,x_n).
\end{equation} 
The definition does not depend on the choice of the order of the points because $\nu$ is symmetric.  

\begin{lemma}\label{lemma:W}
For any sequence $x_0,\dots,x_n\in X$ (with possible repetitions), we have 
\begin{equation}
\ww_\nu(\{ x_0,\dots,x_n \}) = \WW_\nu(x_0,\dots,x_n). 
\end{equation}
\end{lemma}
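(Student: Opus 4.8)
\section*{Proof proposal}

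The plan is to induct on $n$ (the number of terms in the sequence being $n+1$), using the symmetry of $\nu$ together with the degeneracy half of the simpliciality axiom recorded in Lemma~\ref{lemma:W_nu}(1). The essential point is that $\ww_\nu$ is a priori only defined on finite sets of \emph{distinct} points, so the content of the lemma is that one may collapse repetitions in a tuple without changing its $\nu$-weight.

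For the base case $n=0$, both sides equal $\WW_\nu(x_0)$ by the definition of $\ww_\nu$, so there is nothing to prove. For the inductive step, if $x_0,\dots,x_n$ are pairwise distinct then $\ww_\nu(\{x_0,\dots,x_n\})=\WW_\nu(x_0,\dots,x_n)$ holds by the very definition of $\ww_\nu$. Otherwise there are indices $i<j$ with $x_i=x_j$. Since $\nu$ is symmetric, $\WW_\nu$ is invariant under permuting the tuple, so I may reorder the points so that two entries carrying the common value $x_i=x_j$ become adjacent, say in positions $k$ and $k+1$; call the resulting tuple $\rho$. Then $\rho=s_k\tau$, where $\tau=(y_0,\dots,y_{n-1})$ is obtained from $\rho$ by deleting one of these two equal entries. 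By symmetry, $\WW_\nu(x_0,\dots,x_n)=\WW_\nu(\rho)$, and by Lemma~\ref{lemma:W_nu}(1), $\WW_\nu(\rho)=\WW_\nu(s_k\tau)=\WW_\nu(\tau)$; hence $\WW_\nu(x_0,\dots,x_n)=\WW_\nu(\tau)$.

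Now the underlying set of $\tau$ is still $\{x_0,\dots,x_n\}$, since deleting a repeated value does not change the set, and $\tau$ has only $n$ terms. Applying the induction hypothesis to $\tau$ gives $\WW_\nu(\tau)=\ww_\nu(\{y_0,\dots,y_{n-1}\})=\ww_\nu(\{x_0,\dots,x_n\})$. Combining this with the previous paragraph yields $\WW_\nu(x_0,\dots,x_n)=\ww_\nu(\{x_0,\dots,x_n\})$, completing the induction.

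I do not expect a real obstacle here: the only non-formal ingredient is the use of symmetry of $\nu$ to bring a repeated pair into adjacent position, which is exactly what makes the degeneracy identity $\WW_\nu(s_k\tau)=\WW_\nu(\tau)$ applicable (for adjacent repetitions no symmetry is needed, but without it one cannot directly collapse a repetition $x_i=x_j$ with $j>i+1$). The remainder is routine bookkeeping with permutations and the trivial observation that collapsing a repeated entry leaves the underlying set unchanged.
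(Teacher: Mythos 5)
Your proof is correct and uses exactly the same two ingredients as the paper's — the symmetry of $\nu$ to reorder the tuple and the degeneracy invariance $\WW_\nu(s_k\tau)=\WW_\nu(\tau)$ from Lemma~\ref{lemma:W_nu}(1). The only cosmetic difference is that you collapse one repetition at a time via induction on $n$, while the paper groups all repeated values in a single permutation and applies degeneracies all at once; the substance is identical.
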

\begin{proof} Set $\sigma=\{x_0,\dots,x_n\}.$
If all points $x_0,\dots,x_n$ are distinct, then it is obvious. If they are not distinct, there exists a permutation $\pi\in \Sigma_{[n]}$ and a subsequence $0\leq i_0<\dots<i_m\leq n$ such that the points  $x_{\pi(i_0)},\dots,x_{\pi(i_m)}$ are distinct, $\sigma = \{x_{\pi(i_0)},\dots,x_{\pi(i_m)} \}$ and for $i_{k} \leq j< i_{k+1}$ we have $x_{\pi(i_k)}=x_{\pi(j)}.$ Then $(x_{\pi(0)},\dots,x_{\pi(n)})$ 
can be obtained from $(x_{\pi(i_0)},\dots,x_{\pi(i_m)})$ by applying degeneracy maps. 
It follows that  $\WW_\nu(x_{\pi(0)},\dots,x_{\pi(n)})= \WW_\nu(x_{\pi(i_0)},\dots,x_{\pi(i_m)})=\ww_\nu(\sigma).$
\end{proof}

The $\nu$-Vietoris-Rips complex $\vr^\nu_L X$ is defined as a simplicial complex consisting of finite non-empty subsets $\sigma\subseteq X$ such that $\ww_\nu(\sigma)\in L$
\begin{equation}
\vr^\nu_L X =\{  \sigma \underset{\scalebox{0.6}{\sf f.n.}}\subseteq X \mid \ww_\nu(\sigma) \in L  \}. 
\end{equation}
We will also consider its $n$-skeleton 
\begin{equation}
\vr^{\nu,n}_L X \cong \sk_n( \vr^\nu_L X ).
\end{equation}

\begin{proposition}\label{prop:vrVR-equiv}
For a symmetric distance matrix norm $\nu,$ a metric space $X$ and any $n\geq 0,$ there are homotopy equivalences  
\begin{equation}
    |\vr^\nu_L X| \simeq |\VR^\nu_L X|, \hspace{1cm} |\vr^{\nu,n}_L X| \simeq | \VR^{\nu,n}_L X|,
\end{equation}
which are natural with respect to inclusions  defined by inclusions $L\subseteq L' $ and $X\subseteq X'.$ 
\end{proposition}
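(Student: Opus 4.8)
The plan is to identify $\VR^\nu_L X$ with the \emph{ordered simplicial set} of the simplicial complex $\vr^\nu_L X$, and then to prove, for an arbitrary abstract simplicial complex, that its ordered simplicial set is naturally homotopy equivalent to it. Recall that for an abstract simplicial complex $K$ with vertex set $V$ one has the simplicial set $\mathrm{Ord}(K)$, a simplicial subset of $\mathrm{cosk}_0(V)$, whose $k$-simplices are the tuples $(v_0,\dots,v_k)\in V^{k+1}$ whose underlying set $\{v_0,\dots,v_k\}$ is a face of $K$, with faces and degeneracies given by deletion and doubling. Since $\nu$ is symmetric, Lemma~\ref{lemma:W} gives $\WW_\nu(x_0,\dots,x_k)=\ww_\nu(\{x_0,\dots,x_k\})$ for every tuple of points of $X$, so a tuple lies in $\VR^\nu_L X$ precisely when its underlying set is a simplex of $\vr^\nu_L X$; that is, $\VR^\nu_L X=\mathrm{Ord}(\vr^\nu_L X)$, and likewise, restricting to tuples supported on at most $n+1$ vertices, $\VR^{\nu,n}_L X=\mathrm{Ord}(\vr^{\nu,n}_L X)$. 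Hence both equivalences reduce to the single assertion: for every abstract simplicial complex $K$ there is a homotopy equivalence $|\mathrm{Ord}(K)|\simeq|K|$, natural with respect to inclusions of subcomplexes. Naturality in $L\subseteq L'$ and $X\subseteq X'$ then follows immediately, since by Lemma~\ref{lemma:W_nu}(1), and because $L$ is a left infinite interval, these inclusions induce subcomplex inclusions $\vr^\nu_L X\hookrightarrow\vr^\nu_{L'}X$ and $\vr^\nu_L X\hookrightarrow\vr^\nu_L X'$.

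To prove $|\mathrm{Ord}(K)|\simeq|K|$, let $P(K)$ be the poset of nonempty faces of $K$ ordered by inclusion; it is a direct category, graded by dimension. I would argue in three steps. First, $\mathrm{Ord}(K)=\colim_{\sigma\in P(K)}\mathrm{cosk}_0(\sigma)$: each tuple of vertices whose underlying set is a face $\rho$ is ``supported on $\rho$'', and one checks directly that the colimit over $P(K)$ of the subcomplexes $\mathrm{cosk}_0(\sigma)\subseteq\mathrm{cosk}_0(V)$ is exactly their union, namely $\mathrm{Ord}(K)$ (although $P(K)$ is not filtered, any two faces containing a common tuple also contain the face on which that tuple is supported, which suffices to compute the colimit). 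Second, the diagram $\sigma\mapsto\mathrm{cosk}_0(\sigma)$ on $P(K)$ is Reedy cofibrant: the latching object at $\tau$ is the simplicial subset of $\mathrm{cosk}_0(\tau)$ of tuples not using every vertex of $\tau$, and the latching map is a monomorphism; hence $\colim_{P(K)}\mathrm{cosk}_0(-)=\hocolim_{P(K)}\mathrm{cosk}_0(-)$. Third, for a nonempty finite set $S$ the simplicial set $\mathrm{cosk}_0(S)$ is the nerve of the codiscrete groupoid on $S$, hence weakly contractible, so the canonical natural transformation $\mathrm{cosk}_0(-)\Rightarrow *$ to the constant point diagram is an objectwise weak equivalence; applying homotopy colimits, which preserve objectwise weak equivalences, gives
\[
|\mathrm{Ord}(K)|=\bigl|\hocolim_{P(K)}\mathrm{cosk}_0(-)\bigr|\;\overset{\sim}{\longrightarrow}\;\bigl|\hocolim_{P(K)}(*)\bigr|=|N(P(K))|.
\]
Finally $N(P(K))$ is the barycentric subdivision of $K$, and $|N(P(K))|\cong|K|$ by the classical subdivision homeomorphism. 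All of the above is natural in $K$, so the composite is natural for subcomplex inclusions, with no choice of a linear order on vertices required.

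The only genuinely non-formal points are the first step — identifying $\mathrm{Ord}(K)$ with a colimit over the non-filtered face poset — and the Reedy-cofibrancy check in the second step; granting those, weak contractibility of $\mathrm{cosk}_0$ of a finite set and the subdivision homeomorphism finish the argument. A more elementary alternative, avoiding homotopy colimits, is to induct on $\dim K$ via the skeletal pushout squares $\sk_{m-1}K\leftarrow\bigsqcup_{\dim\tau=m}\partial\Delta^\tau\hookrightarrow\bigsqcup_{\dim\tau=m}\Delta^\tau$: the functor $\mathrm{Ord}$ preserves these pushouts and carries the top map to a monomorphism, while $|\mathrm{cosk}_0(\sigma)|\simeq *\simeq|\Delta^{\dim\sigma}|$, so the gluing lemma propagates the equivalence $|\mathrm{Ord}(\sk_m K)|\simeq|\sk_m K|$ along the skeletal filtration, the infinite-dimensional case being recovered as the filtered colimit over $m$ along cofibrations. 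The $n$-skeletal equivalence $|\vr^{\nu,n}_L X|\simeq|\VR^{\nu,n}_L X|$ is then the same statement applied to the subcomplex $\vr^{\nu,n}_L X$.
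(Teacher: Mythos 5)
Your proof is correct and follows the same essential route as the paper. The paper's proof reduces, via Lemma \ref{lemma:W}, to the identity $\VR^\nu_L X=\SS(\vr^\nu_L X)$ and then invokes Theorem \ref{th:geometric:realisation:SSK} from the appendix, which is exactly your claim $|\mathrm{Ord}(K)|\simeq |K|$ (your $\mathrm{Ord}(K)$ and $\mathrm{cosk}_0(\sigma)$ are the paper's $\SS(K)$ and $\SS(\Delta(\sigma))$). The appendix proof has the same three-step structure you use: present $\SS(K)$ as $\colim_{\sigma\in P(K)}\SS(\Delta(\sigma))$ (Lemma \ref{lemma:colimit:SS}), promote this colimit to a homotopy colimit, note that each $\SS(\Delta(\sigma))$ is contractible (Lemma \ref{lemma:contractible:simplicial:set}), and identify $\hocolim_{P(K)}\ast$ with $N(P(K))\cong \sd(K)$. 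The only technical variation is how you justify colimit $=$ hocolim: the paper invokes the Dwyer--Kan/Farjoun notion of a \emph{free diagram} (exhibiting an explicit basis $B_{n,s}$ of nondegenerate tuples that use every vertex of $s$), while you phrase the same observation as Reedy cofibrancy over the direct category $P(K)$ — the latching map at $\tau$ being the inclusion of tuples missing a vertex. These are two names for the same phenomenon here. Your alternative skeletal-induction argument via the gluing lemma is a genuinely different and more elementary route that the paper does not take; it avoids homotopy colimits entirely at the cost of a transfinite/filtered-colimit step for infinite-dimensional $K$, and would be a reasonable substitute for the appendix theorem.
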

\begin{proof} In this proof we use the results and notations of Appendix ``Simplicial sets vs. simplicial complexes'' (Section \ref{sec:ss_sc}). In particular, for a simplicial complex $K$ we denote by $\SS(K)$ the simplicial set, whose $n$-simplices are tuples $(x_0,\dots,x_n)$ such that $\{x_0,\dots,x_n\}\in K$. Using Lemma \ref{lemma:W}, we obtain $\VR^\nu_L X=\SS(\vr^\nu_L X)$ and $\VR^{\nu,n}_L X=\SS(  \vr^{\nu,n}_L X).$ Then the statement follows from Theorem \ref{th:geometric:realisation:SSK}.  
\end{proof}

The $\ell_p$-weight of a non-empty finite subset  $\tau = \{x_0,\dots,x_n\}\subseteq X,$ where $x_0,\dots,x_n$ are distinct points, is defined by 
\begin{equation}
\ww_p(\tau) = \min_{\pi\in \Sigma_{[n]}} \WW_p( x_{\pi(0)},\dots,x_{\pi(n)} ).
\end{equation}
For $p=1$ and $p=\infty,$ we have 
\begin{equation}
\ww_1(\tau) = \min_{\pi\in \Sigma_{[n]}} d(x_{\pi(0)},x_{\pi(1)}) + \dots + d(x_{\pi(n-1)},x_{\pi(n)}) \end{equation}
and $\ww_\infty(\tau) = \diam(\tau).$

The $\ell_p$-Vietoris-Rips complex is defined by $\vr^p_L X = \vr^{\nu}_L X$ for $\nu=\nu^{\sf sym}_p.$ Therefore 
\begin{equation}
\vr^p_L X = \{ \tau \underset{\scalebox{0.6}{\sf f.n.}}\subseteq X\mid \ww_p(\tau)\in L \}.
\end{equation}
Since $\ww_\infty(\tau)=\diam(\tau),$ we obtain that $\vr^\infty_L X$ is the classical Vietoris-Rips complex
\begin{equation}
\vr^\infty_L X = \{  \tau \underset{\scalebox{0.6}{\sf f.n.}}\subseteq X\mid \diam(\tau)\in L \}.
\end{equation}
Let us denote by $\VR^{{\sf sym}\text{-}p}_L X$ the Vietoris-Rips simplicial set associated with the symmetric distance matrix norm $\nu_p^{\sf sym}$ 
\begin{equation}
(\VR^{{\sf sym}\text{-}p}_L X)_n = \{ (x_0,\dots,x_n) \mid \ww_p(\{x_{0},\dots,x_{n}\})\in L \}.
\end{equation}
Then Proposition \ref{prop:vrVR-equiv} implies that there are  homotopy equivalences between the geometric realisations and their $n$-skeletons 
\begin{equation}
|\vr^p_L X| \simeq |\VR^{{\sf sym}\text{-}p}_L X|, \hspace{1cm} |\vr^{p,n}_L X| \simeq |\VR^{{\sf sym}\text{-}p,n}_L X|,    
\end{equation} 
which are natural with respect to inclusions defined by inclusions $L\subseteq L'$ and $X\subseteq X'.$
Since $\nu_\infty=\nu_\infty^{\sf sym},$ we also have 
\begin{equation}
|\vr^\infty_L X| \simeq |\VR^\infty_L X|. 
\end{equation}

Note that for any symmetric distance matrix norm $\nu,$ Lemma \ref{lemma:W_nu} implies that 
\begin{equation}
\vr^1_L X \subseteq \vr^\nu_L X \subseteq \vr^\infty_L X.
\end{equation}

\subsection{Cyclic versions of the Vietoris-Rips spaces}\label{subsection:cyclic_versions}

Let us give some other examples of distance matrix norms and the associated simplicial sets and simplicial complexes. For a matrix $D\in \Dist_{[n]},$ we set 
\begin{equation}
\nu^{\sf c}_p(D) =2^{- \frac{1}{p} } \cdot  \max_{0\leq i_0<\dots <i_m\leq n} \|D_{i_0,i_1},\dots,D_{i_{m-1},i_{m}},D_{i_m,i_0}\|_p.
\end{equation}
Then $\nu^{\sf c}_p$ is another example of a distance matrix norm. The associated $\nu^{\sf c}_p$-weight is denoted by $\WW_p^{\sf c}.$ Using the triangle inequality, for $p=1$ and $\sigma=(x_0,\dots,x_n)$  we obtain
\begin{equation}
\WW_1^{\sf c}(\sigma) = \frac{1}{2} (d(x_0,x_1)+\dots +d(x_{n-1},x_n)+d(x_0,x_n)).
\end{equation}
The corresponding $\nu_p^{\sf c}$-Vietoris-Rips simplicial sets can be called cyclic $\ell_p$-Vietoris-Rips simplicial sets
$\VR^{{\sf c}\text{-}p}_{<r} X.$ Of course, we can take the symmetric version of this distance matrix norm
\begin{equation}
\nu_p^{\sf c.sym}(A) = \min_{\pi\in \Sigma} \nu_p^{\sf c}(A^\pi),
\end{equation}
the associated weight of a finite subset $\tau=\{x_0,\dots,x_n\}\subseteq X $
\begin{equation}
\ww^{\sf c}_p(\tau) = \min_{\pi\in \Sigma_{[n]}} \WW^{\sf c}_p(x_{\pi(0)},\dots,x_{\pi(n)})
\end{equation}
and consider the associated \emph{cyclic $\ell_p$-Vietoris-Rips complex} $\vr^{{\sf c}\text{-}p}_{<r} X$ that consists of non-empty finite subsets $\tau\subseteq X$ such that $\ww^{\sf c}_p(\tau)<r.$

\subsection{Cho's definition} 
We fix some $p\in [1,\infty]$  and consider the binary operation for non-negative real numbers 
\begin{equation}
  a\otimes  b = \|a,b\|_p.  
\end{equation}
If $p<\infty,$ we have $a\otimes  b=(a^p+b^p)^{\frac{1}{p}},$ and for $p=\infty,$ we have $a\otimes  b = \max(a,b).$ This binary operation defines a structure of a monoid on $\RR_{\geq 0}$ such that 
\begin{equation}
a_1 \otimes  \dots \otimes  a_n = \|a_1,\dots,a_n\|_p. 
\end{equation}

For a metric space $X,$ Cho defines the $\ell_p$-Vietoris-Rips simplicial set $\widetilde{\VR}^p_{\leq r} X$ of a metric space $X$ as a simplicial set, whose $n$-simplices are tuples $(x_0,\dots,x_n)$ such that there exists a sequence of non-negative real numbers $r_1,\dots,r_n$ satisfying the following conditions: $r_1\otimes \dots \otimes r_n \leq r $ and $d(x_i,x_j)\leq r_{i+1} \otimes \dots \otimes r_{j}$ for $i\leq j$ (see 
\cite[Prop. 13]{cho2019quantales}).

\begin{proposition}\label{prop:Cho} The definition of Cho is equivalent to our definition
\begin{equation}
\widetilde{\VR}^p_{\leq r} X = \VR^p_{\leq r} X.
\end{equation} 
\end{proposition}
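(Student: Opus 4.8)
The plan is to show a double inclusion of the sets of $n$-simplices, since both simplicial sets are determined by their $n$-simplices inside $X^{n+1}$ with identical face and degeneracy maps (given by deletions and doublings). So it suffices to prove, for each fixed tuple $(x_0,\dots,x_n)$, the equivalence
\[
\WW_p(x_0,\dots,x_n) \leq r
\quad\Longleftrightarrow\quad
\exists\, r_1,\dots,r_n\geq 0 :\ \|r_1,\dots,r_n\|_p\leq r \text{ and } d(x_i,x_j)\leq \|r_{i+1},\dots,r_j\|_p \text{ for all } i\leq j.
\]

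For the implication ``Cho $\Rightarrow$ ours'', suppose such $r_1,\dots,r_n$ exist. Given any strictly increasing subsequence $0\leq i_0<\dots<i_m\leq n$, I would estimate
\[
\|d(x_{i_0},x_{i_1}),\dots,d(x_{i_{m-1}},x_{i_m})\|_p
\leq \big\|\, \|r_{i_0+1},\dots,r_{i_1}\|_p,\ \dots,\ \|r_{i_{m-1}+1},\dots,r_{i_m}\|_p \,\big\|_p
= \|r_{i_0+1},\dots,r_{i_m}\|_p \leq \|r_1,\dots,r_n\|_p \leq r,
\]
using the monotonicity of the $\ell_p$-norm in its arguments, the hypothesis $d(x_{i_{k-1}},x_{i_k})\leq\|r_{i_{k-1}+1},\dots,r_{i_k}\|_p$, and the ``associativity'' $a_1\otimes\dots\otimes a_n=\|a_1,\dots,a_n\|_p$ recorded in the paper (so nested $\ell_p$-norms flatten). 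Taking the maximum over all such subsequences gives $\WW_p(x_0,\dots,x_n)\leq r$.

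For the converse ``ours $\Rightarrow$ Cho'', the natural candidate is $r_{k}:=d(x_{k-1},x_k)$ for $k=1,\dots,n$. Then $\|r_1,\dots,r_n\|_p=\|d(x_0,x_1),\dots,d(x_{n-1},x_n)\|_p$, which is the value of $\WW_p$ on the full subsequence $i_0=0<i_1=1<\dots<i_n=n$, hence $\leq \WW_p(x_0,\dots,x_n)\leq r$. The remaining conditions $d(x_i,x_j)\leq\|r_{i+1},\dots,r_j\|_p$ amount to $d(x_i,x_j)\leq\|d(x_i,x_{i+1}),\dots,d(x_{j-1},x_j)\|_p$ for $i\leq j$: this is exactly the statement that the weight of the length-two subsequence $i<j$ is dominated by the weight of the full chain from $i$ to $j$, which again is bounded by $\WW_p$ of the whole tuple and hence by $r$ --- but more directly, it follows from Lemma~\ref{lemma:W_nu}(1),(2) applied to the subtuple $(x_i,\dots,x_j)$, or simply from the fact that $d(x_i,x_j)$ appears as one of the terms $\|D_{i_0,i_1}\|_p$ with the single pair $i_0=i,i_1=j$, while $\|d(x_i,x_{i+1}),\dots,d(x_{j-1},x_j)\|_p$ is the value on the consecutive chain, and the ``max'' in the definition of $\nu_p$ (together with the triangle inequality forcing $d(x_i,x_j)\leq\sum d(x_k,x_{k+1})$, and more sharply $d(x_i,x_j)\leq\|d(x_i,x_{i+1}),\dots,d(x_{j-1},x_j)\|_p$ since the $\ell_p$-norm dominates the distance between endpoints of a chain via the triangle inequality combined with $\ell_p\geq\ell_\infty$ reasoning) gives the bound. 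I would isolate this last inequality as the one genuine computation: $d(x_i,x_j)\leq\|d(x_i,x_{i+1}),\dots,d(x_{j-1},x_j)\|_p$ holds because $d(x_i,x_j)\leq\sum_{k=i}^{j-1}d(x_k,x_{k+1})=\|(\cdot)\|_1\geq\|(\cdot)\|_p$ fails in the wrong direction, so instead one notes it is immediate for $p=\infty$ (triangle inequality is weaker than the max only when... ), and for general $p$ one uses $d(x_i,x_j)\leq d(x_i,x_{i+1})+d(x_{i+1},x_j)$ and induction together with the subadditivity $\|a+b,c_2,\dots\|_p$-type estimates; cleanly, it is Lemma~\ref{lemma:W_nu}(2),(3) for $\nu_p$ giving $d(x_i,x_j)=\WW_p(x_i,x_j)\leq \WW_p(x_i,x_{i+1})+\WW_p(x_{i+1},\dots,x_j)$, and iterating plus the bound $\WW_p(x_i,\dots,x_j)\leq \|d(x_i,x_{i+1}),\dots,d(x_{j-1},x_j)\|_p$? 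No --- this is circular.

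Let me restate the one real obstacle cleanly: the crux is the elementary inequality, valid in any metric space,
\[
d(x_i,x_j)\ \leq\ \big\|\, d(x_i,x_{i+1}),\, d(x_{i+1},x_{i+2}),\,\dots,\, d(x_{j-1},x_j)\,\big\|_p ,
\]
which I would prove by induction on $j-i$: the base case $j=i+1$ is an equality, and the inductive step combines the triangle inequality $d(x_i,x_j)\leq d(x_i,x_{i+1})+d(x_{i+1},x_j)$, the inductive hypothesis bounding $d(x_{i+1},x_j)$ by $\|d(x_{i+1},x_{i+2}),\dots,d(x_{j-1},x_j)\|_p$, and the subadditivity estimate $a+\|b_2,\dots,b_m\|_p\leq\|a+b_2',b_3,\dots\|_p$ --- more honestly, it suffices to observe $\|a\|_p\otimes\|b_2,\dots,b_m\|_p\geq a+b_2+\dots$? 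That over-counts too. The correct and clean route is: it is enough for the Cho condition to take $r_k=d(x_{k-1},x_k)$ and verify $d(x_i,x_j)\leq\|r_{i+1},\dots,r_j\|_p$, and this single inequality is precisely the special case of the definition of $\WW_p$ restricted to the subtuple $(x_i,\dots,x_j)$ comparing the two-element chain $\{i,j\}$ with the maximal chain $\{i,i+1,\dots,j\}$; since $\WW_p$ takes a $\max$ over chains and both are chains of the same subtuple, we need the chain-length-two term $d(x_i,x_j)$ to be $\leq$ the full-chain term --- and that is exactly the content of the triangle-inequality inflation used implicitly in Lemma~\ref{lemma:W_nu}(3),(4). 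I would therefore first extract and prove this inequality as a standalone sublemma (one short induction), then feed it into the two inclusions above; I expect the sublemma to be the only non-formal step, everything else being bookkeeping with the flattening identity $a_1\otimes\dots\otimes a_n=\|a_1,\dots,a_n\|_p$ and the monotonicity of $\ell_p$-norms.
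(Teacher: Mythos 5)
Your forward direction (Cho $\Rightarrow$ ours) is fine and matches the paper. The converse is where the proposal breaks down, and your own hesitations at the end were well-founded: the candidate $r_k := d(x_{k-1},x_k)$ does \emph{not} satisfy Cho's second condition, and the ``sublemma'' you wanted to isolate,
\begin{equation}
d(x_i,x_j)\ \leq\ \bigl\| d(x_i,x_{i+1}),\ d(x_{i+1},x_{i+2}),\ \dots,\ d(x_{j-1},x_j)\bigr\|_p,
\end{equation}
is simply false for $1\leq p<\infty$. Take the three points $0,1,2$ on the real line and $p=2$: then $d(x_0,x_2)=2$ but $\|1,1\|_2=\sqrt{2}<2$. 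This is not a fixable slip in the bookkeeping --- the triangle inequality gives you control on the $\ell_1$-norm of the gaps, which dominates the $\ell_p$-norm, so it points in exactly the wrong direction, as you yourself noticed mid-paragraph. In the same example $\WW_2(0,1,2)=2$, so $(0,1,2)\in(\VR^2_{\leq 2}X)_2$, yet with your $r_1=r_2=1$ the Cho condition $d(x_0,x_2)\leq r_1\otimes r_2$ reads $2\leq\sqrt{2}$ and fails; so the tuple you propose does not witness membership in $\widetilde{\VR}^2_{\leq 2}X$.

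The paper avoids this by \emph{not} taking the consecutive distances as the $r_k$; it instead defines $r_k$ inductively as the \emph{least} number making all the inequalities $d(x_i,x_k)\leq r_{i+1}\otimes\cdots\otimes r_k$ ($i<k$) hold. This forces equality $d(x_{f(k)},x_k)=r_{f(k)+1}\otimes\cdots\otimes r_k$ at some index $f(k)<k$, and iterating $f$ from $n$ produces a chain $0=i_0<i_1<\dots<i_m=n$ along which $r_1\otimes\cdots\otimes r_n$ telescopes into $d(x_{i_0},x_{i_1})\otimes\cdots\otimes d(x_{i_{m-1}},x_{i_m})$ --- a term in the max defining $\WW_p$, hence $\leq r$. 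In the example above this produces $r_1=1$, $r_2=\sqrt{3}$ (not $1$), giving $\|r_1,r_2\|_2=2\leq r$, with the extremal chain being the two-element chain $\{0,2\}$. So the essential idea you are missing is that the witnessing $r_k$ must be allowed to be strictly larger than $d(x_{k-1},x_k)$, and that it is the \emph{extremal} chain traced out by where equality is attained --- rather than the full consecutive chain --- that controls $\|r_1,\dots,r_n\|_p$.

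Everything else in your write-up (identifying the two directions, using the flattening identity $a_1\otimes\cdots\otimes a_n=\|a_1,\dots,a_n\|_p$ and monotonicity of the $\ell_p$-norm in the forward direction) is correct, but as it stands the converse has a genuine gap and does not prove the proposition.
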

\begin{proof}
First we prove that $\widetilde{\VR}^p_{\leq r} X \subseteq  \VR^p_{\leq r} X.$ Take  $(x_0,\dots,x_n)\in (\widetilde{\VR}^p_{\leq r} X)_n.$ Then there exist $r_1,\dots,r_n$ such that $r_1\otimes \dots \otimes r_n\leq r$ and 
$d(x_i,x_j) \leq r_{i+1}\otimes \dots \otimes r_{j}$ for $i\leq j.$ Then for any subsequence $0\leq i_0<\dots <i_m\leq n,$ we have $d(x_{i_0},x_{i_1})\otimes\dots \otimes d(x_{i_{m-1}},x_{i_m}) \leq r_{i_0+1}\otimes \dots \otimes r_{i_m} \leq r.$ Therefore $\WW_p(x_0,\dots,x_n)\leq r.$

Now we prove that $\VR^p_{\leq r} X\subseteq  \widetilde{\VR}^p_{\leq r} X.$ For any $n$-simplex $(x_0,\ldots,x_n)\in (\VR^p_{\leq r}X)_n$, we define $r_1,\ldots,r_n$ inductively. First we set $r_1=d(x_0,x_1)$. Suppose we have defined $r_1,\ldots,r_{k-1}$. We define $r_{k}$ to be the least number such that the following inequalities are satisfied:
\begin{equation}
d(x_i,x_{k}) \leq r_{i+1}\otimes\cdots\otimes r_{k} \hspace{5mm} 0\leq i\leq k-1.
\end{equation}
Since it is the least number, there exists an index $f(k)< k$ such that the equality holds
\begin{equation}
d(x_{f(k)},x_{k}) = r_{f(k)+1}\otimes\cdots\otimes r_{k}.
\end{equation}

Now we verify that such $r_1,\ldots,r_n$ satisfy the desired properties. Since $0\leq f(k)<k,$ there exists $m$ such that $f^m(n)=0.$ We consider the subsequence $0\leq i_0<\dots < i_m\leq n$  defined by $i_j=f^{m-j}(n).$ Then 
\begin{equation}
d(x_{i_j},x_{i_{j+1}}) = r_{i_j+1}\otimes \dots \otimes r_{i_{j+1}}.
\end{equation}
It follows that 
\begin{equation}
r_1\otimes\cdots\otimes r_n=d(x_{i_0},x_{i_1})\otimes\cdots\otimes d(x_{i_{m-1}},x_{i_m})\leq r,
\end{equation}
and for any $0\leq i<j\leq n$, we know from the construction of $r_j$ that 
\begin{equation}
    d(x_i,x_j)\leq r_{i+1}\otimes\cdots\otimes r_j.
\end{equation}
Therefore we complete the proof.
\end{proof}

\section{Lipschitz and short maps}

In this section we introduce the notion of an $L$-locally $(a,b)$-Lipschitz map $f:X\to Y,$ and an $L$-locally $(a,b)$-Lipschitz pair of maps $f,g:X\to Y.$ We  show that an $L$-locally $(a,b)$-Lipschitz map $f$ induces a morphism of $\nu$-Vietoris-Rips simplicial sets with appropriate scale parameters. Moreover, if $f,g$ is an $L$-locally $(a,b)$-Lipschitz pair, then these morphisms are simplicially homotopic.  

Throughout this section  $X,Y$ denote two metric spaces;  $L$ denotes a left infinite interval; $\nu$ denotes a distance matrix norm. We also set $C_n=C_n(\nu),$ $\WW=\WW_\nu$ and assume that $a,b\geq 0$ are two non-negative real numbers. 

A map between metric spaces $f:X\to Y$ is called \emph{$L$-locally $(a,b)$-Lipschitz}, if for any $x,x'\in X$ we have 
\begin{equation}
d(x,x')\in L \hspace{5mm}  \Rightarrow \hspace{5mm}  d(f(x),f(x')) \leq a \cdot  d(x,x')+b.
\end{equation} 
If $a=1,$ then $f$ is called a $L$-locally $b$-short map. Note that if $b\neq 0,$ the maps are not necessarily continuous. 

A pair of maps $f,g:X\to Y$ is called \emph{$L$-locally $(a,b)$-Lipschitz pair}, if both of them are $L$-locally $(a,b)$-Lipschitz and, for any $x,x'\in X$, we have 
\begin{equation}
d(x,x')\in L \hspace{5mm}  \Rightarrow \hspace{5mm}  d(f(x),g(x')) \leq a \cdot  d(x,x')+b.
\end{equation}
If $a=1,$ then $f,g$ is called an $L$-locally $b$-short pair. If $L=\RR$, we use terms ``$(a,b)$-Lipschitz map'', ``$b$-short map'', ``$(a,b)$-Lipschitz pair'' and ``$b$-short pair''.

\begin{lemma}\label{lemma:main_inequality}
Let $(x_0,\dots,x_n)\in X^{n+1}$  and $(y_0,\dots,y_n)\in Y^{n+1}$ be tuples of points, and $a,b\geq 0$ be real numbers such that  
\begin{equation}
d(y_i,y_j) \leq a\cdot  d(x_i,x_j)+b, \hspace{1cm} 0\leq i,j\leq n.
\end{equation}
Then 
\begin{equation}
\WW(y_0,\dots,y_n)\leq a\cdot \WW(x_0,\dots,x_n)+b\cdot  C_n.
\end{equation}
\end{lemma}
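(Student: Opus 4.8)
The hypothesis gives a uniform comparison $d(y_i,y_j)\le a\cdot d(x_i,x_j)+b$ between the two distance matrices. The natural move is to encode this as a matrix inequality $D(y_0,\dots,y_n)\le a\cdot D(x_0,\dots,x_n)+b\cdot E_n$ entrywise, and then apply the axioms of a distance matrix norm. Concretely, I would first observe that the off-diagonal entries of $D(y_0,\dots,y_n)$ are bounded by the corresponding entries of $a\cdot D(x_0,\dots,x_n)+b\cdot E_n$ (on the diagonal both sides are $0$, so the bound holds there too). This puts us in position to use monotonicity (axiom (3)): $\nu(D(y_0,\dots,y_n))\le \nu\bigl(a\cdot D(x_0,\dots,x_n)+b\cdot E_n\bigr)$.

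\textbf{Key steps.} The second step is to check that $a\cdot D(x_0,\dots,x_n)+b\cdot E_n$ actually lies in $\Dist_{[n]}$, so that $\nu$ is defined on it: it is a non-negative combination of $D(x_0,\dots,x_n)\in\Dist_{[n]}$ and $E_n\in\Dist_{[n]}$, and $\Dist_{[n]}$ is a convex cone, so this is immediate. The third step applies subadditivity (axiom (2)) and homogeneity (axiom (1)):
\begin{equation}
\nu\bigl(a\cdot D(x_0,\dots,x_n)+b\cdot E_n\bigr)\le \nu\bigl(a\cdot D(x_0,\dots,x_n)\bigr)+\nu(b\cdot E_n)=a\cdot\nu(D(x_0,\dots,x_n))+b\cdot\nu(E_n).
\end{equation}
Since $\nu(D(x_0,\dots,x_n))=\WW(x_0,\dots,x_n)$ by definition of the $\nu$-weight, and $\nu(E_n)=C_n$ by the definition of the constants $C_n=C_n(\nu)$, stringing these inequalities together yields
\begin{equation}
\WW(y_0,\dots,y_n)=\nu(D(y_0,\dots,y_n))\le a\cdot\WW(x_0,\dots,x_n)+b\cdot C_n,
\end{equation}
which is the claim.

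\textbf{Main obstacle.} There is no serious obstacle here; the lemma is essentially a direct unwinding of the four algebraic axioms (homogeneity, subadditivity, monotonicity, and the definition of $C_n$), with simpliciality (axiom (5)) not even needed. The only point requiring a moment's care is confirming that the comparison matrix $a\cdot D(x_0,\dots,x_n)+b\cdot E_n$ is a genuine element of $\Dist_{[n]}$ — i.e. that adding the constant off-diagonal matrix $b\cdot E_n$ preserves the triangle inequality and the vanishing diagonal — but this is exactly what the convex-cone remark about $\Dist_{[n]}$ provides, since $E_n\in\Dist_{[n]}$.
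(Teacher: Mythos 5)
Your proof is correct and follows exactly the same route as the paper's proof: both establish the entrywise matrix inequality $D(y_0,\dots,y_n)\le a\cdot D(x_0,\dots,x_n)+b\cdot E_n$ and then apply monotonicity, subadditivity, and homogeneity of $\nu$ together with $\nu(E_n)=C_n$. You merely spell out the individual axioms (and the convex-cone observation ensuring $a\cdot D(x)+b\cdot E_n\in\Dist_{[n]}$) where the paper compresses them into ``using the properties of a distance matrix norm.''
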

\begin{proof}
The conditions imply that $D(y_0,\dots,y_n)_{i,j}\leq a\cdot D(x_0,\dots,x_n)_{i,j}+b\cdot (E_n)_{i,j}.$ Therefore, using the properties of a distance matrix norm, we obtain 
\begin{equation}
\nu(D(y_0,\dots,y_n)) \leq a\cdot \nu(D(x_0,\dots,x_n))+b\cdot \nu(E_n).
\end{equation}
The statement follows. 
\end{proof}

For a simplicial complex $K,$ we denote by $\SS(K)$ the simplicial set consisting of tuples $(x_0,\dots,x_n)$ such that $\{x_0,\dots,x_n\} \in K.$ 
Basic properties of the functor $\SS:\SCpx\to \sSet$ can be found in Section \ref{sec:ss_sc}. For a set $S,$ we denote by $\Delta(S)$ the simplicial complex of all finite non-empty subsets of $S.$ Therefore  $\SS(\Delta(S))$ is a simplicial set such that $\SS(\Delta(S))_n=S^{n+1}.$ Note that 
\begin{equation}
\VR^\nu_{L} X \subseteq \SS(\Delta(X)).
\end{equation}

\begin{proposition}\label{prop:c-short:induces} For any $n\geq 0,$  an $L$-locally $(a,b)$-Lipschitz map $f:X\to Y$ induces a morphism of simplicial sets
\begin{equation}
f_* : \VR^{\nu,n}_L X \longrightarrow \VR^{\nu,n}_{a\cdot L+b\cdot C_n} Y.
\end{equation}
\end{proposition}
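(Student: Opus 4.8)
The plan is to show that $f_*$ is well-defined on each set of $n$-simplices and commutes with the face and degeneracy maps; since $\VR^{\nu,n}_L X = \sk_n(\VR^\nu_L X)$, it suffices to verify this for simplices of dimension $\leq n$. Given a tuple $(x_0,\dots,x_m) \in (\VR^\nu_L X)_m$ with $m \leq n$, I want to check that $(f(x_0),\dots,f(x_m)) \in (\VR^\nu_{a\cdot L + b\cdot C_n} Y)_m$. First I would record that for each pair of indices $0 \leq i,j \leq m$ the weight condition gives, via Lemma \ref{lemma:W_nu}(1,2), that $d(x_i,x_j) = \WW_\nu(x_i,x_j) \leq \WW_\nu(x_0,\dots,x_m) \in L$; in particular $d(x_i,x_j) \in L$ because $L$ is a left infinite interval (any value at most an element of $L$ lies in $L$). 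Hence the $L$-locally $(a,b)$-Lipschitz hypothesis applies to the pair $x_i,x_j$ and yields $d(f(x_i),f(x_j)) \leq a\cdot d(x_i,x_j) + b$ for all $i,j$.

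Next I would feed these inequalities into Lemma \ref{lemma:main_inequality} applied to the tuples $(x_0,\dots,x_m)$ and $(f(x_0),\dots,f(x_m))$, obtaining
\begin{equation}
\WW_\nu(f(x_0),\dots,f(x_m)) \leq a\cdot \WW_\nu(x_0,\dots,x_m) + b\cdot C_m.
\end{equation}
Since $C_m \leq C_n$ for $m \leq n$ (the sequence $C_\bullet(\nu)$ is increasing) and $b \geq 0$, this gives $\WW_\nu(f(x_0),\dots,f(x_m)) \leq a\cdot \WW_\nu(x_0,\dots,x_m) + b\cdot C_n$. Now if $\WW_\nu(x_0,\dots,x_m) \in L$, then $a\cdot \WW_\nu(x_0,\dots,x_m) + b \cdot C_n \in a\cdot L + b\cdot C_n$ — here I would note that $a \geq 0$ so multiplication by $a$ sends the left infinite interval $L$ to a left infinite interval (or to a point or $\emptyset$ if $a=0$, which still behaves correctly since then the target value is the constant $b\cdot C_n \in a\cdot L + b\cdot C_n$ provided $L \neq \emptyset$) and translation preserves this — and since $a\cdot L + b\cdot C_n$ is again left infinite, any value $\leq$ an element of it lies in it. Therefore $(f(x_0),\dots,f(x_m)) \in (\VR^\nu_{a\cdot L+b\cdot C_n} Y)_m$, so the assignment $\sigma \mapsto f_*\sigma := (f(x_0),\dots,f(x_m))$ is a well-defined map on $n$-simplices and lower.

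Finally, compatibility with the simplicial structure is immediate: the face maps $\partial_i$ and degeneracies $s_i$ on $\VR^\nu_L X$ are given by deleting, respectively doubling, a point of the tuple, and these operations manifestly commute with applying $f$ coordinatewise. Thus $f_*$ is a morphism of simplicial sets $\VR^{\nu,n}_L X \to \VR^{\nu,n}_{a\cdot L + b\cdot C_n} Y$. I expect no serious obstacle here; the one point requiring a little care is the bookkeeping around left infinite intervals and the degenerate cases $a = 0$ or $L = \emptyset$, and the replacement of the dimension-dependent constant $C_m$ by the uniform bound $C_n$, which is exactly why the skeleton appears in the statement.
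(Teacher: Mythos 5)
Your proof is correct and takes essentially the same approach as the paper: reduce to the key inequality of Lemma~\ref{lemma:main_inequality} after observing that the simplicial-subset condition forces all pairwise distances into $L$, and then use the simplicial structure to conclude. The only cosmetic difference is that you verify the weight bound on every dimension $m\le n$ (using $C_m\le C_n$), whereas the paper checks only dimension $n$ and then notes that the $n$-skeleton is generated by $n$-simplices under face and degeneracy maps; your aside about $a=0$ and $L=\emptyset$ is unnecessary here (the paper's left-infinite intervals are never empty, and the degenerate case $a=0$ is not really in scope), and in fact the inclusion you claim there does not hold when $L$ is open, but this does not affect the argument for the cases of interest.
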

\begin{proof} For $n=0,$ the statement is obvious, so let us assume that $n\geq 1.$
For a tuple $(x_0,\dots,x_n)\in (\VR^\nu_L X)_n,$ we have  $ (x_i,x_j)\in (\VR^\nu_LX)_1$ for $i<j.$ Hence $d(x_i,x_j)=\WW(x_i,x_j)\in L$ and we obtain $d(f(x_i),f(x_j))\leq a\cdot d(x_i,x_j)+b.$ Then by Lemma \ref{lemma:main_inequality}, we obtain $(f(x_0),\dots,f(x_n))\in (\VR^\nu_{a\cdot L+b\cdot C_n} Y)_n.$ Since all simplices of $\VR^{\nu,n}_L X$ can be obtained from $n$-simplices of $\VR^\nu_L X$ by applying face and degeneracy maps, we obtain that the map $f^*:\SS(\Delta(X))\to \SS(\Delta(Y))$  induces a map $\VR^{\nu,n}_L X \longrightarrow \VR^{\nu,n}_{a\cdot L+b\cdot C_n} Y.$ 
\end{proof}

Recall that a simplicial homotopy between morphisms of simplicial sets $\varphi,\psi:S\to T$ is a collection of maps $h_i:S_n\to T_{n+1}$ for $0\leq i\leq n$ satisfying 
$\partial_0h_0=\varphi_n,$ $\partial_{n+1}h_n=\psi_n$ and
\begin{equation}
\partial_i h_j = 
\begin{cases}
    h_{j-1} \partial_i, & i<j,\\
    \partial_i h_{i-1}, & i=j\neq 0,\\
    h_j \partial_{i-1}, & i>j+1.
\end{cases}
\hspace{1cm}
s_ih_j = 
\begin{cases}
h_{j+1}s_i, & i\leq j,\\
h_js_{i-1}, & i>j
\end{cases}
\end{equation}
(see \cite[Def.5.1]{may1992simplicial}). Note that the last set of relations can be rewritten as
\begin{equation}\label{eq:simp:homopy:eq}
h_j s_i = 
\begin{cases}
s_i h_{j-1} , & i< j,\\
s_{i+1} h_j, & i \geq j.
\end{cases}    
\end{equation}
If $\varphi=\psi,$ then the constant homotopy from $\varphi$ to itself is defined by $h_i=s_i\varphi.$ 

Simplicial homotopies between $\varphi$ and $\psi$ are in bijection with maps $\tilde h:S\times \Delta^1 \to T$ such that $\varphi =\tilde hi$ and $\psi=\tilde hi',$ where $i,i':S\to S\times \Delta^1$ are the maps induced by the  maps $\Delta^0\to \Delta^1.$ Then the constant homotopy corresponds to the composition with the projection $S\times \Delta^1 \to S \to T.$

For any two maps between sets  $f,g:X\to Y,$ we consider a collection of maps $h^{f,g}_i:(\SS(\Delta(X))_n \to (\SS(\Delta(Y))_{n+1},$ for $0\leq i\leq n,$ defined by the formula 
\begin{equation}\label{eq:homotopy_h^{f,g}}
h^{f,g}_i(x_0,\dots,x_n) = (f(x_0),\dots,f(x_{i}),g(x_i), \dots,g(x_n)).
\end{equation}
It is easy to see that $h^{f,g}=(h^{f,g}_i)$ defines a simplicial homotopy between $\SS(\Delta(f))$ and $\SS(\Delta(g)).$ Moreover,  $h^{f,f}$ is the constant homotopy from $f$ to $f.$

\begin{proposition}\label{prop:c-short:homotopy}
Let $n\geq 0,$ and $f,g:X\to Y$ be a $L$-locally $(a,b)$-Lipschitz pair.  
Then the simplicial homotopy $h^{f,g}$ restricts to a simplicial homotopy between the following induced maps
\begin{equation}
f_*\sim g_* : \VR^{\nu,n}_LX \longrightarrow \VR^{\nu,n+1}_{a\cdot L+b\cdot C_{n+1}}Y.
\end{equation}
\end{proposition}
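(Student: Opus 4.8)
The plan is to check that the simplicial homotopy $h^{f,g}$ between $\SS(\Delta(f))$ and $\SS(\Delta(g))$, which already exists on the ambient simplicial sets, restricts to the subcomplexes in question — concretely, that $h^{f,g}_i$ carries $(\VR^{\nu,n}_L X)_m$ into $(\VR^{\nu,n+1}_{a\cdot L+b\cdot C_{n+1}} Y)_{m+1}$ for all $m$ and $i$. Once this purely set-theoretic statement is in hand there is nothing left to check: the simplicial-homotopy identities among the maps $h^{f,g}_i$ hold verbatim on $\SS(\Delta(X))$, hence on any subcomplex, and $\partial_0 h^{f,g}_0=\SS(\Delta(f))$ and $\partial_{n+1}h^{f,g}_n=\SS(\Delta(g))$ restrict to the maps $f_*$ and $g_*$ supplied by Proposition \ref{prop:c-short:induces} (landing in the larger skeleton $\VR^{\nu,n+1}_{a\cdot L+b\cdot C_{n+1}} Y$ via the evident inclusion). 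We may assume $\VR^\nu_L X\neq\emptyset$ (otherwise the statement is vacuous), in which case $0\in L$, since every vertex of a simplex has $\nu$-weight $0$.

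The core of the argument is the estimate in degrees $m\leq n$. Fix such an $m$, an index $0\leq i\leq m$, and $\sigma=(x_0,\dots,x_m)\in(\VR^\nu_L X)_m$. The $(m+1)$-simplex $h^{f,g}_i(\sigma)=(f(x_0),\dots,f(x_i),g(x_i),\dots,g(x_m))$ is obtained from $s_i\sigma=(x_0,\dots,x_i,x_i,\dots,x_m)$ by applying $f$ to its first $i+1$ coordinates and $g$ to the remaining ones. Writing $y$ for the former tuple and $\sigma'$ for the latter, I would verify coordinatewise that $d(y_k,y_l)\leq a\cdot d(\sigma'_k,\sigma'_l)+b$ for all $0\leq k,l\leq m+1$, distinguishing three cases: $k,l$ both among the $f$-coordinates (use that $f$ is $L$-locally $(a,b)$-Lipschitz), both among the $g$-coordinates (use the same for $g$), or one in each block (use the pairing inequality $d(f(x),g(x'))\leq a\cdot d(x,x')+b$). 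In every case the distance appearing on the right is $d(x_p,x_q)=\WW_\nu(x_p,x_q)$ for two coordinates of $\sigma$; since $(x_p,x_q)$ is a face of $\sigma$ — or, when $p=q$, two points at distance $0\in L$ — it is a $1$-simplex of $\VR^\nu_L X$, so $d(x_p,x_q)\in L$ and the hypotheses genuinely apply. Then Lemma \ref{lemma:main_inequality} in degree $m+1$, combined with Lemma \ref{lemma:W_nu}(1) and the monotonicity of $C_\bullet(\nu)$, gives
\[
\WW_\nu(y)\ \leq\ a\cdot\WW_\nu(s_i\sigma)+b\cdot C_{m+1}\ =\ a\cdot\WW_\nu(\sigma)+b\cdot C_{m+1}\ \leq\ a\cdot\WW_\nu(\sigma)+b\cdot C_{n+1}.
\]
As $\WW_\nu(\sigma)\in L$ and $a\cdot L+b\cdot C_{n+1}$ is a left infinite interval, $\WW_\nu(y)\in a\cdot L+b\cdot C_{n+1}$; and since $m+1\leq n+1$, this says precisely that $h^{f,g}_i(\sigma)\in(\VR^{\nu,n+1}_{a\cdot L+b\cdot C_{n+1}} Y)_{m+1}$.

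For degrees $m>n$ no new idea is needed: there every $m$-simplex $\sigma$ of $\VR^{\nu,n}_L X=\sk_n\VR^\nu_L X$ is degenerate, say $\sigma=s_k\tau$ with $\tau=\partial_k\sigma\in(\VR^{\nu,n}_L X)_{m-1}$, and the twisted identity \eqref{eq:simp:homopy:eq} rewrites $h^{f,g}_i(\sigma)$ as $s_{k'}h^{f,g}_{i'}(\tau)$ for appropriate indices $k',i'$; an induction on $m$ with base case $m\leq n$, using that $\VR^{\nu,n+1}_{a\cdot L+b\cdot C_{n+1}} Y$ is closed under degeneracy maps, then completes the claim. The step I expect to require the most care is the coordinatewise comparison: the cross-block case is exactly where the \emph{pair} hypothesis is used essentially — two separately $L$-locally $(a,b)$-Lipschitz maps would not suffice — and one must remember that the coordinate $x_i$ repeated by $s_i$ causes no trouble precisely because $d(x_i,x_i)=0\in L$.
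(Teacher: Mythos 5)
Your proof follows essentially the same route as the paper's: reduce to checking that each $h^{f,g}_i$ lands in $(\VR^{\nu,n+1}_{a\cdot L+b\cdot C_{n+1}}Y)_{m+1}$; for $m\leq n$ compare $h_i\sigma$ coordinatewise with $s_i\sigma$ and invoke Lemma \ref{lemma:main_inequality} together with $\WW_\nu(s_i\sigma)=\WW_\nu(\sigma)$ and the monotonicity of $C_\bullet(\nu)$; for $m>n$ peel off a degeneracy via the identity \eqref{eq:simp:homopy:eq} and induct. The one point where you are slightly more careful than the paper is the explicit remark that the repeated vertex at the position doubled by $s_i$ contributes a distance $0$, which lies in $L$ because $L$ is a left-infinite interval containing the weight of any $0$-simplex whenever $\VR^\nu_L X\neq\emptyset$; the paper leaves this implicit.
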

\begin{proof} Set $h_i=h_i^{f,g}.$ 
If  $x,x'\in X$ and  $d(x,x')\in L,$ all three numbers $d(f(x),f(x')),$ $d(f(x),g(x')),$ $d(g(x),g(x'))$ are less or equal than $a\cdot d(x,x')+b.$ Hence for any $\sigma=(x_0,\dots,x_m)\in (\VR_L^\nu X)_m$ the sequences $s_i\sigma$ and $h_i\sigma$ satisfy the assumption of Lemma \ref{lemma:main_inequality}. Therefore, using that $\WW(s_i \sigma )=\WW(\sigma),$ we obtain  
\begin{equation} \label{eq:lemma:h_i}
\WW(h_i \sigma)\leq a\cdot \WW(\sigma)+b\cdot C_{m+1}.
\end{equation}

Let us prove that for $\sigma\in (\VR^{\nu,n}_L X)_m$ and any $m,$ we have $h_i\sigma \in (\VR^{\nu,n+1}_{a\cdot L+b\cdot C_{n+1}} Y)_{m+1}.$  
It is sufficient to prove that for any natural $m,$ and any $\sigma\in (\VR^{\nu,n}_\RR X)_m$ we have $h_i\sigma\in (\VR^{\nu,n+1}_\RR Y)_{m+1}$ and 
\begin{equation}
\WW(h_i \sigma )\leq a\cdot \WW(\sigma)+b\cdot C_{n+1}.
\end{equation}
If $m\leq n,$ it follows from \eqref{eq:lemma:h_i}.
Now assume that $m\geq n+1$ and prove the statement by induction. Since $m\geq n+1,$ any $m$-simplex of the $n$-skeleton $\sigma\in (\VR^{\nu,n}_LX)_m$ can be obtained 
from an $(m-1)$-simplex $\sigma'\in  (\VR^{\nu,n}_LX)_{m-1}$ by applying a degeneracy map $\sigma=s_j(\sigma')$. Using \eqref{eq:simp:homopy:eq}, we obtain that we have an identity of the form $h_is_j = s_{j'}h_{i'}.$ 
The inductive assumption says that $h_{i'}\sigma'  \in (\VR^{\nu,n+1}_\RR Y)_{m}.$ Therefore, we obtain $h_i \sigma = s_{j'}h_{i'} \sigma' \in (\VR^{\nu,n+1}_\RR Y)_{m+1}.$ Moreover, we have 
\begin{equation}
\begin{split}
\WW(h_i\sigma) &= 
\WW(h_is_j\sigma') = \WW(s_{j'} h_{i'} \sigma') = \WW(h_{i'}\sigma') \\ 
&  \leq a\cdot \WW(\sigma') + b\cdot C_{n+1} = a\cdot \WW(\sigma)+b\cdot C_{n+1},
\end{split}
\end{equation}
which finishes the proof. 
\end{proof}

\section{Stability theorem}

In this section, we will prove a stability theorem for homology of $\nu$-Vietoris-Rips simplicial sets. We will show that the interleaving distance between the persistent modules  $H_n(\VR^{\nu}_{<*}X)$ and $H_n(\VR^{\nu}_{<*}Y)$ is bounded above by $2C_{n+2}(\nu)\cdot d_{GH}(X,Y),$ where $d_{GH}$ is the Gromov-Hausdorff distance. Throughout this section we denote by $X$ and $Y$ some metric spaces, by $\nu$ a distance matrix norm, and set $C_n=C_n(\nu).$

Let us recall the definitions of the Hausdorff distance and the Gromov-Hausdorff distance. If $Z$ is a metric space,  the Hausdorff distance $d_H(X,Y)$ between subsets  $X,Y\subseteq Z$ is the infimum of constants $c > 0$ such that for any $x\in X$ there exists $y\in Y$ such that $d(x,y)\leq c,$ and for any $y\in Y$ there exists $x\in X$ such that $d(y,x)\leq c.$ 
The Gromov-Hausdorff distance $d_{GH}(X,Y)$ between any metric spaces $X$ and $Y$ is defined as the infimum of $d_H(i(X),j(Y))$ taken by all isometric embeddings of $X$ and $Y$ to some metric space $i: X \hookrightarrow Z \hookleftarrow Y:j.$ 
Here we do not assume that $X$ and $Y$ are compact, so the distance $d_{GH}(X,Y)$ can be zero for non-isometric spaces, and can also be infinite. 

\begin{lemma}\label{lemma:distance_equivalent}
Let $\delta > 2 d_{GH}(X,Y).$ Then there exist $\delta$-short maps $f:X\to Y$ and $g:Y\to X$ such that $({\sf id}_X,gf)$ and $({\sf id}_Y,fg)$ are $2\delta$-short pairs. 
\end{lemma}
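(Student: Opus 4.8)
The plan is to unwind the definition of $d_{GH}$ to produce a concrete metric space and then extract the desired short maps by nearest-point selection. Since $\delta > 2d_{GH}(X,Y)$, I can choose isometric embeddings $i\colon X\hookrightarrow Z$ and $j\colon Y\hookrightarrow Z$ into a common metric space $Z$ with $d_H(i(X),j(Y)) < \delta/2$. Identifying $X$ and $Y$ with their images, this means every point of $X$ is within distance $\delta/2$ of some point of $Y$ and vice versa. Using the axiom of choice, I define $f\colon X\to Y$ by picking, for each $x\in X$, a point $f(x)\in Y$ with $d_Z(x,f(x)) < \delta/2$, and similarly $g\colon Y\to X$ with $d_Z(y,g(y)) < \delta/2$.

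Next I would verify the three Lipschitz-type estimates directly from the triangle inequality in $Z$. For $f$: given $x,x'\in X$, we have $d(f(x),f(x')) \leq d(f(x),x) + d(x,x') + d(x',f(x')) < d(x,x') + \delta$, so $f$ is a $\delta$-short map; the argument for $g$ is identical. For the pair $({\sf id}_X, gf)$: given $x,x'\in X$, we estimate $d(x, gf(x')) \leq d(x,x') + d(x', f(x')) + d(f(x'), gf(x')) < d(x,x') + \delta/2 + \delta/2 = d(x,x') + \delta$, and since $2\delta > \delta$ this shows $({\sf id}_X,gf)$ is a $2\delta$-short pair (I also need $d(x,gf(x')) \le d(x,x') + 2\delta$ with the roles symmetric, but the same chain of inequalities handles $d(gf(x), x')$ as well, and each of ${\sf id}_X$ and $gf$ is individually $2\delta$-short since ${\sf id}_X$ is $0$-short and $gf$ is $2\delta$-short as a composite of $\delta$-short maps). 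The estimate for $({\sf id}_Y, fg)$ is symmetric.

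There is essentially no hard part here — the statement is a routine packaging of the definition of Gromov-Hausdorff distance into the language of $b$-short maps and pairs introduced in the previous section. The only mild subtlety is bookkeeping: one must remember that a $b$-short pair requires \emph{both} maps to be $b$-short \emph{and} the mixed inequality $d(\varphi(x),\psi(x')) \leq d(x,x') + b$ to hold, and check that the factor of $2$ (rather than just $\delta$) in the conclusion comfortably absorbs the two accumulated error terms of size $\delta/2$. The choice of strict versus non-strict inequalities is handled by the slack between $\delta$ and $2d_{GH}(X,Y)$, which is why the hypothesis is stated with a strict inequality.
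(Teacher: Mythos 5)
Your proposal is correct and takes essentially the same route as the paper: embed $X$ and $Y$ isometrically in a common ambient space with Hausdorff distance less than $\delta/2$, pick $f(x)$ and $g(y)$ by nearest-point selection, and verify the $\delta$-short and $2\delta$-short-pair conditions via the triangle inequality. The only cosmetic difference is strict versus non-strict inequality in $d(x,f(x))\leq\delta/2$, which does not affect the argument.
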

\begin{proof}
By the definition of the Gromov-Hausdorff distance, we can choose some embeddings to an ambient space $X,Y\hookrightarrow Z$ such that $\delta> 2d_H(X,Y).$ Then for any $x\in X,$ there exists an element $y\in Y$ such that $d(x,y)\leq \delta/2.$ Choosing one $y=f(x)$ for each $x,$ we define a function $f:X\to Y$ such that 
$d(x,f(x))\leq \delta/2.$
Similarly, we define a function $g:Y\to X$ such that $d(y,g(y))\leq \delta/2$ for any $y\in Y.$ These inequalities imply the following inequalities 
\begin{equation}
d( f(x),f(x'))\leq d(x,x') + \delta, \hspace{1cm} d( g(y),g(y'))\leq d(y,y') + \delta.   
\end{equation}
\begin{equation} 
d(x, gf(x')) \leq d(x,x') + \delta, \hspace{1cm} d(y, fg(y')) \leq d(y,y') + \delta.
\end{equation}
for any $x,x'\in X$ and $y,y'\in Y.$ 
Since $f$ and $g$ are $\delta$-short, their compositions $gf$ and $fg$ are $2\delta$-short. The statement follows. 
\end{proof}

Let us recall the definition of the interleaving distance between persistent objects \cite{bubenik2015metrics}, \cite{oudot2017persistence}. By a persistent object in a category $\CC,$ we mean a functor $P:\RR\to \CC,$ where $\RR$ is treated as a posetal category. A morphism of persistent objects is a natural transformation. 
For a real number $c,$ we denote by $P[c]$ the shifted persistent object, which is defined as the composition of $P$ with the functor $+c:\RR\to \RR.$ 
Then we have a natural transformation $\theta^c : P\to P[c]$ defined by $\theta^c_a = P(a,a+c):P(a)\to P(a+c).$  For $c>0,$ two persistent objects $P,P'$ are called $c$-interleaved, if there are morphisms $\varphi:P\to Q[c]$ and $\psi:Q\to P[c]$ such that the following diagrams are commutative.
\begin{equation}
\begin{tikzcd}
P\ar{r}{\varphi} \ar{rd}[below left]{\theta^{2c}} & Q[c] \ar{d}{\psi[c]} \\
  & P[2c]
\end{tikzcd}
\hspace{1cm} 
\begin{tikzcd}
Q\ar{r}{\psi} \ar{rd}[below left]{\theta^{2c}} & P[c] \ar{d}{\varphi[c]} \\
  & Q[2c]
\end{tikzcd}
\end{equation}
Then the interleaving distance between $P$ and $Q$ is defined as the infimum of the numbers $c>0$ such that $P$ and $Q$ are $c$-interleaved 
\begin{equation}
d_{\sf int}(P,Q) = \inf \{c>0\mid P,Q \text{ are $c$-interleaved}\}. 
\end{equation}

For a metric space $X$ and a commutative ring $\KK,$ we consider the persistent module 
\begin{equation}
H_n(\VR^{\nu}_{<*}X) : \RR \longrightarrow {\sf Mod}(\KK), \hspace{1cm} r\mapsto H_n(\VR^{\nu}_{<r} X), 
\end{equation}
where $H_n(-)=H_n(-;\KK).$

\begin{theorem}\label{theorem:stability} For any  metric spaces $X$ and $Y,$ any distance matrix norm $\nu,$ any commutative ring $\KK$ and any $n\geq 0$ we have 
\begin{equation}
d_{\sf int}( H_n(\VR_{<*}^{\nu} X), H_n(\VR_{<*}^{\nu} Y)) \leq 2 C_{n+2}(\nu) 
 \cdot   d_{GH}( X,Y ).
\end{equation}
The same inequality holds for the non-strict versions of these persistent modules $H_n(\VR^{\nu}_{\leq *}X)$ and $H_n(\VR^{\nu}_{\leq *}Y).$ 
\end{theorem}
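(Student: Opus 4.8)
The plan is to run the standard interleaving argument, using Lemma~\ref{lemma:distance_equivalent} to replace the Gromov--Hausdorff hypothesis by a pair of short maps, Proposition~\ref{prop:c-short:induces} to turn those maps into morphisms of persistent modules, and Proposition~\ref{prop:c-short:homotopy} to verify the two interleaving triangles. We may assume $d_{GH}(X,Y)<\infty$, as otherwise the right-hand side is infinite. Fix $\delta>2d_{GH}(X,Y)$ and set $c=\delta\,C_{n+2}$; it suffices to show that $H_n(\VR^\nu_{<*}X)$ and $H_n(\VR^\nu_{<*}Y)$ are $c$-interleaved, since then $d_{\sf int}\le c=\delta\,C_{n+2}$, and taking the infimum over all such $\delta$ gives the stated bound. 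By Lemma~\ref{lemma:distance_equivalent} there are $\delta$-short maps $f\colon X\to Y$ and $g\colon Y\to X$ such that $({\sf id}_X,gf)$ and $({\sf id}_Y,fg)$ are $2\delta$-short pairs, and $gf,fg$ are $2\delta$-short. Throughout we use that $H_n$ of a simplicial set is computed from its $(n+1)$-skeleton, so we only work with the skeleta $\VR^{\nu,n+1}$ and $\VR^{\nu,n+2}$, and that $C_{n+1}\le C_{n+2}$, which provides scale-inclusions $\VR^\nu_{<r+\delta C_{n+1}}Z\hookrightarrow\VR^\nu_{<r+c}Z$.

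I would first construct the interleaving morphisms. By Proposition~\ref{prop:c-short:induces} applied with skeleton level $n+1$, the $\delta$-short map $f$ induces $f_*\colon\VR^{\nu,n+1}_{<r}X\to\VR^{\nu,n+1}_{<r+\delta C_{n+1}}Y$, naturally in $r$; applying $H_n$ and composing with the inclusion into scale $r+c$ yields a morphism of persistent modules $\varphi\colon H_n(\VR^\nu_{<*}X)\to H_n(\VR^\nu_{<*}Y)[c]$. Symmetrically $g$ yields $\psi\colon H_n(\VR^\nu_{<*}Y)\to H_n(\VR^\nu_{<*}X)[c]$. The two interleaving triangles are mirror images, so it suffices to check $\psi[c]\circ\varphi=\theta^{2c}$. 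Since the maps induced by $f$ and $g$ are defined by applying these functions vertexwise, they commute with the scale-inclusions; chasing the composite, one finds for each $r$ that $\psi[c]_r\circ\varphi_r$ is the map $H_n(\VR^\nu_{<r}X)\to H_n(\VR^\nu_{<r+2c}X)$ obtained by applying $H_n$ to the map $(gf)_*\colon\VR^{\nu,n+1}_{<r}X\to\VR^{\nu,n+1}_{<r+2\delta C_{n+1}}X$ induced (via Proposition~\ref{prop:c-short:induces}) by the $2\delta$-short map $gf$, followed by the inclusion into scale $r+2c$.

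To finish, I would apply Proposition~\ref{prop:c-short:homotopy} to the $2\delta$-short pair $({\sf id}_X,gf)$ with skeleton level $n+1$: the induced maps $({\sf id}_X)_*$ and $(gf)_*$ from $\VR^{\nu,n+1}_{<r}X$ to $\VR^{\nu,n+2}_{<r+2\delta C_{n+2}}X=\VR^{\nu,n+2}_{<r+2c}X$ are simplicially homotopic, hence induce equal maps on $H_n$, which is unchanged under passage from the $(n+1)$- to the $(n+2)$-skeleton. The map induced by $({\sf id}_X)_*$ is the structure map $\theta^{2c}_r$, and the one induced by $(gf)_*$ is $\psi[c]_r\circ\varphi_r$ by the previous paragraph; therefore $\psi[c]\circ\varphi=\theta^{2c}$. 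The triangle $\varphi[c]\circ\psi=\theta^{2c}$ follows in the same way from the pair $({\sf id}_Y,fg)$. This proves the $c$-interleaving, hence the inequality. The non-strict version is obtained verbatim, since Lemma~\ref{lemma:distance_equivalent} and Propositions~\ref{prop:c-short:induces} and~\ref{prop:c-short:homotopy} are valid for an arbitrary left-infinite interval $L$ and $(-\infty,r]+\delta C_m=(-\infty,r+\delta C_m]$.

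The only genuine subtlety, and the step I would flag as the main point, is the bookkeeping that produces $C_{n+2}$ rather than $C_{n+1}$: a simplicial homotopy raises simplicial degree by one, so in order to certify that the round trip $gf$ acts on $H_n$ as the structure map one must use a homotopy landing in the $(n+2)$-skeleton, where the scale shift in Proposition~\ref{prop:c-short:homotopy} for a $2\delta$-short pair is $2\delta\,C_{n+2}$, i.e.\ exactly the $2c$ in the interleaving. Everything else is routine naturality of vertexwise-induced maps and of scale-inclusions, together with the fact that homotopic maps of simplicial sets induce the same map on homology.
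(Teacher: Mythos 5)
Your proposal is correct and follows essentially the same approach as the paper's proof: fix $\delta>2d_{GH}(X,Y)$, apply Lemma~\ref{lemma:distance_equivalent} to obtain $\delta$-short maps $f,g$ with the round-trip conditions, use Proposition~\ref{prop:c-short:induces} at skeleton level $n+1$ to build the interleaving morphisms, and Proposition~\ref{prop:c-short:homotopy} on the $2\delta$-short pairs $({\sf id}_X,gf)$ and $({\sf id}_Y,fg)$ to verify the interleaving triangles, with the degree shift in the simplicial homotopy forcing $C_{n+2}$. Your write-up simply makes explicit a few points the paper leaves implicit (the scale-inclusion $C_{n+1}\le C_{n+2}$, the vertexwise naturality used to identify the composite with $(gf)_*$, and the interval arithmetic for the non-strict case).
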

\begin{proof}
Fix some $\delta >2d_{GH}(X,Y).$ By Lemma \ref{lemma:distance_equivalent}, there exist $\delta$-short maps $f:X\to Y$ and $g:Y\to X$ such that $({\sf id}_X,gf)$ and $({\sf id}_Y,fg)$ are $2\delta$-short pairs.  
Then Proposition \ref{prop:c-short:induces} and inequality $C_{n+1}\leq C_{n+2}$ imply that  $f$ and $g$ induce morphisms
\begin{equation}
   f_* : \VR^{\nu,n+1}_{<*} X \to  \VR^{\nu,n+1}_{<*+\delta C_{n+2}} Y, \hspace{5mm}  g_* : \VR^{\nu,n+1}_{<*} Y \to  \VR^{\nu,n+1}_{<*+\delta C_{n+2}} X. 
\end{equation}
Proposition  \ref{prop:c-short:homotopy} implies that the diagrams 
\begin{equation}
\begin{tikzcd}[column sep=5mm]
\VR^{\nu,n+1}_{<*} X \ar{r}{f_*} \ar{rd}[below left]{\theta} & \VR^{\nu,n+1}_{<*+\delta C_{n+2}} Y \ar{d}{g_*} \\
& \VR^{\nu,n+2}_{<*+2\delta C_{n+2}} X
\end{tikzcd}
\hspace{5mm}
\begin{tikzcd}[column sep=5mm]
\VR^{\nu,n+1}_{<*} Y \ar{r}{g_*} \ar{rd}[below left]{\theta} & \VR^{\nu,n+1}_{<*+\delta C_{n+2}} X \ar{d}{f_*} \\
& \VR^{\nu,n+2}_{<*+2\delta C_{n+2}} Y
\end{tikzcd}
\end{equation}
are commutative up to homotopy. Since  $H_n(\VR^\nu_{<r}X) = H_n(\VR^{\nu,m}_{<r}X)$ for $m\geq n+1,$ we obtain that the persistent modules $H_n(\VR^{\nu}_{<*} X)$ and $H_n(\VR^{\nu}_{<*} Y)$ are $\delta C_{n+2}$-interleaved. 
\end{proof}

\begin{corollary}\label{cor:stability}
Under the assumption of Theorem \ref{theorem:stability} and for any $p\in [1,\infty],$ there are inequalities 
\begin{equation}
d_{\sf int}( H_n(\VR_{<*}^{p} X), H_n(\VR_{<*}^{p} Y)) \leq 2 
(n+2)^{\frac{1}{p}}\cdot  d_{GH}( X,Y ),
\end{equation}
and 
\begin{equation}
d_{\sf int}( H_n(\vr_{<*}^{p} X), H_n(\vr_{<*}^{p} Y)) \leq 2 
(n+2)^{\frac{1}{p}}\cdot  d_{GH}( X,Y ),
\end{equation}
and 
\begin{equation}
d_{\sf int}( \MH_{n,< *}(X), \MH_{n,< *} ( Y) ) \leq 2 
(n+2)\cdot  d_{GH}( X,Y ).
\end{equation}
The same inequalities hold for non-strict versions of these persistent modules $H_n(\VR^p_{\leq *} -),$ $H_n(\vr^p_{\leq *}-)$ and  $\MH_{n,\leq *}(-).$  
\end{corollary}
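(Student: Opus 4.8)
The plan is to deduce Corollary \ref{cor:stability} directly from Theorem \ref{theorem:stability} by specializing the distance matrix norm. For the first inequality, I take $\nu = \nu_p$; since $\VR^p_{<*} X = \VR^{\nu_p}_{<*} X$ by definition, Theorem \ref{theorem:stability} gives
\begin{equation}
d_{\sf int}( H_n(\VR_{<*}^{p} X), H_n(\VR_{<*}^{p} Y)) \leq 2 C_{n+2}(\nu_p) \cdot d_{GH}(X,Y),
\end{equation}
and then I substitute the already-computed value $C_{n+2}(\nu_p) = (n+2)^{1/p}$ to obtain the claimed bound. The same reasoning with $p=1$ gives the magnitude homology inequality, since $\MH_{n,<*}(X) = H_n(\VR^1_{<*} X)$ and $(n+2)^{1/1} = n+2$.

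For the $\ell_p$-Vietoris-Rips complex inequality, I argue via the symmetric norm $\nu_p^{\sf sym}$. By Proposition \ref{prop:vrVR-equiv} (or rather its consequence stated just afterwards), there is a homotopy equivalence $|\vr^p_{<r} X| \simeq |\VR^{{\sf sym}\text{-}p}_{<r} X|$ that is natural with respect to the inclusions $X \subseteq X'$ and $(-\infty,r) \subseteq (-\infty,r')$; hence the persistent modules $H_n(\vr^p_{<*} X)$ and $H_n(\VR^{\nu_p^{\sf sym}}_{<*} X)$ are naturally isomorphic, and so have the same interleaving distance. Applying Theorem \ref{theorem:stability} with $\nu = \nu_p^{\sf sym}$ and using $C_{n+2}(\nu_p^{\sf sym}) = (n+2)^{1/p}$ yields the second inequality.

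Finally, the non-strict versions follow identically: Theorem \ref{theorem:stability} explicitly asserts the same bound for $H_n(\VR^\nu_{\leq *} -)$, and Proposition \ref{prop:vrVR-equiv} covers the interval $L = (-\infty,r]$ as well, so the naturality argument for complexes goes through verbatim with $\leq$ in place of $<$.

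There is essentially no obstacle here: the corollary is a straightforward specialization, and the only nontrivial ingredient — the computation $C_n(\nu_p) = C_n(\nu_p^{\sf sym}) = n^{1/p}$ — is already established in the excerpt. The one point requiring a moment of care is the passage from a natural homotopy equivalence of geometric realizations to an isomorphism of persistent homology modules, but this is immediate since homology is a homotopy invariant and the equivalences are natural in the scale parameter.
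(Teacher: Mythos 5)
Your proof is correct and is the natural (and essentially forced) way to deduce the corollary; the paper itself omits an explicit proof, evidently regarding it as routine. You correctly identify all three ingredients: the specialization $\nu = \nu_p$ (resp.\ $\nu_p^{\sf sym}$) together with the computed constant $C_{n+2}(\nu_p) = C_{n+2}(\nu_p^{\sf sym}) = (n+2)^{1/p}$; the identification $\MH_{n,<*}(X) = H_n(\VR^1_{<*}X)$ for the magnitude case; and, for the simplicial-complex version, the naturality (in the scale parameter) of the homotopy equivalence $|\vr^p_{<r} X| \simeq |\VR^{{\sf sym}\text{-}p}_{<r} X|$ supplied by Proposition~\ref{prop:vrVR-equiv}, which upgrades to an isomorphism of persistent modules and hence preserves interleaving distance.
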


\section{Embeddings inducing weak equivalences} 

The main goal of this section is to introduce  some conditions for an embedding of metric spaces  $A\hookrightarrow X$ under which the induced embedding of simplicial sets  $\VR^\nu_{<r} A \hookrightarrow  \VR^\nu_{<r} X$ is a weak equivalence (Theorem \ref{th:embedding}). 
As a corollary, we obtain that the canonical embedding of a metric space to its metric completion $X\hookrightarrow \hat X$ induces a weak equivalence $\VR^\nu_{<r} X \overset{\sim}\to \VR^\nu_{<r} \hat X$ (Corollary \ref{cor:completion}), and that for a convex subset $X\subseteq \RR^n$ the simplicial set $\VR^\nu_{<r} X$ is weakly contractible (Corollary \ref{cor:convex}).

In this section we denote by $\nu$ a distance matrix norm, by $r>0$ a positive number, and we assume that the left infinite interval that we considered in previous sections is open $L=(-\infty, r),$ because the results of this section do not hold for closed intervals (Remarks \ref{remark:non-iso-for-closed} and \ref{remark:shortly_contr}).

\begin{lemma}
\label{lemma:finite_space} 
For a finite metric space $X$ and any $n\geq 0,$ there exists $\varepsilon>0$ such that the map
\begin{equation}
\VR^{\nu,n}_{<r-\varepsilon} X \longrightarrow 
\VR^{\nu,n}_{<r} X
\end{equation}
is an isomorphism. 
\end{lemma}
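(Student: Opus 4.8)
The plan is to exploit finiteness to replace the continuous family of thresholds by a finite set of critical values. First I would observe that the simplicial set $\VR^{\nu,n}_{<r} X$ is determined by its nondegenerate simplices of dimension at most $n$, and since $X$ is finite there are only finitely many tuples $(x_0,\dots,x_k)\in X^{k+1}$ with $k\leq n$. Consequently the set of weights $S = \{\WW_\nu(x_0,\dots,x_k) : k\leq n,\ x_i\in X\}$ is a finite subset of $\RR_{\geq 0}$. The inclusion $\VR^{\nu,n}_{<r-\varepsilon}X \hookrightarrow \VR^{\nu,n}_{<r}X$ is an isomorphism precisely when no element of $S$ lies in the half-open interval $[r-\varepsilon, r)$, because a $k$-simplex $(x_0,\dots,x_k)$ (with $k\leq n$) belongs to the source iff $\WW_\nu(x_0,\dots,x_k)<r-\varepsilon$ and to the target iff $\WW_\nu(x_0,\dots,x_k)<r$, and these conditions agree on all simplices exactly when $S\cap[r-\varepsilon,r)=\emptyset$; the $n$-skeleton is then generated by the same simplices on both sides, so the simplicial sets coincide.

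Next I would produce the required $\varepsilon$. Let $S_{<r} = \{w\in S : w < r\}$. If $S_{<r}=\emptyset$, then $\VR^{\nu,n}_{<r}X$ has no simplices of dimension $\geq 1$ beyond what is forced, and any $\varepsilon$ in $(0,r)$ works (in fact both sides agree on vertices too, since $\WW_\nu$ of a $0$-simplex is $0<r$, assuming $r>0$; one handles the degenerate bookkeeping using Lemma~\ref{lemma:W_nu}(1) so that the $n$-skeleton is still well-defined). Otherwise, since $S_{<r}$ is a nonempty finite set bounded above by $r$, it has a maximum $w_0 = \max S_{<r} < r$. Choose $\varepsilon = r - w_0 > 0$. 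Then $r-\varepsilon = w_0$, and for any $w\in S$ we have either $w\leq w_0 < r$, in which case $w < r-\varepsilon$ fails only if $w = w_0$ — wait, more carefully: if $w = w_0$ then $w = r-\varepsilon$, so $w\notin(-\infty, r-\varepsilon)$ but also the strict inequality $w<r$ holds; to avoid this boundary issue I instead pick $\varepsilon = r - w_0$ shrunk slightly, or better, note we only need $S\cap(r-\varepsilon, r) = \emptyset$ with the half-open convention matching the definition. Concretely, set $\varepsilon = \frac{r - w_0}{1}$ is fine if we take the interval $(r-\varepsilon, r)$ to be genuinely empty of weights: since the only candidate weight in $[w_0, r)$ is $w_0$ itself and $w_0 = r-\varepsilon$, every simplex with weight $w_0$ lies in neither $\VR^{\nu,n}_{<r-\varepsilon}X$ nor is excluded from $\VR^{\nu,n}_{<r}X$... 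I would clean this up by choosing any $\varepsilon$ with $0 < \varepsilon < r - w_0'$ where $w_0'$ is handled so the open interval $(r-\varepsilon, r)$ contains no element of $S$; then a simplex has weight $<r-\varepsilon$ iff it has weight $<r$, giving equality of $n$-simplices, hence of skeleta.

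The only genuine subtlety — and the step I expect to require the most care — is the interaction between the $n$-skeleton operation $\sk_n$ and the threshold: I must verify that $\sk_n(\VR^\nu_{<r-\varepsilon}X) \to \sk_n(\VR^\nu_{<r}X)$ is an isomorphism, not merely that the underlying full simplicial sets agree in low degrees. Since $\sk_n$ of a simplicial set is the subsimplicial set generated by simplices of dimension $\leq n$, and the nondegenerate simplices of dimension $\leq n$ in $\VR^\nu_{<r}X$ are exactly the tuples with distinct entries and weight $<r$, the equality $\{(x_0,\dots,x_k): k\leq n,\ \WW_\nu<r-\varepsilon\} = \{(x_0,\dots,x_k): k\leq n,\ \WW_\nu<r\}$ for the chosen $\varepsilon$ immediately gives that both skeleta are generated by the same simplices inside $\SS(\Delta(X))$, and since both are subsimplicial sets of $\SS(\Delta(X))$ with the same generators, they are equal. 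This completes the argument; I would present it in roughly this order: (i) reduce to a statement about the finite weight set $S$; (ii) choose $\varepsilon$ from $\max(S\cap(-\infty,r))$; (iii) check that the generating simplices of the two $n$-skeleta coincide; (iv) conclude the map is an isomorphism.
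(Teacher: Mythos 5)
Your proposal is correct and follows essentially the same route as the paper: finiteness of $X$ gives a finite set $S$ of possible weights of tuples of length at most $n+1$, and one chooses $\varepsilon$ small enough that the conditions $\WW_\nu<r-\varepsilon$ and $\WW_\nu<r$ select the same simplices. The one place to tidy up is the choice of $\varepsilon$: you need a strict inequality $0<\varepsilon<r-\max(S\cap(-\infty,r))$ (so that $[r-\varepsilon,r)\cap S=\emptyset$, not just the open interval), which you eventually arrive at after the false start with $\varepsilon=r-w_0$.
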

\begin{proof}
The set of all possible weights $\WW(x_0,\dots,x_k),$ for $k\leq n,$ is a finite subset of $\RR.$ Then there exists $\varepsilon$ such that $\WW(x_0,\dots,x_k)<r$ implies $\WW(x_0,\dots,x_k)<r-\varepsilon$ for $k\leq n.$
\end{proof}

An $r$-locally $c$-short map is an $L$-locally $c$-short map for $L=(-\infty,r).$ More precisely, a map $f:X\to Y$ is $r$-locally $c$-short, if $d(x,x')<r$ implies   $d(f(x),f(x'))\leq d(x,x')+c.$ A pair of maps $f,g:X\to Y$ is called an $r$-locally $c$-short pair, if $d(x,x')<r$ implies $d(f(x),g(x'))\leq d(x,x')+c.$ If $r=\infty,$ then we use terms ``$c$-short map'', and ``$c$-short pair of maps''.

\begin{definition}[$r$-locally $c$-short equivalence]
We say that two $r$-locally $c$-short maps $f,g:X\to Y$ are $r$-locally $c$-shortly equivalent, if there is a sequence of $r$-locally $c$-short maps $f=f_0,\dots,f_n=g$ such that $(f_i,f_{i+1})$ is a $r$-locally $c$-short pair for any $i.$
\end{definition}

\begin{definition}[$r$-locally $c$-short equivalence for maps of pairs of metric spaces] A pair of metric spaces is a pair $(X,A),$ where $X$ is a metric space and $A$ is its subspace. An $r$-locally $c$-short map of pairs $f:(X,A)\to (Y,B)$ is an $r$-locally $c$-short map $f:X\to Y$ such that $f(A)\subseteq B.$ Two $r$-locally $c$-short maps of pairs $f,g:(X,A)\to (Y,B)$ are $r$-locally $c$-shortly equivalent relative to $A$, if there is a sequence of $r$-locally $c$-short maps of pairs $f=f_0,\dots,f_n=g:(X,A)\to (Y,B)$ such that $(f_i,f_{i+1})$ is an $r$-locally $c$-short pair of maps from $X$ to $Y$ and $f_i(a)=f_{i+1}(a)$ for any $i$ and $a\in A.$
\end{definition}

\begin{theorem}[{cf. \cite[(2.2)]{hausmann1994vietoris}}] 
\label{th:embedding}
Let $X$ be a metric space and $A\subseteq X$ be its subspace, with inclusion denoted by  $\iota:A\hookrightarrow X.$ Assume that for any $\varepsilon>0$ there is an $r$-locally $\varepsilon$-short map $\rho_\varepsilon : X\to A$ such that $\rho_\varepsilon\iota={\sf id}_A$ and the maps of pairs $\iota \rho_\varepsilon, {\sf id}_X :(X,A) \to (X,A)$ are $r$-locally $\varepsilon$-shortly equivalent. Then $\iota$ induces a weak equivalence 
\begin{equation}
\VR^\nu_{<r} A \overset{\sim}\longrightarrow  \VR^\nu_{<r} X.
\end{equation}
\end{theorem}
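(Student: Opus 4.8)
The plan is to reduce the weak equivalence $\VR^\nu_{<r}A\to\VR^\nu_{<r}X$ to a statement about finite subspaces and then leverage the fact that weak equivalences are detected on homotopy groups, which are compact. Concretely, a map of simplicial sets is a weak equivalence if and only if it induces isomorphisms on all homotopy groups with all basepoints; since $\pi_k$ is computed from maps out of $\partial\Delta^{k+1}$ and $\Delta^{k+1}$, and these are finite simplicial sets, every element of $\pi_k(\VR^\nu_{<r}X)$ and every relation among such elements lives in a finitely generated sub-simplicial-set of $\VR^\nu_{<r}X$, hence involves only finitely many points of $X$. So first I would fix a finite subset $T\subseteq X$, let $B=T\cap A$ together with a suitable finite enlargement, and show that the inclusion $\VR^\nu_{<r}B\hookrightarrow\VR^\nu_{<r}(B\cup T)$ is ``enough'': more precisely, I want a commuting ladder of finite subspaces exhibiting any homotopy-theoretic datum in $X$ as coming from $A$ up to homotopy.

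The technical heart is the following. Given a finite subspace $T\subseteq X$, apply the hypothesis with a sufficiently small $\varepsilon$ (small depending on $T$, using Lemma \ref{lemma:finite_space} applied to $T\cup\rho_\varepsilon(T)$ to convert $r$-local $\varepsilon$-shortness into an honest simplicial-set map $\VR^{\nu,n}_{<r}(T)\to\VR^{\nu,n}_{<r}(\rho_\varepsilon(T))$ that does not move weights across the threshold $r$). This gives maps $\rho_\varepsilon\colon\VR^{\nu}_{<r}T\to\VR^\nu_{<r}(\rho_\varepsilon(T))\subseteq\VR^\nu_{<r}A$ with $\rho_\varepsilon\iota=\mathrm{id}$ on $B=T\cap A$, and, via Proposition \ref{prop:c-short:homotopy} applied along the chain of $r$-locally $\varepsilon$-short pairs witnessing the equivalence $\iota\rho_\varepsilon\simeq\mathrm{id}_X$ rel $A$, a simplicial homotopy (after possibly enlarging the skeleton by one and using Lemma \ref{lemma:finite_space} again to keep weights below $r$) between $\iota\rho_\varepsilon$ and $\mathrm{id}$ on $\VR^{\nu,n}_{<r}T$ that is \emph{stationary on $B$}. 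Passing to the colimit over all finite $T\subseteq X$ directed by inclusion, and using that $\VR^\nu_{<r}X=\colim_T \VR^\nu_{<r}T$ and that homotopy groups commute with filtered colimits of simplicial sets, these local retractions and homotopies assemble to show $\iota_*$ is surjective on all $\pi_k$ (every class in $\pi_k(\VR^\nu_{<r}X)$ is supported on some finite $T$, hence pushed into $A$ by $\rho_\varepsilon$ and seen to be in the image up to the homotopy $\iota\rho_\varepsilon\simeq\mathrm{id}$) and injective (any class in $\pi_k(\VR^\nu_{<r}A)$ that dies in $X$ does so over some finite $T$, and applying $\rho_\varepsilon$ together with $\rho_\varepsilon\iota=\mathrm{id}$ shows it already died in $A$). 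The relative-homotopy/stationary-on-$A$ condition is exactly what is needed so that composing ``expand to $X$, then retract to $A$'' is homotopic to the identity \emph{through maps into $A$}, which is what gives injectivity rather than merely surjectivity.

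The main obstacle I anticipate is the bookkeeping around the threshold $r$: the maps $\rho_\varepsilon$ and the homotopies $h^{f,g}$ are only $r$-locally $\varepsilon$-short, so a priori Proposition \ref{prop:c-short:induces} and \ref{prop:c-short:homotopy} only produce maps $\VR^{\nu,n}_{<r}\to\VR^{\nu,n}_{<r+\varepsilon C_{n+2}}$, landing in the \emph{wrong} simplicial set. The fix is the finiteness Lemma \ref{lemma:finite_space}: for a fixed finite $T$ there is a gap $\varepsilon_0>0$ such that no weight of a tuple from the relevant finite point set lies in $[r-\varepsilon_0,r)$, so choosing $\varepsilon<\varepsilon_0/C_{n+2}$ makes the enlargement $<r+\varepsilon C_{n+2}$ harmless on that finite piece — i.e. $\VR^{\nu,n}_{<r}T=\VR^{\nu,n}_{<r+\varepsilon C_{n+2}}T$. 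One must be careful to apply this lemma to a finite set large enough to contain all points appearing in $T$, in $\rho_\varepsilon(T)$, and in all the images of the intermediate maps $f_i$ along the chain realizing the short equivalence, and to do the skeleton-by-skeleton induction (as in the proof of Proposition \ref{prop:c-short:homotopy}) so that $n$ is fixed throughout. Granting this, the argument is a standard ``weak equivalences are local, and these spaces are filtered colimits of finite pieces on which we have genuine deformation retractions'' manipulation, mirroring Hausmann's \cite[(2.2)]{hausmann1994vietoris}.
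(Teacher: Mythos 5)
Your overall strategy is a genuinely different packaging of the same core ideas: instead of the paper's Lemma~\ref{lemma:anodyne} (a relative lifting criterion, proved with ${\sf Ex}^\infty$ and the simplicial Whitehead theorem of Kan), you directly verify $\pi_*$-isomorphism at all basepoints, using that simplicial homotopy groups are detected on finite subcomplexes and commute with filtered colimits. Both routes reduce to the same technical engine (Lemma~\ref{lemma:finite_space}, Proposition~\ref{prop:c-short:induces}, Proposition~\ref{prop:c-short:homotopy}, and the relative condition coming from $\rho_\varepsilon\iota=\mathrm{id}_A$ and $f_i|_A=\mathrm{id}_A$); yours is arguably more elementary in that it avoids subdivision machinery, while the paper's is cleaner to state as a reusable criterion.

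However, there is a concrete gap in your threshold bookkeeping. You propose to apply Lemma~\ref{lemma:finite_space} to $T\cup\rho_\varepsilon(T)$; this is circular, because $\varepsilon$ must be chosen before $\rho_\varepsilon$ (hence $\rho_\varepsilon(T)$) is even defined, and the chain $f_0,\dots,f_m$ realizing the $r$-local $\varepsilon$-short equivalence also depends on $\varepsilon$, so all the intermediate images do as well. Your fallback fix --- choose $\varepsilon$ so that $\VR^{\nu,n}_{<r}T=\VR^{\nu,n}_{<r+\varepsilon C_{n+2}}T$ --- is doubly off: Lemma~\ref{lemma:finite_space} only gives you a gap \emph{below} $r$ (namely $\VR^{\nu,n}_{<r-\delta}T\cong\VR^{\nu,n}_{<r}T$; the upward version fails whenever $r$ itself is a realized weight of a tuple in $T$), and in any case it controls weights of tuples \emph{in} $T$, whereas the trouble is with weights of tuples in $\rho_\varepsilon(T)$ and in the mixed tuples $(f_i(x_0),\dots,f_i(x_j),f_{i+1}(x_j),\dots)$ appearing in the prism homotopy $h^{f_i,f_{i+1}}$. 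The resolution the paper uses, and which fixes your argument without any circularity, is to shrink the threshold on the \emph{source}: by Lemma~\ref{lemma:finite_space} pick $\varepsilon>0$ with $\VR^{\nu,n}_{<r-\varepsilon C_{n+2}}T=\VR^{\nu,n}_{<r}T$. Then any tuple from $T$ that you care about has weight $<r-\varepsilon C_{n+2}$, and after applying $\rho_\varepsilon$ (resp.\ the prism homotopy along an $r$-locally $\varepsilon$-short pair) its weight is bounded by $r-\varepsilon C_{n+2}+\varepsilon C_{n}\le r$ (resp.\ $r-\varepsilon C_{n+2}+\varepsilon C_{n+1}\le r$) by Propositions~\ref{prop:c-short:induces} and \ref{prop:c-short:homotopy}, so everything lands in $\VR^\nu_{<r}A$ or $\VR^\nu_{<r}X$ with no information about the image needed. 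With that correction, your $\pi_*$/filtered-colimit argument goes through: surjectivity from $\rho_\varepsilon$ composed with the homotopy $\iota\rho_\varepsilon\simeq\mathrm{id}$ rel $A$, injectivity from $\rho_\varepsilon$ applied to a null-homotopy supported on a finite $T$ together with $\rho_\varepsilon\iota=\mathrm{id}_A$.
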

Before we prove the theorem, we need to prove the following version of the Whitehead theorem, which seems to be well known but we could not find a proper reference. If $S,T$ are simplicial sets, and $S'\subseteq S$ is a simplicial subset, we say that two maps $f,g:S \to T$ are homotopy equivalent  relative to $S',$ if there is a sequence of maps  $f=f_0,\dots,f_n=g:S \to T$ such that, for any $0\leq i<n,$ there exists a simplicial homotopy $S\times \Delta^1 \to T$ connecting either $f_i$ with $f_{i+1}$ or $f_{i+1}$ with $f_i$ whose restriction on $S'\times \Delta^1$ is a constant simplicial homotopy. In particular, we assume that $f|_{S'} = g|_{S'}.$ 

\begin{lemma}\label{lemma:anodyne} Let $f:T'\to T$ be a morphism of simplicial sets. Assume that, for any inclusion of simplicial sets having a finite number of non-degenerate simplices $S'\hookrightarrow S$ and for any commutative square, 
\begin{equation}
\begin{tikzcd}
S' \ar[r] \ar[d,hookrightarrow] & T' \ar[d,"f"] \\
S \ar[r] \ar[ru,dashed,"h"] & T
\end{tikzcd}
\end{equation}
there exists a map $h:S\to T'$ such that the upper triangle is commutative and the lower triangle is commutative up to homotopy relative to $S'.$
Then the map $f:T\to T'$ is a weak equivalence. 
\end{lemma}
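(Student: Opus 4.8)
\textbf{Proof strategy for Lemma \ref{lemma:anodyne}.} The plan is to recognize the hypothesis as a strengthened lifting property against all inclusions of finite simplicial sets, and to bootstrap from finite inclusions to all inclusions, then apply the standard characterization of weak equivalences via lifting. Recall that a morphism of simplicial sets is a \emph{trivial fibration} (acyclic Kan fibration) if and only if it has the right lifting property against all boundary inclusions $\partial\Delta^n \hookrightarrow \Delta^n$; every trivial fibration is a weak equivalence. However, $f$ here is not assumed to be a fibration, so I cannot use that directly. Instead, the natural route is: (i) first show that $f$ is \emph{surjective on connected components and induces isomorphisms on homotopy groups} by using the lifting-up-to-homotopy property against the inclusions $\partial\Delta^n \hookrightarrow \Delta^n$ (which are finite), which gives surjectivity of $\pi_n(f)$, and against the finite inclusion $\partial\Delta^{n+1}\hookrightarrow \Delta^{n+1}$ applied to pairs of parallel fillings, which gives injectivity; (ii) then invoke the simplicial Whitehead theorem in the form: a map between simplicial sets inducing isomorphisms on all homotopy groups (with all basepoints) and a bijection on $\pi_0$ is a weak equivalence.

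More concretely, first I would reduce to the case where $T'$ and $T$ are Kan complexes. One clean way: apply $\mathrm{Ex}^\infty$ to the square; since $\mathrm{Ex}^\infty$ preserves the relevant structure and the hypothesis is stable under it (a lift up to homotopy relative to $S'$ in $T'$ produces one in $\mathrm{Ex}^\infty T'$, using that $S$ is finite so that any map $S \to \mathrm{Ex}^\infty T'$ factors through some finite stage), it suffices to treat Kan complexes. Alternatively, and perhaps more transparently given the finite-domain hypothesis, I would argue directly: to check $f$ is a weak equivalence it is enough (by a standard argument, e.g. via the fact that weak equivalences are detected on homotopy groups after fibrant replacement, or via the mapping-space criterion) to produce, for every finite simplicial set $S$ and every map $S \to T$, a lift to $T'$ up to homotopy, and to show such lifts are unique up to homotopy — which is almost verbatim the hypothesis with $S' = \emptyset$ for existence and $S' = S \sqcup S$, $S = S\times\Delta^1$ for a homotopy-uniqueness statement. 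This is the content of saying $f$ is an isomorphism in the homotopy category when tested against finite simplicial sets, and since every simplicial set is a filtered colimit of its finite subcomplexes and homotopy groups commute with filtered colimits, this forces $\pi_n(f)$ to be an isomorphism for all $n$ and all basepoints, hence $f$ is a weak equivalence.

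The key steps, in order: (1) observe that taking $S' = \emptyset$ gives that every map from a finite simplicial set into $T$ lifts to $T'$, in particular $\pi_0(f)$ is surjective and (applying this to spheres $\partial\Delta^{n+1}$ with a chosen filling, i.e. to $\Delta^n$ relative to $\partial\Delta^n$) $\pi_n(f)$ is surjective onto every class; (2) taking $S' = \partial\Delta^{n+1}$ inside $S = \Delta^{n+1}$, deduce that if two $n$-simplices of $T'$ become homotopic rel boundary in $T$ then they were already homotopic rel boundary in $T'$ — which gives injectivity of $\pi_n(f)$; here one must be slightly careful to phrase "homotopic rel boundary" simplicially, gluing two copies of $\Delta^{n+1}$ along $\partial\Delta^{n+1}$ to form the finite complex $S$ and its subcomplex $S'$, with the given map $S \to T'$ encoding the two parallel fillings and $S \to T$ a homotopy between their images; (3) assemble (1) and (2), using that every simplicial set is the filtered colimit of its finite subcomplexes and that homotopy groups (of a fibrant replacement) commute with filtered colimits, to conclude that $f$ induces a bijection on $\pi_0$ and isomorphisms on all $\pi_n$ at all basepoints; (4) invoke the simplicial Whitehead theorem to conclude $f$ is a weak equivalence.

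The main obstacle I anticipate is step (2): translating the up-to-homotopy-relative-to-$S'$ lifting property into honest injectivity statements on homotopy groups requires choosing the right finite pair $(S, S')$ — concretely $S = \Delta^{n+1} \cup_{\partial\Delta^{n+1}} \Delta^{n+1}$ (two disks glued along their common boundary, which is the simplicial model of $S^{n+1}$ only after further collapsing, so one may instead want $S = \partial\Delta^{n+2}$) and being careful that the homotopy the hypothesis provides is \emph{relative to $S'$}, which is exactly what is needed to keep the boundary fixed. A secondary technical point is the reduction to (or avoidance of) fibrant replacement: one should verify that the finite-domain lifting hypothesis is inherited by $\mathrm{Ex}^\infty f$, which uses that a map out of a finite simplicial set into $\mathrm{Ex}^\infty(-)$ factors through a finite stage $\mathrm{Ex}^k(-)$, together with the fact that $\mathrm{Ex}$ preserves finiteness of non-degenerate simplices of the domain only after subdivision — so it is cleaner to phrase everything in terms of homotopy classes of maps from finite complexes and never leave the world of arbitrary simplicial sets. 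I expect the write-up to lean on \cite{may1992simplicial} for the simplicial Whitehead theorem and for the behavior of homotopy groups under filtered colimits.
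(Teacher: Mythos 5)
Your broad strategy — pass to $\mathsf{Ex}^\infty$ to land in Kan complexes, exploit finiteness so that maps from $S$ and $S'$ factor through $\mathsf{Ex}^N$, then invoke a simplicial Whitehead theorem — is the same skeleton as the paper's argument. The difference is in which form of Whitehead you aim for. You propose to extract isomorphisms on $\pi_0$ and all $\pi_n$ (surjectivity from $S'=\emptyset$; injectivity from a carefully chosen finite pair such as $\Delta^n\times\partial\Delta^1\cup\partial\Delta^n\times\Delta^1\hookrightarrow\Delta^n\times\Delta^1$) and then cite the homotopy-group version. The paper instead cites Kan's Theorem 7.2 from \cite{kan1957css}, which states the Whitehead criterion for Kan complexes \emph{exactly} in the form of your hypothesis: for every square over $\partial\Delta^n\hookrightarrow\Delta^n$ there is a diagonal making the upper triangle commute and the lower commute up to homotopy rel $\partial\Delta^n$. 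This collapses your steps (1)--(4) into a single citation, so no homotopy groups, no filtered-colimit bookkeeping, and no separate surjectivity/injectivity arguments are needed. What you gain from your route is that it stays closer to first principles; what the paper's route buys is a much shorter argument once the hypothesis has been transported to $\mathsf{Ex}^\infty T'\to\mathsf{Ex}^\infty T$.

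The point you flag at the end as a ``secondary technical point'' is actually the entire content of the paper's proof, and you leave it unresolved. The issue is not just that a map $S\to\mathsf{Ex}^\infty T$ from a finite $S$ factors through $\mathsf{Ex}^N T$ — that much is easy. It is that after transposing across $\mathsf{sd}^N\dashv\mathsf{Ex}^N$ you are handed a square over $\mathsf{sd}^N\partial\Delta^n\hookrightarrow\mathsf{sd}^N\Delta^n$ into $T'\to T$, you apply the hypothesis to get a lift $h'':\mathsf{sd}^N\Delta^n\to T'$ whose lower triangle commutes only up to a homotopy $(\mathsf{sd}^N\Delta^n)\times\Delta^1\to T$ rel $\mathsf{sd}^N\partial\Delta^n$, and you must then convert this into a homotopy $\Delta^n\times\Delta^1\to\mathsf{Ex}^N T$ rel $\partial\Delta^n$ making the lower triangle for the adjoint $h:\Delta^n\to\mathsf{Ex}^N T'$ commute up to homotopy rel $\partial\Delta^n$. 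The adjunction by itself does not do this for you, because $\mathsf{sd}^N(\Delta^n\times\Delta^1)\neq(\mathsf{sd}^N\Delta^n)\times\Delta^1$. The paper's fix is to precompose with $\mathsf{sd}^N(\Delta^n\times\Delta^1)\to(\mathsf{sd}^N\Delta^n)\times(\mathsf{sd}^N\Delta^1)\xrightarrow{1\times\lambda}(\mathsf{sd}^N\Delta^n)\times\Delta^1$ using the iterated last-vertex map $\lambda$, and then to chase the two squares showing that this construction respects the end-point inclusions $\partial^i$ and restricts correctly on $\partial\Delta^n\times\Delta^1$ so that ``rel $S'$'' is preserved. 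Without this step your reduction to Kan complexes is incomplete, and your homotopy-group argument that comes after it cannot even get started. So I would classify your proposal as having the right overall shape but a genuine gap at the place you yourself identify as delicate; the remedy is the last-vertex-map diagram chase.
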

\begin{proof} Let's apply the Kan fibrant replacement ${\sf Ex}^\infty$  to the map  $f:T \to T'$ (see \cite[\S III.4]{goerss2009simplicial}). 
Then we need to show that ${\sf Ex}^\infty f: {\sf Ex}^\infty\:T' \to  {\sf Ex}^\infty\:T$ is a weak equivalence. By the simplicial Whitehead theorem \cite[Th.7.2]{kan1957css}, we need to show that for any commutative square
\begin{equation}\label{eq:wh}
\begin{tikzcd}
\partial \Delta^n \ar[r,"a"] \ar[d,hookrightarrow] &  {\sf Ex}^\infty\:T' \ar[d] \\
\Delta^n  \ar[r,"b"] \ar[ru,dashed,"h"]   & {\sf Ex}^\infty\:T
\end{tikzcd}
\end{equation}
there exists a map $h:\Delta^n \to {\sf Ex}^\infty T'$ such that the upper triangle is commutative and the lower triangle is commutative up to homotopy relative to $\partial \Delta^n$. Since $\partial \Delta^n$ and $\Delta^n$ have a finite number of simplices, the maps to ${\sf Ex}^\infty$ factor through ${\sf Ex}^N$ for sufficiently large $N.$ Therefore, we obtain a commutative diagram 
\begin{equation}
\begin{tikzcd}
\partial \Delta^n \ar[r,"a'"] \ar[d,hookrightarrow] &  {\sf Ex}^N\:T' \ar[d] \\
\Delta^n  \ar[r,"b'"]  & {\sf Ex}^N\:T
\end{tikzcd}
\end{equation}
Using the adjunction ${\sf sd} \dashv {\sf Ex},$ we obtain the following commutative square. 
\begin{equation}
\begin{tikzcd}
{\sf sd}^N \partial \Delta^n \ar[r,"a''"] \ar[d,hookrightarrow] &  T' \ar[d] \\
{\sf sd}^N \Delta^n  \ar[r,"b''"] \ar[ru,dashed,"h''"]  & T
\end{tikzcd}
\end{equation}
By the assumption, there is a map $h'':{\sf sd}^N \Delta^N \to T'$ such that the upper triangle is commutative and the lower triangle is commutative up to homotopy relative to ${\sf sd}^N\partial \Delta^n$. Then the map $h$ can be constructed as a composition of the adjoint map $h':\Delta^n \to {\sf Ex}^N T'$ and the monomorphism  $ {\sf Ex}^N T' \mono {\sf Ex}^\infty T'.$

 In order to prove that the lower triangle of the diagram \eqref{eq:wh} is commutative up to homotopy relative to $\partial \Delta^n$, we need to observe that for any homotopy $({\sf sd}^N \Delta^n)\times \Delta^1 \to T'$, we obtain a map ${\sf sd}^N(\Delta^n\times \Delta^1)\to T'$ by precomposing it with
\begin{equation}
    {\sf sd}^N(\Delta^n \times \Delta^1)\to ({\sf sd}^N\Delta^n)\times ({\sf sd}^N\Delta^1) \xrightarrow{1\times\lambda} ({\sf sd}^N\:\Delta^n)\times \Delta^1,
\end{equation}
where $\lambda:{\sf sd}^N\:\Delta^1\to \Delta^1$ is an iteration of the last vertex map. Moreover, the composition of maps 
\begin{equation}
    {\sf sd}^N\Delta^n \cong {\sf sd}^N(\Delta^n\times \Delta^0)\xrightarrow{{\sf sd}^N(1\times \partial^i)} {\sf sd}^N(\Delta^n\times \Delta^1)\to T'
\end{equation}
equals to
\begin{equation}
    {\sf sd}^N\Delta^n\cong{\sf sd}^N\Delta^n\times\Delta^0\xrightarrow{1\times \partial^i}({\sf sd}^N\:\Delta^n)\times \Delta^1\to T',
\end{equation}
for $i\in\{0,1\} $, and we have the following commutative diagram
\begin{equation}
    \begin{tikzcd}
        {\sf sd}^N(\partial\Delta^n\times\Delta^1) \ar[r,] \ar[d,"{\sf sd}^N(\iota\times 1)"] & {\sf sd}^N\partial\Delta^n\times {\sf sd}^N\Delta^1\ar[r,"1\times\lambda"] \ar[d,"{\sf sd}^N(\iota)\times 1"] & {\sf sd}^N\partial\Delta^n\times\Delta^1 \ar[d,"{\sf sd}^N(\iota)\times1"]\\
        {\sf sd}^N(\Delta^n\times\Delta^1) \ar[r] &  {\sf sd}^N\Delta^n\times{\sf sd}^N\Delta^1 \ar[r,"1\times\lambda"] & {\sf sd}^N\Delta^n\times\Delta^1 
    \end{tikzcd}
\end{equation}
where $\iota:\partial\Delta^n\hookrightarrow\Delta^n$ is the inclusion. Therefore, by our observation and the adjunction ${\sf sd}\dashv {\sf Ex}$, the sequence of homotopies $({\sf sd}^N\Delta^n) \times \Delta^1 \to T'$ defines a sequence of homotopies $\Delta^n\times \Delta^1 \to {\sf Ex}^N T'$ such that the lower triangle of the diagram \eqref{eq:wh} is commutative up to homotopy relative to $\partial\Delta^n$, as desired.
\end{proof}

\begin{proof}[Proof of Theorem \ref{th:embedding}] 
By Lemma \ref{lemma:anodyne}, it is sufficient to prove that for any pair of simplicial sets $(S,S')$ with finitely many non-degenerate simplices any map of pairs of simplicial sets  $f:(S,S')\to (\VR^{\nu}_{<r} X,\VR^{\nu}_{<r} A)$ is 
homotopy equivalent relative to $S'$ to a map sending $S$ to $\VR^{\nu}_{<r} A.$ Set $X'=f_0(S_0)\subseteq X$ and $A'=f_0(S'_0)\subseteq A$ and choose $n$ such that $S=\sk_n(S)$ and $S'=\sk_n(S').$ Then the map $f$ factors through the inclusion  $(\VR^{\nu,n}_{<r} X',\VR^{\nu,n}_{<r} A') \to (\VR^{\nu}_{<r} X,\VR^{\nu}_{<r} A),$ and it is sufficient to prove the statement for 
\begin{equation}
 (S,S')=(\VR^{\nu,n}_{<r} X',\VR^{\nu,n}_{<r} A')   
\end{equation}
and $f$ induced by the embedding $\alpha :X'\hookrightarrow X.$ 

By Lemma \ref{lemma:finite_space}, we can choose $\varepsilon>0$ such that the map 
\begin{equation}
(\VR^{\nu,n}_{<r - \varepsilon C_{n+2}} X',\VR^{\nu,n}_{<r - \varepsilon C_{n+2} } A') \overset{\cong}\longrightarrow (\VR^{\nu,n}_{<r} X',\VR^{\nu,n}_{<r} A')    
\end{equation}
is an isomorphism. Proposition \ref{prop:c-short:induces} implies that the $r$-locally short maps $\alpha$ and $\iota \rho_\varepsilon \alpha$ induce maps
\begin{equation}
\alpha_* , (\iota \rho_\varepsilon \alpha)_* : 
(\VR^{\nu,n}_{<r-\varepsilon C_{n+2} } X',\VR^{\nu,n}_{<r-\varepsilon 
 C_{n+2} } A') \longrightarrow (\VR^{\nu}_{<r} X,\VR^{\nu}_{<r} A).  
\end{equation} 
Then it is enough to prove that these maps are  homotopic $\alpha_* \sim (\iota \rho_\varepsilon \alpha)_*$ relative to $\VR^{\nu,n}_{<r-\varepsilon 
 C_{n+2} } A'.$  Since $\iota \rho_\varepsilon$ and ${\sf id}$ are $r$-locally $\varepsilon$-shortly equivalent morphisms of pairs $(X,A)\to (X,A),$ we have a sequence ${\sf id}=f_0,\dots, f_m = \iota \rho_\varepsilon :(X,A)\to (X,A)$ such that $(f_i,f_{i+1})$ is an $r$-locally $\varepsilon$-short pair  and $f_i|_A={\sf id}_A$. 
If we compose them with $\alpha,$ we obtain that  $(f_i\alpha ,f_{i+1}\alpha)$  are $r$-locally $\varepsilon$-short   pairs so that $\iota \rho_\varepsilon\alpha$ is $r$-locally $\varepsilon$-shortly equivalent to $\alpha$ as maps of pairs $(X',A')\to (X,A)$. By Proposition \ref{prop:c-short:homotopy} and the formula \eqref{eq:homotopy_h^{f,g}}, we obtain that there is a simplicial homotopy 
\begin{equation}
(f_i\alpha )_* \sim (f_{i+1} \alpha)_* : 
(\VR^{\nu,n}_{<r-\varepsilon C_{n+2}} X',\VR^{\nu,n}_{<r-\varepsilon C_{n+2} } A') \longrightarrow (\VR^{\nu}_{<r} X,\VR^{\nu}_{<r} A),  
\end{equation} 
relative to $\VR^{\nu,n}_{<r-\varepsilon C_{n+2} } A'$ which finishes the proof. 
\end{proof}

 \begin{proposition}[{cf. \cite[(2.4)]{hausmann1994vietoris}}]
Let $X$ be a metric space and $A\subseteq X$ be its dense subspace.  Then the inclusion 
$\VR^\nu_{<r} A \to \VR^\nu_{<r} X$
is a weak equivalence. 
\end{proposition}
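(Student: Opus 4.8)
The plan is to obtain this as a direct consequence of Theorem \ref{th:embedding}, whose hypotheses we verify using density. Fix $\varepsilon>0$. Since $A$ is dense in $X$, for each $x\in X$ we may choose a point $\rho_\varepsilon(x)\in A$ with $d(x,\rho_\varepsilon(x))<\varepsilon/2$, making the choice so that $\rho_\varepsilon(x)=x$ whenever $x\in A$ (consistent, since then $d(x,x)=0<\varepsilon/2$). This defines a (generally discontinuous) map $\rho_\varepsilon\colon X\to A$ with $\rho_\varepsilon\iota={\sf id}_A$.

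First I would check that $\rho_\varepsilon$ is an $\varepsilon$-short map, hence in particular $r$-locally $\varepsilon$-short: for $x,x'\in X$ the triangle inequality gives $d(\rho_\varepsilon(x),\rho_\varepsilon(x'))\le d(\rho_\varepsilon(x),x)+d(x,x')+d(x',\rho_\varepsilon(x'))<d(x,x')+\varepsilon$. Next I would produce the required shortly equivalence of pairs by using the one-step sequence ${\sf id}_X,\iota\rho_\varepsilon\colon (X,A)\to(X,A)$. Both maps carry $A$ into $A$ and restrict to ${\sf id}_A$ on $A$ (for $\iota\rho_\varepsilon$ this is $\rho_\varepsilon\iota={\sf id}_A$), so the ``relative to $A$'' condition $f_i|_A=f_{i+1}|_A$ is met; and $({\sf id}_X,\iota\rho_\varepsilon)$ is an $r$-locally $\varepsilon$-short pair because both maps are $r$-locally $\varepsilon$-short and, for $d(x,x')<r$, we have $d(x,\iota\rho_\varepsilon(x'))=d(x,\rho_\varepsilon(x'))\le d(x,x')+d(x',\rho_\varepsilon(x'))<d(x,x')+\varepsilon$. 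Thus $\iota\rho_\varepsilon$ and ${\sf id}_X$ are $r$-locally $\varepsilon$-shortly equivalent as maps of pairs $(X,A)\to(X,A)$.

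Since $\varepsilon>0$ was arbitrary, all hypotheses of Theorem \ref{th:embedding} hold for the inclusion $\iota\colon A\hookrightarrow X$, and that theorem yields that $\VR^\nu_{<r}A\to\VR^\nu_{<r}X$ is a weak equivalence. I do not expect any real obstacle here: all of the genuine topological work — the Whitehead-type criterion of Lemma \ref{lemma:anodyne} and the skeleton-wise homotopies of Proposition \ref{prop:c-short:homotopy} — is already packaged inside Theorem \ref{th:embedding}, and density is exactly what is needed to manufacture the approximate retractions $\rho_\varepsilon$. The only points requiring care are that the selection defining $\rho_\varepsilon$ be made compatibly with $\rho_\varepsilon|_A={\sf id}_A$, and that the ``relative to $A$'' condition be tracked through the single homotopy step.
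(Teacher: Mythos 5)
Your proof is correct and follows essentially the same route as the paper: construct the approximate retraction $\rho_\varepsilon$ from density with $\rho_\varepsilon|_A={\sf id}_A$, verify that $({\sf id}_X,\iota\rho_\varepsilon)$ is an $\varepsilon$-short pair of maps of pairs $(X,A)\to(X,A)$, and invoke Theorem \ref{th:embedding}. The only cosmetic difference is that you spell out the one-step ``$r$-locally $\varepsilon$-shortly equivalent relative to $A$'' verification, which the paper leaves implicit.
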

\begin{proof}
Take  $\varepsilon>0.$ Since $A$ is dense in $X,$ for any $x\in X$ there exists $a\in A$ such that $d(x,a)\leq \varepsilon/2.$ Hence we can choose a map $\rho_\varepsilon : X\to A$ such that $d(x,\rho_\varepsilon(x))\leq \varepsilon/2$ and $\rho_\varepsilon(a)=a$ for $a\in A.$ Then we obtain $d(x,\rho_\varepsilon(x'))\leq d(x,x')+\varepsilon/2$ and $d(\rho_\varepsilon(x),\rho_\varepsilon(x')) \leq d(x,x')+\varepsilon.$ Therefore $\rho_\varepsilon$ is $\varepsilon$-short and $({\sf id},\iota \rho_\varepsilon)$ is a $\varepsilon$-short pair of maps $(X,A)\to (X,A)$. Then the statement follows from Theorem \ref{th:embedding}. 
\end{proof}

\begin{corollary}[{cf. \cite[(2.5)]{hausmann1994vietoris}}] 
\label{cor:completion}
Let $\hat X$ be the metric completion of a metric space $X.$ Then the map
$\VR^\nu_{<r} X \overset{\sim}\to  \VR^\nu_{<r} \hat X$
is a weak equivalence for any $r>0.$ In particular, we have an isomorphism for strictly blurred magnitude homology 
\begin{equation}\label{eq:open_blured_completion}
\MH_{n,<r}(X)\cong \MH_{n,<r}(\hat X).     
\end{equation} 
\end{corollary}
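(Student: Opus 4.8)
The plan is to deduce this statement directly from the preceding proposition on dense subspaces. First I would recall that, by the very construction of the metric completion, $X$ embeds isometrically into $\hat X$ as a dense subspace: every point of $\hat X$ is a limit of a Cauchy sequence in $X$, so for any $\varepsilon>0$ and any $\hat x\in\hat X$ there is $x\in X$ with $d(\hat x,x)<\varepsilon$. Hence the hypotheses of the proposition ``$A\subseteq X$ dense $\Rightarrow \VR^\nu_{<r}A\to\VR^\nu_{<r}X$ is a weak equivalence'' (cf.\ \cite[(2.4)]{hausmann1994vietoris}) are met with the pair $(X,\hat X)$ in place of $(A,X)$.

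Applying that proposition therefore immediately yields that the inclusion-induced map $\VR^\nu_{<r}X\to\VR^\nu_{<r}\hat X$ is a weak equivalence of simplicial sets, for any distance matrix norm $\nu$ and any $r>0$. This is the first assertion.

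For the homology consequence, I would invoke the standard fact that a weak equivalence of simplicial sets induces isomorphisms on all homology groups (with coefficients in any commutative ring $\KK$), so $H_n(\VR^\nu_{<r}X)\cong H_n(\VR^\nu_{<r}\hat X)$ for every $n$. Specializing to the distance matrix norm $\nu=\nu_1$ and using the identification $\MH_{n,<r}(-)=H_n(\VR^1_{<r}(-))=H_n(\VR^{\nu_1}_{<r}(-))$ established earlier, one obtains $\MH_{n,<r}(X)\cong\MH_{n,<r}(\hat X)$, which is \eqref{eq:open_blured_completion}.

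There is essentially no obstacle here: the corollary is a formal consequence of the already-proved proposition once one observes density of $X$ in $\hat X$. The only points requiring a word of care are (i) that completion is taken for genuine metric (not pseudo-metric) spaces so the embedding $X\hookrightarrow\hat X$ is isometric, and (ii) that one is using the \emph{strict} (open-interval) versions, for which the preceding results were proved; the analogous statement fails for the non-strict versions, as noted in Remark \ref{remark:non-iso-for-closed}.
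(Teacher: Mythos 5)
Your proof is correct and matches the paper's intent exactly: the corollary is stated immediately after the proposition on dense subspaces (cf.\ \cite[(2.4)]{hausmann1994vietoris}) and is a direct application of it to the dense isometric embedding $X\hookrightarrow\hat X$, with the homology statement following from the identification $\MH_{n,<r}(-)=H_n(\VR^1_{<r}(-))$. No gaps.
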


\begin{remark}\label{remark:non-iso-for-closed} The analogue of the isomorphism \eqref{eq:open_blured_completion} does not hold for the non-strictly blurred magnitude homology 
\begin{equation}
\MH_{n,\leq r}(X)\not\cong \MH_{n,\leq r}(\hat X).
\end{equation}
Indeed, if we had such an isomorphism, the long exact sequence \eqref{eq:long_exact_magnitude} would imply the isomorphism for magnitude homology. However, it is easy to construct an example of a metric space $X$ such that 
\begin{equation}
\MH_{1,1}(X) \not\cong \MH_{1,1}(\hat X)
\end{equation}
(see the description of $\MH_{1,r}$ in \cite[Cor. 4.5]{leinster2021magnitude}).
For example,  we can take $X\subseteq \RR^2$ defined by 
\begin{equation}
X=\RR^2\setminus((0,1)\times \{0\}), \end{equation}
where $(0,1)\subseteq \RR$ is the open interval. Then we have $\hat X=\RR^2,$  $\MH_{1,1}(X)\neq 0$ and $\MH_{1,1}(\hat X)=0.$ This also implies that Theorem \ref{th:embedding} does not hold for the non-strict version of the simplicial set $\VR^\nu_{\leq r}.$
\end{remark}

\begin{definition}[$r$-locally short deformation retract]\label{def:shortly_contractible} A metric subspace $A\subseteq X$ is called $r$-locally short deformation retract, if there is a short map $\rho:X\to A$ such that $\rho(a)=a$ for  $a\in A$ and there is a map $h:X\times  [0,c] \to X,$ for some $c>0,$ such that $h(x,0)=\rho(x),$  $h(x,c)=x$ for $x\in X,$ $h(a,t)=a$ for $a\in A,t\in [0,c]$ and 
\begin{equation}
d(x,x')<r \hspace{5mm} \Rightarrow \hspace{5mm} d(h(x,t),h(x',t'))\leq d(x,x')+|t-t'|.
\end{equation}
If $A$ consists of one point, then $X$ is called $r$-locally shortly contractible. If $r=\infty,$ then we use the terms ``short deformation retract'' and ``shortly contractible''.  
\end{definition}

\begin{proposition}\label{prop:short_def_retract}
For an $r$-locally short deformation retract $A\subseteq X$ the inclusion $\VR^\nu_{<r} A \to \VR^\nu_{<r} X$ is a weak equivalence.  
\end{proposition}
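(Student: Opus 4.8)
The plan is to deduce this directly from Theorem~\ref{th:embedding}, taking $\rho_\varepsilon=\rho$ for every $\varepsilon>0$, where $\rho:X\to A$ is the short retraction furnished by Definition~\ref{def:shortly_contractible}. Since $\rho$ is short it is a fortiori $r$-locally $\varepsilon$-short, and $\rho\iota=\mathrm{id}_A$ holds by hypothesis, so the only thing left to verify is that the maps of pairs $\iota\rho,\mathrm{id}_X:(X,A)\to(X,A)$ are $r$-locally $\varepsilon$-shortly equivalent relative to $A$ for each $\varepsilon>0$.

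To build such an equivalence I would slice up the homotopy $h:X\times[0,c]\to X$ supplied by Definition~\ref{def:shortly_contractible}. Given $\varepsilon>0$, pick a partition $0=t_0<t_1<\dots<t_N=c$ with $t_{k+1}-t_k<\varepsilon$ for all $k$, and set $f_k:X\to X$, $f_k(x)=h(x,t_k)$. Then $f_0=\iota\rho$ (because $h(x,0)=\rho(x)$) and $f_N=\mathrm{id}_X$ (because $h(x,c)=x$). Each $f_k$ sends $A$ into $A$ and restricts to $\mathrm{id}_A$, since $h(a,t)=a$ for all $a\in A$; moreover the defining inequality $d(h(x,t),h(x',t'))\le d(x,x')+|t-t'|$, valid whenever $d(x,x')<r$, gives with $t=t'=t_k$ that $f_k$ is $r$-locally $0$-short, hence in particular an $r$-locally $\varepsilon$-short map of pairs $(X,A)\to(X,A)$. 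Applying the same inequality with $t=t_k$ and $t'=t_{k+1}$ yields $d(f_k(x),f_{k+1}(x'))\le d(x,x')+(t_{k+1}-t_k)<d(x,x')+\varepsilon$ whenever $d(x,x')<r$, so $(f_k,f_{k+1})$ is an $r$-locally $\varepsilon$-short pair; and $f_k|_A=f_{k+1}|_A=\mathrm{id}_A$. Hence the sequence $f_0,\dots,f_N$ exhibits $\iota\rho$ and $\mathrm{id}_X$ as $r$-locally $\varepsilon$-shortly equivalent relative to $A$, exactly as Theorem~\ref{th:embedding} requires.

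With these points checked, Theorem~\ref{th:embedding} applies and produces the weak equivalence $\VR^\nu_{<r}A\overset{\sim}\to\VR^\nu_{<r}X$. There is no serious obstacle here: the whole argument is a reduction to Theorem~\ref{th:embedding}, and the only points demanding a little care are choosing the mesh of the partition strictly below $\varepsilon$ (so that consecutive maps genuinely form $r$-locally $\varepsilon$-short pairs, not merely $r$-locally short maps) and recording that every $f_k$ fixes $A$ pointwise, which is precisely what makes the short equivalence relative to $A$ and hence what Theorem~\ref{th:embedding} can consume.
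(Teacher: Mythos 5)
Your proof is correct and follows essentially the same route as the paper's: both reduce to Theorem~\ref{th:embedding} with $\rho_\varepsilon=\rho$ and then exhibit the required $r$-locally $\varepsilon$-short equivalence by slicing the homotopy $h$ along a time partition of mesh less than $\varepsilon$. The only cosmetic difference is that the paper fixes the uniform partition $t_k=k/n$ (extending $h$ past $c$ by $h(x,t)=h(x,c)$), whereas you use an arbitrary partition of $[0,c]$; this changes nothing.
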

\begin{proof} In order to use Theorem \ref{th:embedding}, we need to show that for any $\varepsilon>0$ the short maps of pairs ${\sf id}, \iota \rho : (X,A)\to (X,A)$ are $r$-locally $\varepsilon$-shortly equivalent. Consider $n$ such that $1/n<\varepsilon$ and set $f_i(x)=h(x,i/n)$ for $0\leq i\leq \lceil cn \rceil$ assuming that $h(x,t)=h(x,c)$ for $t>c.$  Then, if $d(x,x')<r,$ we have
\begin{equation}
\begin{split}
d(f_i(x),f_{i+1}(x'))&=d(h(x,i/n),h(x',(i+1)/n))\\
& \leq d(x,x')+ 1/n \\ 
&\leq d(x,x')+ \varepsilon.
\end{split}
\end{equation}
Therefore the pair $(f_i,f_{i+1})$ is $r$-locally $\varepsilon$-short, $f_i(a)=a$ for $a\in A$, and hence ${\sf id}$ and $ \iota \rho$ are $\varepsilon$-equivalent maps of pairs $(X,A)\to (X,A)$. 
\end{proof}

\begin{corollary}\label{cor:short_contr}
If $X$ is shortly contractible, then $\VR^\nu_{<r} X$ is weakly contractible for any $\nu$ and $r.$
\end{corollary}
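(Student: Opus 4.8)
The plan is to derive this as a direct specialization of Proposition \ref{prop:short_def_retract}. Unwinding Definition \ref{def:shortly_contractible}, saying that $X$ is shortly contractible is exactly saying that a one-point subspace $A=\{x_0\}\subseteq X$ is a short deformation retract of $X$, i.e. an $\infty$-locally short deformation retract. Since the defining inequality $d(h(x,t),h(x',t'))\leq d(x,x')+|t-t'|$ in the $\infty$-local case holds with no restriction on $d(x,x')$, it holds a fortiori whenever $d(x,x')<r$; hence $A$ is also an $r$-locally short deformation retract for every $r>0$. So Proposition \ref{prop:short_def_retract} applies and yields that the inclusion $\VR^\nu_{<r} A \hookrightarrow \VR^\nu_{<r} X$ is a weak equivalence for every distance matrix norm $\nu$ and every $r>0$.

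It then suffices to observe that $\VR^\nu_{<r} A$ is weakly contractible. Because $A$ has a single point, the only tuple in $A^{n+1}$ is the constant tuple $(x_0,\dots,x_0)$, whose distance matrix is the zero matrix; by axiom (1) of a distance matrix norm, $\nu(0)=0<r$, so this constant tuple is a simplex. Therefore $(\VR^\nu_{<r}A)_n$ is a single element for every $n$, which identifies $\VR^\nu_{<r}A$ with the terminal simplicial set $\Delta^0$. Composing the weak equivalence of the previous paragraph with the contractibility of $\Delta^0$ gives that $\VR^\nu_{<r}X$ is weakly contractible, as claimed.

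I do not anticipate any real obstacle here: the content is entirely in Proposition \ref{prop:short_def_retract} (and ultimately in Theorem \ref{th:embedding}), and the only thing to check is the bookkeeping that ``shortly contractible'' is precisely the hypothesis of that proposition for a one-point subspace with $r=\infty$, together with the elementary facts that $\infty$-local conditions imply $r$-local ones and that the $\nu$-Vietoris--Rips simplicial set of a point is $\Delta^0$.
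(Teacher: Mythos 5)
Your proof is correct and matches the paper's intent: the paper states this corollary immediately after Proposition \ref{prop:short_def_retract} without further argument, and the intended derivation is exactly the one you spell out — a one-point subspace is an $\infty$-locally (hence $r$-locally) short deformation retract, and $\VR^\nu_{<r}\{pt\}\cong\Delta^0$. The only checks you need (that the $\infty$-local condition implies the $r$-local one, and that $\nu(0)=0$ by homogeneity) are carried out correctly.
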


A subset $X\subseteq \RR^n$ is called \emph{star domain}, if there exists a point $x_0\in X$ such that for any $x\in X$ the line segment from $x_0$ to $x$ lies in $X.$ For example, convex sets are star domains.   

\begin{proposition}\label{prop:star:domain} For a star domain $X\subseteq \RR^n,$  the simplicial set $\VR^\nu_{<r} X$ is weakly contractible for any  $r>0.$  
\end{proposition}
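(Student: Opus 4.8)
The plan is to treat the bounded and the unbounded cases separately. If $X$ is bounded, I would show directly that $X$ is shortly contractible and then quote Corollary~\ref{cor:short_contr}. Fix a centre $x_0\in X$ of the star domain and a constant $M>0$ with $d(x,x_0)\le M$ for all $x\in X$ (if $X=\{x_0\}$ the claim is trivial). Define $h\colon X\times[0,M]\to X$ by $h(x,s)=x_0+\tfrac{s}{M}(x-x_0)$; since $0\le s/M\le 1$, this point lies on the segment $[x_0,x]\subseteq X$, so $h$ is well defined, with $h(x,0)=x_0$ and $h(x,M)=x$. From the identity $h(x,s)-h(x',s')=\tfrac{s}{M}(x-x')+\tfrac{s-s'}{M}(x'-x_0)$, together with $s\le M$ and $d(x',x_0)\le M$, subadditivity of the Euclidean norm gives $d(h(x,s),h(x',s'))\le d(x,x')+|s-s'|$. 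Thus, with the (evidently short) constant map $X\to\{x_0\}$, this $h$ exhibits $X$ as shortly contractible, and Corollary~\ref{cor:short_contr} yields that $\VR^\nu_{<r}X$ is weakly contractible.

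For a general (possibly unbounded) star domain $X$ with centre $x_0$, I would reduce to the bounded case by an exhaustion. For $k\ge 0$ put $X_k=\{x\in X:d(x,x_0)\le k\}$. Each $X_k$ is again a star domain with the same centre: a point on the segment from $x_0$ to $x\in X_k$ lies in $X$ and has distance $\le d(x,x_0)\le k$ from $x_0$, so it belongs to $X_k$. Each $X_k$ is bounded, so by the previous step $\VR^\nu_{<r}X_k$ is weakly contractible. The $X_k$ form an increasing chain with $X=\bigcup_k X_k$, hence the inclusions induce monomorphisms $\VR^\nu_{<r}X_k\hookrightarrow\VR^\nu_{<r}X_{k+1}$. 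Since the $\nu$-weight of a tuple depends only on the (subspace) distances among its entries, every simplex of $\VR^\nu_{<r}X$ already lies in some $\VR^\nu_{<r}X_k$, so $\VR^\nu_{<r}X=\colim_k \VR^\nu_{<r}X_k$ is a filtered colimit of simplicial sets along monomorphisms.

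To conclude I would use that homotopy groups of simplicial sets commute with filtered colimits: a map of a sphere into $|\colim_k \VR^\nu_{<r}X_k|=\colim_k|\VR^\nu_{<r}X_k|$, or a homotopy between two such, has compact source and hence factors through a finite stage. In particular any basepoint of $\VR^\nu_{<r}X$ comes from some stage $k_0$, and $\pi_n(\VR^\nu_{<r}X)=\colim_{k\ge k_0}\pi_n(\VR^\nu_{<r}X_k)=0$ for $n\ge 1$, while $\pi_0(\VR^\nu_{<r}X)=\colim_k\pi_0(\VR^\nu_{<r}X_k)=\ast$; since $X\ni x_0$ the simplicial set is nonempty, so $\VR^\nu_{<r}X$ is weakly contractible.

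The main obstacle, and the only point needing care, is precisely the possible unboundedness of a star domain: Corollary~\ref{cor:short_contr} cannot be applied to $X$ itself because the radial contraction is not a short homotopy (moving the time parameter displaces far-away points by more than the time increment), so the argument must go through the filtered union of the bounded pieces $X_k$, and there the one nontrivial input is that weak contractibility is preserved under filtered colimits of simplicial sets along monomorphisms.
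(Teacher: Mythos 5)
Your proof is correct and follows essentially the same route as the paper: in the bounded case the radial homotopy exhibits $X$ as shortly contractible so that Corollary~\ref{cor:short_contr} applies, and the unbounded case is handled by exhausting $X$ by bounded star domains and passing to the increasing union $\VR^\nu_{<r}X=\bigcup_k\VR^\nu_{<r}X_k$. The only (immaterial) difference is how the last step is justified: the paper identifies the union with the homotopy colimit of the increasing sequence of simplicial subsets, while you invoke the equivalent fact that homotopy groups commute with such filtered colimits via compactness.
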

\begin{proof} For simplicity we make a shift of $X$ such that $x_0=0.$ First we assume that $X$ is bounded and denote by $c>0$ a constant such that $\|x\|\leq c$ for any $x\in X.$
Consider the map $h:X\times_1 [0,c] \to X$ defined by $h(x,t)= (t/c) x.$ The map is short:   
\begin{equation}
\begin{split}
 \|(t/c)x - (t'/c)x'\| &\leq 
\|(t/c) x - (t/c) x'\|+\|(t/c)x' - (t'/c) x'\| \\
& \leq (t/c)\cdot \|x - x'\| + |(t-t')/c|\cdot \|x'\|   \\
&\leq \|x - x'\| + |t-t'|. 
\end{split}  
\end{equation}
Then $X$ is shortly contractible and we can use  Corollary \ref{cor:short_contr}. 
Now assume that $X$ is not bounded. Then we can present it as the union $X=\bigcup_n X_n,$ where $X_n$ is the intersection with the ball  $X\cap B_n(0)$ of radius $n.$ Then $\VR^\nu_{<r} X = \bigcup \VR^\nu_{<r} X_n$ is the union of the increasing sequence of simplicial subsets. Hence the union is homotopy equivalent to the homotopy colimit $\VR^\nu_{<r} X  \simeq  \hocolim \VR^\nu_{<r} X_n.$ Therefore, the fact that $\VR^\nu_{<r} X_n$ is weakly contractible for each $n$ implies that $\VR^\nu_{<r} X$ is weakly contractible. 
\end{proof}

\begin{corollary}\label{cor:convex}
For a convex subset $X\subseteq \RR^n,$  the simplicial set $\VR^\nu_{<r} X$ is weakly contractible for any  $r>0.$
\end{corollary}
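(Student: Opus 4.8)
The plan is to obtain this as an immediate special case of Proposition \ref{prop:star:domain}. First I would recall that every (non-empty) convex set is a star domain: pick any $x_0\in X$; then for each $x\in X$ and each $t\in[0,1]$ the point $(1-t)x_0+tx$ lies in $X$ by convexity, so the whole line segment from $x_0$ to $x$ is contained in $X$, which is precisely the defining condition of a star domain with center $x_0$. Having verified this, I would simply invoke Proposition \ref{prop:star:domain} to conclude that $\VR^\nu_{<r}X$ is weakly contractible for every $r>0$.

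I expect no real obstacle here: the only content is the elementary observation that convexity implies the star-domain property, and all of the topological work — the short contraction $h(x,t)=(t/c)x$ in the bounded case, together with the reduction of the unbounded case to the homotopy colimit of the bounded slices $X\cap B_n(0)$ — is already carried out in the proof of Proposition \ref{prop:star:domain}. (If one wanted a self-contained argument tailored to the convex case, one could note that the slices $X\cap B_n(0)$ are again bounded and convex, hence shortly contractible via the same formula, so the homotopy-colimit argument applies verbatim; but appealing to Proposition \ref{prop:star:domain} is cleaner.)
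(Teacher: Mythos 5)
Your argument is exactly the paper's intended one: the corollary is stated immediately after Proposition \ref{prop:star:domain}, whose preamble already notes that convex sets are star domains, and the corollary is left without a separate proof precisely because it is this special case. Your proposal is correct and matches the paper's approach.
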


\begin{corollary}
For a convex subset $X\subseteq \RR^n,$ both strictly and non-strictly blurred magnitude homology vanish 
\begin{equation}
\MH_{n,<r}(X)=\MH_{n,\leq r}(X)=0
\end{equation}
for $n\geq 1.$
\end{corollary}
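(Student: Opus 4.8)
The strict case is immediate from Corollary~\ref{cor:convex}: applied to the distance matrix norm $\nu_1$ it says that $\VR^1_{<r}X$ is weakly contractible, so $\MH_{n,<r}(X)=H_n(\VR^1_{<r}X)=0$ for $n\ge 1$.

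For the non-strict case I would pass through the long exact sequence \eqref{eq:long_exact_magnitude}. Since the strict case gives $\MH_{n,<r}(X)=0$ for all $n\ge 1$, the segment $\MH_{n,<r}(X)\to\MH_{n,\le r}(X)\to\MH_{n,r}(X)$ shows that $\MH_{n,\le r}(X)$ injects into the ordinary magnitude homology $\MH_{n,r}(X)$ for $n\ge1$, so it is enough to prove $\MH_{n,r}(X)=0$ for all $n\ge1$. This is the point at which convexity enters essentially, and not just ``short contractibility'' (cf.\ Remark~\ref{remark:shortly_contr}): a convex $X\subseteq\RR^n$ is uniquely geodesic, the geodesic from $x$ to $y$ being the segment $[x,y]$, and in particular every pair of points at distance $\ell>0$ has a point strictly between them, for instance the midpoint $(x+y)/2\in X$. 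For $n=1$ the vanishing then follows from the description of $\MH_{1,\ell}$ in \cite[Cor.~4.5]{leinster2021magnitude} (with $\MH_{1,0}(X)=0$ trivially). For $n\ge2$ I would work with the magnitude chain complex: in the length grading the outer face maps $\partial_0$ and $\partial_n$ vanish, so $\MH_{*,r}(X)$ splits as a direct sum, over pairs $(a,b)$ with $d(a,b)\le r$, of the homology of the subcomplex spanned by tuples $a=x_0,x_1,\dots,x_n=b$ of $\ell_1$-weight $r$; when $r=d(a,b)$ this subcomplex is the augmented, shifted chain complex of the order complex of the totally ordered open segment $(a,b)$, which is a cone and hence contractible, and when $r>d(a,b)$ one contracts it by collapsing interior vertices along the segments joining consecutive points. (Equivalently, one can cite the identification of magnitude homology with the homology of order complexes of betweenness posets, which are contractible here.) Feeding $\MH_{n,r}(X)=0$ back into \eqref{eq:long_exact_magnitude} gives $\MH_{n,\le r}(X)=0$ for $n\ge1$, which together with the strict case is the assertion.

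The reduction through the long exact sequence and the case $n=1$ are routine. The main obstacle is the vanishing of $\MH_{n,r}(X)$ for $n\ge2$: one must make the chain-level collapsing argument precise --- the naive ``insert the midpoint of $\{x_i,x_{i+1}\}$'' operator is not a chain homotopy on the nose, since an inserted midpoint depends on its two neighbours and this breaks the simplicial identities once a neighbour is deleted, so one needs either a more careful homotopy (or a filtration/discrete-Morse argument) or an appeal to the known computations of magnitude homology of uniquely geodesic spaces.
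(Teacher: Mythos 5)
Your approach matches the paper's exactly: the strict case via Corollary~\ref{cor:convex}, and the non-strict case via the long exact sequence \eqref{eq:long_exact_magnitude} reduced to the vanishing of the ordinary magnitude homology $\MH_{n,r}(X)$ for convex $X$. The paper does not attempt the chain-level collapsing argument you sketch (and whose difficulties you rightly flag); it simply cites Kaneta--Yoshinaga \cite{kaneta2021magnitude} for the vanishing of magnitude homology of convex sets, which is precisely the ``identification with order complexes of betweenness posets'' you mention as the alternative.
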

\begin{proof}
For the strict case it follows from Corollary \ref{cor:convex}. For the non-strict case it follows from the long exact sequence \eqref{eq:long_exact_magnitude} and the fact that magnitude homology is trivial for convex sets \cite{kaneta2021magnitude}.  
\end{proof}

\begin{remark}
\label{remark:shortly_contr} Here we want to emphasize the difference between strictly and non-strictly blurred magnitude homology once again.  
Consider a star domain $X\subseteq \RR^2$ defined by 
\begin{equation}
 X=([0,1]\times \{0\})\cup (\{0\}\times [0,1]).   
\end{equation}
Then by Proposition \ref{prop:star:domain} we have $\MH_{n,<r}(X)=0$ for $n\geq 1.$ However, it is easy to see that $\MH_{1,r}(X)\neq 0$ (see \cite[Cor. 4.5]{leinster2021magnitude}) for any $0<r\leq \sqrt{2},$ and hence, using the long exact sequence \eqref{eq:long_exact_magnitude}, we obtain that $\MH_{1,\leq r}(X)\neq 0.$ 
\end{remark}

\begin{example}
Consider an arc  $X=\{e^{ix} \mid 0\leq x\leq c \} \subseteq \mathbb{C},$ where $\pi \leq c <2\pi,$ 
with the Euclidean metric.    
\[ 
\begin{tikzpicture}
\draw (0,0) arc (0:320:1);
\filldraw  (0,0) circle (1pt) node[right]{$1$};
\filldraw  (-0.25,-0.65) circle (1pt) node[right]{$e^{ic}$};
\node at (-3,0) {$X:$};
\end{tikzpicture}
\]
We claim that for any $r<d(1,e^{ic})$ the arc $X$ is $r$-locally shortly contractible, and hence,  $\VR^\nu_{<r} X$ is weakly contractible. Let us prove this. Consider a map $h:X \times [0,c]\to X$ defined by
\begin{equation}
h(e^{ix},t)= e^{ix (t/c)}. 
\end{equation}
We need to check the property from the definition of the $r$-locally short deformation retract. First note that if $|x-x'|\leq \pi$ and $0\leq \alpha\leq 1,$ then
\begin{equation}
|e^{ix \alpha} - e^{ix'\alpha}| \leq  |e^{ix} - e^{ix'}| \leq |x-x'|.
\end{equation}
Assume that $0\leq x,x'\leq c$ and $d(e^{ix},e^{ix'})<r.$ Then $|x-x'|\leq \pi$ and we have 
\begin{equation}
\begin{split}
|e^{ix(t/c)}-e^{ix'(t'/c)}| & \leq |e^{ix(t/c)}-e^{ix'(t/c)}| + |e^{ix'(t/c)}-e^{ix'(t'/c)}|  \\
&\leq |e^{ix}-e^{ix'}| + |t-t'|.
\end{split}
\end{equation}
Therefore $X$ is $r$-locally shortly contractible. In particular, we obtain that the strictly blurred magnitude homology is trivial 
\begin{equation}
\MH_{n,<r}(X)=0, \hspace{1cm} n\geq 1,\  0<r\leq d(1,e^{ic}).
\end{equation}
While the long exact sequence \eqref{eq:long_exact_magnitude} and the description of the first magnitude homology \cite[Cor. 4.5]{leinster2021magnitude} imply that 
\begin{equation}
\MH_{1,\leq r}(X) \cong \MH_{1,r}(X)\cong \mathbb{Z}^{\oplus (2^{\aleph_0})}, \hspace{1cm} 0<r<2.
\end{equation}
\end{example}

\section{Nerve theorem and Mayer-Vietoris sequence}

This section is devoted to results that allow us to reduce the computation of the  homotopy type of $\VR^\nu_L X$ of a metric space $X$ covered by a family of subsets $X=\bigcup_{i\in I}U_i$ to the homotopy types of $\VR^\nu_L U_i.$ In particular, we prove a version of the nerve theorem for the $\nu$-Vietoris-Rips simplicial sets. 

Let $S$ be a set (resp. simplicial set) and $\UU=(U_i)_{i\in I}$ be a family of subsets (resp. simplicial subsets). For a subset $\sigma \subseteq I$ we set $U_\sigma = \bigcap_{i\in \sigma} U_i.$ We denote by $N_\UU$ the nerve complex of this family 
\begin{equation}
N_\UU = \{\emptyset \neq \sigma \underset{\scalebox{0.6}{\sf fin}}\subseteq I \mid U_\sigma \neq \emptyset\}.
\end{equation}

\begin{theorem}[Nerve theorem]\label{theorem:nerve_theorem}
Let $L$ be a left infinite interval, $X$ be a metric space and $\UU=(U_i)_{i\in I}$ be a family of subsets $U_i\subseteq X.$ Assume that $\VR^\nu_L X=\bigcup_{i\in I} \VR^\nu_L U_i$ and $\VR^\nu_L U_\sigma$ is weakly contractible for any $\sigma\in N_\UU.$ Then 
there is a homotopy equivalence  
\begin{equation}
|\VR^\nu_L X| \simeq |N_\UU|.
\end{equation}
\end{theorem}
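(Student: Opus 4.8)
The plan is to deduce the nerve theorem from the standard homotopy-colimit version of the nerve theorem applied to the covering of the simplicial set $\VR^\nu_L X$ by the simplicial subsets $\VR^\nu_L U_i$. First I would observe that the hypothesis $\VR^\nu_L X = \bigcup_{i\in I}\VR^\nu_L U_i$ makes $\VV = (\VR^\nu_L U_i)_{i\in I}$ a covering of the simplicial set $\VR^\nu_L X$ by simplicial subsets, so that there is a canonical map from the homotopy colimit of the diagram $\sigma \mapsto \VR^\nu_L U_\sigma$ (indexed over the poset of finite nonempty subsets $\sigma\subseteq I$ with $\bigcap_{i\in\sigma}\VR^\nu_L U_i \neq \emptyset$, ordered by reverse inclusion) to $\VR^\nu_L X$, and this map is a weak equivalence. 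This is the simplicial-set (projection) formulation of the nerve lemma; I would cite it from \cite{goerss2009simplicial} or from the standard references on homotopy colimits of simplicial sets, noting that a covering by simplicial subsets is always ``cofibrant enough'' because all the maps in the diagram are monomorphisms.

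The next step is to identify the indexing poset and the intersections. The key point is the identity of simplicial subsets
\begin{equation}
\VR^\nu_L U_\sigma = \bigcap_{i\in\sigma}\VR^\nu_L U_i,
\end{equation}
which holds because an $n$-simplex $(x_0,\dots,x_n)$ of $\VR^\nu_L X$ lies in $\VR^\nu_L U_i$ exactly when all its vertices lie in $U_i$ (the weight condition $\WW_\nu(x_0,\dots,x_n)\in L$ is already satisfied, being inherited from $\VR^\nu_L X$), hence it lies in every $\VR^\nu_L U_i$ for $i\in\sigma$ iff all its vertices lie in $U_\sigma$. In particular $\bigcap_{i\in\sigma}\VR^\nu_L U_i$ is nonempty iff $U_\sigma\neq\emptyset$ (a single point of $U_\sigma$ gives a $0$-simplex), so the indexing poset of the homotopy colimit is precisely the face poset of the nerve complex $N_\UU$. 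Under the contractibility hypothesis, every value $\VR^\nu_L U_\sigma$ of the diagram is weakly contractible, so the diagram is objectwise weakly equivalent to the constant diagram at a point over the same poset, and therefore its homotopy colimit is weakly equivalent to the homotopy colimit of the terminal diagram over the face poset of $N_\UU$, which is (the nerve of, hence) $|N_\UU|$ up to homotopy. Combining the two weak equivalences gives $|\VR^\nu_L X|\simeq |N_\UU|$.

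A couple of small points need care. One is to make sure the homotopy-invariance statement ``objectwise weak equivalence of diagrams induces a weak equivalence on homotopy colimits'' is being applied correctly: this holds without cofibrancy hypotheses for homotopy colimits, so the only subtlety is the first reduction — replacing the actual colimit (union) $\VR^\nu_L X$ by the homotopy colimit. That replacement is exactly where the covering-by-simplicial-subsets hypothesis does its work: for such a covering the natural map $\hocolim \to \colim$ is a weak equivalence, and since $\VR^\nu_L X = \colim \VR^\nu_L U_\sigma$ (as the union equals the colimit over the poset of nonempty intersections), we are done. The main obstacle is therefore purely bookkeeping: pinning down the precise homotopy-colimit nerve lemma in a form that applies to an arbitrary (possibly infinite) covering of a simplicial set by simplicial subsets, and checking that the index poset of that homotopy colimit is the face poset of $N_\UU$ rather than, say, the poset $I$ itself. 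Once the correct statement is quoted, the verification of its hypotheses in our situation is immediate from the displayed identity $\VR^\nu_L U_\sigma = \bigcap_{i\in\sigma}\VR^\nu_L U_i$ and the assumed weak contractibility.
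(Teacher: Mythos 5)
Your proposal is correct and its opening move is exactly the paper's: observe the identity $\VR^\nu_L U_\sigma = \bigcap_{i\in\sigma}\VR^\nu_L U_i$, so that $\VV=(\VR^\nu_L U_i)_{i\in I}$ is a cover of the simplicial set $\VR^\nu_L X$ by simplicial subsets whose intersections are the weakly contractible $\VR^\nu_L U_\sigma$, then apply a nerve lemma for simplicial sets. After this common reduction, however, you invoke a different underlying nerve lemma than the paper does, and that is worth contrasting.

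The paper's version of the nerve lemma for simplicial sets (its Theorem \ref{th:simplicial_nerve_theorem}) is proved via the \v{C}ech nerve of the cover viewed as a bisimplicial set: one maps $\Ch(\VV)_{\bullet,m} = \coprod U_{i_0}\cap\dots\cap U_{i_m}$ to the constant-in-one-direction bisimplicial set with levels $\SS(N_\UU)_m$, observes this is a levelwise weak equivalence by contractibility of the pieces, and then uses the fact that the diagonal of a levelwise weak equivalence of bisimplicial sets is a weak equivalence together with Proposition \ref{prop:chech:diagonal} (that $\diag\Ch(\VV)\to X$ is a weak equivalence for any cover). Your proposal instead goes through the homotopy colimit indexed by the face poset $P_\UU$ of the nerve: $\hocolim_{\sigma}\VR^\nu_L U_\sigma\to\colim_\sigma\VR^\nu_L U_\sigma=\VR^\nu_L X$ should be a weak equivalence, and then objectwise contractibility reduces the homotopy colimit to $\hocolim_{P_\UU}*\simeq|N_\UU|$. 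This is a legitimate alternative route, and the second and third steps are unproblematic, but the first step — that $\hocolim\to\colim$ is a weak equivalence for this particular $P_\UU$-diagram — is not automatic from ``all maps are monomorphisms'' as you suggest; it requires a genuine cofibrancy check (e.g., Reedy or projective cofibrancy of the diagram), or a citation to a proved form of the poset-indexed nerve lemma. The paper's bisimplicial route sidesteps precisely this: the levelwise-to-diagonal weak equivalence theorem for bisimplicial sets needs no cofibrancy hypotheses, and the $\diag\Ch(\VV)\to X$ step is handled by the elementary Lemma \ref{lemma:surjective:Chech}. So the trade-off is: your route is conceptually lighter once the poset nerve lemma is in hand, but pinning that lemma down precisely is the part you correctly flag as ``bookkeeping'' and is actually the bulk of the content; the paper's route has more moving pieces in the appendix, but each piece is a standard fact with a clean reference.

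One small point in your favour: you spell out why the identity $\VR^\nu_L U_\sigma = \bigcap_{i\in\sigma}\VR^\nu_L U_i$ holds (the weight of a tuple depends only on the ambient metric, so membership in $\VR^\nu_L U_i$ is purely the condition that all vertices lie in $U_i$), which the paper leaves as ``easy to see.'' That verification is correct and worth having.
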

\begin{proof}
It is easy to see that for $\sigma\in N_\UU$ we have $ \VR^\nu_L U_\sigma = \bigcap_{i\in \sigma}(\VR^\nu_L U_i).$ Then the statement follows from the nerve theorem for simplicial sets that can be found in Appendix (Theorem \ref{th:simplicial_nerve_theorem}).
\end{proof}

For a metric space $X,$ a left infinite interval $L$ and points $x,y\in X$ we consider the ``$\nu$-ellipse with foci $x,$ $y$''  
\begin{equation}
\Ell_L^\nu(x,y) = \{ a\in X \mid  \WW_\nu(x,a,y) \in L \}.
\end{equation}
Note that, since $d(x,a)=\WW_\nu(x,a)\leq \WW_\nu(x,a,y),$ we have 
\begin{equation}\label{eq:elli:B}
\Ell^\nu_L(x,y) \subseteq B_L(x),
\end{equation}
where $B_L(x)=\{a\in X\mid d(x,a)\in L\}.$

If $\nu=\nu_p,$ we denote the $\nu$-ellipse by $\Ell^p_L(x,y),$ and if $\nu=\nu^{\sf sym}_{p},$ then we denote it by $\Ell^{{\sf sym}\text{-}p}_L(x,y).$ We use the term ``ellipce'' because 
\begin{equation}
\Ell^1_L(x,y) = \{a\in X\mid d(x,a)+d(a,y)\in L\}.
\end{equation}
For $p=\infty$ we have 
\begin{equation}
\Ell^\infty_L(x,y) = B_L(x) \cap B_L(y).
\end{equation}
For the symmetric case and $p=1,$ we have
\begin{equation}
\Ell^{{\sf sym}\text{-}1}_L(x,y) = B_{L-d(x,y)}(x) \cup \Ell^1_{L}(x,y) \cup B_{L-d(x,y)}(y).
\end{equation}

\begin{lemma}\label{lemma:union:family}
Let $\UU=(U_i)_{i\in I}$ be a family of subsets of a metric space $X$ and $L$ be a left infinite interval. Assume that for any $x,y\in X$ such that $d(x,y)\in L$ there exists $i\in I$ such that $\Ell_L^\nu(x,y)\subseteq U_i.$ Then 
\begin{equation}
\VR^\nu_L X = \bigcup_{i\in I} \VR^\nu_L U_i.  
\end{equation}
\end{lemma}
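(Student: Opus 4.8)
The statement is an equality of simplicial subsets of $\SS(\Delta(X))$, so it suffices to check equality degreewise, and in fact equality on $n$-simplices for every $n$. One inclusion is essentially formal: if $(x_0,\dots,x_n)$ is an $n$-simplex of some $\VR^\nu_L U_i$, then each $d(x_k,x_l) \in L$ (because $\WW_\nu(x_k,x_l) \leq \WW_\nu(x_0,\dots,x_n) \in L$ by Lemma \ref{lemma:W_nu}(1),(2)), and $\WW_\nu$ only depends on the pairwise distances, which are unchanged by viewing the points in $X$; hence $(x_0,\dots,x_n)$ is an $n$-simplex of $\VR^\nu_L X$. So $\bigcup_i \VR^\nu_L U_i \subseteq \VR^\nu_L X$ with no hypothesis needed.

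For the reverse inclusion, take an $n$-simplex $\sigma = (x_0,\dots,x_n)$ of $\VR^\nu_L X$, i.e. $\WW_\nu(\sigma) \in L$. If $n = 0$ we need the hypothesis to produce some $U_i$ containing $x_0$; but note that the hypothesis as stated speaks of pairs $x,y$ with $d(x,y)\in L$, and taking $x=y=x_0$ gives $d(x_0,x_0)=0\in L$ (since $L$ is a left infinite interval containing $0$), and $\Ell^\nu_L(x_0,x_0) \ni x_0$ because $\WW_\nu(x_0,x_0,x_0)=0\in L$, so $x_0 \in U_i$ for the corresponding $i$. For $n \geq 1$: the plan is to apply the hypothesis to the pair $(x_0, x_n)$. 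First, $d(x_0,x_n) = \WW_\nu(x_0,x_n) \leq \WW_\nu(\sigma) \in L$ by Lemma \ref{lemma:W_nu}(1),(2), so the hypothesis applies and yields an index $i \in I$ with $\Ell^\nu_L(x_0,x_n) \subseteq U_i$. It then remains to check that every vertex $x_k$ of $\sigma$ lies in $\Ell^\nu_L(x_0,x_n)$, i.e. that $\WW_\nu(x_0, x_k, x_n) \in L$. This follows from Lemma \ref{lemma:W_nu}(1): the tuple $(x_0, x_k, x_n)$ is obtained from $\sigma$ by deleting all vertices except those in positions $0, k, n$, i.e. by a composite of face maps $\partial_i$, so $\WW_\nu(x_0,x_k,x_n) \leq \WW_\nu(\sigma) \in L$ (using that $L$ is downward closed). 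Hence all $x_k \in U_i$, so $\sigma$ is an $n$-simplex of $\SS(\Delta(U_i))$; and since $\WW_\nu(\sigma)\in L$, in fact $\sigma$ is an $n$-simplex of $\VR^\nu_L U_i$. Thus $\VR^\nu_L X \subseteq \bigcup_i \VR^\nu_L U_i$.

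The one point requiring a little care — and the closest thing to an obstacle — is the monotonicity step $\WW_\nu(x_0,x_k,x_n) \leq \WW_\nu(\sigma)$: it is crucial that passing to the sub-tuple $(x_0,x_k,x_n)$ is realized by \emph{face} operations (deletions of coordinates) and not by an arbitrary reindexing, and Lemma \ref{lemma:W_nu}(1) is exactly the statement that $\nu$ does not increase under face maps. Here one uses that $0 \le k \le n$ so the three chosen positions are already in increasing order, so this is indeed a composite of $\partial_i$'s. (Note that we do \emph{not} need $x_0$ and $x_n$ to be any kind of ``extremal'' vertices — the ellipse $\Ell^\nu_L(x_0,x_n)$ is simply chosen using the first and last vertices of the tuple, and every vertex of the tuple lands in it because deleting down to three coordinates only decreases the $\nu$-weight.) Combining the two inclusions gives the desired equality.

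\begin{proof}
Both sides are simplicial subsets of $\SS(\Delta(X))$, so it suffices to prove the equality of their sets of $n$-simplices for every $n \geq 0$.

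\emph{The inclusion $\bigcup_{i\in I}\VR^\nu_L U_i \subseteq \VR^\nu_L X$.} Let $(x_0,\dots,x_n)$ be an $n$-simplex of $\VR^\nu_L U_i$ for some $i$. The $\nu$-weight of a tuple depends only on its distance matrix, which is unchanged when we regard the points as lying in $X$; hence $\WW_\nu(x_0,\dots,x_n)\in L$ computed in $U_i$ equals the same quantity computed in $X$, and $(x_0,\dots,x_n)$ is an $n$-simplex of $\VR^\nu_L X$.

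\emph{The inclusion $\VR^\nu_L X \subseteq \bigcup_{i\in I}\VR^\nu_L U_i$.} Let $\sigma = (x_0,\dots,x_n)$ be an $n$-simplex of $\VR^\nu_L X$, so $\WW_\nu(\sigma)\in L$. Since $L$ is a left infinite interval, $0\in L$.

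First suppose $n = 0$. Applying the hypothesis to $x = y = x_0$ (note $d(x_0,x_0) = 0 \in L$), we obtain $i\in I$ with $\Ell^\nu_L(x_0,x_0)\subseteq U_i$. Since $\WW_\nu(x_0,x_0,x_0) = 0 \in L$, we have $x_0\in \Ell^\nu_L(x_0,x_0)\subseteq U_i$, so $\sigma = (x_0)$ is a $0$-simplex of $\VR^\nu_L U_i$.

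Now suppose $n\geq 1$. By Lemma \ref{lemma:W_nu}(1),(2) we have $d(x_0,x_n) = \WW_\nu(x_0,x_n)\leq \WW_\nu(\sigma)\in L$, so $d(x_0,x_n)\in L$ (as $L$ is downward closed). Applying the hypothesis to the pair $(x_0,x_n)$, we obtain $i\in I$ with $\Ell^\nu_L(x_0,x_n)\subseteq U_i$. For each $k\in\{0,\dots,n\}$, the tuple $(x_0,x_k,x_n)$ is obtained from $\sigma$ by a composite of face maps (deleting all coordinates other than those in the increasing positions $0$, $k$, $n$), so by Lemma \ref{lemma:W_nu}(1) we have $\WW_\nu(x_0,x_k,x_n)\leq \WW_\nu(\sigma)\in L$, hence $\WW_\nu(x_0,x_k,x_n)\in L$. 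Therefore $x_k\in \Ell^\nu_L(x_0,x_n)\subseteq U_i$ for all $k$, so $\{x_0,\dots,x_n\}\subseteq U_i$. Since moreover $\WW_\nu(\sigma)\in L$, the tuple $\sigma$ is an $n$-simplex of $\VR^\nu_L U_i$.

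In all cases $\sigma$ lies in $\bigcup_{i\in I}\VR^\nu_L U_i$, which completes the proof.
\end{proof}
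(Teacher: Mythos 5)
Your proof is correct and takes essentially the same route as the paper: apply the hypothesis to the pair $(x_0,x_n)$ and check that each vertex $x_k$ lies in the ellipse $\Ell^\nu_L(x_0,x_n)$ via Lemma \ref{lemma:W_nu}(1). The paper's own proof is slightly more terse (it does not spell out the $n=0$ case and takes $1\le k\le n$, leaving $x_0,x_n\in\Ell^\nu_L(x_0,x_n)$ implicit), but there is no substantive difference in approach.
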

\begin{proof}
Take $(x_0,\dots,x_n)\in (\VR^\nu_L X)_n.$ Then for any $1\leq k\leq n$ we have 
$(x_0,x_k,x_n)\in (\VR^\nu_L X)_2$. It follows that $x_k\in \Ell_L^\nu(x,y)(x_0,x_n)$ and $d(x_0,x_n)\in L.$ Hence, if $\Ell_L^\nu(x_0,x_n)\subseteq U_i$ then $(x_0,\dots,x_n)\in \VR^\nu_L U_i.$
\end{proof}

\begin{corollary}\label{cor:nerve_theorem}
Let $L$ be a left infinite interval, $X$ be a metric space and $\UU=(U_i)_{i\in I}$ be a family of subsets $U_i\subseteq X$ such that $\VR^\nu_L U_\sigma$ is weakly contractible for any $\sigma\in N_\UU.$ 
Assume that for any $x,y\in X$ such that $d(x,y)\in L$ there exists $i\in I$ such that $\Ell_L^\nu(x,y)\subseteq U_i.$ Then there is a homotopy equivalence  
\begin{equation}
|\VR^\nu_L X| \simeq |N_\UU|.
\end{equation}
\end{corollary}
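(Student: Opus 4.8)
The plan is to obtain this corollary as an immediate consequence of the Nerve theorem (Theorem~\ref{theorem:nerve_theorem}) once its covering hypothesis has been supplied by Lemma~\ref{lemma:union:family}. Recall that Theorem~\ref{theorem:nerve_theorem} requires two things: that $\VR^\nu_L U_\sigma$ be weakly contractible for every $\sigma\in N_\UU$, and that $\VR^\nu_L X=\bigcup_{i\in I}\VR^\nu_L U_i$. The first requirement is assumed verbatim in the hypotheses of the corollary, so nothing remains to be checked there.

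First I would observe that the second hypothesis of the corollary --- for all $x,y\in X$ with $d(x,y)\in L$ there exists $i$ with $\Ell^\nu_L(x,y)\subseteq U_i$ --- is precisely the hypothesis of Lemma~\ref{lemma:union:family}. Applying that lemma therefore gives the covering equality $\VR^\nu_L X=\bigcup_{i\in I}\VR^\nu_L U_i$. With both hypotheses of Theorem~\ref{theorem:nerve_theorem} now in hand, the theorem yields the homotopy equivalence $|\VR^\nu_L X|\simeq|N_\UU|$, which is the assertion.

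I do not expect any real obstacle: the corollary is a formal juxtaposition of Lemma~\ref{lemma:union:family} and Theorem~\ref{theorem:nerve_theorem}, and all of the mathematical content is already carried by those two statements (and, behind them, by the simplicial nerve theorem of the appendix and by the identity $\VR^\nu_L U_\sigma=\bigcap_{i\in\sigma}\VR^\nu_L U_i$, which holds since an $n$-simplex lies in $\VR^\nu_L U_\sigma$ exactly when all of its vertices lie in $U_\sigma$ and its $\nu$-weight lies in $L$). The only point that deserves a moment's care is that $x_0$ and $x_n$ themselves belong to $\Ell^\nu_L(x_0,x_n)$ whenever $d(x_0,x_n)\in L$, which follows from $\WW_\nu(x_0,x_0,x_n)=\WW_\nu(x_0,x_n)=d(x_0,x_n)$ by Lemma~\ref{lemma:W_nu}(1),(2); this is used implicitly inside the proof of Lemma~\ref{lemma:union:family} but adds no difficulty.
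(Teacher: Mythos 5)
Your proposal is correct and matches the paper's (implicit) argument exactly: the corollary is stated without proof precisely because it is the formal juxtaposition of Lemma~\ref{lemma:union:family}, which supplies the covering equality $\VR^\nu_L X=\bigcup_i\VR^\nu_L U_i$, and Theorem~\ref{theorem:nerve_theorem}, whose remaining hypothesis (weak contractibility of the pieces) is assumed. Nothing further is needed.
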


\begin{proposition}[Mayer-Vietoris sequence]
Let $L$ be a left infinite interval, $X$ be a metric space and $U,V\subseteq X$ be subsets such that for any $x,y\in X$ such that $d(x,y)\in L$ we either have  $\Ell_L^\nu(x,y)\subseteq U,$ or $\Ell_L^\nu(x,y)\subseteq V.$ Then the following commutative square is a homotopy pushout.
\begin{equation}\label{eq:pushout:MV}
\begin{tikzcd}
    \VR^\nu_L(U\cap V) \ar{r} \ar{d} & \VR^\nu_L V \ar{d} \\
    \VR^\nu_L U \ar{r} & \VR^\nu_L X
\end{tikzcd}
\end{equation}
In particular, there is a long exact sequence 
\begin{equation}
\dots \to H_n(\VR^\nu_L(U\cap V)) \to H_n(\VR^\nu_L V) \oplus H_n(\VR^\nu_L U) \to H_n(\VR^\nu_L X) \to \dots, 
\end{equation}
where $H_n(-)=H_n(-,\KK)$ is homology with coefficients in a commutative ring $\KK.$
\end{proposition}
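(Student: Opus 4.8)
The plan is to deduce the Mayer--Vietoris statement from the nerve theorem (Theorem~\ref{theorem:nerve_theorem}), applied to the two-element cover $\UU=(U,V)$ of $X$, after verifying its hypotheses. First I would check that $\VR^\nu_L X=\VR^\nu_L U\cup \VR^\nu_L V$. This is precisely Lemma~\ref{lemma:union:family} applied to the family $(U,V)$: the hypothesis that for every $x,y\in X$ with $d(x,y)\in L$ we have $\Ell^\nu_L(x,y)\subseteq U$ or $\Ell^\nu_L(x,y)\subseteq V$ is exactly the hypothesis of that lemma, so $\VR^\nu_L X=\VR^\nu_L U\cup \VR^\nu_L V$ follows immediately. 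Moreover, as noted in the proof of Theorem~\ref{theorem:nerve_theorem}, one always has $\VR^\nu_L(U\cap V)=\VR^\nu_L U\cap \VR^\nu_L V$, so the square~\eqref{eq:pushout:MV} is literally the intersection/union square of the two simplicial subsets $\VR^\nu_L U,\VR^\nu_L V\subseteq\VR^\nu_L X$.

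The key point is then purely simplicial: if a simplicial set $S$ is the union of two simplicial subsets $S_1,S_2$ with $S_1\cap S_2=S_{12}$, then the commutative square with corners $S_{12}, S_1, S_2, S$ is a homotopy pushout. I would justify this by the standard argument that the inclusion $S_i\hookrightarrow S$ is a cofibration in the Kan--Quillen model structure (every monomorphism of simplicial sets is a cofibration), so the pushout of cofibrations computing $S_1\cup_{S_{12}}S_2\to S$ is already a homotopy pushout; and the canonical map $S_1\cup_{S_{12}}S_2\to S$ is an isomorphism precisely because $S=S_1\cup S_2$ and $S_1\cap S_2=S_{12}$ hold levelwise in $\Set$, where the analogous statement about unions and intersections of subsets is elementary. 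Alternatively, if the paper prefers to route everything through the nerve theorem, one can invoke Corollary~\ref{cor:nerve_theorem}: the cover $(U,V)$ has nerve $N_\UU$ equal to a point, an edge, or two points depending on which of $U$, $V$, $U\cap V$ are nonempty, but to conclude a pushout rather than just a homotopy type one really wants the levelwise cofibrancy argument, so I would present that directly.

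Once the square~\eqref{eq:pushout:MV} is a homotopy pushout, the long exact sequence is the usual Mayer--Vietoris sequence associated to a homotopy pushout of spaces (equivalently, of simplicial sets), obtained by applying singular/simplicial homology with coefficients in $\KK$: a homotopy pushout square induces a long exact sequence
\begin{equation}
\dots \to H_n(\VR^\nu_L(U\cap V)) \to H_n(\VR^\nu_L V) \oplus H_n(\VR^\nu_L U) \to H_n(\VR^\nu_L X) \to \dots.
\end{equation}
This is classical; one can cite it or derive it from the fact that a homotopy pushout square of chain complexes yields a distinguished triangle in the derived category. I expect the only genuinely delicate step to be the very first one --- correctly invoking Lemma~\ref{lemma:union:family} to get $\VR^\nu_L X=\VR^\nu_L U\cup\VR^\nu_L V$ --- but in fact that lemma was stated in exactly the form needed, so even that is immediate; the remainder is a formal consequence of model-categorical generalities about monomorphisms of simplicial sets. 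Thus the main ``obstacle'' is essentially bookkeeping: making sure the hypothesis on $\nu$-ellipses is transcribed verbatim into the hypothesis of Lemma~\ref{lemma:union:family}, and citing a clean reference for ``union-of-two-subcomplexes is a homotopy pushout'' and for the Mayer--Vietoris sequence of a homotopy pushout.
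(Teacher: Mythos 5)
Your proposal is correct and follows the same route as the paper: invoke Lemma \ref{lemma:union:family} for the union, observe the intersection identity, note the square is a pushout along monomorphisms (hence a homotopy pushout since monomorphisms of simplicial sets are cofibrations), and read off the long exact sequence. The paper states the final steps tersely; your expanded justification via cofibrancy of monomorphisms is exactly the standard argument the paper has in mind.
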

\begin{proof} By Lemma \ref{lemma:union:family} we have $\VR^\nu_L X = \VR^\nu_L U \cup \VR^\nu_L V.$ It is easy to see that $\VR^\nu_L(U\cap V) =(\VR^\nu_L U)\cap (\VR^\nu_L V).$ Then the square is a pushout consisting of embeddings. Therefore it is a homotopy pushout.
\end{proof}

\section{Geodesic spaces and Riemannian manifolds}

This section is devoted to the proof of the fact that for a geodesic space $X$ and a sufficiently small scale parameter $r$ the geometric realization of the simplicial set $\VR^\nu_{<r} X$ is homotopy equivalent to $X$ (Theorem \ref{th:manifold}).

Let $X$ be a metric space. A minimizing geodesic is a map $\gamma : I \to X$ from an interval $I\subseteq \RR$ such that 
$d(\gamma(s),\gamma(t))=|t-s|$ for any $s,t\in I$ (see \cite[Def.1.9]{gromov1999metric}). A metric space  $X$ is called geodesic, if for any $x,y\in X$ there exists a minimizing geodesic $\gamma_{x,y}:[0,d(x,y)]\to X$ from $x$ to $y.$ For example, any Riemannian manifold can be treated as a geodesic space \cite[\S 1.10, \S 1.14]{gromov1999metric}.

For a geodesic space $X,$ we define a real number $r(X)\geq 0$ as the supremum of numbers $r\geq 0$ satisfying the following three conditions.
\begin{itemize}
    \item[(a)] For any $x,y\in X$ such that $d(x,y)<2r,$ there is a unique minimizing geodesic $\gamma_{x,y}$ joining $x$ to $y.$ 
    \item[(b)] If $x,y,a\in M$ such that  $d(x,y)< 2 r,$ $d(a,x)<r,$ $d(a,y)<r$  and $z$ is a point on the shortest geodesic joining $x$ to $y,$ then $d(a,z)\leq \max(d(a,x),d(a,y)).$
    \item[(c)] if $\gamma$ and $\gamma'$ are minimizing geodesics and if $0\leq s,s'< 2r$ and $0\leq t\leq 1,$ then $d(\gamma(st),\gamma'(s't))\leq d(\gamma(s),\gamma'(s')).$ 
\end{itemize}

Assume that $0<r<r(X).$  Then for any $\alpha=(x_0,\dots,x_n)\in (\VR^\infty_{<r}X)_n,$ we consider a map  (see \cite[p.179]{hausmann1994vietoris}) 
\begin{equation}
T(\alpha) : \Delta^n_{\sf top} \longrightarrow X
\end{equation}
that can be inductively defined as follows. Let $(e_i)_{i\in [n]}$ be the standard basis in $\RR^{[n]}.$ Set $T(\alpha)(e_0)=x_0.$ Suppose $T(\alpha)(y)$ is defined for elements of the form  $y=\sum_{i=0}^{p-1} t_i e_i$ for some $p\leq n$ and define it for an element of the from $z=\sum_{i=0}^{p} t_i e_i.$  If $t_p=1,$ we set $T(\alpha)(e_p)=x_p.$ Otherwise, we consider a point $x=T^\alpha(\frac{1}{1-t_p} \sum_{i=0}^{p-1} t_i e_i)$ and set $T(\alpha)(z)=\gamma_{x,x_p}(t_p\cdot d(x,x_p))$. It is easy to see that the diagrams 
\begin{equation}
\begin{tikzcd}[column sep = 1pt]
\Delta^{n-1}_{\sf top} \ar{rr}{\partial^i}\ar{rd}[below left]{T(\partial_i \alpha)} & &  \Delta^n_{\sf top} \ar{ld}{T(\alpha)} \\ 
& X &
\end{tikzcd}
\hspace{1cm}
\begin{tikzcd}[column sep = 1pt]
\Delta^{n+1}_{\sf top} \ar{rr}{s^i}\ar{rd}[below left]{T(s_i \alpha)} & &  \Delta^n_{\sf top} \ar{ld}{T(\alpha)} \\ 
& X &
\end{tikzcd}
\end{equation}
are commutative. Therefore, this construction defines a map $T:|\VR^\infty_{<r}X|\to X$ that can be restricted to a map
\begin{equation}\label{eq:T}
T^\nu:|\VR^\nu_{<r}X|\longrightarrow X.
\end{equation}

\begin{theorem}[{cf. \cite[(3.5)]{hausmann1994vietoris}}]\label{th:manifold} Let $X$ be a geodesic metric space with $r(X)>0.$ Then for any $0<r<r(X)$ and any distance matrix norm $\nu,$ the map \eqref{eq:T} is a homotopy equivalence. 
\end{theorem}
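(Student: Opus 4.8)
The plan is to follow Hausmann's strategy \cite{hausmann1994vietoris}, adapting it to the $\nu$-setting, and to reduce the problem to the already-established results of this paper wherever possible. The map $T^\nu : |\VR^\nu_{<r}X| \to X$ is the restriction of the map $T : |\VR^\infty_{<r}X| \to X$, so one natural route is to build a homotopy inverse and check the two composites are homotopic to the identity. First I would construct a candidate homotopy inverse $\iota : X \to |\VR^\nu_{<r}X|$ by sending $x$ to the vertex $[x]$ (i.e., the $0$-simplex), which is well defined since $\WW_\nu(x)=0<r$. Then $T^\nu \circ \iota = \mathrm{id}_X$ on the nose, because $T(\alpha)(e_0)=x_0$. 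The real content is to show $\iota \circ T^\nu \simeq \mathrm{id}_{|\VR^\nu_{<r}X|}$.

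For the other composite, I would work simplicially rather than topologically. The key combinatorial device in Hausmann's proof is that for $0<r<r(X)$, condition (b) in the definition of $r(X)$ guarantees a ``local convexity'': if $(x_0,\dots,x_n)$ has $\WW_\infty <r$ and $a$ lies on a geodesic between two of the points, then adding $a$ keeps the diameter below $r$. Combined with condition (c), one gets that the geodesic-contraction maps are $r$-locally short. Concretely, I would cover $X$ by the metric balls $U_x = B_{<r/?}(x)$ (the precise radius to be pinned down, likely $r$ itself or $r/2$ using (b)), observe via Lemma \ref{lemma:union:family} and the ellipse inclusion \eqref{eq:elli:B} that $\VR^\nu_{<r}X = \bigcup_x \VR^\nu_{<r} U_x$, and then use the Nerve theorem (Theorem \ref{theorem:nerve_theorem} / Corollary \ref{cor:nerve_theorem}) to reduce the homotopy type of $|\VR^\nu_{<r}X|$ to that of the nerve of a good cover of $X$. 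The point is that each $U_x$ is $r$-locally shortly contractible via the geodesic contraction $h(y,t)=\gamma_{x,y}(t\cdot d(x,y))$ — condition (c) of $r(X)$ is precisely what makes this $h$ an $r$-locally short homotopy in the sense of Definition \ref{def:shortly_contractible} — so Proposition \ref{prop:short_def_retract} (or Corollary \ref{cor:short_contr}) gives that $\VR^\nu_{<r} U_\sigma$ is weakly contractible for every $\sigma$ in the nerve (intersections of such balls are again geodesically star-shaped about any of their points, using (b)–(c)). One then invokes the classical fact (or Hausmann's argument) that such a cover of the geodesic space $X$ has nerve homotopy equivalent to $X$, and checks that under these identifications the composite $|N_\UU| \to |\VR^\nu_{<r}X| \xrightarrow{T^\nu} X$ is the usual nerve equivalence, so $T^\nu$ is a homotopy equivalence.

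An alternative, more self-contained route that avoids re-proving the cover-nerve equivalence for $X$ is to mimic Hausmann's explicit homotopy directly. One builds, for each $n$, a simplicial homotopy on skeleta between $\iota T^\nu$ and the identity by pushing simplices along the geodesic straightening $T$: given $\alpha=(x_0,\dots,x_n)$, the values $T(\alpha)(e_i)=x_i$ together with the geodesic interpolation produce, after barycentric-subdivision bookkeeping, a sequence of $r$-locally short maps interpolating between the two, and Proposition \ref{prop:c-short:homotopy} converts each step into a simplicial homotopy of $\VR^{\nu,m}_{<r}$. Since $H_*$ and homotopy type of $\VR^\nu_{<r}X$ are detected on finite skeleta, and on each skeleton the straightening only involves finitely many points (hence a finite, geodesically convex sub-metric-space where conditions (a)–(c) apply cleanly), this goes through. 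I would likely present the nerve-theoretic argument as the main line since it reuses the machinery of Sections 4–5 most economically.

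The main obstacle I anticipate is \emph{not} the $\nu$-part — once the underlying combinatorics works for $\nu_\infty$ it transfers via the inclusions \eqref{eq:inclusion-to-infty} and the fact that the $T$-construction lands in $X$ regardless of $\nu$ — but rather verifying carefully that the relevant balls and their finite intersections are genuinely $r$-locally shortly contractible with a contraction satisfying the Lipschitz-in-$t$ estimate of Definition \ref{def:shortly_contractible}. This is exactly where conditions (b) and (c) in the definition of $r(X)$ must be used, and where Hausmann's original argument is most delicate: one must ensure the geodesic contraction of a ball stays inside that ball (this is (b)) and that it is $1$-Lipschitz jointly in the space and time variables (this is (c)), and that these properties are inherited by intersections of balls. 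The bookkeeping relating the barycentric subdivisions appearing in the Nerve theorem's proof (Theorem \ref{th:simplicial_nerve_theorem}) to the map $T^\nu$ so as to identify the composite with the standard nerve equivalence is the second delicate point; I would handle it by checking commutativity on vertices and invoking that both maps are simplicial-homotopic to the canonical one on each skeleton.
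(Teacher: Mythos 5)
Your approach is essentially the one the paper takes: cover $X$ by the open balls $U_x = B_r(x)$, observe via condition (b) that finite intersections $U_\sigma$ are geodesically closed, use condition (c) to show the geodesic contraction $h(y,t) = \gamma_{u,y}\bigl(t\,d(u,y)/(2r)\bigr)$ is a short map so that $U_\sigma$ is shortly contractible, and compare the induced cover $\bigl(\VR^\nu_{<r} U_x\bigr)_{x\in X}$ of $\VR^\nu_{<r}X$ (which really is a cover, by Lemma \ref{lemma:union:family} together with the inclusion \eqref{eq:elli:B}) with the cover of $X$. Where the paper does something cleaner than your main line --- and this is precisely the ``second delicate point'' you flag but leave unresolved --- is that it does not pass through $|N_\UU|$ and then attempt to match the abstract nerve-theorem equivalence with the classical good-cover equivalence for $X$. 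Instead it compares the two homotopy colimits of \v{C}ech nerves directly, invoking Corollary \ref{cor:diag:cover}, \cite[Th. 3.6]{isaacsonexercises}, and \cite[Th.1.1]{dugger2004topological}, and then uses the crucial observation that $T^\nu$ restricts to maps $T^\nu_\sigma : |\VR^\nu_{<r} U_\sigma| \to U_\sigma$ between contractible spaces (the restriction makes sense because $U_\sigma$ is geodesically closed, by (b)); the square of homotopy colimits then commutes with $T^\nu$ on the nose, and all the other arrows are weak equivalences. This restriction step is the piece of explicit bookkeeping your sketch is missing; your alternative route via an explicit simplicial homotopy would also work but is not what the paper does and would require redoing Hausmann's argument rather than reusing the machinery of Sections 5--6.
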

\begin{proof} 
For any $x\in X,$ we set $U_x=B_r(x),$ where $B_r(x)=\{y\in X\mid d(x,y)<r\}$ is an open ball, and consider the family $\UU=(U_x)_{x\in X}.$ For any set of points $\sigma=\{x_0,\dots,x_n\},$ we set $U_\sigma=U_{x_0}\cap \dots \cap U_{x_n}.$ We claim that these intersections $U_\sigma$ are closed with respect to geodesics: 
\begin{equation}
x,y\in U_\sigma \ \Rightarrow \ \gamma_{x,y}(t)\in  U_\sigma 
\end{equation}
for all $t.$ Indeed, by property (b) we obtain 
\begin{equation}
d(x_i,\gamma_{x,y}(t))\leq \max(d(x_i,x),d(x_i,y)) <r.   
\end{equation}

Let us prove that $U_\sigma$ is shortly contractible. Choose a point $u\in U_\sigma.$ Consider a map 
\begin{equation}
h : U_\sigma \times [0,2r] \longrightarrow U_\sigma, \hspace{1cm} h(x,t) = \gamma_{u,x}\left( \frac{td(u,x)}{2r} \right).  
\end{equation}
Note that $h(x,2r)=x$ and $h(x,0)=u.$ By condition (c) we obtain 
\begin{equation}
d(h(x,t),h(x',t)) \leq d(x,x'). 
\end{equation}
Since $\gamma_{x,y}$ is an arc-length parameterized shortest geodesic, we have 
\begin{equation}
d(h(x',t),h(x',t')) \leq \frac{d(u,x')}{2 r} \cdot |t-t'|  \leq |t-t'|.
\end{equation}
Therefore, we have 
\begin{equation}
d(h(x,t),h(x',t')) \leq d(x,x') + |t-t'|  
\end{equation}
for any $x,x'\in U_\sigma$ and $t\in [0,2r].$ 
It follows that $h$ is a short map and $U_\sigma$ is shortly contractible. 

For $x\in X,$ we consider a simplicial subset $U'_x = \VR^\nu_{<r} U_x\subseteq \VR^\nu_{<r} X.$ The inclusion \eqref{eq:elli:B} combined with Lemma \ref{lemma:union:family} imply that $\UU'=(U'_x)_{x\in X}$ is a cover of $\VR^\nu_{<r} X.$ By Corollary \ref{cor:diag:cover} we obtain that the inclusions $U'_\sigma \hookrightarrow \VR^\nu_{<r} X$ induce a weak equivalence from the diagonal of the \v{C}ech complex  $\diag \Ch(\UU')\to \VR^\nu_{<r} X.$  Then \cite[Th. 3.6]{isaacsonexercises} implies that the inclusions induce a weak equivalence from the homotopy colimit 
\begin{equation}
\hocolim_{\Delta^{op}} \Ch(\UU') \overset{\sim} \longrightarrow \VR^\nu_{<r} X.
\end{equation}
On the other hand, \cite[Th.1.1]{dugger2004topological} implies that the inclusions $U_\sigma\hookrightarrow X$ induce a weak equivalence 
\begin{equation}
\hocolim_{\Delta^{op}} \Ch(\UU) \overset{\sim} \longrightarrow X.
\end{equation}
Since $U_\sigma$ is closed with respect to geodesics, the map $T^\nu$ can be restricted to maps   $T^\nu_\sigma : |U'_\sigma| \to U_\sigma.$  Using that  $U_\sigma$ is shortly contractible, we obtain that  $|U'_\sigma|$ and $U_\sigma$ are contractible  (Corollary  \ref{cor:short_contr}), and hence, $T^\nu_\sigma$ is a homotopy equivalence. Therefore the statement follows from the fact that geometric realisation commutes with homotopy colimits \cite[Th.18.9.10]{hirschhorn2003model} and the commutativity of the following diagram
\begin{equation}
\begin{tikzcd}
\hocolim_{\Delta^{op}} |\Ch(\UU')| \ar{d}{\sim} \ar{r}[below]{\sim}{(T^\nu_\sigma)} & \hocolim_{\Delta^{op}} \Ch(\UU) \ar{d}{\sim} \\
 {|\VR^\nu_{<r} X |} \ar{r}{T^\nu} & X
\end{tikzcd}
\end{equation}
\end{proof}

It is well known that a compact Riemannian manifold $M$ has $r(M)>0$ \cite[p.179, Remark 1]{hausmann1994vietoris}.

\begin{corollary}
For a compact Riemannian manifold $M,$ there is $r(M)>0$ such that for any $0<r<r(M)$ and any distance matrix norm $\nu$ there is a homotopy equivalence 
\begin{equation}
|\VR^\nu_{<r} M| \simeq M,
\end{equation}
and for $n\geq 1$ we have an isomorphism 
\begin{equation}
\MH_{n,<r}(M)\cong H_n(M).
\end{equation}
\end{corollary}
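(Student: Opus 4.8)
The plan is to derive the corollary directly from Theorem~\ref{th:manifold}, whose only missing ingredient in the present setting is the classical fact, recalled just before the corollary, that a compact Riemannian manifold $M$ has $r(M)>0$.

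First I would note that the Riemannian distance makes $M$ a geodesic metric space, so Theorem~\ref{th:manifold} applies to $M$ as soon as $r(M)>0$ is known. For the latter I would appeal to the statement of Hausmann recalled just above: on a compact Riemannian manifold the three conditions (a), (b), (c) in the definition of $r(X)$ all hold for every sufficiently small $r>0$. Condition (a) is a uniform positive lower bound on the injectivity radius, which holds by compactness; conditions (b) and (c) are the standard convexity and monotonicity estimates for the distance function along geodesics emanating from a fixed point inside a sufficiently small ball, which follow from Riemannian comparison geometry. Granting $r(M)>0$, for every $0<r<r(M)$ and every distance matrix norm $\nu$ Theorem~\ref{th:manifold} says that the map $T^\nu$ of \eqref{eq:T} is a homotopy equivalence, which is exactly the asserted equivalence $|\VR^\nu_{<r} M|\simeq M$.

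For the homological statement I would specialize to $\nu=\nu_1$. By definition of the $\ell_p$-Vietoris-Rips simplicial set one has $\VR^{\nu_1}_{<r} M=\VR^1_{<r} M$, and by definition of the strictly blurred magnitude homology $\MH_{n,<r}(M)=H_n(\VR^1_{<r} M)$. Since the homology of a simplicial set agrees with the singular homology of its geometric realization, the homotopy equivalence $|\VR^1_{<r} M|\simeq M$ gives $\MH_{n,<r}(M)\cong H_n(M)$ for all $n\geq 0$, in particular for $n\geq 1$ as stated.

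I do not expect a genuine obstacle: once Theorem~\ref{th:manifold} is available the corollary is a one-line specialization, and the single non-formal input, namely $r(M)>0$ for a compact Riemannian manifold, is classical and I would cite it (\cite[p.\,179]{hausmann1994vietoris}) rather than reprove it. If a self-contained treatment were desired, the entire effort would lie in verifying conditions (a)--(c) from standard comparison geometry, as sketched above.
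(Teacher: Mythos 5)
Your proof is correct and follows exactly the same route the paper intends: the corollary is an immediate specialization of Theorem~\ref{th:manifold}, with the only external input being the classical fact $r(M)>0$ for a compact Riemannian manifold, which you cite as the paper does. The deduction $\MH_{n,<r}(M)\cong H_n(M)$ via $\nu=\nu_1$ is also exactly the intended reading.
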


\section{Example: circle}
Given a metric space $X$ and a distance matrix norm $ \nu $, we define $r_\nu(X)$ as the infimum of positive real numbers  $r$ such that the space $|\VR^\nu_{<r} X|$ is not homotopy equivalent to $X$  
\begin{equation}
   r_\nu(X)=\inf\{r>0\mid |\VR^{\nu}_{<r} X|\not\simeq X\}.   
\end{equation}
We also set 
\begin{equation}
r_p(X) = r_{\nu_p}(X), \hspace{1cm} r^{{\sf sym}}_p(X) = r_{\nu^{\sf sym}_p}(X).
\end{equation}
For a geodesic metric space $X$, Theorem \ref{th:manifold} implies that $r_\nu(X)\geq r(X)$. 

We define the circle as
\begin{equation}
  S^1=\RR/\ZZ.  
\end{equation}
The counterclockwise distance on $S^1$ is defined as $\vec{d}(x,y)=\tilde y-\tilde x,$ where $\tilde x,\tilde y\in \RR$ are liftings of $x$ and $y$ respectively such that $\tilde x\leq \tilde y < \tilde x+1.$ The distance on $S^1$ is defined as 
\begin{equation}
 d(x,y)=\min \{\vec{d}(x,y), \vec{d}(y,x)\}.   
\end{equation}
This section is devoted to the proof of the following theorem.
\begin{theorem}\label{th:circle}
    For any $p\in [1,\infty],$ we have 
	\[
	    r_p(S^1)=r_p^{\sf sym}(S^1)=\frac{2^{\frac{1}{p}}}{2+2^{\frac{1}{p}}}.
	\]
\end{theorem}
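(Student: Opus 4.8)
Write $q=2^{\frac1p}$ and let $r_0=r_0(p)=\frac{q}{2+q}$, so that the defining identity $r_0(2+q)=q$ rearranges to $\frac{2r_0}{q}=1-r_0$; this numerical coincidence is the heart of the computation. By Proposition~\ref{prop:vrVR-equiv} (applied to $\nu_p^{\sf sym}$) it suffices to treat the simplicial sets $\VR^\nu_{<r}S^1$ for $\nu\in\{\nu_p,\nu_p^{\sf sym}\}$, and for both of these $C_n(\nu)=n^{1/p}$. The plan is to prove two things: (I) $|\VR^\nu_{<r}S^1|\simeq S^1$ for every $0<r<r_0$; and (II) $|\VR^\nu_{<r}S^1|\not\simeq S^1$ for every $r>r_0$. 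Statement (I) gives $\inf\{r>0:|\VR^\nu_{<r}S^1|\not\simeq S^1\}\ge r_0$, while (II) shows this set meets every interval $(r_0,r_0+\varepsilon)$, so together they give $r_\nu(S^1)=r_0$.

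\textbf{Part (II): the transition.} For $r>r_0$ I would exhibit a ``winding'' simplex. Choose $x_0,x_1,x_2\in S^1$ whose counterclockwise gaps are $t,t,1-2t$ with $t\in(\tfrac14,\tfrac12)$; then $d(x_0,x_1)=d(x_1,x_2)=t$ and $d(x_0,x_2)=1-2t$, the last distance being realized by the arc \emph{not} passing through $x_1$. A direct computation gives
\[
\WW_p(x_0,x_1,x_2)=\max\bigl(1-2t,\ \|t,t\|_p\bigr)=\max\bigl(1-2t,\ qt\bigr),
\]
which, minimized over $t$, attains its minimum $r_0$ at $t=\tfrac1{2+q}$ (for $p=1$ this value is approached as $t\downarrow\tfrac14$). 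Hence for $r>r_0$ a choice of $t$ slightly off $\tfrac1{2+q}$ places $(x_0,x_1,x_2)$, together with all of its faces (Lemma~\ref{lemma:W_nu}(1)), into $\VR^\nu_{<r}S^1$ for $\nu\in\{\nu_p,\nu_p^{\sf sym}\}$. In the normalized chain complex $\partial[x_0,x_1,x_2]=[x_1,x_2]+[x_2,x_0]+[x_0,x_1]$, and the right-hand side is a $1$-cycle that wraps once around $S^1$; combined with the observation that every loop of $\VR^\nu_{<r}S^1$ is homotopic to a ``monotone'' one (triples contained in an arc of length $<r/q$ span $2$-simplices, by Lemma~\ref{lemma:W_nu}(5) since $C_2(\nu)=q$), this forces $\pi_1\bigl(|\VR^\nu_{<r}S^1|\bigr)=0$. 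In particular $H_1=0\ne H_1(S^1)$, so $|\VR^\nu_{<r}S^1|\not\simeq S^1$.

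\textbf{Part (I): homotopy type for $r<r_0$.} Here I would apply the nerve theorem. Fix $r<r_0$ and $\nu\in\{\nu_p,\nu_p^{\sf sym}\}$; I would first determine the $\nu$-ellipses $\Ell^\nu_{<r}(x,y)=\{a:\WW_\nu(x,a,y)<r\}$ for $d(x,y)<r$. For $\nu_p$ these are open sub-arcs of $S^1$ of length at most $\tfrac{2r}{q}$ (attained when $x=y$, where $\Ell^{\nu_p}_{<r}(x,x)=B_{<r/q}(x)$); for $\nu_p^{\sf sym}$ they are open sub-arcs of length at most $2r-d(x,y)\le 2r$. One then covers $S^1$ by a cyclically arranged family $\UU=(U_i)$ of open arcs so that every $\nu$-ellipse is contained in some $U_i$ --- by Lemma~\ref{lemma:union:family} this yields $\VR^\nu_{<r}S^1=\bigcup_i\VR^\nu_{<r}U_i$ --- and arranges the combinatorics so that $N_\UU\simeq S^1$. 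Theorem~\ref{theorem:nerve_theorem} then reduces everything to showing that $\VR^\nu_{<r}U_\sigma$ is weakly contractible for each $\sigma\in N_\UU$, where $U_\sigma$ is again a sub-arc of $S^1$. This is exactly where $r<r_0$ enters: the inequality $\tfrac{2r}{q}<1-r$ (resp. $2r<2r_0$ in the symmetric case), valid precisely for $r<r_0$ by the identity above, allows the hosting arcs to be chosen short enough that their Vietoris--Rips complexes are contractible while still large enough to contain the ellipses. Since $q=1$ when $p=\infty$, this recovers $r_\infty(S^1)=\tfrac13$, the result of Adamaszek and Adams.

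\textbf{Main obstacle.} The hard part is the weak contractibility of $\VR^\nu_{<r}A$ for a proper sub-arc $A\subsetneq S^1$ (with the metric induced from $S^1$) and $r<r_0$. This is \emph{not} a consequence of Corollary~\ref{cor:short_contr} or of Theorem~\ref{th:manifold}: when $r>\tfrac14$ such an arc need not be $r$-locally shortly contractible --- the naive contraction toward an interior point fails the required Lipschitz bound on pairs of points of $A$ that are far apart along $A$ but close in $S^1$ --- and $r(S^1)\le\tfrac14<r_0$. I expect the way around this to be an induction on the length of $A$, or a further nerve-type decomposition of $\VR^\nu_{<r}A$ into Vietoris--Rips complexes of shorter sub-arcs and of $\nu$-ellipses lying inside $A$; the induction again turns on $r<r_0$, which is precisely the condition preventing a ``winding'' simplex from occurring inside an arc short enough to appear in the decomposition. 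Once this arc lemma is in hand, Part (I) follows from the nerve theorem as indicated, and together with Part (II) this gives $r_p(S^1)=r_p^{\sf sym}(S^1)=\tfrac{2^{1/p}}{2+2^{1/p}}$.
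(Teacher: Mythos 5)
Your Part (II) is close to the paper's: you exhibit a cyclically ordered triangle of weight just above $r_0$ whose boundary winds once around $S^1$, and argue this kills the generator. The paper makes this rigorous with Lemma~\ref{lem:cycle:directed_cycle} (proved by a subdivision argument), which shows $H_1(\VR^\nu_{<r}S^1;\ZZ/2)$ is $0$ or $\ZZ/2$ and that any $r$-dense cyclically ordered $1$-cycle represents the nontrivial class when it exists; you would need to supply something equivalent to make your $\pi_1=0$ claim precise. Your threshold computation, minimizing $\max(1-2t,qt)$, is essentially the paper's Lemma~\ref{lem:l_p:circle} (there phrased via H\"older's inequality).

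The real gap, which you flag yourself, is Part (I). Your plan to cover $S^1$ by arcs large enough to contain every $\nu$-ellipse forces arcs of length close to $1-r>1/2$; weak contractibility of $\VR^\nu_{<r}$ of such an arc (with the induced $S^1$-metric) is not supplied by Corollary~\ref{cor:short_contr}, since for $r>1/4$ the arc is not shortly contractible, and the induction you propose is neither carried out nor obviously going to close. The paper sidesteps the issue entirely by covering $S^1$ with \emph{all open semicircles} $U_x=B_{1/4}(x)$. Every nonempty finite intersection $U_\sigma$ is then an open arc of length strictly less than $1/2$, isometric to a real interval, so $\VR^\nu_{<r}U_\sigma$ is weakly contractible by Corollary~\ref{cor:short_contr} with no restriction on $r$ whatsoever. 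The threshold $r\leq t(\nu)$ is used instead to verify the coverage condition of the nerve theorem: the paper defines $t(\nu)$ as the infimum of $\WW_\nu$ over triples \emph{not} contained in any open semicircle (so $r\leq t(\nu)$ guarantees the vertices of every $2$-simplex of $\VR^\nu_{<r}S^1$ lie in a common semicircle), and a combinatorial observation (Lemma~\ref{lem:semicircle}: a finite subset of $S^1$ lies in an open semicircle if and only if every $3$-element subset does) bootstraps this from triples to all simplices, yielding $\VR^\nu_{<r}S^1=\bigcup_x\VR^\nu_{<r}U_x$. With that, Theorem~\ref{theorem:nerve_theorem} applies directly, and the ``arc lemma'' you were worried about is never needed. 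This combinatorial reduction from all simplices to triples is the key idea your sketch is missing.
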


Further we denote by $\nu$ any distance matrix norm and, for any metric space $X$, we set:
\begin{equation}
    \begin{aligned}
        C^{\nu}_{n,<r}(X)&:=C_n(\VR^{\nu}_{<r}(X);\ZZ/2)=\ZZ/2\cdot[(\VR^{\nu}_{<r} X)_n];\\
	Z^{\nu}_{n,<r}(X)&:={\rm Ker}\: \delta_n;\\
	B^{\nu}_{n,<r}(X)&:=\mathrm{Im}\: \delta_{n+1};\\
	H^{\nu}_{n,<r}(X)&:=Z^{\nu}_{n,<r}(X)/B^{\nu}_{n,<r}(X),
    \end{aligned}
\end{equation}
where $\delta_n:C^\nu_{n,<r}(X)\to C^\nu_{n-1,<r}(X)$ is the $n$th boundary map. For any $a,b\in C^\nu_{1,<r}(X),$ we write 
$a \equiv b$
if $a-b\in B^\nu_{1,<r}(X).$

\begin{lemma}\label{lem:order_change}
For any $(x,y)\in C^\nu_{1,<r}(S^1),$  we have 
$(x,y) \equiv (y,x).$
\end{lemma}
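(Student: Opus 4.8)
The plan is to show that $(x,y)-(y,x)$ lies in $B^\nu_{1,<r}(S^1)$; since everything is over $\ZZ/2$ this amounts to exhibiting $2$-chains whose boundary is $(x,y)+(y,x)$. I would combine one elementary ``symmetrisation move'' with a subdivision of a geodesic joining $x$ to $y$.

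First I would record the local move. If $p,q\in S^1$ satisfy $\WW_\nu(p,q,p)<r$, then $(p,q,p)$ is a $2$-simplex of $\VR^\nu_{<r} S^1$, and $(p,p,p)$ is always one since $\WW_\nu(p,p,p)=0$. Over $\ZZ/2$ one has $\delta_2(p,q,p)=(q,p)+(p,p)+(p,q)$ and $\delta_2(p,p,p)=(p,p)$, so $\delta_2((p,q,p)+(p,p,p))=(p,q)+(q,p)$; hence $(p,q)\equiv(q,p)$ in this situation. The weight that controls this is $\WW_\nu(p,q,p)=d(p,q)\cdot\nu(M)$, where $M\in\Dist_{[2]}$ is the distance matrix of $(p,q,p)$ after normalising $d(p,q)$ to $1$; by monotonicity $\nu(M)\le\nu(E_2)=C_2(\nu)<\infty$. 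So the move applies whenever $d(p,q)$ is small enough, but not automatically when $d(p,q)<r$: it can fail as soon as $C_2(\nu)>1$ (e.g.\ for $\nu_p$ with $p<\infty$). This is precisely why a subdivision is needed, and it is the point where one must resist the temptation to finish with the single triple $(x,y,x)$.

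Next I would set up the subdivision. Assuming $x\neq y$ (the case $x=y$ is trivial), pick a minimizing geodesic $\gamma$ from $x$ to $y$ (it exists since $S^1$ is geodesic) and put $z_i=\gamma(i\,d(x,y)/N)$ for $0\le i\le N$, so that $z_0=x$, $z_N=y$ and $d(z_i,z_j)=|i-j|\,d(x,y)/N$. The crucial observation is that a monotone triple $(z_0,z_i,z_{i+1})$ of collinear points has distance matrix $u\cdot s_1 E_1+v\cdot s_0 E_1$ with $u=d(z_0,z_i)$, $v=d(z_i,z_{i+1})$, and $u+v=d(z_0,z_{i+1})\le d(x,y)<r$; using subadditivity together with $\nu(s_iD)=\nu(D)$ and $\nu(E_1)=1$ we get $\WW_\nu(z_0,z_i,z_{i+1})\le u+v<r$, so $(z_0,z_i,z_{i+1})$ is a valid $2$-simplex. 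Computing the boundary of the ``fan'' chain $\sum_{i=1}^{N-1}(z_0,z_i,z_{i+1})$ over $\ZZ/2$ telescopes to $(x,y)+\sum_{i=0}^{N-1}(z_i,z_{i+1})$, so $(x,y)\equiv\sum_{i=0}^{N-1}(z_i,z_{i+1})$; running the identical argument on the reversed geodesic gives $(y,x)\equiv\sum_{i=0}^{N-1}(z_{i+1},z_i)$.

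Finally I would choose $N$ large enough that $d(x,y)\,\nu(M)/N<r$ (possible since $\nu(M)<\infty$). Then $\WW_\nu(z_i,z_{i+1},z_i)=(d(x,y)/N)\,\nu(M)<r$, so the local move applies to each consecutive pair, giving $(z_i,z_{i+1})+(z_{i+1},z_i)\equiv 0$ for all $i$. Summing over $i$ yields $\sum_i(z_i,z_{i+1})\equiv\sum_i(z_{i+1},z_i)$, and combining with the previous step $(x,y)\equiv(y,x)$. The main obstacle is really the bookkeeping of the second step — recognising that collinearity makes the distance matrix of $(z_0,z_i,z_{i+1})$ a nonnegative combination of degeneracies of $E_1$, hence of $\nu$-weight at most $d(z_0,z_{i+1})$ rather than $C_2(\nu)\cdot d(z_0,z_{i+1})$ — since this is exactly what keeps the argument uniform across all distance matrix norms $\nu$.
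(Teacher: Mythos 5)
Your proof is correct, but it takes a genuinely different route from the paper's. The paper observes that $x$ and $y$ both lie in some closed semicircle $[a,a+\tfrac12]\subseteq S^1$, which (with the induced metric) is isometric to a bounded interval in $\RR$, hence shortly contractible; Corollary~\ref{cor:short_contr} then gives $H^\nu_{1,<r}([a,a+\tfrac12])=0$, and since $(x,y)+(y,x)$ is a $1$-cycle supported in that subspace, it is already a boundary there. This is short and conceptual, but it sits on top of the embedding machinery of Section~5 (Theorem~\ref{th:embedding}, Proposition~\ref{prop:short_def_retract}, Corollary~\ref{cor:short_contr}). Your argument is instead an explicit chain-level computation, using only the axioms of a distance matrix norm: you correctly isolate the ``local symmetrisation'' move $\delta_2((p,q,p)+(p,p,p))=(p,q)+(q,p)$, correctly note that it needs $\WW_\nu(p,q,p)=d(p,q)\,\nu(M)<r$ with $\nu(M)$ possibly larger than $1$, and then fix this by subdividing a minimizing geodesic. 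The key technical point — that a collinear triple $(z_0,z_i,z_{i+1})$ has distance matrix $u\cdot s_1E_1+v\cdot s_0E_1$, so subadditivity plus simpliciality and normalization give $\WW_\nu(z_0,z_i,z_{i+1})\le u+v=d(z_0,z_{i+1})<r$ with no $C_2(\nu)$ loss — is exactly right, and it is what makes the fan chain and both telescoping boundary computations go through uniformly in $\nu$. What your approach buys is independence from the contractibility results (useful if one wants this lemma earlier or in a stripped-down exposition) and an explicit bounding $2$-chain; what the paper's approach buys is brevity and reuse of machinery that is needed elsewhere anyway. Both proofs cover the antipodal case $d(x,y)=\tfrac12$ and arbitrary $r$ without special pleading, so there is no gap.
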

\begin{proof}
First note that there exists $a\in S^1$ such that $x,y\in [a,a+1/2].$ The interval $[a,a+1/2]\subseteq S^1$ is isometric to the interval $[\tilde a,\tilde a+1/2]\subseteq \RR,$ where $\tilde a$ is a lift of $a.$ Then Corollary  \ref{cor:short_contr} implies that $Z^\nu_{1,<r}([a,a+1/2])=B^\nu_{1,<r}([a,a+1/2]).$ The statement follows. 
\end{proof} 

\begin{definition}
We say that a sequence of distinct points $x_0,\dots,x_{n-1}\in S^1$ indexed by the elements of the cyclic group $\ZZ/n$ is \emph{cyclically ordered}, if 
\begin{equation}
\sum_{i\in \ZZ/n} \vec{d}(x_i,x_{i+1}) = 1.
\end{equation}
Any finite subset $F\subset S^1$ can be cyclically ordered i.e. presented as $F=\{x_0,\dots,x_{n-1}\},$ where $x_0,\dots,x_{n-1}$ is cyclically ordered. We say that a cyclically ordered sequence $x_0,\dots,x_{n-1}$ is \emph{$r$-dense}, if $\vec{d}(x_i,x_{i+1})<r$ for any $i\in \ZZ/n.$ 
A $1$-cycle $c\in C^\nu_{1,<r}(S^1)$ is \emph{$r$-dense  cyclically ordered}, if it can be presented as 
\begin{equation}
c= \sum_{i\in \ZZ/n}(x_i,x_{i+1}),
\end{equation}
where $x_0,\dots,x_{n-1}$ is an $r$-dense cyclically ordered sequence. 
\end{definition}

\begin{lemma}\label{lem:cycle:directed_cycle} For any $r>0$, either $H^\nu_{1,<r}(S^1)=0$ or $H^\nu_{1,<r}(S^1)\cong \ZZ/2.$ Moreover, if $H^\nu_{1,<r}(S^1)\cong \ZZ/2,$ then any $r$-dense cyclically ordered $1$-cycle represents the non-trivial element. 
\end{lemma}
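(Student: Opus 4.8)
The strategy is to analyze the homology $H^\nu_{1,<r}(S^1)$ via explicit $1$-cycles, reducing everything to $r$-dense cyclically ordered cycles, and then to compare two such cycles. First I would show that every class in $H^\nu_{1,<r}(S^1)$ is represented by an $r$-dense cyclically ordered $1$-cycle or by $0$. Starting from an arbitrary $1$-cycle $c=\sum_k (a_k,b_k)$, I would use Lemma \ref{lem:order_change} to reorder each generator so that all pairs go in the counterclockwise direction, i.e.\ replace $(a_k,b_k)$ by $(b_k,a_k)$ whenever $\vec d(a_k,b_k)>1/2$; note $d(a,b)<r$ guarantees $(x,y)\in C^\nu_{1,<r}$ stays in the complex and $\vec d$ is well-defined and $<r$ in at least one direction. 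Then, using the fact that on an arc of length $<1/2$ (which is isometric to a real interval, hence has $\VR^\nu_{<r}$ weakly contractible by Corollary \ref{cor:short_contr}) every $1$-cycle is a boundary, I can insert intermediate points along each counterclockwise step $(x_i,x_{i+1})$ with $\vec d(x_i,x_{i+1})<r$: more precisely $(x,z)\equiv (x,y)+(y,z)$ whenever $x,y,z$ all lie in a common arc of length $<1/2$ and $y$ is between $x$ and $z$ counterclockwise. This lets me refine any counterclockwise cycle into one whose consecutive gaps are all $<r$, i.e.\ an $r$-dense cyclically ordered cycle (possibly after also splitting steps so the relevant triples lie in short arcs). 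A cycle in which all generators point counterclockwise but which does not ``wind around'' can be further contracted to $0$, so the residue is a sum of full loops.

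\textbf{Second main step: comparing two $r$-dense cyclically ordered cycles.} Having shown that, if $H^\nu_{1,<r}(S^1)\ne 0$, then every nonzero class is represented by some $r$-dense cyclically ordered cycle, I would prove that any two $r$-dense cyclically ordered cycles are homologous. Given two such cycles $c=\sum_{i\in\ZZ/n}(x_i,x_{i+1})$ and $c'=\sum_{j\in\ZZ/m}(y_j,y_{j+1})$, take the common refinement: the union of point sets $\{x_i\}\cup\{y_j\}$, cyclically ordered as $z_0,\dots,z_{N-1}$. The key subclaim is that $c\equiv \sum_{l\in\ZZ/N}(z_l,z_{l+1})$: whenever $x_i,x_{i+1}$ are consecutive in $c$ and $z_l=x_i,\dots,z_{l'}=x_{i+1}$ are the refining points in between, we have $\vec d(x_i,x_{i+1})<r$ so all these points lie in an arc of length $<r\le 1/2$... wait, $r$ may exceed $1/2$. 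To handle $r>1/2$ I would instead argue step by step: $(z_l,z_{l+2})\equiv (z_l,z_{l+1})+(z_{l+1},z_{l+2})$ holds as long as $\vec d(z_l,z_{l+2})<1/2$ — but by $r$-density and the fact that the $z$'s refine the $x$'s, each elementary insertion only involves points within a single original gap of length $<r$; if $r<1/2$ this is immediate, and if $r\ge 1/2$ the theorem statement says this lemma still holds because in fact when $r\ge 1/2$ one can check directly (e.g.\ the complex becomes large enough) — I would need to verify the insertion relation using that consecutive refining points within a gap of size $<r$ can be grouped so each triple spans $<1/2$. Granting the subclaim for both $c$ and $c'$ gives $c\equiv \sum(z_l,z_{l+1})\equiv c'$.

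\textbf{Conclusion and the dichotomy.} Combining: the subgroup of $Z^\nu_{1,<r}(S^1)/B^\nu_{1,<r}(S^1)$ generated by $r$-dense cyclically ordered cycles is cyclic, generated by any single such cycle $\gamma$. Since every class is either $0$ or (a multiple/representative involving) such a cycle, $H^\nu_{1,<r}(S^1)$ is cyclic, generated by $[\gamma]$. Over $\ZZ/2$ a cyclic group is either $0$ or $\ZZ/2$, which gives the first assertion; and in the nonzero case $[\gamma]\ne 0$ by construction, so any $r$-dense cyclically ordered cycle represents the nontrivial element — the second assertion. (One must also check $\gamma$ is genuinely a cycle and that at least one $r$-dense cyclically ordered sequence exists for the given $r$, which is clear for $r>0$.)

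\textbf{Main obstacle.} The delicate point is the ``insertion/refinement'' relation $(x,z)\equiv(x,y)+(y,z)$ and establishing it uniformly in $r$ — in particular ensuring that the auxiliary short-arc contractibility argument (Corollary \ref{cor:short_contr} applied to sub-arcs, as in the proof of Lemma \ref{lem:order_change}) can be invoked, which requires the three points to sit in a common arc isometric to a real interval, i.e.\ of length $\le 1/2$. When $r>1/2$ this is not automatic from $r$-density alone, and the argument has to be organized so that each elementary homology relation only ever involves points confined to such a short arc; showing the global cycles are nonetheless homologous then requires chaining these local relations carefully around the circle. The rest — reordering generators, contracting non-winding cycles, the cyclic-group bookkeeping — is routine given the earlier results.
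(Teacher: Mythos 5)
Your approach is genuinely different from the paper's, and as you yourself flag, it has a real gap that the paper's argument is designed to avoid. The paper does not try to compare two $r$-dense cyclically ordered cycles directly at scale $r$. Instead it (i) uses Theorem \ref{th:manifold} to get $H^\nu_{1,<s}(S^1)\cong H_1(S^1;\ZZ/2)\cong\ZZ/2$ for any $s<1/4$, with the map $T^\nu$ sending an $s$-dense cyclically ordered cycle to a degree-one loop, so such a cycle represents the generator; and (ii) for arbitrary $r$, sets $s=\min(r,1/8)$ and shows the comparison map $H^\nu_{1,<s}(S^1)\to H^\nu_{1,<r}(S^1)$ is surjective by a \emph{midpoint} subdivision $\mathsf{sd}$: after reorienting via Lemma \ref{lem:order_change} so that each edge has $d(x_i,y_i)=\vec d(x_i,y_i)\leq 1/2$, the midpoint $z_i$ satisfies $\WW_\nu(x_i,z_i,y_i)\leq d(x_i,z_i)+d(z_i,y_i)=d(x_i,y_i)<r$ by Lemma \ref{lemma:W_nu}(4), so the $2$-simplex $(x_i,z_i,y_i)$ lies in $\VR^\nu_{<r}$ \emph{for any $r$} — no appeal to short-arc contractibility, hence no issue when $r>1/2$. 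Iterating gets every cycle down to scale $s$, and an $r$-dense cyclically ordered cycle stays cyclically ordered under subdivision. The dichotomy then follows because $H^\nu_{1,<r}(S^1)$ is a quotient of $\ZZ/2$.

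Your proposal tries to establish the same conclusion by hand without invoking Theorem \ref{th:manifold}, which is a much harder route, and as written it has concrete gaps. (a) The insertion relation $(x,z)\equiv(x,y)+(y,z)$ you route through Corollary \ref{cor:short_contr} and short-arc contractibility, forcing you into the $r>1/2$ case analysis you explicitly admit you cannot complete; the midpoint trick plus Lemma \ref{lemma:W_nu}(4) sidesteps this entirely. (b) Your first main step is not sound as stated: once you reorient every edge to go counterclockwise (each has $0<\vec d<1/2$), a closed loop $x_0,\dots,x_{n-1},x_0$ automatically has $\sum\vec d(x_i,x_{i+1})\geq 1$, so there are no ``non-winding'' loops to contract; and a loop of winding number $k\geq 2$ is not literally a cyclically ordered cycle and you have not explained how to write it (mod boundaries, over $\ZZ/2$) as a sum of cyclically ordered ones. (c) Even before that, the claim that a $\ZZ/2$-cycle decomposes into a sum of closed loops is an Eulerian-graph fact you assume tacitly. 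None of these are unfixable, but your plan replaces the paper's one clean surjectivity argument with three lemmas, each needing careful work, and the hardest one ($r>1/2$) you leave open.
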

\begin{proof}
First assume that $0<r<1/4=r(S^1).$ 
Then Theorem \ref{th:manifold} implies that the map $T^\nu : |\VR^\nu_{<r} S^1|\to S^1$ is a homotopy equivalence. 
Therefore in the case $0<r<1/4$ the statement follows from the fact that $T^\nu$ induces an isomorphism for homology $H^\nu_{1,<r}(S^1) \cong H_1(S^1;\ZZ/2)$ and the description of homology of the circle $H_1(S^1;\ZZ/2)\cong \ZZ/2.$ 

Now assume that $r>0$ is arbitrary and take $s=\min(r,1/8).$ 
We claim that the map $H^\nu_{1,<s}(S^1)\to H^\nu_{1,<r}(S^1)$ is an epimorphism. 
Take a cycle $c=\sum_{i}(x_i,y_i)\in Z^\nu_{1,<r}(S^1).$ By Lemma \ref{lem:order_change}, replacing $c$ by a homologous cycle, we can assume that $d(x_i,y_i)=\vec{d}(x_i,y_i).$ Define the subdivision ${\sf sd}(c)$ of $c$ in the following way. 
Consider a point $z_i$ in ``between'' of $x_i$ and $y_i$ such that $\vec{d}(x_i,z_i)=\vec{d}(z_i,y_i)=\vec{d}(x_i,y_i)/2.$ It is easy to see that $d(x_i,z_i)=\vec{d}(x_i,z_i)$ and $d(z_i,y_i)=\vec{d}(z_i,y_i).$ Then set ${\sf sd}(c)=\sum_i (x_i,z_i)+ \sum_i (z_i,y_i).$ Since $\delta(x_i,z_i,y_i)=(z_i,y_i)+(x_i,y_i)+(x_i,z_i)$ and $\WW_\nu(x_i,z_i,y_i)\leq d(x_i,z_i)+d(z_i,y_i)=d(x_i,y_i)<r$ we obtain that $c$ and ${\sf sd}(c)$ represent the same element in $H^\nu_{1,<r}(S^1).$ On the other hand, it is easy to see that ${\sf sd}^n(c)\in Z^\nu_{1,<s}(S^1)$ for sufficiently big  $n.$ Hence, the map $H^\nu_{1,<s}(S^1)\to H^\nu_{1,<r}(S^1)$ is an epimorphism. 
Moreover, if $c$ is $r$-dense  cyclically ordered, then for sufficiently big $n$ the cycle ${\sf sd}^n(c)$ is $s$-dense cyclically ordered. The statement follows. 
\end{proof}

\begin{lemma}\label{lem:semicircle}
A finite subset of $S^1$ is contained in an open semicircle if and only if any its subset of cardinality at most 3 is contained in an open semicircle.
\end{lemma}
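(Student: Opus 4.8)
The plan is to first record the elementary characterization that underlies everything: if a nonempty finite set $F\subseteq S^1$ is cyclically ordered as $x_0,\dots,x_{n-1}$, then $F$ is contained in an open semicircle if and only if some cyclic gap $\vec{d}(x_i,x_{i+1})$ is strictly greater than $1/2$; equivalently, $F$ fails to lie in an open semicircle precisely when all cyclic gaps are $\leq 1/2$. For the ``if'' direction, a gap exceeding $1/2$ means that $F$ lies in the complementary closed arc, whose length is below $1/2$, so it fits inside an open semicircle; for the ``only if'' direction, lifting $F$ into an open interval of length $1/2$ and computing the wrap-around gap gives a value exceeding $1/2$. Granting this, the forward implication of the lemma is trivial — a subset of a set lying in an open semicircle lies in the same one — so the real content is the converse, which I will prove in contrapositive form: if $F$ is not contained in an open semicircle, some subset of cardinality at most $3$ is not either.

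If $|F|\leq 3$ one takes $F$ itself as the witness, so assume $|F|\geq 4$; then all cyclic gaps of $F$ are $\leq 1/2$, and it suffices to produce three points of $F$ whose three induced cyclic gaps (each a sum of consecutive original gaps) are all $\leq 1/2$, since by the characterization such a triple cannot lie in an open semicircle. I split into two cases. First, if $F$ contains an antipodal pair $x,y$ (that is, $d(x,y)=1/2$), choose any third point $z\in F$; it lies in the interior of one of the two semicircular arcs cut out by $\{x,y\}$, so in cyclic order the triple $\{x,y,z\}$ has gaps $1/2$, $t$, $1/2-t$ with $0<t<1/2$, all at most $1/2$. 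Second, if $F$ has no antipodal pair, cyclically order $F=\{x_0,\dots,x_{n-1}\}$ with gaps $g_i=\vec{d}(x_i,x_{i+1})$; now every $g_i<1/2$ and every partial sum $S_j=g_0+\dots+g_{j-1}$ with $1\leq j\leq n-1$ is different from $1/2$. Let $b$ be the largest index with $S_b<1/2$; one checks $1\leq b\leq n-2$ using $S_1=g_0<1/2$ and $S_{n-1}=1-g_{n-1}>1/2$. Then $\{x_0,x_b,x_{b+1}\}$ is the desired triple: its cyclic gaps are $S_b$, $g_b$ and $1-S_{b+1}$, all strictly below $1/2$ — the last one because maximality of $b$ forces $S_{b+1}\geq 1/2$, hence $S_{b+1}>1/2$ since no partial sum equals $1/2$.

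I expect the antipodal case to be the only genuinely delicate point: when a gap equals exactly $1/2$ we sit on the boundary of the characterization, and the naive attempt to split the cyclic sequence of gaps greedily into three consecutive blocks of mass $\leq 1/2$ can degenerate by producing an empty block, which is exactly why the antipodal pair is isolated and handled by the direct geometric argument. The remaining work is routine bookkeeping with the directed distance $\vec{d}$ and partial sums; I would also take care with the open/closed and strict/non-strict conventions in the characterization, since the whole argument relies on the fact that an open arc of length $1/2$ cannot contain two antipodal points.
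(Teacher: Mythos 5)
Your proof is correct, but it is organized quite differently from the paper's argument. The paper proves the nontrivial direction by induction on $|F|$ combined with a global counting trick: assuming the conclusion fails for some $F$ of cardinality $n\geq 4$, the inductive hypothesis forces every $(n-1)$-element subset of $F$ into an open semicircle, which gives $\vec{d}(x_{i-1},x_{i+1})>\tfrac12$ for every $i\in\ZZ/n$; summing this over all $i$ and noting that $\sum_i \vec{d}(x_{i-1},x_{i+1})=2\sum_i\vec{d}(x_i,x_{i+1})=2$ yields $\tfrac n2<2$, contradicting $n\geq4$. Your argument is non-inductive and constructive: from the assumption that all cyclic gaps are $\leq\tfrac12$ you directly exhibit a triple with all three induced gaps $\leq\tfrac12$, handling antipodal pairs separately and otherwise taking the last partial sum of gaps below $\tfrac12$ as the cut point. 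Both work; the paper's summation is slicker and avoids case analysis, while yours is more hands-on and produces an explicit witness triple. One small remark: your antipodal case split is not strictly necessary — if you take $b$ to be the largest index with $S_b\leq\tfrac12$ (non-strict), then $S_b\leq\tfrac12$, $g_b\leq\tfrac12$, and $1-S_{b+1}<\tfrac12$ all still hold even when some partial sum equals $\tfrac12$, so a single argument covers everything — but isolating the boundary case as you do is a reasonable piece of caution and costs little.
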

\begin{proof} First we note that if $F\subset S^1$ is a finite subset and $F=\{x_0,\dots,x_{n-1}\}$ is a cyclic order, then  $F$ in included in an open semicircle if and only if $\vec{d}(x_i,x_{i+1})>\frac{1}{2}$ for some $i.$ Let us prove the statement by induction on the cardinality of the subset $F$, which we denote by $n$. If $n\leq 3,$ the statement is obvious. Assume that $n\geq 4$ and the statement holds for all subsets of cardinality less than $n,$ and prove it for sets of cardinality $n.$ Assume the contrary, that there is a subset $F\subset S^1$ of cardinality $n$ such that any its subset of cardinality at most $3$ lies in an open semicircle, but $F$ does not lie in an open semicircle. The assumption implies that any proper subset of $F$ lies in an open semicircle.  The fact that $F$ is not contained in an open semicircle means that $\vec{d}(x_i,x_{i+1})\leq \frac{1}{2}$ for any $i\in \ZZ/n.$ Since $\{x_0,\dots,x_{i-1},x_{i+1},\dots,x_n\}$ lies in an open semicircle, we obtain $\vec{d}(x_{i-1},x_{i+1}) > \frac{1}{2}.$ Therefore 
\begin{equation}
\frac{n}{2} < \sum_{i\in \ZZ/n} \vec{d}(x_{i-1},x_{i+1}) = \sum_{i\in \ZZ/n} (\vec{d}(x_{i-1},x_{i}) + \vec{d}(x_i,x_{i+1})) = 2,  
\end{equation}
which makes the contradiction. 
\end{proof}

\begin{definition} 
We denote by $t(\nu)$ the infimum of the values of $\WW_\nu(x_0,x_1,x_2),$ where the triple $x_0,x_1,x_2\in S^1$ is not contained in any open semicircle.
\end{definition}

\begin{proposition}\label{prop:homology_of_circle:null} For any $\nu,$ we have 
$r_\nu(S^1) = t(\nu).$  Moreover, if $r\leq  t(\nu),$ then $|\VR^\nu_{<r} S^1|\simeq S^1$; and if $r>t(\nu)$ then  $ H^\nu_{1,<r}(S^1)=0.$ 
\end{proposition}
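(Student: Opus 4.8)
The plan is to prove the two assertions of the ``Moreover'' part first and to read off $r_\nu(S^1)=t(\nu)$ from them: the case $r\le t(\nu)$ shows that $\{r>0\mid |\VR^\nu_{<r}S^1|\not\simeq S^1\}\subseteq(t(\nu),\infty)$, hence $r_\nu(S^1)\ge t(\nu)$, while the case $r>t(\nu)$ gives $H^\nu_{1,<r}(S^1)=0$, so that $|\VR^\nu_{<r}S^1|\not\simeq S^1$ because $H_1(S^1;\ZZ/2)\cong\ZZ/2\neq 0$ is a homotopy invariant, hence $r_\nu(S^1)\le t(\nu)$. Two preliminary remarks will be used repeatedly. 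First, $1/3\le t(\nu)\le 1/2$: the upper bound comes from the degenerate triple $(0,0,1/2)$, whose underlying set $\{0,1/2\}$ is antipodal, hence outside every open semicircle, and which has weight $\WW_\nu(0,0,1/2)=\WW_\nu(0,1/2)=1/2$ by Lemma \ref{lemma:W_nu}(1,2); the lower bound holds because any triple in $(S^1)^3$ lying outside every open semicircle has weight $\ge 1/3$ --- if its three points are distinct their cyclic gaps are all $\le 1/2$, hence equal the three pairwise distances, whose maximum is $\ge 1/3$ since they sum to $1$, and $\WW_\nu\ge\diam$ by Lemma \ref{lemma:W_nu}(5); and if two points coincide the underlying set is an antipodal pair of weight $1/2$. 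Second, $r(S^1)=1/4$, so $r_0<r(S^1)$ implies $r_0<1/4<1/3\le t(\nu)$.

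For the case $r\le t(\nu)$, the key step is: \emph{every simplex of $\VR^\nu_{<r}S^1$ has vertex set contained in an open semicircle.} Indeed, for a simplex $(x_0,\dots,x_n)$ and any three of its entries $x_i,x_j,x_k$, Lemma \ref{lemma:W_nu}(1) gives $\WW_\nu(x_i,x_j,x_k)\le\WW_\nu(x_0,\dots,x_n)<r\le t(\nu)$, so by definition of $t(\nu)$ the triple $(x_i,x_j,x_k)$ is not one lying outside every open semicircle, i.e. $\{x_i,x_j,x_k\}$ lies in an open semicircle; since every at-most-$3$-element subset of $\{x_0,\dots,x_n\}$ arises this way, Lemma \ref{lem:semicircle} puts $\{x_0,\dots,x_n\}$ in an open semicircle. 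Hence the family $\UU=(B_{1/4}(z))_{z\in S^1}$ of open semicircles of $S^1$ satisfies $\VR^\nu_{<r}S^1=\bigcup_{z\in S^1}\VR^\nu_{<r}B_{1/4}(z)$; moreover, for $\sigma\in N_\UU$ the intersection $U_\sigma=\bigcap_{z\in\sigma}B_{1/4}(z)$ is a nonempty connected proper open subset of $S^1$ (it is contained in $S^1$ minus the antipode of any of its points), so it is an open arc of length $\le 1/2$, which is isometric to a bounded convex subset of $\RR$; therefore $\VR^\nu_{<r}U_\sigma$ is weakly contractible by Corollary \ref{cor:convex}. Theorem \ref{theorem:nerve_theorem} then gives $|\VR^\nu_{<r}S^1|\simeq|N_\UU|$. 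Since $N_\UU$ depends only on the family of open semicircles of $S^1$, and not on $r$ or $\nu$, applying this with the same $\nu$ but a scale $r_0$ satisfying $0<r_0<r(S^1)$ (so $r_0\le t(\nu)$ by the preliminary remark) yields $|\VR^\nu_{<r_0}S^1|\simeq|N_\UU|$, while Theorem \ref{th:manifold} gives $|\VR^\nu_{<r_0}S^1|\simeq S^1$; hence $|N_\UU|\simeq S^1$, and consequently $|\VR^\nu_{<r}S^1|\simeq S^1$ for every $r\le t(\nu)$.

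For the case $r>t(\nu)$, recall from Lemma \ref{lem:cycle:directed_cycle} that $H^\nu_{1,<r}(S^1)$ is either $0$ or $\ZZ/2$, and that in the latter case every $r$-dense cyclically ordered $1$-cycle is nontrivial; so it suffices to exhibit one such $1$-cycle that is a boundary. As $r>t(\nu)$, choose distinct points $x_0,x_1,x_2\in S^1$ lying outside every open semicircle with $\WW_\nu(x_0,x_1,x_2)<r$: if $r\le 1/2$ any triple of weight $<r$ lying outside every open semicircle is automatically of distinct points (a degenerate one is an antipodal pair of weight $1/2\ge r$), while if $r>1/2$ one may take $(0,\epsilon,1/2)$ for any $\epsilon\in(0,1/2)$, whose weight is $\le 1/2<r$ by subadditivity of $\nu$. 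By the preliminary remark and Lemma \ref{lemma:W_nu}(1), the three pairwise distances are $\le\WW_\nu(x_0,x_1,x_2)<r$ and coincide with the three cyclic gaps of $\{x_0,x_1,x_2\}$, which sum to $1$; writing these points in cyclic order as $z_0,z_1,z_2$ they thus form an $r$-dense cyclically ordered sequence, and $c=\sum_{i\in\ZZ/3}(z_i,z_{i+1})$ is an $r$-dense cyclically ordered $1$-cycle in $C^\nu_{1,<r}(S^1)$. Since $(x_0,x_1,x_2)$ is a $2$-simplex of $\VR^\nu_{<r}S^1$, we have $\delta(x_0,x_1,x_2)\in B^\nu_{1,<r}(S^1)$, and applying Lemma \ref{lem:order_change} to each of the three edges shows $c\equiv\delta(x_0,x_1,x_2)\equiv 0$; this contradicts $H^\nu_{1,<r}(S^1)\cong\ZZ/2$, so $H^\nu_{1,<r}(S^1)=0$.

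I expect the main difficulty to be in the second paragraph: the point-set bookkeeping that nonempty finite intersections of open semicircles are convex arcs, and especially the identification $|N_\UU|\simeq S^1$. I handle the latter by bootstrapping --- running the nerve argument once at a small scale $r_0<r(S^1)$, where Theorem \ref{th:manifold} applies --- rather than invoking a topological nerve theorem for the space $S^1$ directly; the point to notice is that $N_\UU$ is independent of $\nu$ and $r$, and that $r(S^1)=1/4<t(\nu)$, so small scales fall inside the regime $r\le t(\nu)$. In the third paragraph the only delicate point is arranging that the chosen triple has distinct points, which is the short case analysis on whether $r>1/2$.
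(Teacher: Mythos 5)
Your proof is correct and follows the same overall strategy as the paper: for $r\le t(\nu)$, apply Theorem~\ref{theorem:nerve_theorem} to the cover of $S^1$ by all open semicircles after checking the cover condition via $t(\nu)$ and Lemma~\ref{lem:semicircle}; for $r>t(\nu)$, produce an $r$-dense cyclically ordered $1$-cycle that bounds and invoke Lemma~\ref{lem:cycle:directed_cycle}. Where your write-up genuinely adds value is in supplying details the paper silently assumes. First, the paper never checks the second hypothesis of Theorem~\ref{theorem:nerve_theorem}, namely that $\VR^\nu_{<r}U_\sigma$ is weakly contractible for $\sigma\in N_\UU$; you observe that nonempty finite intersections of open semicircles are arcs of length $\le 1/2$, hence isometric to bounded intervals, and invoke Corollary~\ref{cor:convex}. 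Second, the paper states without argument that the nerve theorem yields $|\VR^\nu_{<r}S^1|\simeq S^1$, which requires knowing $|N_\UU|\simeq S^1$; your bootstrap --- running the same nerve argument at a small scale $r_0<r(S^1)$ and comparing with Theorem~\ref{th:manifold}, exploiting that $N_\UU$ is independent of $\nu$ and $r$ --- is a neat way to get this internally, without invoking a separate topological nerve theorem. Third, the paper's second case implicitly takes for granted that the triple realizing $\WW_\nu<r$ outside every semicircle can be chosen with \emph{distinct} points; your case split on $r\le 1/2$ versus $r>1/2$ is the right way to justify this, since the definition of $t(\nu)$ a priori ranges over degenerate triples as well (and indeed $t(\nu_1)=1/2$ is achieved by such a triple).

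One small inaccuracy: the parenthetical you offer for connectedness of $U_\sigma$ --- that it lies in $S^1$ minus the antipode of any of its points --- does not by itself imply connectedness (an open subset of an open arc of length $1$ can be disconnected). The clean justification is that for any $x,y\in U_\sigma$ one has $d(x,y)<1/2$, so the unique shortest arc from $x$ to $y$ lies inside every $B_{1/4}(z)$ with $x,y\in B_{1/4}(z)$, hence inside $U_\sigma$; thus $U_\sigma$ is a path-connected open subset of an arc of length $\le 1/2$, i.e.\ itself an arc. The conclusion you draw is correct, so this is a presentational point only.
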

\begin{proof}
Assume that $r\leq t(\nu).$ We denote by $U_x = B_{1/4}(x)$ the open semicircle with the center in $x$ and  consider the cover of the circle by all open semicircles $\UU=(U_x)_{x\in S^1}.$  By Theorem \ref{theorem:nerve_theorem}, in order to prove that  $|\VR^\nu_{<r} S^1|\simeq S^1,$ it is sufficient to prove that 
\begin{equation}
\VR^\nu_{<r} S^1 = 
\bigcup_{x\in S^1} \VR^{\nu}_{<r} U_x. 
\end{equation}
Therefore we need to prove that, for any $(x_0,\dots,x_n) \in (\VR^\nu_{<r} S^1)_n,$ the set  $\{x_0,\dots,x_n\} $ is included in an open semicircle. For $n\leq 3,$ it follows from the definition of $t(\nu).$ Then for any $n$ it follows from Lemma \ref{lem:semicircle}.

Assume that $r> t(\nu).$ Then  there exists a triple  $(x_0,x_1,x_2)\in C^\nu_{2,<r}(S^1)$ such that $\{x_0,x_1,x_2\}$ is not contained in any open semicircle. Hence 
    \begin{equation}
      (x_1,x_2)+(x_0,x_2)+(x_0,x_1) = \delta(x_0,x_1,x_2) \equiv 0.  
    \end{equation}
    Lemma \ref{lem:order_change} implies that
    \begin{equation}
      (x_{\pi(0)},x_{\pi(1)})+(x_{\pi(1)},x_{\pi(2)})+(x_{\pi(2)},x_{\pi(0)})\equiv   0
    \end{equation}
    for any permutation $\pi.$ Take $\pi$ such that $x_{\pi(0)},x_{\pi(1)},x_{\pi(2)}$ is cyclically ordered. Since these three points do not lie in any open semicircle, we obtain that 
    \begin{equation}
     \vec{d}(x_{\pi(i)},x_{\pi(i+1)})=d(x_{\pi(i)},x_{\pi(i+1)})   
    \end{equation}
    for $i\in \ZZ/3.$ Hence $\vec{d}(x_{\pi(i)},x_{\pi(i+1)})< r.$
    Therefore $x_{\pi(0)},x_{\pi(1)},x_{\pi(2)}$ is an $r$-dense  cyclically ordered sequence. It follows that there exists an $r$-dense  cyclically ordered 1-cycle, which is null homologous. Then Lemma \ref{lem:cycle:directed_cycle} implies that $H^\nu_{1,<r}(S^1)=0.$
\end{proof}

\begin{lemma}\label{lem:l_p:circle}
$t(\nu_p)=t(\nu_p^{\sf sym})=\frac{2^{\frac{1}{p}}}{2+2^{\frac{1}{p}}}$.
\end{lemma}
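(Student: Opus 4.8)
The plan is to reduce everything to a single elementary inequality about triangles inscribed in $S^1$. First I would observe that for any $3$-tuple $(x_0,x_1,x_2)$ we have $\WW_{\nu_p^{\sf sym}}(x_0,x_1,x_2)=\ww_p(\{x_0,x_1,x_2\})=\min_{\pi\in\Sigma_{[2]}}\WW_{\nu_p}(x_{\pi(0)},x_{\pi(1)},x_{\pi(2)})$, and that a reordering of a tuple has the same underlying set, hence is contained in an open semicircle iff the original is. Consequently
$t(\nu_p)=\inf_\sigma\WW_{\nu_p}(\sigma)=\inf_F\min_\pi\WW_{\nu_p}(\sigma_F^\pi)=\inf_F\ww_p(F)=t(\nu_p^{\sf sym})$,
where $\sigma$ runs over $3$-tuples whose support is not in an open semicircle and $F$ over the possible supports. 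Triples with a repeated point contribute sets of size $\le 2$; a size-$1$ set lies in an open semicircle, and for an antipodal pair one checks directly that every arrangement has $\WW_{\nu_p}\ge\tfrac12$, which exceeds $M:=\tfrac{2^{1/p}}{2+2^{1/p}}$. So it remains to show $\inf\ww_p(F)=M$, the infimum being over $3$-element subsets $F\subseteq S^1$ not contained in any open semicircle.

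Next I would coordinatize such an $F$. As recorded in the proof of Lemma~\ref{lem:semicircle}, if we list the three points in cyclic order $x_0,x_1,x_2$ with counterclockwise gaps $\alpha=\vec{d}(x_0,x_1)$, $\beta=\vec{d}(x_1,x_2)$, $\gamma=\vec{d}(x_2,x_0)$, then $F$ fails to lie in an open semicircle exactly when $\alpha,\beta,\gamma\le\tfrac12$ (they always satisfy $\alpha+\beta+\gamma=1$). Under this hypothesis each chord distance equals the corresponding gap: $d(x_0,x_1)=\alpha$, $d(x_1,x_2)=\beta$, $d(x_0,x_2)=\gamma$. Unwinding the definition of $\nu_p$ on the tuple $(x_0,x_1,x_2)$ — the only strictly increasing index subsequences that matter are the three pairs and the full triple, and $\|a,b\|_p\ge\max(a,b)$ — gives $\WW_{\nu_p}(x_0,x_1,x_2)=\max(\gamma,\|\alpha,\beta\|_p)$; permuting which point sits in the middle yields
\[
\ww_p(F)=\min\bigl(\max(\gamma,\|\alpha,\beta\|_p),\ \max(\alpha,\|\beta,\gamma\|_p),\ \max(\beta,\|\gamma,\alpha\|_p)\bigr).
\]

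For the lower bound I would use the identity $2^{\frac1p-1}(1-M)=M$ (immediate from $1-M=\tfrac{2}{2+2^{1/p}}$). I claim each of the three terms above is $\ge M$; by symmetry it suffices to bound $\max(\gamma,\|\alpha,\beta\|_p)$. If $\gamma\ge M$ we are done; otherwise $\alpha+\beta=1-\gamma>1-M$, and the power–mean inequality (convexity of $t\mapsto t^p$ for $1\le p<\infty$, with the obvious replacement $\|\alpha,\beta\|_\infty=\max(\alpha,\beta)\ge\tfrac{\alpha+\beta}{2}$ when $p=\infty$) gives $\|\alpha,\beta\|_p\ge 2^{\frac1p-1}(\alpha+\beta)>2^{\frac1p-1}(1-M)=M$. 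Hence $\ww_p(F)\ge M$ for every admissible $F$, so $t(\nu_p)=t(\nu_p^{\sf sym})\ge M$.

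For the matching upper bound I would exhibit the isosceles configuration: set $s=\tfrac{1}{2+2^{1/p}}$ and take $F=\{0,s,2s\}\subseteq S^1=\RR/\ZZ$, whose cyclic gaps are $\alpha=\beta=s$ and $\gamma=1-2s=2^{\frac1p}s$; since $2^{1/p}\le 2$ we have $\gamma\le\tfrac12$, so $F$ is not in an open semicircle. Then the middle-apex term is $\max(2^{\frac1p}s,\|s,s\|_p)=\max(2^{\frac1p}s,2^{\frac1p}s)=2^{\frac1p}s=M$, while the other two terms both equal $\|s,2^{\frac1p}s\|_p=3^{\frac1p}s\ge M$; thus $\ww_p(F)=M$, and the ordering $(0,s,2s)$ realizes $\WW_{\nu_p}(0,s,2s)=\WW_{\nu_p^{\sf sym}}(0,s,2s)=M$. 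Therefore $t(\nu_p),t(\nu_p^{\sf sym})\le M$, which together with the previous paragraph gives $t(\nu_p)=t(\nu_p^{\sf sym})=\tfrac{2^{1/p}}{2+2^{1/p}}$. The step requiring the most care is the lower bound, which must cover \emph{all} triangle shapes rather than just the isosceles one — this is why I argue term by term instead of minimizing $\ww_p(F)$ directly — and it hinges on the collapse $2^{\frac1p-1}(1-M)=M$ and on using the non-semicircle hypothesis precisely to identify chord distances with cyclic gaps.
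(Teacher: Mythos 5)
Your proof is correct and uses essentially the same strategy as the paper: the lower bound rests on the same power-mean/H\"older inequality $\|\alpha,\beta\|_p\ge 2^{\frac{1}{p}-1}(\alpha+\beta)$ combined with the identity $2^{\frac{1}{p}-1}(1-M)=M$ (you phrase it as a case split on $\gamma\gtrless M$ while the paper adds the two bounds with weights, but these are the same computation), and the upper bound uses the identical isosceles configuration $\{0,s,2s\}$ with $s=\tfrac{1}{2+2^{1/p}}$. Your version is a little more explicit about the reduction $t(\nu_p)=t(\nu_p^{\sf sym})=\inf_F\ww_p(F)$, about checking all three cyclic orderings in both bounds, and about the degenerate (repeated-point/antipodal) triples, but the substance matches.
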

\begin{proof}
Suppose that $\{x_0,x_1,x_2\}$ is not contained in any semicircle, then we must have $d(x_0,x_1)+ d(x_1,x_2) + d(x_2,x_0)=1$. Set $a=d(x_0,x_1)$ and $b=d(x_1,x_2)$. By H\"{o}lder's inequality, we have 
    \begin{equation}
        \frac{2}{2^{\frac{1}{p}}}\|a,b\|_p=\|a,b\|_p\cdot\|1,1\|_q\geq a+b,
    \end{equation}
    where $\frac{1}{p} +  \frac{1}{q}=1$. Since $\WW_p(x_0,x_1,x_2)=\max\{\|a,b\|_p,1-a-b\}$, we have 
    \begin{equation}
       \left(1+ \frac{2}{2^{\frac{1}{p}}} \right) \WW_p(x_0,x_1,x_2) \geq (1-a-b) + \frac{2}{2^{\frac{1}{p}}} \| a,b\|_p \geq 1.
    \end{equation}
Hence we obtain  $\WW_p(x_0,x_1,x_2)\geq \frac{2^{\frac{1}{p}}}{2+2^{\frac{1}{p}}}.$ It also follows that $\WW^{\sf sym}_p(x_0,x_1,x_2)\geq \frac{2^{\frac{1}{p}}}{2+2^{\frac{1}{p}}}.$ Hence 
 $t(\nu_p) \geq  \frac{2^{\frac{1}{p}}}{2+2^{\frac{1}{p}}}$ and $t(\nu^{\sf sym}_p) \geq  \frac{2^{\frac{1}{p}}}{2+2^{\frac{1}{p}}}.$ 
    
    On the other hand, we can construct the following tuple
    \begin{equation}
        (x_0,x_1,x_2)=(\frac{-1}{2+2^{\frac{1}{p}}} ,0,\frac{1}{2+2^{\frac{1}{p}}}).
    \end{equation}
    Since 
    \begin{equation}
    d(x_0,x_2) = 1 - \frac{2}{2+2^{\frac{1}{p}}} = \frac{2^{\frac{1}{p}}}{2+ 2^{ \frac{1}{p}}} = \| d(x_0,x_1),d(x_1,x_2) \|_p,
    \end{equation}
    we obtain 
    \begin{equation}
\WW_p(x_0,x_1,x_2) = \WW_p^{\sf sym}(x_0,x_1,x_2) = \frac{2^{\frac{1}{p}}}{2+2^{\frac{1}{p}}}.
    \end{equation}
Using that  $\vec{d}(x_i,x_{i+1})\leq \frac{1}{2},$ we see that they are not contained in any open semicircle. Hence $t(\nu_p)\leq \frac{2^{\frac{1}{p}}}{2+2^{\frac{1}{p}}}$ and $t(\nu^{\sf sym}_p)\leq \frac{2^{\frac{1}{p}}}{2+2^{\frac{1}{p}}}.$
\end{proof}

\begin{proof}[Proof of Theorem \ref{th:circle}]
It follows from Proposition \ref{prop:homology_of_circle:null} and Lemma \ref{lem:l_p:circle}. 
\end{proof}

\section{Pro-simplicial sets and limit homology} 

In this section we show that the limits of homology of the simplicial set $\VR^\nu_{<r} X,$ as $r$ tends to zero, do not depend on $\nu.$ Moreover, we will show that pro-simplicial sets defined by  $\VR^{\nu,n}_{<r} X$ do not depend on $\nu.$ First, we will remind some basic information about pro-objects. 

\subsection{Reminder about pro-objects} 

Let us recall the definition of the category of pro-objects $\Pro(\CC)$ in a category $\CC$ \cite{kashiwara2006indization}, \cite[\S 1.1]{mardesic1982shape}, 
\cite{isaksen2002calculating}. Consider the co-Yoneda embedding 
\begin{equation}
y^{op} : \CC \hookrightarrow \Fun(\CC,\Set)^{op}, \hspace{1cm} y^{op}(c) = \CC(c,-).
\end{equation}
For any small cofiltered category $I$ and a functor $\FF:I\to \CC,$ we define the pro-object of $\FF$  as an object of  $\Fun(\CC,\Set)^{op}$ given by 
\begin{equation}
\FF_{\sf pro} = \lim \ y^{op} \FF. 
\end{equation}
Sometimes $\FF_{\sf pro}$ is also called formal cofiltered limit of $\FF.$
Then the category of pro-objects $\Pro(\CC)$ is the full subcategory of $\Fun(\CC,\Set)^{op}$ consisting of pro-objects $\FF_{\sf pro},$ where $\FF$ runs over all functors from small cofiltered categories. The category of ind-objects ${\sf Ind}(\CC)$ is defined dually ${\sf Ind}(\CC)=\Pro(\CC^{op})^{op}.$ The hom-set between the pro-objects of functors $\FF:I\to \CC$ and $\GG:J\to \CC$ can be computed as follows (see \cite[p. 134]{kashiwara2006indization})
\begin{equation}
\Hom_{\Pro(\CC)}(\FF_{\sf pro},\GG_{\sf pro}) \cong \underset{J}\lim\: \underset{I^{op}}\colim\: \Hom_\CC(\FF i,\GG j). 
\end{equation}

Let us give a more explicit description of the hom-set. First assume that $\GG$ is a constant functor  sending all objects to a fixed object $c$ and all morphisms to ${\sf id}_c$ and describe the hom-set $\Hom_{\Pro(\CC)}(\FF_{\sf pro},c)$ (here we identify $c$ with $y^{op}(c)$). We say that two morphisms $\alpha_1:\FF i_1\to c$ and $\alpha_2:\FF i_2\to c$ are equivalent $\alpha_1\sim \alpha_2$, if there is an object $i_3$ together with morphisms $f_1:i_3\to i_1$ and $f_2:i_3\to i_2$ such that $\alpha_1 (\FF f_1) =\alpha_2 (\FF f_2).$ Using that $I$ is a cofiltered category, it is easy to check that this is an equivalence relation. An equivalence class of a morphism $\alpha:\FF i\to c$ will be denoted by $[\alpha].$  Then
\begin{equation}
\Hom_{\Pro(\CC)}(\FF_{\sf pro},c)\cong  \underset{I^{op}}\colim\: \Hom_\CC(\FF i,c) 
\end{equation}
can be described as the set of the equivalence classes $[\alpha:\FF i \to c].$ Now assume that $\GG$ is an arbitrary functor. Therefore 
\begin{equation}
\Hom_{\Pro(\CC)}(\FF_{\sf pro},\GG_{\sf pro})\cong \underset{J}\lim\ \Hom_{\Pro(\CC)}(\FF_{\sf pro},\GG j). 
\end{equation}
It follows that a morphism $\varphi:\FF_{\sf pro}\to \GG_{\sf pro}$ is a family of equivalence classes $\varphi=([\varphi_j : \FF i_j\to \GG j])_j$ such that for any morphism $g:j\to j'$ we have $[(\GG g) \varphi_j] = [\varphi_{j'}].$ For any tree functors from small cofiltered categories $\FF:I\to \CC$, $\GG:J\to \CC$ and $\HH:K\to \CC$ and morphisms $\varphi:\FF\to \GG$ and $\psi:\GG\to \HH$ the composition $\psi\varphi$ is defined component-wise $\psi\varphi=([ \psi_k \varphi_{j_k}])_k.$

The construction $\Pro(-)$ is natural with respect to all functors \cite[Prop.6.1.9]{kashiwara2006indization}. More precisely, for any functor $\Phi:\CC\to \DD,$ there exists a unique functor $\Pro(\Phi) : \Pro(\CC)\to \Pro(\DD)$ that commutes with cofiltered limits such that the diagram 
\begin{equation}
\begin{tikzcd}
\CC \ar[rr,"\Phi"] \ar[d] && \DD \ar[d] \\
\Pro(\CC) \ar[rr,"\Pro(\Phi)"] && \Pro(\DD)
\end{tikzcd}
\end{equation}
is commutative. In particular, we obtain that 
\begin{equation}
\Pro(\Phi)(\FF_{\sf pro})\cong  (\Phi\FF)_{\sf pro}.
\end{equation}
In other words, any functor commutes with formal small cofiltered limits. 

If $\CC$ admits all small cofiltered limits, then the embedding $\CC \hookrightarrow \Pro(\CC)$ has a left adjoint sending $\FF_{\sf pro}$ to $\lim \FF$ \cite[Prop 6.3.1]{kashiwara2006indization}
\begin{equation}
L : \Pro(\CC) \longrightarrow \CC,
\hspace{1cm}
L(\FF_{\sf pro}) \cong \lim \FF.
\end{equation}
Using what was said above, we obtain that, if two functors from small cofiltered categories $\FF: I \to \CC$ and $\GG: J\to \DD$ have isomorphic pro-objects, then for any functor to a category admitting cofiltered limits 
$\Phi:\CC\to \DD$ the limits of compositions $\Phi \FF$ and $\Phi\GG$ are isomorphic
\begin{equation}\label{eq:pro->lim}
\FF_{\sf pro}\cong \GG_{\sf pro} \hspace{5mm} \Rightarrow \hspace{5mm} \lim \Phi\FF \cong \lim \Phi \GG.
\end{equation}

\subsection{Vietoris-Rips pro-simplicial sets and limit homology}

Let $X$ be a metric space, $\nu$ be a distance matrix norm and $n\geq 0$. We can consider the $n$-skeleton of the $\nu$-Vietoris-Rips simplicial as a functor from the cofiltered category of positive real numbers 
\begin{equation}
\VR^{\nu,n}_{<*} X : (0,\infty) \longrightarrow \sSet. 
\end{equation}
This functor defines a pro-simplicial set that we denote by 
\begin{equation}
\VR^{\nu,n}_{\sf pro} X \in  \Pro(\sSet).
\end{equation}
The following lemma states that strict and non-strict versions of the simplicial sets define the same pro-simplicial sets. 

\begin{lemma}
The non-strict version of the functor
\begin{equation}
\VR^{\nu,n}_{\leq *} X : (0,\infty)\to \sSet 
\end{equation}
defines a pro-simplicial set isomorphic to $\VR^{\nu,n}_{\sf pro} X.$ The isomorphism is induced by the embedding $\VR^{\nu,n}_{<r} X \hookrightarrow \VR^{\nu,n}_{\leq r} X$. 
\end{lemma}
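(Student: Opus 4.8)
The plan is to exhibit an explicit isomorphism of pro-simplicial sets $\VR^{\nu,n}_{\sf pro}X \cong \widetilde{\VR}^{\nu,n}_{\sf pro}X$, where I temporarily write $\widetilde{\VR}^{\nu,n}_{<*}X$ for the non-strict functor $r\mapsto \VR^{\nu,n}_{\leq r}X$. Both are pro-objects indexed by the cofiltered poset $(0,\infty)$, so by the description of morphisms in $\Pro(\sSet)$ recalled just above, a morphism between them is a compatible family of equivalence classes of maps $\VR^{\nu,n}_{\leq r'}X \to \VR^{\nu,n}_{<r}X$. The key geometric input is the chain of inclusions, valid for every $r>0$,
\begin{equation}
\VR^{\nu,n}_{<r}X \ \hookrightarrow\ \VR^{\nu,n}_{\leq r}X \ \hookrightarrow\ \VR^{\nu,n}_{<r'}X \qquad \text{whenever } r<r',
\end{equation}
where the first inclusion is obvious (if $\WW_\nu(\sigma)<r$ then $\WW_\nu(\sigma)\le r$) and the second holds because $\WW_\nu(\sigma)\le r<r'$ implies $\WW_\nu(\sigma)<r'$.

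First I would unwind what structure maps these two functors carry: for $r<r'$, the transition map of $\VR^{\nu,n}_{<*}X$ is the inclusion $\VR^{\nu,n}_{<r}X\hookrightarrow\VR^{\nu,n}_{<r'}X$, and likewise for the non-strict functor; all these maps, together with the comparison inclusions $\VR^{\nu,n}_{<r}X\hookrightarrow\VR^{\nu,n}_{\leq r}X$, fit into a single commuting diagram of simplicial sets indexed by the relevant sub-poset of $(0,\infty)\times\{<,\le\}$. Then I would define the two morphisms of pro-objects. The morphism $\alpha\colon \VR^{\nu,n}_{\sf pro}X\to \widetilde{\VR}^{\nu,n}_{\sf pro}X$ is the one induced levelwise by the embeddings $\VR^{\nu,n}_{<r}X\hookrightarrow\VR^{\nu,n}_{\leq r}X$ (which is a natural transformation of functors on $(0,\infty)$, hence induces a pro-map by functoriality of $\Pro(-)$). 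The morphism $\beta$ in the other direction is defined component-wise: for each index $r$ of the target, choose any $r'>r$ and take the class $[\,\VR^{\nu,n}_{\leq r'}X\hookrightarrow\VR^{\nu,n}_{<r}X\,]$ furnished by the second inclusion above. I would check this is well-defined: the class does not depend on the choice of $r'$ because for $r<r'_1<r'_2$ the triangle with $\VR^{\nu,n}_{\leq r'_2}X\to\VR^{\nu,n}_{\leq r'_1}X\to\VR^{\nu,n}_{<r}X$ commutes, and it is compatible with the structure maps of the target functor for the same reason.

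Finally I would verify $\beta\alpha=\mathrm{id}$ and $\alpha\beta=\mathrm{id}$ in $\Pro(\sSet)$. For $\beta\alpha$: its $r$-component is represented by the composite $\VR^{\nu,n}_{<r''}X\hookrightarrow\VR^{\nu,n}_{\leq r'}X\hookrightarrow\VR^{\nu,n}_{<r}X$ (for suitable $r<r'<r''$), which is literally the transition map of the source functor, hence equals the identity class. For $\alpha\beta$: its $r$-component is represented by $\VR^{\nu,n}_{\leq r''}X\hookrightarrow\VR^{\nu,n}_{<r'}X\hookrightarrow\VR^{\nu,n}_{\leq r}X$, again a transition map of the target functor, hence the identity class. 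Both verifications reduce to the statement that any further-refined composite of the structural inclusions is the structure map itself, which is immediate from the commuting diagram assembled in the previous step.

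The only mild subtlety — and the step most deserving of care rather than being genuinely hard — is confirming that $\beta$ is a legitimate morphism of pro-objects, i.e.\ that the family of classes $([\,\VR^{\nu,n}_{\leq r'}X\hookrightarrow\VR^{\nu,n}_{<r}X\,])_r$ satisfies the compatibility condition $[(\GG g)\varphi_r]=[\varphi_{r'}]$ from the hom-set description above; this is a direct diagram chase using cofilteredness of $(0,\infty)$ to choose a common refinement. No model-categorical input is needed, since the claim is an honest isomorphism of pro-objects, not merely a pro-weak-equivalence.
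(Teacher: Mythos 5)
Your overall strategy is the same as the paper's: write the two inclusions as morphisms of pro-objects and check that the composites are transition maps, hence identity classes. The paper does exactly this with the concrete choice $r'=r/2$; your version chooses an arbitrary smaller index, which is fine in principle. However, the inequalities you write are systematically reversed, and this makes the claimed maps nonexistent.

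Concretely, to get a map $\VR^{\nu,n}_{\leq r'}X \hookrightarrow \VR^{\nu,n}_{<r}X$ you need $r'<r$ (if $\WW_\nu(\sigma)\leq r'<r$ then $\WW_\nu(\sigma)<r$); with $r'>r$ as you wrote, the inclusion simply does not hold, so your $\beta_r$ is not defined. This error propagates to the verification of $\beta\alpha={\sf id}$, where you write the chain $\VR^{\nu,n}_{<r''}X\hookrightarrow\VR^{\nu,n}_{\leq r'}X\hookrightarrow\VR^{\nu,n}_{<r}X$ with $r<r'<r''$: the first arrow needs $r''\leq r'$ and the second needs $r'<r$, so both fail under your stated ordering. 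The fix is just to flip the inequalities throughout (take $r'<r$, and $r''\leq r'<r$, or more simply follow the paper's normalization $r'=r/2$). Once corrected, the argument goes through and coincides with the paper's proof.
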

\begin{proof} Denote by $\VR^{\nu,n}_{\leq \sf pro} X$ the pro-object defined by $\VR^{\nu,n}_{\leq *} X.$ Note that there are inclusions 
\begin{equation}\label{eq:inclusions:e/2}
\VR^{\nu,n}_{<r/2} X \subseteq \VR^{\nu,n}_{\leq r/2} X \subseteq \VR^{\nu,n}_{<r} X \subseteq \VR^{\nu,n}_{\leq r} X.
\end{equation} 
The inclusions $\varphi_{r}:\VR^{\nu,n}_{<r} X \hookrightarrow \VR^{\nu,n}_{\leq r} X$ and $\psi_r:\VR^{\nu,n}_{\leq r/2} X\to \VR^{\nu,n}_{<r} X$ define morphisms of pro-simplicial sets $\varphi=([\varphi_r])_r: \VR^{\nu,n}_{\sf pro} X \to \VR^{\nu,n}_{\leq \sf pro} X$ and $\psi=([\psi_r])_r:\VR^{\nu,n}_{\leq \sf pro} X \to \VR^{\nu,n}_{\sf pro}.$ Using the inclusions \eqref{eq:inclusions:e/2}, it is easy to check that their compositions $\varphi\psi$  and $\psi \varphi$ are identity morphisms. 
\end{proof}

\begin{theorem}\label{th:pro-simplicial}
For a metric space $X,$ a distance matrix norm $\nu$ and $n\geq 0,$ the inclusions $\VR^{\nu,n}_{<r} X \hookrightarrow  \VR^{\infty,n}_{<r} X$ define an isomorphism of pro-simplicial sets
\begin{equation}
\VR^{\nu,n}_{\sf pro} X \cong \VR^{\infty,n}_{\sf pro} X.
\end{equation}
\end{theorem}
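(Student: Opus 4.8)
The key is the two-sided inclusion already recorded in \eqref{eq:inclusion-to-infty}: for any distance matrix norm $\nu$ we have
\begin{equation}
\VR^{\nu,n}_{<r} X \subseteq \VR^{\infty,n}_{<r} X
\qquad\text{and}\qquad
\VR^{\infty,n}_{<\, r/C_n(\nu)}\, X \subseteq \VR^{\nu,n}_{<r} X ,
\end{equation}
where the second inclusion uses that $D(\sigma)_{i,j}\le \diam(\tilde\sigma)\cdot (E_n)_{i,j}$ together with $\WW_\nu(\sigma)\le C_n(\nu)\cdot\diam(\tilde\sigma)$ and $\diam(\tilde\sigma)=\WW_\infty(\sigma)$ (Lemma \ref{lemma:W_nu}(5)). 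Set $C:=C_n(\nu)\ge 1$. The plan is to package these two families of inclusions as a pair of mutually inverse morphisms in $\Pro(\sSet)$.

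First I would fix the index category $(0,\infty)$ (cofiltered, with a unique arrow $r'\to r$ whenever $r'\le r$) and, for the morphism $\varphi\colon \VR^{\nu,n}_{\sf pro}X\to\VR^{\infty,n}_{\sf pro}X$, take $\varphi_r$ to be the equivalence class $[\,\VR^{\nu,n}_{<r}X\hookrightarrow\VR^{\infty,n}_{<r}X\,]$; this is visibly compatible with the structure maps, so it is a well-defined morphism of pro-simplicial sets. For the morphism $\psi\colon \VR^{\infty,n}_{\sf pro}X\to\VR^{\nu,n}_{\sf pro}X$ in the other direction I would take $\psi_r$ to be the class of the inclusion $\VR^{\infty,n}_{<\, r/C}\, X\hookrightarrow \VR^{\nu,n}_{<r} X$, where the source is regarded as the $(r/C)$-stage of the pro-object; again the structure maps in $(0,\infty)$ are respected because scaling by $1/C$ is monotone, so $\psi$ is a morphism of pro-simplicial sets.

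It remains to check $\psi\varphi=\mathrm{id}$ and $\varphi\psi=\mathrm{id}$. By the description of composition in $\Pro(\sSet)$ recalled in the excerpt (componentwise, up to the equivalence relation $\sim$), the composite $\psi\varphi$ at stage $r$ is the class of the composite inclusion
\begin{equation}
\VR^{\nu,n}_{<\, r/C}\, X \;\hookrightarrow\; \VR^{\infty,n}_{<\, r/C}\, X \;\hookrightarrow\; \VR^{\nu,n}_{<r}\, X ,
\end{equation}
which is simply the structure map of $\VR^{\nu,n}_{\sf pro}X$ associated to $r/C\le r$; hence it equals the identity class on $\VR^{\nu,n}_{\sf pro}X$. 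Symmetrically, $\varphi\psi$ at stage $r$ is the class of
\begin{equation}
\VR^{\infty,n}_{<\, r/C}\, X \;\hookrightarrow\; \VR^{\nu,n}_{<r}\, X \;\hookrightarrow\; \VR^{\infty,n}_{<r}\, X ,
\end{equation}
again a structure map of $\VR^{\infty,n}_{\sf pro}X$, so it is the identity class. Therefore $\varphi$ and $\psi$ are mutually inverse isomorphisms and $\VR^{\nu,n}_{\sf pro}X\cong\VR^{\infty,n}_{\sf pro}X$, as claimed.

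I do not anticipate a genuine obstacle here: once \eqref{eq:inclusion-to-infty} is in hand, the whole argument is the standard ``cofinality/interleaving'' trick showing that two nested filtered systems with bounded reindexing have the same pro-object. The only point requiring a little care is bookkeeping of the index shifts by the constant $C=C_n(\nu)$ and verifying that the reindexed inclusions really do assemble into morphisms of pro-objects (i.e. commute with the structure maps), which is immediate from monotonicity of multiplication by $1/C$ on $(0,\infty)$. As an immediate consequence, applying $\Pro(H_n(-;\KK))$ and the functor $L$ (formula \eqref{eq:pro->lim}) gives $\lim_{r\to 0}H_n(\VR^\nu_{<r}X)\cong\lim_{r\to 0}H_n(\VR^\infty_{<r}X)=\lim_{r\to 0}H_n(\vr_{<r}X)$.
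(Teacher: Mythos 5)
Your proposal is correct and follows essentially the same route as the paper's proof: both use the two-sided inclusions from \eqref{eq:inclusion-to-infty}, define $\varphi=([\varphi_r])_r$ and $\psi=([\psi_r])_r$ via the inclusions $\VR^{\nu,n}_{<r}X\hookrightarrow\VR^{\infty,n}_{<r}X$ and $\VR^{\infty,n}_{<r/C_n}X\hookrightarrow\VR^{\nu,n}_{<r}X$, and verify that the composites are the identity classes using the chain $\VR^{\nu,n}_{<r/C_n}X\subseteq\VR^{\infty,n}_{<r/C_n}X\subseteq\VR^{\nu,n}_{<r}X\subseteq\VR^{\infty,n}_{<r}X$. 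You simply spell out the bookkeeping a bit more explicitly than the paper does.
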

\begin{proof}
By \eqref{eq:inclusion-to-infty}  we have inclusions 
\begin{equation}\label{eq:inclusions:nu-infty}
 \VR^{\nu,n}_{< r/C_n}  X \subseteq  \VR^{\infty,n}_{< r/C_n}  X \subseteq  \VR^{\nu,n}_{<r} X \subseteq \VR^{\infty,n}_{<r} X,
\end{equation}
where $C_n=C_n(\nu).$ Then the inclusions $\varphi_r: \VR^{\nu,n}_{<r} X  \to  \VR^{\infty,n}_{<r} X$ and $\psi_r : \VR^{\infty,n}_{< r/C_n}  X \to  \VR^{\nu,n}_{<r} X$ define morphisms of pro-simplicial sets $\varphi = ([\varphi_r])_r: \VR^{\nu,n}_{\sf pro} X  \to  \VR^{\infty,n}_{\sf pro} X$ and $\psi=([\psi_r])_r : \VR^{\infty,n}_{\sf pro}  X \to  \VR^{\nu,n}_{\sf pro} X.$ Using the inclusions \eqref{eq:inclusions:nu-infty}, it is easy to check that $\varphi\psi={\sf id}$ and $\psi\varphi={\sf id}.$
\end{proof}

Assume that $\KK$ is a commutative ring and consider the functor 
\begin{equation}
H_n(\VR^{\nu}_{<*} X) : (0,\infty) \longrightarrow {\sf Mod}(\KK). 
\end{equation}
where $H_n(-)=H_n(-;\KK)$ is the homology with coefficients in $\KK.$ The limit of this functor is denoted by 
\begin{equation}
\LH^\nu_n(X) := \lim_{r}\  H_n(\VR^\nu_{<r} X).
\end{equation}
If $\nu=\nu_\infty,$ the limit is the limit of homology of the classical Vietoris-Rips complex
\begin{equation}
\LH_n(X):=\LH^{\nu_\infty}_n(X) = \lim_{r} \ H_n( \vr^\infty_{<r} X )
\end{equation}
considered by Vietoris in the pioneering article  \cite{vietoris1927hoheren}. Since the pro-simplicial sets $\VR^{\nu,n+1}_{<*}X$ and $\VR^{\nu,n+1}_{\leq *}X$ are isomorphic, we obtain that the pro-modules  $H_n(\VR^{\nu,n+1}_{<*}X)$ and $H_n(\VR^{\nu,n+1}_{\leq *}X)$ are also isomorphic, and hence 
\begin{equation}
\LH_n^{\nu}(X) \cong  \lim_{r} H_n(\VR^\nu_{\leq r} X). 
\end{equation}
As a corollary of Theorem \ref{th:pro-simplicial}, we obtain the following generalization of the statement Nina Otter \cite[Th.33]{otter2018magnitude}.
\begin{corollary}
For any metric space $X$ and any distance matrix norm $\nu,$ the inclusion $\VR^\nu_{<r}X \to \VR^\infty_{<r}X$ induces an isomorphism 
\begin{equation}
\LH^\nu_*(X) \cong \LH_*(X).
\end{equation}
\end{corollary}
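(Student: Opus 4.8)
The plan is to deduce this directly from Theorem \ref{th:pro-simplicial} together with the implication \eqref{eq:pro->lim}, so that the substantive work has already been done in the proof of that theorem and what remains is bookkeeping with the pro-object formalism.

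First I would record the elementary fact that $n$-th homology depends only on the $(n+1)$-skeleton, so that the inclusion of the skeleton induces an isomorphism $H_n(\VR^\nu_{<r} X) \cong H_n(\VR^{\nu,n+1}_{<r} X)$ naturally in $r$, and likewise for $\nu_\infty$. Consequently
\[
\LH^\nu_n(X) = \lim_r H_n(\VR^{\nu,n+1}_{<r} X), \qquad \LH_n(X) = \lim_r H_n(\VR^{\infty,n+1}_{<r} X).
\]
Theorem \ref{th:pro-simplicial}, applied with $n$ replaced by $n+1$, gives an isomorphism of pro-simplicial sets $\VR^{\nu,n+1}_{\sf pro} X \cong \VR^{\infty,n+1}_{\sf pro} X$, and by construction this isomorphism is the one induced by the inclusions $\VR^{\nu,n+1}_{<r} X \hookrightarrow \VR^{\infty,n+1}_{<r} X$.

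Next I would apply the implication \eqref{eq:pro->lim} with $\CC = \sSet$, $\DD = {\sf Mod}(\KK)$ and $\Phi = H_n(-;\KK)$: since ${\sf Mod}(\KK)$ admits all small cofiltered limits and the two pro-simplicial sets above are isomorphic, the limits of the composite functors agree, that is
\[
\LH^\nu_n(X) = \lim_r H_n(\VR^{\nu,n+1}_{<r} X) \cong \lim_r H_n(\VR^{\infty,n+1}_{<r} X) = \LH_n(X).
\]
Because $\Pro(-)$ is functorial and the pro-object isomorphism of Theorem \ref{th:pro-simplicial} is induced by the inclusions, the resulting isomorphism on limit homology is exactly the map induced by $\VR^\nu_{<r} X \to \VR^\infty_{<r} X$, which is the assertion.

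I do not expect a genuine obstacle here; the only point demanding a little care is purely formal, namely that Theorem \ref{th:pro-simplicial} is phrased in terms of skeleta, so one must pass to $\VR^{\nu,n+1}$ before invoking it and then return to $\VR^\nu$ via the skeletal stability of $H_n$ noted in the first step — and one must make sure, when applying \eqref{eq:pro->lim}, that $H_n(-;\KK)$ is being used as the functor $\Phi$ rather than being post-composed with $L$, so that the identification of the induced map with the inclusion-induced map is transparent.
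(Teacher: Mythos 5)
Your proposal is correct and follows the paper's own route exactly: the paper's proof is the one-liner ``It follows from Theorem \ref{th:pro-simplicial} and \eqref{eq:pro->lim},'' and the steps you spell out (pass to the $(n{+}1)$-skeleton since $H_n$ depends only on it, invoke the isomorphism of pro-simplicial sets, apply \eqref{eq:pro->lim} with $\Phi = H_n(-;\KK)$, and note that the resulting isomorphism is inclusion-induced) are precisely the bookkeeping that the paper leaves implicit.
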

\begin{proof} It follows from 
Theorem \ref{th:pro-simplicial} and \eqref{eq:pro->lim}. 
\end{proof}

\section{Filtered colimits of metric spaces}\label{sec:filtered_colimits}

The content of this section is the result of a personal discussion with Emily Roff, to whom we express our gratitude. Here we prove that $\VR^\nu_{<r}$ commutes (up to homotopy) with filtered colimits of metric spaces and short maps. As a corollary, we obtain that strictly blurred magnitude homology commutes with filtered colimits.

A disadvantage of the ordinary magnitude homology theory is that it does not commute with filtered colimits of metric spaces in the category of metric spaces and short maps. Indeed, if we consider two-element metric spaces $X_n=\{x_n,y_n\},$  where the distance is defined by  $d(x_n,y_n)=1+\frac{1}{n},$ and take the obvious short maps $X_n\to X_{n+1},$ then $\colim X_n=X=\{x,y\},$ where $d(x,y)=1.$ However, $\MH_{1,1}(X_n)=0$ for $n<\infty,$ and $\MH_{1,1}(X)=\langle (x,y),(y,x) \rangle  \cong  \ZZ^2.$ Therefore 
\begin{equation}
 \colim  \MH_{1,1}(X_n) \ncong \MH_{1,1}(X). 
\end{equation}
It is also easy to see that $\MH_{1,\leq 1}(X_n)=0$ and $\MH_{1,\leq 1}(X)=\langle (x,y)+(y,x) \rangle \cong \ZZ.$ Therefore 
\begin{equation}
 \colim  \MH_{1,\leq 1}(X_n) \ncong \MH_{1,\leq 1}(X). 
\end{equation}
However, further we prove that strictly blurred magnitude homology commute with filtered colimits.

In this section it will be convenient to slightly extend the category of metric spaces to the category of ``pseudometric'' spaces. By a pseudometric space we mean a set $X$ equipped by a map $d:X\times X\to \RR_{\geq 0}$ such that $d(x,x)=0,$   $d(x,y)=d(x,y)$ and $d(x,y)+d(y,z) \geq d(x,z)$ for all $x,y,z\in X.$ So the distance between different points can be equal to zero. Short maps between pseudometric spaces is defined in the same way as for metric spaces. The category of pseudometric spaces and short maps is denoted by $\PMet.$ Then the category of metric spaces and short maps $\Met$ is its full subcategory
\begin{equation}
\Met \subset \PMet.
\end{equation}
The Vietoris-Rips complex $\VR^\nu_{<r} X$ of a pseudometric space $X$ is defined in the way similar to the case of metric spaces. 

For any pseudometric space $X,$ there is a universal map to a metric space called Kolmogorov quotient of $X$ 
\begin{equation}
X \longrightarrow \Kol(X).
\end{equation}
The metric space $\Kol(X)$ is defined as the quotient $X/\sim$ by the equivalence relation $\sim$ such that $x\sim y$ if and only if $d(x,y)=0.$ The distance in $\Kol(X)$ is defined by $d([x],[y])=d(x,y).$ Therefore, the category of metric spaces is a reflective subcategory of the category of pseudometric spaces. 

Later we show that filtered colimits in $\PMet$ are defined in the straightforward way, but filtered colimits in $\Met$ are Kolmogorov quotients of the filtered colimits in the category of pseudometric spaces. We will need the following lemma to compare Vietoris-Rips simplicial sets of colimits in the category of metric spaces and in the category of pseudometric spaces. 

\begin{lemma}\label{lemma:triv_fib} 
Let $\nu$ be a distance matrix norm and $r>0.$ 
For any pseudometric space $X,$ the map $X\to \Kol(X)$ induces a trivial fibration of the $\nu$-Vietoris-Rips simplicial sets
\begin{equation}
\VR^\nu_{<r} X \overset{\sim}\epi \VR^\nu_{<r} (\Kol(X)).
\end{equation}
\end{lemma}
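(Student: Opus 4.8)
The plan is to verify directly that the map $q_* : \VR^\nu_{<r} X \to \VR^\nu_{<r}(\Kol(X))$ has the right lifting property with respect to all boundary inclusions $\partial\Delta^n \hookrightarrow \Delta^n$, which (together with the fact that it is a Kan fibration, or simply by invoking that RLP against boundary inclusions characterizes trivial fibrations of simplicial sets) gives that it is a trivial fibration. Write $q:X\to \Kol(X)$, $q(x)=[x]$, for the quotient map; the key elementary observation is that $d([x],[y]) = d(x,y)$ for all $x,y\in X$, so $q$ is an \emph{isometry-on-distances} map, hence $\WW_\nu(q(x_0),\dots,q(x_n)) = \WW_\nu(x_0,\dots,x_n)$ for every tuple. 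Consequently $q_*$ is surjective on $n$-simplices (lift each representative $[x_i]$ to an arbitrary $x_i\in X$; the $\nu$-weight is unchanged so the lift lands in $\VR^\nu_{<r}X$), and a tuple $(x_0,\dots,x_n)\in X^{n+1}$ lies in $(\VR^\nu_{<r}X)_n$ if and only if its image does.

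The main step is the lifting. Suppose given a commutative square with $\partial\Delta^n \to \VR^\nu_{<r}X$ and $\Delta^n \to \VR^\nu_{<r}(\Kol(X))$. The bottom map is an $n$-simplex of $\VR^\nu_{<r}(\Kol(X))$, i.e.\ a tuple $([z_0],\dots,[z_n])$ with $\WW_\nu([z_0],\dots,[z_n])<r$; the top map prescribes, for each $i\in[n]$, a point $x_i\in X$ with $[x_i]=[z_i]$ (these come from the $0$-faces, which are forced by compatibility with all faces $\partial^i$ of $\partial\Delta^n$, using that a simplicial set map out of $\partial\Delta^n$ for $n\ge 1$ is determined by its vertices only up to the coherence already encoded — more precisely, the restriction to the $0$-skeleton already pins down candidate vertices, and all higher faces of $\partial\Delta^n$ are themselves built from these vertices). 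Define the lift $\Delta^n \to \VR^\nu_{<r}X$ to be the tuple $(x_0,\dots,x_n)$. Since $d(x_i,x_j)=d([x_i],[x_j])=d([z_i],[z_j])$ for all $i,j$, we get $D(x_0,\dots,x_n)=D([z_0],\dots,[z_n])$, hence $\WW_\nu(x_0,\dots,x_n)<r$, so the tuple really is an $n$-simplex of $\VR^\nu_{<r}X$; and by construction it agrees with the given map on $\partial\Delta^n$ and maps to the bottom simplex under $q_*$. A small care point for $n=0$: the square is a lifting problem against $\emptyset \hookrightarrow \Delta^0$, which is just surjectivity of $q_*$ on vertices, already noted.

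The only genuine subtlety — and the step I expect to require the most care — is the bookkeeping at the level of $\partial\Delta^n$: one must check that the vertices read off from the top map $\partial\Delta^n\to\VR^\nu_{<r}X$ are consistent and that assembling them into a single tuple $(x_0,\dots,x_n)$ indeed restricts correctly to \emph{all} non-degenerate simplices of $\partial\Delta^n$, not merely the vertices. This is automatic because every simplex of $\VR^\nu_{<r}X$ is literally a tuple of points of $X$ and faces are computed by deleting entries, so a map out of $\partial\Delta^n$ is the same data as a tuple indexed by $[n]$ together with the (vacuous, for $n\ge 1$) check that every $n-1$-element subtuple has $\nu$-weight $<r$ — which follows from $\WW_\nu(\partial_i\sigma)\le\WW_\nu(\sigma)$ in Lemma \ref{lemma:W_nu}(1) once we know the full tuple works. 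Thus the lift exists and is unique given the chosen vertices, completing the verification that $q_*$ is a trivial fibration.
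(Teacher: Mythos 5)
Your proposal is correct and follows essentially the same route as the paper: both verify the right lifting property against all boundary inclusions $\partial\Delta^n\hookrightarrow\Delta^n$ using that the quotient $q:X\to\Kol(X)$ preserves distances, hence preserves $\nu$-weights, so that a tuple $(x_0,\dots,x_n)$ chosen by lifting vertices has the same weight as the bottom simplex and lands in $\VR^\nu_{<r}X$. The paper is a bit more explicit about the bookkeeping: it uses the presentation of $\partial\Delta^n$ as a coequaliser to show, for $n\ge 2$, that the boundary data amounts to a compatible family of $(n-1)$-tuples which assemble into a single $(n+1)$-tuple; it also treats $n=0$ and $n=1$ separately, whereas you fold $n=1$ into the general case with the (correct, but slightly compressed) remark that a simplex of $\VR^\nu_{<r}X$ is recovered from its ordered vertices.
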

\begin{proof}
 By  \cite[Ch I, Th.11.2]{goerss2009simplicial}, we have to check that any lifting problem of the form 
\begin{equation}\label{eq:lifting_prop}
\begin{tikzcd}
\partial \Delta^n \ar[d,hookrightarrow] \ar[r,"\psi"] & \VR^\nu_{<r} X \ar[d] \\
 \Delta^n \ar[r,"\varphi"] \ar[ur,dashed] & \VR^\nu_{<r} (\Kol(X)) 
\end{tikzcd}
\end{equation}
has a solution. If $n=0,$ this follows from the fact that $X\to \Kol(X)$ is surjective. If $n=1,$ it follows from the fact that $\WW_\nu(x_0,x_1)= d(x_0,x_1)=d([x_0],[x_1])=\WW_\nu([x_0],[x_1]).$ Let us prove it for $n\geq 2.$ In general, since $\partial \Delta^n$ is the  coequaliser 
\begin{equation}
\begin{tikzcd}
\coprod\limits_{0\leq i<j\leq n} \Delta^{n-2} 
\ar[r,shift left=1mm]
\ar[r,shift left=-1mm]
& \coprod\limits_{0\leq i\leq n} \Delta^{n-1} \ar[r] & \partial \Delta^n
\end{tikzcd}
\end{equation}
(see \cite[p.9]{goerss2009simplicial}), for a simplicial set $A,$ a simplicial map $\partial \Delta^n\to A$ is uniquely defined by a sequence of simplices $a^{(0)},\dots,a^{(n)}\in A_{n-1}$ such that $d_i(a^{(j)})=d_{j-1}(a^{(i)})$ for $0\leq i<j\leq n.$ Therefore, a map $\psi : \partial \Delta^n \to \VR^\nu_{<r}  X$ is defined by a sequence of tuples $(x_{0}^{(i)},\dots,x_{i-1}^{(i)},x_{i+1}^{(i)},\dots,x_n^{(i)})\in (\VR^{\nu}_{<r} X)_{n-1}, 0\leq i\leq n$ such that
\begin{equation}
 (x_{0}^{(i)},\dots,\hat x_{i}^{(i)}, \dots, \hat x_{j}^{(i)},\dots  x_n^{(i)}) 
 =
 (x_{0}^{(j)},\dots,\hat x_{i}^{(j)}, \dots, \hat x_{j}^{(j)},\dots  x_n^{(j)}),
\end{equation}
for any $0\leq i<j\leq n,$ where the hat $\hat{(-)}$ means that we skip the entry. Since $n\geq 2,$ it follows that $x_k^{(i)}=x_k^{(j)}$ for any $k\notin \{i,j\}.$ Therefore, if we set $x_k=x_k^{(i)}, i\neq k,$  we obtain a tuple $(x_0,\dots, x_n).$ The map $\varphi:\Delta^n\to \VR^\nu_{<r} (\Kol(X))$ is defined by a tuple $(y_0,\dots,y_n)\in (\VR^\nu_{<r} (\Kol(X)))_n.$ The commutativity of the diagram \eqref{eq:lifting_prop} implies that 
\begin{equation}
([x_0],\dots,\widehat{[x_i]},\dots,[x_n]) = (y_0,\dots,\hat y_i,\dots,y_n)
\end{equation}
for any $0\leq i\leq n.$ Hence $[x_k]=y_k$ for any $0\leq k\leq n.$ Therefore, $([x_0],\dots,[x_n])\in (\VR^\nu_{<r} (\Kol(X)))_n.$ Since  $\WW_\nu(x_0,\dots,x_n)=\WW_\nu([x_0],\dots,[x_n]),$ we obtain $(x_0,\dots,x_n)\in  (\VR^\nu_{<r}  X)_n.$ Therefore the tuple $(x_0,\dots,x_n)$ defines a solution of the lifting problem $\Delta^n \to \VR^\nu_{<r}  X$. 
\end{proof}

Denote by $U:\PMet \to \Set$ the forgetful functor to the category of sets. If $\FF:I\to \PMet$ is a functor from a small filtered category, we take the filtered colimit in the category of sets $X=\colim U\FF$ with the universal cone $\alpha_i:\FF(i)\to X$ and endow it by a pseudometric defined by 
\begin{equation}
 d(x,y) = \inf\{d(x',y')\mid x',y'\in \FF(i), \alpha_i(x')=x, \alpha_i(y')=y \}.
\end{equation}
Since $I$ is filtered, for any finite subset $F\subseteq X,$ there is $i\in I$ and $F'\subseteq \FF(i)$ such that $\alpha_i(F')=F.$ Using this we obtain that the distance is well defined and that the triangle inequality is satisfied. So $(X,d)$ is a pseudometric space. It is easy to check that it is the colimit in the category of pseudometric spaces $(X,d) = \colim \FF.$ In particular, we obtain 
\begin{equation}
U( \colim \FF ) \cong \colim U\FF.
\end{equation} 

\begin{lemma}\label{lemma:colim_pseudo}
Let $\nu$ be a distance matrix norm and $r>0.$ Then for any small filtered category $I$ and any functor $\FF:I\to \PMet,$ we have an isomorphism 
\begin{equation}
\colim (\VR^\nu_{<r} \FF(i)) \cong  \VR^\nu_{<r} (\colim \FF).   
\end{equation}
\end{lemma}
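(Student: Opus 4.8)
The plan is to establish a degreewise bijection. Since colimits in $\sSet$ are computed degreewise, $\left(\colim_i \VR^\nu_{<r}\FF(i)\right)_n \cong \colim_i \left(\VR^\nu_{<r}\FF(i)\right)_n$, so it suffices to show that for each $n$ the canonical map
\begin{equation}
\kappa_n : \colim_i \left(\VR^\nu_{<r}\FF(i)\right)_n \longrightarrow \left(\VR^\nu_{<r}(\colim\FF)\right)_n
\end{equation}
is a bijection. Writing $X=\colim\FF$, this map is induced by the cone maps $\alpha_i:\FF(i)\to X$. I would first note that, because the pseudometric on $X$ is defined as an infimum over preimages, $d_X(\alpha_i(x'),\alpha_i(y'))\le d_{\FF(i)}(x',y')$, so each $\alpha_i$ is short; by Lemma \ref{lemma:main_inequality} (with $a=1$, $b=0$) a short map does not increase $\nu$-weight, hence $\alpha_i$ restricts to a simplicial map $\VR^\nu_{<r}\FF(i)\to\VR^\nu_{<r}X$, and these are compatible over $I$. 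The same remark shows every transition map $\FF(i\to j)$ restricts to $\VR^\nu_{<r}\FF(i)\to\VR^\nu_{<r}\FF(j)$, which I will use repeatedly.

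Injectivity of $\kappa_n$ is the easy half. Suppose $(x_0',\dots,x_n')\in(\VR^\nu_{<r}\FF(i))_n$ and $(y_0',\dots,y_n')\in(\VR^\nu_{<r}\FF(j))_n$ have the same image in $(\VR^\nu_{<r}X)_n$, i.e.\ $\alpha_i(x_k')=\alpha_j(y_k')$ for all $k$. Since $U(\colim\FF)\cong\colim U\FF$ and $I$ is filtered, there is an index $l$ with morphisms $i\to l$ and $j\to l$ such that $\FF(i\to l)(x_k')=\FF(j\to l)(y_k')$ for every $k$. As the transition maps preserve the $\VR^\nu_{<r}$'s, the two tuples already become equal in $(\VR^\nu_{<r}\FF(l))_n$, hence represent the same element of the colimit.

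Surjectivity is the main content, and the step I expect to be the real obstacle. Given $(x_0,\dots,x_n)\in(\VR^\nu_{<r}X)_n$, set $c=\WW_\nu(x_0,\dots,x_n)<r$. Since $\{x_0,\dots,x_n\}$ is finite there are an index $i$ and lifts $x_k'\in\FF(i)$ with $\alpha_i(x_k')=x_k$; the difficulty is that $d_{\FF(i)}(x_k',x_l')$ may strictly exceed $d_X(x_k,x_l)$. The key sublemma I would prove is that for each pair $k,l$,
\begin{equation}
d_X(x_k,x_l) = \inf_j\, d_{\FF(j)}\big(\FF(i\to j)(x_k'),\,\FF(i\to j)(x_l')\big),
\end{equation}
where $j$ ranges over objects receiving a morphism from $i$; that is, the infimum over \emph{all} preimages in the definition of $d_X$ can be computed using only the images of the \emph{fixed} lifts $x_k',x_l'$ under transition maps. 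This holds because any two preimages of $x_k$ are identified in $\colim U\FF$ and so agree after passing far enough in the filtered system, together with shortness of the transition maps. Since these distances are nonincreasing along $i\to j$ and there are only finitely many pairs $(k,l)$, I would fix $\varepsilon>0$ with $c+\varepsilon\,C_n(\nu)<r$, then use filteredness to choose a single $j$ (with a morphism $i\to j$) and lifts $x_k'':=\FF(i\to j)(x_k')$ such that $d_{\FF(j)}(x_k'',x_l'')<d_X(x_k,x_l)+\varepsilon$ for all $k,l$. Lemma \ref{lemma:main_inequality} then gives $\WW_\nu(x_0'',\dots,x_n'')\le c+\varepsilon\,C_n(\nu)<r$, so $(x_0'',\dots,x_n'')\in(\VR^\nu_{<r}\FF(j))_n$ maps to $(x_0,\dots,x_n)$ under $\kappa_n$. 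Combining the two halves, $\kappa_n$ is a bijection for every $n$, which yields the claimed isomorphism of simplicial sets.
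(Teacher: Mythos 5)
Your proposal is correct and follows essentially the same approach as the paper: reduce to a degreewise statement, use the finiteness of an $n$-simplex together with filteredness and the infimum definition of the colimit pseudometric to find lifts whose mutual distances exceed those of $(x_0,\dots,x_n)$ by less than $\varepsilon$, and conclude with Lemma~\ref{lemma:main_inequality}. The only cosmetic difference is that the paper packages the degreewise claim as the set equality $(\VR^\nu_{<r}X)_n=\bigcup_i\alpha_i^{n+1}\bigl((\VR^\nu_{<r}\FF(i))_n\bigr)$ (implicitly using that filtered colimits of subsets of a filtered colimit of sets are just unions), whereas you spell out injectivity and surjectivity of the comparison map $\kappa_n$ directly, together with a more explicit statement of the infimum sublemma.
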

\begin{proof} Set $X=\colim \FF.$ 
Since $(\VR^\nu_{<r}  \FF(i))_n \subseteq \FF(i)^{n+1}$ and $(\VR^\nu_{<r} X)_n \subseteq  X^{n+1},$ it is sufficient to prove that $(\VR^\nu_{<r} X)_n = \bigcup_i \alpha_i^{n+1}( \VR^\nu_{<r}  \FF(i) )_n.$ Let us prove this. Take a tuple $(x_0,\dots,x_n)\in (\VR^\nu_{<r}  X)_n$ and choose $r>0$ such that $\WW_\nu(x_0,\dots,x_n) + \varepsilon C_n(\nu)  < r.$
Since $I$ is filtered, using the definition of the distance on $X,$ we obtain that there exists $i$ and a tuple $(x'_0,\dots,x'_n)\in \FF(i)^{n+1}$ such that $\alpha_i(x'_k)=x_k$ and   $d(x'_k,x'_j) \leq d(x_k,x_j)+\varepsilon$ for any $0\leq k,j\leq n.$ Then by Lemma \ref{lemma:main_inequality} we obtain that $\WW_\nu(x_0',\dots,x_n') \leq \WW_\nu(x_0,\dots,x_n) + \varepsilon C_n(\nu) < r.$ Therefore, $(x_0',\dots,x_n')\in (\VR^\nu_{<r} \FF(i))_n$ is the preimage of $(x_0,\dots,x_n).$
\end{proof}

The category of metric spaces is not closed with respect to filtered colimits in the category of pseudometric spaces. For example, if $X_n=\{x_n,y_n\}$ is a metric space, such that $d(x_n,y_n)=\frac{1}{n},$ then the filtered colimit in the category of pseudometric spaces consists of two points $\{x_\infty,y_\infty\}$ such that $d(x_\infty,y_\infty)=0.$ However, if we denote by $\iota:\Met\to \PMet$ the inclusion functor, then for any functor from a filtered category $\FF:I\to \Met$ we have 
\begin{equation}\label{eq:colim_met}
\colim \FF \cong \Kol(\colim \iota \FF).
\end{equation}
So the filtered colimit in the category of metric spaces is the Kolmogorov quotient of the filtered colimit in the category of pseudometric spaces.

\begin{theorem}\label{th:filtered_colimits} Let $\nu$ be a distance matrix norm and $r>0.$  Then for any functor from a small filtered category to the category of metric spaces and short maps $\FF:I\to \Met,$ the simplicial map
\begin{equation}
\colim (\VR^\nu_{<r} \FF(i)) \overset{\sim}\longrightarrow  \VR^\nu_{<r} (\colim \FF) 
\end{equation}
is a weak equivalence.
\end{theorem}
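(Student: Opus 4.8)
The plan is to reduce everything to the two lemmas already proved in this section: Lemma~\ref{lemma:colim_pseudo}, which handles filtered colimits in the larger category $\PMet$ on the nose, and Lemma~\ref{lemma:triv_fib}, which compares a pseudometric space with its Kolmogorov quotient. The passage between the two colimits is exactly \eqref{eq:colim_met}.

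First I would set $X=\colim \iota\FF \in \PMet$, the colimit of the underlying diagram of pseudometric spaces, so that by \eqref{eq:colim_met} we have $\colim \FF \cong \Kol(X)$ in $\Met$. The universal cone of the metric colimit factors through $X$: each structure map $\FF(i)\to \colim\FF$ is the composite $\FF(i)\to X\to \Kol(X)=\colim\FF$, where the first map is the cone of the pseudometric colimit and the second is the canonical projection. Applying $\VR^\nu_{<r}(-)$ (which agrees on a metric space whether we regard it in $\Met$ or in $\PMet$), and using that colimits of simplicial sets are computed levelwise, the canonical map in the statement factors as
\begin{equation}
\colim \bigl(\VR^\nu_{<r} \FF(i)\bigr) \longrightarrow \VR^\nu_{<r}(X) \longrightarrow \VR^\nu_{<r}(\Kol(X)) = \VR^\nu_{<r}(\colim\FF),
\end{equation}
where the first arrow is induced by the cone of $\colim \iota\FF$ and the second is induced by $X\to\Kol(X)$.

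Next I would identify the two arrows. The first is precisely the canonical comparison map for the diagram $\iota\FF \colon I\to\PMet$, which Lemma~\ref{lemma:colim_pseudo} tells us is an isomorphism of simplicial sets. The second is, by Lemma~\ref{lemma:triv_fib}, a trivial fibration $\VR^\nu_{<r}(X)\overset{\sim}{\epi}\VR^\nu_{<r}(\Kol(X))$, in particular a weak equivalence. Since weak equivalences are closed under composition and an isomorphism is a weak equivalence, the composite is a weak equivalence, which is the assertion. As a consequence, taking $\nu=\nu_1$ and passing to homology with the appropriate coefficients yields $\colim \MH_{n,<r}(X_i)\cong\MH_{n,<r}(\colim X_i)$, since homology commutes with filtered colimits of simplicial sets.

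I do not expect a genuine obstacle here: all the substance has been absorbed into Lemmas~\ref{lemma:colim_pseudo} and \ref{lemma:triv_fib}. The only point requiring a little care is the bookkeeping that the canonical map of the theorem really does split as the displayed composite --- i.e.\ that the cone of the $\Met$-colimit is obtained from the cone of the $\PMet$-colimit by postcomposing with $X\to\Kol(X)$, which is immediate from the universal property in \eqref{eq:colim_met} --- and the observation that $\VR^\nu_{<r}$ does not see the ambient category ($\Met$ versus $\PMet$) of a space that happens to be metric.
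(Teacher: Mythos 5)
Your proposal is correct and follows exactly the route the paper takes: factor the comparison map through $\VR^\nu_{<r}(\colim\iota\FF)$, invoke Lemma~\ref{lemma:colim_pseudo} for the first leg and Lemma~\ref{lemma:triv_fib} (via the isomorphism \eqref{eq:colim_met}) for the second. The paper's proof is just a one-line citation of those three ingredients; you have merely spelled out the bookkeeping it leaves implicit.
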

\begin{proof}
This follows from Lemma \ref{lemma:colim_pseudo}, isomorphism  \eqref{eq:colim_met}, and Lemma \ref{lemma:triv_fib}.
\end{proof}

\begin{corollary}
For any distance matrix norm $\nu,$ any $r>0$ and any $n\geq 0,$ 
the functor $H_n(\VR^\nu_{<r} -)$ commutes with filtered colimits of metric spaces and short maps. 
\end{corollary}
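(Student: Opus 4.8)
The plan is to reduce the statement to Theorem~\ref{th:filtered_colimits} by combining it with two standard facts: homology of simplicial sets commutes with filtered colimits, and a weak equivalence of simplicial sets induces an isomorphism on homology with coefficients in $\KK$. Fix a small filtered category $I$, a functor $\FF:I\to\Met$, and set $X=\colim\FF$.

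First I would observe that the normalized chain complex functor $S\mapsto C_*(S;\KK)$ from simplicial sets to chain complexes of $\KK$-modules is a left adjoint (it is built degreewise from the free $\KK$-module functor), hence preserves all colimits; in particular
\[
C_*\big(\colim_i\VR^\nu_{<r}\FF(i);\KK\big)\ \cong\ \colim_i C_*\big(\VR^\nu_{<r}\FF(i);\KK\big).
\]
Since $I$ is filtered and $\mathrm{Mod}(\KK)$ satisfies AB5, filtered colimits of chain complexes of $\KK$-modules are exact, so homology commutes with the colimit and there is a natural isomorphism $\colim_i H_n(\VR^\nu_{<r}\FF(i))\cong H_n(\colim_i\VR^\nu_{<r}\FF(i))$. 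Then I would apply Theorem~\ref{th:filtered_colimits}: the canonical map $\colim_i\VR^\nu_{<r}\FF(i)\to\VR^\nu_{<r}X$ is a weak equivalence, and therefore induces an isomorphism $H_n(\colim_i\VR^\nu_{<r}\FF(i))\cong H_n(\VR^\nu_{<r}X)$. Composing the two isomorphisms gives $\colim_i H_n(\VR^\nu_{<r}\FF(i))\cong H_n(\VR^\nu_{<r}(\colim\FF))$, which is the assertion; specializing to $\nu=\nu_1$ recovers the statement for strictly blurred magnitude homology.

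The only point requiring care is to check that this composite isomorphism is the canonical comparison morphism $\colim_i H_n(\VR^\nu_{<r}\FF(i))\to H_n(\VR^\nu_{<r}(\colim\FF))$, rather than merely an abstract isomorphism of $\KK$-modules. This holds because both constituent isomorphisms are natural in the diagram $\FF$, so the claim follows from the universal property of the colimit. There is no substantive obstacle: the whole content of the corollary is carried by Theorem~\ref{th:filtered_colimits}, and what remains is the routine interchange of homology with filtered colimits.
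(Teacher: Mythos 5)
Your proposal is correct and follows essentially the same route as the paper: apply Theorem~\ref{th:filtered_colimits} and use that homology of simplicial sets commutes with filtered colimits. The paper leaves the second step as a one-line remark, while you spell it out via the chain-complex functor being a left adjoint and exactness of filtered colimits in $\mathrm{Mod}(\KK)$; the extra care about the comparison map being the canonical one is a welcome clarification but does not change the argument.
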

\begin{proof}
This follows from the fact that the homology of spaces commutes with filtered colimits. 
\end{proof}

\begin{corollary}
Strictly blurred magnitude homology commutes with filtered colimits of metric spaces and short maps. 
\end{corollary}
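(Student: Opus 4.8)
The plan is to observe that this corollary is an immediate specialization of the previous one. Recall from the subsection ``Relation to blurred magnitude homology'' that the strictly blurred magnitude homology is defined by $\MH_{n,<r}(X)=H_n(\VR^1_{<r}X)$, and that $\VR^1_{<r}X=\VR^{\nu_1}_{<r}X$ for the distance matrix norm $\nu_1$ given by $\nu_1(D)=\max_{0\leq i_0<\dots<i_m\leq n}\|D_{i_0,i_1},\dots,D_{i_{m-1},i_m}\|_1$. Thus $\MH_{n,<r}(-)$ is literally the functor $H_n(\VR^{\nu}_{<r}-)$ for the particular choice $\nu=\nu_1$.

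The first and only step I would carry out is therefore to apply the corollary stating that for any distance matrix norm $\nu$, any $r>0$ and any $n\geq 0$ the functor $H_n(\VR^\nu_{<r}-)$ commutes with filtered colimits of metric spaces and short maps, specialized to $\nu=\nu_1$. This yields $\colim\MH_{n,<r}(X_i)\cong\MH_{n,<r}(\colim X_i)$ for any filtered diagram $X_i$ in $\Met$, which is exactly the assertion.

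There is essentially no obstacle at this level: the substantive content has already been discharged in Theorem \ref{th:filtered_colimits} and the corollary preceding this one. If one wishes to unwind why those hold, the only genuinely delicate point — and the one I would flag as the heart of the argument — is that filtered colimits in $\Met$ are \emph{not} the naive set-level colimits equipped with the infimum pseudometric, but rather their Kolmogorov quotients (isomorphism \eqref{eq:colim_met}); this discrepancy is absorbed by Lemma \ref{lemma:triv_fib}, which shows $\VR^\nu_{<r}X\epi\VR^\nu_{<r}(\Kol(X))$ is a trivial fibration, so that passing to the Kolmogorov quotient does not change the weak homotopy type. Combined with Lemma \ref{lemma:colim_pseudo} (commutation with colimits in $\PMet$) one gets the weak equivalence $\colim(\VR^\nu_{<r}\FF(i))\xrightarrow{\sim}\VR^\nu_{<r}(\colim\FF)$, and then the fact that simplicial homology commutes with filtered colimits of simplicial sets finishes it. For the present corollary, however, no new work is needed: it is a direct instance of the general statement with $\nu=\nu_1$.

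\begin{proof}
By definition $\MH_{n,<r}(X)=H_n(\VR^1_{<r}X)=H_n(\VR^{\nu_1}_{<r}X)$, so this is the previous corollary applied to the distance matrix norm $\nu=\nu_1$.
\end{proof}
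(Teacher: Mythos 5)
Your proof is correct and is exactly the paper's intended argument: the paper leaves this corollary without a written proof precisely because it is the immediate specialization to $\nu=\nu_1$ of the preceding corollary, via the identification $\MH_{n,<r}(X)=H_n(\VR^{\nu_1}_{<r}X)$. Your supplementary unwinding of Theorem~\ref{th:filtered_colimits}, Lemma~\ref{lemma:colim_pseudo}, and Lemma~\ref{lemma:triv_fib} also matches the paper's chain of reasoning.
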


\section{Appendix. Simplicial sets vs.  simplicial complexes} \label{sec:ss_sc}

 We show that there is an adjunction between the category of simplicial sets and simplicial complexes, which defines an equivalence between the category of simplicial complexes and some full subcategory of simplicial sets. We also show that this equivalence agrees with the geometric realization up to homotopy. The results of this section were inspired by the post of John Baez \enquote{Simplicial Sets vs. Simplicial Complexes} in \enquote{The $n$-category Caf\'{e}\:}, its comments and the discussion in the proof of \cite[Prop. 9]{otter2018magnitude} in the paper of Nina Otter. These results are well known to experts, but we decided to add them here for completeness.

By a simplicial complex $K,$ we mean a set of finite non-empty sets such that $\emptyset \neq \tau \subseteq \sigma \in K$ implies $\tau \in K.$ The set of vertices of $K$ is defined by $V(K)=\bigcup_{\sigma\in K} \sigma.$ A morphism of simplicial complexes $f:K\to L$ is a map $f:V(K)\to V(L)$ such that for any $\sigma \in K$ we have $f(\sigma)\in L.$ The category of simplicial complexes is denoted by $\SCpx.$ 

For a simplicial complex $K,$ we denote by $\SS(K)$ a simplicial set, whose components are defined by
\begin{equation}
\SS(K)_n = \{ (x_0,\dots,x_n)\mid \{x_0,\dots,x_n\}\in K\},
\end{equation}
the $i$-th face map is defined by deletion of the $i$-th vertex, and the $i$-th degeneracy map is defined by doubling of the $i$-th vertex. 

For a simplicial set $X,$ we consider a \emph{map of the $i$-th vertex} 
\begin{equation}
v_i : X_n \longrightarrow X_0, \hspace{1cm} 0\leq i\leq n,
\end{equation}
which is induced by the map $[0]\to [n]$ sending $0$ to $i.$ For a simplex $x\in X_n,$ we set    
\begin{equation}
v_*(x) = (v_0(x),\dots,v_n(x)).
\end{equation}
It is easy to see that there are the following relations with face maps and degeneracy maps
\begin{equation}\label{eq:v:relations}
\begin{split} 
v_*(\partial_i x) &= ( v_0(x),\dots,v_{i-1}(x), v_{i+1}(x),\dots,v_n(x) ),
\\
v_*( s_i x ) &= (v_0(x),\dots,v_i(x),v_i(x),\dots,v_n(x)).
\end{split}
\end{equation}
Therefore we can consider a simplicial complex 
\begin{equation}
\SC(X) = \{ \{v_0(x),\dots,v_n(x)\} \mid n\geq 0, x\in X_n\}
\end{equation}
with a morphism of simplicial sets
\begin{equation}
v_* : X \longrightarrow \SS(\SC(X)).
\end{equation}
It is easy to see that for any simplicial complex $K$, there is an isomorphism 
\begin{equation}
\SC(\SS(K)) \cong K.
\end{equation}

These two constructions define functors between the category of simplicial sets and the category of simplicial complexes
\begin{equation}\label{eq:functorsSSSC}
\SC : \sSet \leftrightarrows \SCpx : \SS.
\end{equation}

\begin{proposition}\label{prop:sc_ss_adjunction}
The functors \eqref{eq:functorsSSSC} are adjoint 
\begin{equation}\label{eq:adjunction}
\Hom_\SCpx(\SC(X),K) \cong \Hom_\sSet(X,\SS(K)),  
\end{equation}
where $v_*:X\to \SS(\SC(X))$ is the unit of the adjunction, and the isomorphism $\SC(\SS(K))\cong K$  is the counit of the adjunction. 
\end{proposition}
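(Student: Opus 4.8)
The plan is to supply the unit--counit data and verify the two triangle identities, after which the adjunction isomorphism \eqref{eq:adjunction}, together with its naturality in both variables, is formal. For a morphism of simplicial complexes $g\colon\SC(X)\to K$ I will set $\Phi(g)=\SS(g)\circ v_*\colon X\to\SS(K)$; this is a legitimate morphism of simplicial sets because $v_*$ is one, by the relations \eqref{eq:v:relations}, and $\SS$ is a functor. Concretely, $\Phi(g)$ sends $x\in X_n$ to $(g(v_0(x)),\dots,g(v_n(x)))$, which lies in $\SS(K)_n$ since $\{v_0(x),\dots,v_n(x)\}\in\SC(X)$ and $g$ carries simplices to simplices. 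In the reverse direction, for a morphism of simplicial sets $f\colon X\to\SS(K)$ I will set $\Psi(f)=\varepsilon_K\circ\SC(f)\colon\SC(X)\to K$, where $\varepsilon_K\colon\SC(\SS(K))\xrightarrow{\cong}K$ is the canonical isomorphism; on vertices $\Psi(f)$ sends $x\in X_0=V(\SC(X))$ to the unique $y$ with $f(x)=(y)\in\SS(K)_0$.

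I would first record that $v_*$ is natural in $X$ and $\varepsilon_K$ is natural in $K$: for a simplicial map $h\colon X\to Y$ one has $\SS(\SC(h))\circ v_*=v_*\circ h$, because $h$ commutes with the vertex maps $v_i$; and $\varepsilon$ is natural because, read on underlying sets of simplices, it is just the identity. The two triangle identities then collapse to trivialities once the standard identifications $V(\SC(X))=X_0$, $\SS(K)_0\cong V(K)$ and $\SC(\SS(K))\cong K$ are made explicit, together with the description of $\SC$ and $\SS$ on morphisms (acting by $h\mapsto h_0$ on vertices, respectively by applying the vertex map entry-wise): $\SC(v_*)$ is the identity on $X_0$ and $\varepsilon_{\SC(X)}$ is the identity on simplices, so $\varepsilon_{\SC(X)}\circ\SC(v_*)=\mathrm{id}_{\SC(X)}$; and $v_*$ fixes every tuple of $\SS(K)$ viewed in $\SS(\SC(\SS(K)))\cong\SS(K)$ while $\SS(\varepsilon_K)$ is applied entry-wise, so $\SS(\varepsilon_K)\circ v_*=\mathrm{id}_{\SS(K)}$. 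From here the usual computation gives $\Psi(\Phi(g))=\varepsilon_K\circ\SC(\SS(g))\circ\SC(v_*)=g\circ\varepsilon_{\SC(X)}\circ\SC(v_*)=g$, using naturality of $\varepsilon$ and the first triangle identity, and dually $\Phi(\Psi(f))=f$, using naturality of $v_*$ and the second identity. Naturality of $g\mapsto\Phi(g)$ in $X$ and in $K$ is then automatic.

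I do not expect a genuine obstacle: the statement is pure bookkeeping. The only place that needs care is nailing down the identifications above and being consistent about how the two functors act on morphisms; with those settled, every verification in the proof is a one-line calculation. As an alternative route one could bypass the unit--counit machinery entirely and check directly that $\Phi$ and $\Psi$ are mutually inverse and natural using the explicit formulas for $\Phi(g)$ on simplices and for $\Psi(f)$ on vertices given above, which amounts to the same computations.
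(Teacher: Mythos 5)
Your argument is correct, and it differs from the paper's in packaging rather than substance. The paper proceeds hands-on: it takes the map $f\mapsto\tilde f$ given by $\tilde f(x)=(f(v_0(x)),\dots,f(v_n(x)))$ — which is exactly your $\Phi$ — and checks directly that it is a bijection, constructing the inverse from a map $g:X\to\SS(K)$ by restricting to vertices (your $\Psi$). You instead set up the unit--counit data and verify the two triangle identities, deriving $\Psi\Phi=\mathrm{id}$ and $\Phi\Psi=\mathrm{id}$ from them by the standard formal argument. The two routes involve the same identifications ($V(\SC(X))=X_0$, $\SC(\SS(K))\cong K$, $v_*$ acting by the vertex maps); the computations are the same one-liners either way. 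What your route buys is that naturality in both variables comes for free from the unit--counit framework, whereas the paper's direct bijection argument strictly speaking leaves naturality to the reader; what the paper's route buys is brevity, since it never needs to state the triangle identities explicitly. Your closing remark — that one can bypass the unit--counit machinery and just check $\Phi,\Psi$ are mutually inverse — describes the paper's proof exactly.
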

\begin{proof} Let  $X$ be a simplicial set and $K$ be a simplicial complex. Then a morphism $f:\SC(X) \to K$ is a map of sets of  vertices $f: X_0 \to V(K)$ such that $f(\{v_0(x),\dots,v_n(x)\})\in K$ for any $x\in X_n$. Then the relations \eqref{eq:v:relations} imply that we can construct a morphism of simplicial sets $\tilde f:X\to \SS(K)$ defined by $\tilde f(x)=(f(v_0(x)),\dots,f(v_n(x))).$ We need to show that the correspondence $f\mapsto \tilde f$ defines a bijection \eqref{eq:adjunction}. 
The fact that the map is injective is obvious. 

Let us prove that it is surjective. Assume that we have a morphism of simplicial sets $g:X\to \SS(K).$ For $x\in X_0$ we set $f(x)=g(x)\in V(K).$ This defines a map $f:X_0\to V(K).$  For $x\in X_n$ we have $f(v_i(x))=v_i(g(x)).$ Therefore
\begin{equation}
g(x)=(f(v_0(x)),\dots, f(v_n(x))).
\end{equation}
It follows that $\{f(v_0(x)),\dots, f(v_n(x))\}\in K.$ Hence $f$ defines  a morphism of simplicial complexes $f:\SC(X)\to K$ such that $g=\tilde f.$
\end{proof}

\begin{corollary}
The category of simplicial complexes $\SCpx$ is equivalent to the full subcategory of the category of simplicial sets $\sSet$ consisting of simplicial sets $X$ such that the map $v_*:X \to \SS(\SC(X))$ is an isomorphism. 
\end{corollary}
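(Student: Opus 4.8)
The plan is to deduce this purely formally from the adjunction $\SC \dashv \SS$ of Proposition~\ref{prop:sc_ss_adjunction} together with the observation that its counit, the natural isomorphism $\SC(\SS(K)) \cong K$, is invertible. Write $c_K \colon \SC(\SS(K)) \xrightarrow{\ \cong\ } K$ for this counit, and let $\mathcal{A} \subseteq \sSet$ denote the full subcategory of those simplicial sets $X$ for which the unit $v_* \colon X \to \SS(\SC(X))$ is an isomorphism. I claim that $\SS$ corestricts to an equivalence $\SCpx \xrightarrow{\ \sim\ } \mathcal{A}$, with quasi-inverse the restriction of $\SC$, and the proof of the corollary amounts to checking this.

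First I would verify that $\SS$ takes values in $\mathcal{A}$, i.e.\ that the unit is invertible on simplicial sets of the form $\SS(K)$. Here the unit at $\SS(K)$ is the map $v_* \colon \SS(K) \to \SS(\SC(\SS(K)))$, and one of the two triangle identities for $\SC \dashv \SS$ asserts that the composite
\begin{equation*}
\SS(K) \xrightarrow{\ v_*\ } \SS(\SC(\SS(K))) \xrightarrow{\ \SS(c_K)\ } \SS(K)
\end{equation*}
is the identity. Since $c_K$ is an isomorphism, so is $\SS(c_K)$, and hence $v_*$ is an isomorphism; thus $\SS(K) \in \mathcal{A}$. Next, $\SS \colon \SCpx \to \mathcal{A}$ is essentially surjective: if $X \in \mathcal{A}$ then $v_* \colon X \to \SS(\SC(X))$ is an isomorphism, so $X$ is isomorphic to $\SS$ of the simplicial complex $\SC(X)$. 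It is also fully faithful: faithfulness is immediate, since on $0$-simplices $\SS(f)$ is just the underlying vertex map of $f$, while fullness follows by applying Proposition~\ref{prop:sc_ss_adjunction} with $X = \SS(K)$, which gives
\begin{equation*}
\Hom_\sSet(\SS(K),\SS(L)) \cong \Hom_\SCpx(\SC(\SS(K)),L) \cong \Hom_\SCpx(K,L)
\end{equation*}
via $c_K$; and $\mathcal{A}$ is a full subcategory of $\sSet$. A fully faithful, essentially surjective functor is an equivalence, so $\SS \colon \SCpx \to \mathcal{A}$ is an equivalence of categories, which is the assertion of the corollary.

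Equivalently, one may exhibit the quasi-inverse directly: the counit gives a natural isomorphism $\SC \circ \SS \cong \mathrm{id}_{\SCpx}$, and on $\mathcal{A}$ the unit $v_*$ gives a natural isomorphism $\mathrm{id}_{\mathcal{A}} \cong \SS \circ \SC|_{\mathcal{A}}$ --- indeed, $\mathcal{A}$ is by definition exactly the locus where $v_*$ is invertible --- so $\SC|_{\mathcal{A}}$ and $\SS$ are mutually quasi-inverse. I expect no genuine obstacle here; the statement is an instance of the general fact that a right adjoint with invertible counit is a fully faithful (reflective) embedding whose essential image is carved out by invertibility of the unit. The only step deserving a moment's attention is the verification, via the triangle identity, that the objects $\SS(K)$ themselves lie in $\mathcal{A}$.
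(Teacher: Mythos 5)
Your argument is correct, and it is the standard category-theoretic proof that the paper implicitly relies on by stating the corollary without proof immediately after Proposition~\ref{prop:sc_ss_adjunction}: a right adjoint with invertible counit is fully faithful, and its essential image is exactly the locus where the unit is invertible. The details you supply — the triangle identity showing $\SS(K) \in \mathcal{A}$, essential surjectivity from the definition of $\mathcal{A}$, and full faithfulness via the Hom-set isomorphism composed with $c_K$ — are all accurate and fill in the routine verification the authors omit.
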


Further we show that this equivalence of categories agrees with geometric realizations up to homotopy (Theorem \ref{th:geometric:realisation:SSK}). 

For a set $S,$ we denote by $\Delta(S)$ the simplicial complex consisting of all non-empty finite subsets of $S.$ 

\begin{lemma}\label{lemma:contractible:simplicial:set}
For a non-empty set $S,$ the simplicial set $\SS(\Delta(S))$ is contractible. 
\end{lemma}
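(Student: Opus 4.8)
The plan is to exhibit an explicit simplicial contraction of $\SS(\Delta(S))$ to a chosen basepoint. Fix any point $s_0 \in S$, and let $c \colon S \to S$ be the constant map with value $s_0$, with $\mathrm{id} \colon S \to S$ the identity. Both induce morphisms of simplicial sets $\SS(\Delta(\mathrm{id})) = \mathrm{id}$ and $\SS(\Delta(c))$ on $\SS(\Delta(S))$, the latter being the constant map to the vertex $s_0$. The key observation is that for any two maps of sets $f, g \colon X \to Y$, the formula \eqref{eq:homotopy_h^{f,g}} (already recorded in the excerpt, together with the remark that $h^{f,g}$ is always a simplicial homotopy between $\SS(\Delta(f))$ and $\SS(\Delta(g))$) applies verbatim here, since $\Delta(S)$ contains \emph{every} finite non-empty subset of $S$, so there is no constraint to check when forming the interleaved tuple $(f(x_0),\dots,f(x_i),g(x_i),\dots,g(x_n))$.

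Concretely, I would take $f = \mathrm{id}_S$ and $g = c$ and invoke the statement from the excerpt that $h^{\mathrm{id},c}$ is a simplicial homotopy between $\SS(\Delta(\mathrm{id}))$ and $\SS(\Delta(c))$, i.e., between the identity of $\SS(\Delta(S))$ and the constant map at $s_0$. A simplicial homotopy between the identity and a constant map to a vertex exhibits the simplicial set as simplicially contractible; in particular its geometric realization is contractible, and the simplicial set is weakly contractible. (Equivalently one may note directly that $\SS(\Delta(S))$ is a Kan complex — any horn can be filled because every finite tuple of vertices spans a simplex — so a simplicial homotopy to a point suffices to conclude contractibility in the homotopy category of simplicial sets.)

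There is essentially no obstacle: the only thing to verify is that the interleaving formula produces tuples that genuinely lie in $\SS(\Delta(S))$, which is immediate because $\Delta(S)$ is the full simplicial complex on $S$, so any finite non-empty subset of $S$ is a simplex. The non-emptiness hypothesis on $S$ is used only to guarantee the existence of the basepoint $s_0$. Thus the proof is a one-line application of material already set up in the excerpt; I would write it as: ``Pick $s_0 \in S$. By \eqref{eq:homotopy_h^{f,g}} applied to the identity and the constant map $c$ at $s_0$, the collection $h^{\mathrm{id},c}$ is a simplicial homotopy from $\mathrm{id}_{\SS(\Delta(S))}$ to the constant map at $s_0$; hence $\SS(\Delta(S))$ is contractible.''

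\begin{proof}
Pick a point $s_0\in S$, which exists since $S\neq\emptyset$, and let $c\colon S\to S$ be the constant map at $s_0$. Consider the identity map $\mathrm{id}_S\colon S\to S$. Since $\Delta(S)$ is the simplicial complex of \emph{all} finite non-empty subsets of $S$, for any tuple $(x_0,\dots,x_n)$ with $\{x_0,\dots,x_n\}\in\Delta(S)$ the interleaved tuple $(x_0,\dots,x_i,s_0,\dots,s_0)$ again has underlying set in $\Delta(S)$. Hence the collection of maps $h^{\mathrm{id},c}_i$ defined by the formula \eqref{eq:homotopy_h^{f,g}} is a well-defined simplicial homotopy on $\SS(\Delta(S))$, and by the discussion following \eqref{eq:homotopy_h^{f,g}} it is a simplicial homotopy between $\SS(\Delta(\mathrm{id}_S))=\mathrm{id}_{\SS(\Delta(S))}$ and $\SS(\Delta(c))$, the constant map at the vertex $s_0$. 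A simplicial homotopy between the identity and a constant map to a vertex shows that $\SS(\Delta(S))$ is contractible.
\end{proof}
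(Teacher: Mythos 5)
Your proof is correct, but it takes a genuinely different route from the paper's. The paper's own argument is purely categorical: it observes that $\SS(\Delta(S))$ is the nerve of the ``chaotic'' category $\tilde S$ on the object set $S$ (every hom-set a singleton), that $\tilde S$ is equivalent to the terminal category $*$, and that the nerve functor sends equivalences of categories to homotopy equivalences, so $\SS(\Delta(S)) \simeq N(*) = *$. You instead build an explicit simplicial contraction: you pick a basepoint $s_0$ and reuse the interleaving homotopy $h^{f,g}$ from formula \eqref{eq:homotopy_h^{f,g}}, with $f = \mathrm{id}_S$ and $g$ the constant map at $s_0$, noting that the only thing to check --- that the interleaved tuples actually land in $\SS(\Delta(S))$ --- is automatic because $\Delta(S)$ contains every finite non-empty subset of $S$. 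Both are valid; the paper's version is shorter and highlights the nerve-of-a-groupoid structure (which also makes $\SS(\Delta(S))$ a Kan complex, a fact you mention parenthetically), while your version is more hands-on and has the virtue of reusing machinery the paper already develops in Section 3 rather than appealing to the functoriality and homotopy-invariance properties of the nerve.

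One small caution worth making explicit if you wrote this up: a simplicial homotopy in the sense of \cite[Def.\ 5.1]{may1992simplicial} is a directed notion, so $h^{\mathrm{id},c}$ a priori only gives a homotopy in one direction between $\mathrm{id}$ and the constant map. This still suffices for your conclusion, since geometric realization converts a simplicial homotopy (in either direction) into an ordinary homotopy, which shows $|\SS(\Delta(S))|$ is contractible and hence $\SS(\Delta(S))$ is weakly contractible --- which is all the paper uses. Your Kan-complex remark is also correct and would upgrade this to a simplicial homotopy equivalence, but it isn't needed.
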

\begin{proof}
Consider the category $\tilde S,$ whose objects are elements of $S$ and each hom set consists of one morphism. Then $\SS(\Delta(S))$ is isomorphic to the nerve of this category $N(\tilde S).$ Moreover, the category $\tilde S$ is equivalent to the one-point category $*.$ Therefore $\SS(\Delta(S)) \simeq N(\tilde S) \simeq N(*)=*.$  
\end{proof}

For a simplicial complex $K,$ we denote by $P(K)$ the poset of its simplices.

\begin{lemma}\label{lemma:colimit:SS}
For a simplicial complex $K,$ the inclusions $\Delta(s) \to K, s\in K,$ induce an isomorphism 
\begin{equation}
\underset{s\in P(K)}\colim  \SS(\Delta(s)) \cong \SS(K).
\end{equation}
\end{lemma}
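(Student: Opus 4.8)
The plan is to verify the claimed colimit directly, level by level in the simplicial degree, since colimits of simplicial sets are computed degreewise. Fix $n\geq 0$. I must show that the canonical map
\begin{equation}
\underset{s\in P(K)}\colim\ \SS(\Delta(s))_n \longrightarrow \SS(K)_n
\end{equation}
is a bijection. Recall $\SS(\Delta(s))_n = \{(x_0,\dots,x_n)\mid \{x_0,\dots,x_n\}\subseteq s\}$ and $\SS(K)_n = \{(x_0,\dots,x_n)\mid \{x_0,\dots,x_n\}\in K\}$. First I would observe that $P(K)$ is a filtered — indeed directed — poset only locally; it is not filtered globally, but it does not need to be: I will instead exploit the fact that for a fixed $n$-tuple $\sigma=(x_0,\dots,x_n)$ with $\{x_0,\dots,x_n\}\in K$, the set of $s\in P(K)$ with $\sigma\in\SS(\Delta(s))_n$ is exactly $\{s\in K\mid \{x_0,\dots,x_n\}\subseteq s\}$, and this subposet has the smallest element $\tau_\sigma := \{x_0,\dots,x_n\}$, which lies in $K$ because $K$ is a simplicial complex closed under nonempty subsets.

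Concretely: for surjectivity, given $\sigma=(x_0,\dots,x_n)\in\SS(K)_n$, the simplex $\tau_\sigma=\{x_0,\dots,x_n\}$ belongs to $K$, hence is an object of $P(K)$, and $\sigma\in\SS(\Delta(\tau_\sigma))_n$ maps to $\sigma$. For injectivity, the colimit over $P(K)$ in $\Set$ is the quotient of $\coprod_{s\in P(K)}\SS(\Delta(s))_n$ by the equivalence relation generated by $(s,\sigma)\sim(s',\sigma)$ whenever $s\subseteq s'$ in $P(K)$ (note all transition maps $\SS(\Delta(s))\to\SS(\Delta(s'))$ are inclusions, so they do not alter the underlying tuple). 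Thus two elements $(s,\sigma)$ and $(s',\sigma')$ become identified in the colimit only if $\sigma=\sigma'$ and $s,s'$ are connected by a zigzag inside $\{t\in K\mid \tau_\sigma\subseteq t\}$; since this subposet is nonempty and has a least element it is connected, so every $(s,\sigma)$ with $\tau_\sigma\subseteq s$ is identified with $(\tau_\sigma,\sigma)$, and distinct tuples $\sigma\neq\sigma'$ yield distinct classes because they map to distinct elements of $\SS(K)_n$. Hence the map on $n$-simplices is a bijection for every $n$. Finally I would note this bijection is natural in $n$ (it is induced by the evident simplicial maps $\SS(\Delta(s))\to\SS(K)$, which are compatible with faces and degeneracies by construction of $\SS$), so it assembles into an isomorphism of simplicial sets.

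The main thing to be careful about — the only real subtlety — is that $P(K)$ is not a filtered category, so one cannot simply quote "filtered colimits are computed by taking unions of subobjects." The argument instead rests on the observation that the relevant comma category (the poset of $s\in P(K)$ containing a fixed tuple) has an initial object $\tau_\sigma$; this is what makes the colimit behave like a union. I would state this explicitly so the reader sees why the non-filteredness of $P(K)$ causes no trouble. Everything else is a routine check that the canonical cocone $\{\SS(\Delta(s))\to\SS(K)\}_{s\in P(K)}$ is universal.
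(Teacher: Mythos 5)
Your proof is correct, and the key observation — that for a fixed tuple $\sigma$ the poset $\{t\in K : \tau_\sigma\subseteq t\}$ has a least element, so the colimit is free of unwanted identifications despite $P(K)$ not being filtered — is exactly the crux the paper's proof also relies on. The paper packages this as an abstract criterion: a diagram $F:\mathcal{C}\to\Set$ of subsets of a fixed set, all transition maps inclusions, satisfying the consistency condition that any element of $F(c)\cap F(c')$ is hit from some $c''$ mapping to both $c$ and $c'$, has $\colim F=\bigcup_c F(c)$; it then verifies this last condition via $\SS(\Delta(s))\cap\SS(\Delta(t))=\SS(\Delta(s\cap t))$. You instead compute the colimit degreewise by hand in $\Set$, locating the canonical representative $(\tau_\sigma,\sigma)$ in each equivalence class. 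These are the same argument in two presentations: the paper's condition on common lower bounds is just the pairwise version of your initial-object observation, and both reduce to the fact that $\tau_\sigma\in K$ because simplicial complexes are closed under nonempty subsets. What the paper's formulation buys is a reusable lemma stated once; what yours buys is a self-contained, explicit description of the colimit that does not require abstracting out a criterion first.

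One cosmetic remark: in your injectivity paragraph, the phrase ``distinct tuples yield distinct classes because they map to distinct elements of $\SS(K)_n$'' reads slightly circularly, since injectivity of that map is the thing being proved. What you actually need (and implicitly use) is the cleaner statement that the assignment $[(s,\sigma)]\mapsto\sigma$ is well defined on classes because every generating relation preserves the second coordinate; well-definedness immediately forces $\sigma=\sigma'$ whenever $[(\tau_\sigma,\sigma)]=[(\tau_{\sigma'},\sigma')]$. You might rephrase to make that explicit.
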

\begin{proof} Let us first formulate a general statement about colimits of sets. 
Let $S$ be a set and  $F:\mathcal{C}\to {\sf Sets}$ be a functor from a small category to the category of sets such that: (1) $F(c)\subseteq S;$ (2) for any morphism $f:c\to c'$ the map $F(f):F(c)\to F(c')$ is the inclusion $F(c)\subseteq F(c')$; (3) for any element from an intersection $s\in F(c) \cap F(c')$ there is an object $c''$ with  morphisms $f:c''\to c$ and $g:c''\to c'$ such that $s\in F(c'').$ Then $\colim F=\bigcup_{c\in C}F(c).$ Indeed, any natural transformation to a constant functor 
$\varphi : F\to \Delta T$ uniquely defines a  well defined map $\tilde \varphi:\bigcup_{c\in C} F(c) \to T$ such that $\tilde \varphi(s)=\varphi_c(s)$ for $s\in F(c).$ Since colimits of simplicial sets are defined component-wise, we can apply this statement to simplicial sets.

Therefore, the statement of the lemma follows from the following facts. For $s\in K$ the embedding $\Delta(s)\subseteq K$ induces an embedding $\SS(\Delta(s))\subseteq \SS(K).$ The simplicial set $\SS(K)$ can be presented as the union $\SS(K)=\bigcup_{s\in P(K)} \SS( \Delta(s) ).$ For any $s,t\in K$ such that $s\cap t\neq \emptyset$ we have $\SS(\Delta(s))\cap \SS(\Delta(t)) = \SS(\Delta(s\cap t))$ and $s\cap t\in K.$ 
\end{proof}

\begin{theorem}\label{th:geometric:realisation:SSK}
For a simplicial complex $K,$ there is a natural homotopy equivalence of geometric realizations 
\begin{equation}
|K| \simeq |\SS(K)|.
\end{equation}
\end{theorem}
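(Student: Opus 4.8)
The plan is to exploit the colimit decomposition from Lemma \ref{lemma:colimit:SS}, namely $\SS(K)\cong \colim_{s\in P(K)}\SS(\Delta(s))$, together with the standard fact that the geometric realization of a simplicial complex is the colimit over its poset of faces of the realizations of its simplices, $|K|\cong \colim_{s\in P(K)}|\Delta(s)|$, where $\Delta(s)$ here is the full simplex on the vertex set of $s$ (an $(|s|-1)$-simplex). The idea is to upgrade both of these colimits to \emph{homotopy} colimits and then to construct a natural zig-zag of weak equivalences between the two diagrams $s\mapsto |\SS(\Delta(s))|$ and $s\mapsto |\Delta(s)|$ over the poset $P(K)$.

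First I would observe that each of the diagrams $s\mapsto \SS(\Delta(s))$ and $s\mapsto \Delta(s)$ (the latter thought of as a diagram of simplicial complexes, or of their realizations) is a diagram of \emph{cofibrations} indexed by the poset $P(K)$: all the maps are inclusions of simplicial sets (resp. subcomplexes), and $P(K)$ is a poset in which, crucially, the diagram is ``projective-cofibrant''-like because the realization of a simplicial complex is built by attaching cells along its faces. Concretely, $|K|$ is obtained from the diagram by a sequence of pushouts along cofibrations, so the strict colimit computing $|K|$ already agrees with the homotopy colimit; the same applies to $|\SS(K)|$ since geometric realization of simplicial sets preserves colimits and sends monomorphisms to cofibrations, and the decomposition in Lemma \ref{lemma:colimit:SS} is a colimit along monomorphisms over $P(K)$. (One convenient way to phrase this uniformly: the diagram $s\mapsto \SS(\Delta(s))$ is Reedy/projective cofibrant over $P(K)$ because $P(K)$ has finite height and the latching maps are the boundary inclusions; hence $|\colim|\simeq \hocolim |{-}|$.) Then I would invoke Lemma \ref{lemma:contractible:simplicial:set}: $\SS(\Delta(s))$ is contractible for every $s$, and $|\Delta(s)|$ is a (contractible) topological simplex. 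Therefore both diagrams $s\mapsto |\SS(\Delta(s))|$ and $s\mapsto |\Delta(s)|$ are objectwise weakly equivalent to the constant diagram at a point, via canonical natural maps — for the second, the projection of each simplex to a point; for the first, the contraction from Lemma \ref{lemma:contractible:simplicial:set}, which is natural in $s$ since it comes from the equivalence $\tilde S\simeq *$ of categories applied functorially. A natural objectwise weak equivalence of diagrams induces a weak equivalence on homotopy colimits, so
\begin{equation}
|\SS(K)|\simeq \hocolim_{s\in P(K)}|\SS(\Delta(s))|\simeq \hocolim_{s\in P(K)} * = |N(P(K))| = |K'|,
\end{equation}
and similarly $|K|\simeq \hocolim_{s\in P(K)} * = |N(P(K))|$, where $N(P(K))$ is the nerve of the face poset, i.e. the barycentric subdivision $K'$ of $K$, whose realization is canonically homeomorphic to $|K|$. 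Chaining these equivalences gives $|K|\simeq |\SS(K)|$.

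For naturality in $K$: a morphism of simplicial complexes $f:K\to L$ induces a functor $P(K)\to P(L)$ compatible with all the diagrams above, so each step of the zig-zag is natural, and the identification of the homotopy colimit of the constant point-diagram with $|N(P(K))|\cong|K|$ is natural in the poset. Hence the composite equivalence is natural in $K$, as claimed.

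The main obstacle I expect is making the ``strict colimit equals homotopy colimit'' step fully rigorous and uniform for both sides — that is, checking that the diagrams over $P(K)$ are cofibrant enough (projectively cofibrant, or that the colimit is a ``good'' colimit along cofibrations) so that passing to the homotopy colimit is legitimate. This is where one must be careful: $P(K)$ is not a Reedy category in the usual variance unless one is careful about which maps raise and which lower degree, and the cleanest argument is probably to filter $P(K)$ by the dimension of simplices and build $|\SS(K)|$ (resp. $|K|$) as an iterated pushout of cofibrations $\SS(\partial\Delta^n)\hookrightarrow\SS(\Delta^n)$ (resp. $\partial\Delta^n_{\mathrm{top}}\hookrightarrow\Delta^n_{\mathrm{top}}$), indexed by the $n$-simplices of $K$, and then use that homotopy colimits are invariant under objectwise weak equivalences of such cellular diagrams. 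Everything else — contractibility of the pieces, naturality — follows formally from Lemmas \ref{lemma:contractible:simplicial:set} and \ref{lemma:colimit:SS}.
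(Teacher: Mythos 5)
Your proposal follows the paper's proof very closely: the same decomposition $\SS(K)\cong\colim_{s\in P(K)}\SS(\Delta(s))$ from Lemma \ref{lemma:colimit:SS}, the same contractibility input from Lemma \ref{lemma:contractible:simplicial:set}, and the same final identification through $N(P(K))$, i.e.\ the barycentric subdivision ${\sf sd}(K)$, whose realization is $|K|$. The one substantive place where you diverge is exactly the step you flag as the main obstacle, namely justifying that the strict colimit of $s\mapsto\SS(\Delta(s))$ computes the homotopy colimit: you sketch a projective-cofibrancy or cellular-filtration argument (noting correctly that $P(K)$ is not naively a Reedy category), whereas the paper invokes the Dwyer--Kan notion of a \emph{free diagram} of simplicial sets and verifies it directly by exhibiting the explicit basis $B_{n,s}=\{(x_0,\dots,x_n)\mid\{x_0,\dots,x_n\}=s\}$, for which the map $\hocolim\to\colim$ is automatically a weak equivalence. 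That is both cleaner and fully rigorous, so if you want to tighten your argument, replacing the Reedy/filtration sketch by the free-diagram criterion is the recommended fix. A minor simplification you could also adopt from the paper: you run the homotopy colimit argument on both sides, but the paper only needs it for the $\SS(K)$ side and then uses the classical homeomorphisms $|N(P(K))|\cong|{\sf sd}(K)|\cong|K|$ directly, avoiding a second homotopy colimit comparison.
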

\begin{proof}
Let us remind the notion of a free diagram of simplicial sets \cite[\S 2.4]{dwyer1983function}, \cite[\S 2.4]{farjoun2006homotopy}, \cite[Appendix I]{farjoun1996cellular}. A functor $F:\mathcal{C}\to \sSet$ is called free, if it is possible to choose a collection of subsets (called basis of the functor)  $B_{n,c}\subseteq F(c)_n$ for $n\geq 0, c\in \mathcal{C},$ which are closed with respect to degeneracy maps $\sigma_i(B_{n,c} )\subseteq B_{n+1,c},$ and such that for any simplex $x\in F(c)_n$ there exists a unique $f:c'\to c$ and a unique element of the basis $b\in B_{n,c'}$ such that $x=F(f)(b).$ For a free functor $F$ the comparison map $\hocolim F \to \colim F$ is a weak equivalence \cite[\S 4.2]{dwyer1983function}.

We claim that the functor $P(K)\to \sSet, s \mapsto \SS(\Delta(s))$ is free. Indeed the sets 
\begin{equation}
B_{n,s}=\{(x_0,\dots,x_n)\in \SS(\Delta(s)) \mid \{x_0,\dots,x_n\} = s \}    
\end{equation}
form its basis. Combining this with Lemma \ref{lemma:colimit:SS}  we obtain that there is a natural weak equivalence
\begin{equation} \underset{s\in P(K)}\hocolim\    \SS(\Delta(s)) \overset{\sim}\longrightarrow \SS(K).
\end{equation}
On the other hand, it is well known that for any category $C$  the homotopy colimit of the trivial functor is the nerve $\hocolim_{C} *\simeq N(C).$ Therefore, using Lemma \ref{lemma:contractible:simplicial:set},  we obtain a weak equivalence 
\begin{equation}
\underset{s\in P(K)}\hocolim\   \SS(\Delta(s)) \overset{\sim}\longrightarrow N(P(K)).
\end{equation}
Applying the geometric realisation, we obtain that there is a natural homotopy equivalence
\begin{equation}
|N(P(K))| \simeq |\SS(K)|.
\end{equation}
Then the result follows from the homotopy equivalences 
\begin{equation}
|N(P(K))| \simeq |{\sf sd}(K)| \simeq  |K|,
\end{equation}
where ${\sf sd}(K)$ is the barycentric subdivision of $K.$ 
\end{proof}

\section{Appendix. Nerve theorem for simplicial sets}

This section is devoted to the nerve theorem for simplicial sets (Theorem \ref{th:simplicial_nerve_theorem}). The results of this section are well known to experts, but we decided to add them here for completeness.

\begin{definition}[\v{C}ech nerve]
Let $\mathcal{C}$ be a small category with pullbacks and  let $ f:X\to Y$ be a morphism in $ \mathcal{C}$. Its corresponding \emph{\v{C}ech nerve} $\Ch(f) $ is the simplicial object in $ \mathcal{C}$ whose $n$-th component is given by the $(n+1)$-fold fiber product of $X$ over $Y$: 
\begin{equation}
\Ch(f)= \left(
\begin{tikzcd}
\dots 
\ar[shift left = 1.5mm]{r} 
    \ar[shift left = 0.5mm]{r}  
    \ar[shift left = -0.5mm]{r}
    \ar[shift left = -1.5mm]{r}
& 
X \times_Y X \times_Y X 
\ar[shift left=1mm]{r}
\ar{r} 
\ar[shift left=-1mm]{r}
&
    X\times_Y X 
    \ar[shift left = 0.5mm]{r}  
    \ar[shift left = -0.5mm]{r}
    & X 
\end{tikzcd}
\right).
\end{equation}
The degeneracy maps are defined as projections and degeneracy maps are defined as diagonal embeddings. Note that  the map $f$ induces a morphism of simplicial objects
\begin{equation}
\hat f : \Ch(f) \longrightarrow Y,
\end{equation}
where $Y$ is treated as a constant simplicial object.
\end{definition}

\begin{lemma}\label{lemma:surjective:Chech}
For a surjective map of sets $f:X\to Y,$ the morphism of simplicial sets $\hat f : \Ch(f)\to Y$ is a weak equivalence to a constant simplicial set. 
\end{lemma}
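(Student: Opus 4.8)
The plan is to reduce the statement to the case of a surjection onto a single point, where the \v{C}ech nerve coincides with the contractible simplicial set $\SS(\Delta(S))$ constructed in the appendix on simplicial sets versus simplicial complexes.

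First I would unwind the iterated fiber product of sets: a point of the $(n+1)$-fold fiber product of $X$ over $Y$ is a tuple all of whose entries have the same image, so
\[ \Ch(f)_n = \{(x_0,\dots,x_n)\in X^{n+1} \mid f(x_0)=\dots=f(x_n)\}, \]
the face maps (the projections of the fiber product) are deletions of entries, and the degeneracy maps (the diagonals) are doublings of entries. Grouping simplices according to their common image value gives an isomorphism of simplicial sets
\[ \Ch(f) \;\cong\; \coprod_{y\in Y}\Ch(f_y), \qquad f_y\colon f^{-1}(y)\twoheadrightarrow\{y\}, \]
and under this decomposition $\hat f$ becomes the coproduct of the augmentations $\hat f_y\colon \Ch(f_y)\to\{y\}$, whose target is the constant simplicial set $Y=\coprod_{y\in Y}\{y\}$.

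Next, for a fixed $y\in Y$ set $S=f^{-1}(y)$; since $f$ is surjective, $S\neq\emptyset$. The \v{C}ech nerve of $S\twoheadrightarrow\{y\}$ has $n$-simplices $S^{n+1}$ with deletions as face maps and doublings as degeneracies, which is exactly the simplicial set $\SS(\Delta(S))$ of Section~\ref{sec:ss_sc}. By Lemma~\ref{lemma:contractible:simplicial:set} this simplicial set is contractible, so each $\hat f_y$ is a weak equivalence. Finally, geometric realization commutes with coproducts and a disjoint union of weak equivalences is a weak equivalence, whence $\hat f=\coprod_{y\in Y}\hat f_y$ is a weak equivalence onto the constant simplicial set $Y$.

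I do not expect a serious obstacle: the content is the identification of $\Ch(f_y)$ with $\SS(\Delta(S))$ together with the bookkeeping that the whole \v{C}ech nerve splits as a coproduct over $Y$ compatibly with $\hat f$. The one place the hypothesis is genuinely used is the surjectivity of $f$, which guarantees that every fiber $S=f^{-1}(y)$ is nonempty, so that Lemma~\ref{lemma:contractible:simplicial:set} applies and no vertex of the constant simplicial set $Y$ is missing from $\Ch(f)$.
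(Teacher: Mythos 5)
Your proof is correct and follows essentially the same route as the paper: decompose $\Ch(f)$ as a coproduct over the fibers $f^{-1}(y)$, identify each piece with $\SS(\Delta(f^{-1}(y)))$, invoke Lemma~\ref{lemma:contractible:simplicial:set} to get contractibility, and conclude since a disjoint union of weak equivalences onto points is a weak equivalence onto the discrete target. The paper simply presents the one-point case first before the decomposition, but the content is identical.
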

\begin{proof}
First assume that $Y=\{y\}$ is a one-point set. Then $\Ch(f)\cong \SS(\Delta(X))$ is contractible by Lemma \ref{lemma:contractible:simplicial:set}. Now consider the general case of a surjective map $f:X\to Y.$ Since $\Ch(f)_n \cong \coprod_{y\in Y} f^{-1}(y)^{n+1},$ the map to the discrete simplicial set  $\hat f:\Ch(f)\to Y$ is a disjoint union of maps $\hat f_y : \Ch(\hat f_y) \to \{y\}$ for $y\in Y,$ where $f_y: f^{-1}(y)\to \{y\}$ is the restriction of $f.$ Using that each of the maps $\hat f_y$ is a homotopy equivalence, we obtain that $\hat f$ is a homotopy equivalence.
\end{proof}

If $\mathcal{C}={\sf sSet}$ is the category of simplicial sets, then $\Ch(f)$ is a bisimplicial set, where $\Ch(f)_{n,m}=X_n^{\times_{Y_n}(m+1)}$ is the fiber product of $m+1$ copies of $X_n$ over $Y_n.$

\begin{proposition}\label{prop:chech:diagonal}
Let $f : X \to Y$ be a component-wise surjective morphism of simplicial sets. Then $\hat f$ induces a weak homotopy equivalence
\begin{equation}
    \diag (\Ch(f)) \overset{\sim}\longrightarrow Y.
\end{equation}
\end{proposition}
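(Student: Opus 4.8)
The plan is to exploit the fact that $\Ch(f)$ is naturally a bisimplicial set, and that the diagonal functor carries levelwise weak equivalences of bisimplicial sets to weak equivalences. Recall from the discussion just before the statement that $\Ch(f)$ is the bisimplicial set with
\begin{equation}
\Ch(f)_{n,m} = X_n \times_{Y_n} \dots \times_{Y_n} X_n \qquad (m+1 \text{ factors}),
\end{equation}
where the operators in the first simplicial variable are induced by those of $X$ and $Y$, and the operators in the second simplicial variable are the projections and diagonal embeddings. I would first observe that for each fixed $n \geq 0$, the simplicial set $\Ch(f)_{n,\bullet}$ is precisely the \v{C}ech nerve $\Ch(f_n)$ of the map of \emph{sets} $f_n : X_n \to Y_n$, which is surjective by hypothesis. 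Hence Lemma \ref{lemma:surjective:Chech} applies and shows that the augmentation
\begin{equation}
\hat f_n : \Ch(f)_{n,\bullet} \longrightarrow Y_n
\end{equation}
is a weak equivalence onto the discrete simplicial set $Y_n$, for every $n$.

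Next I would package this: regard $Y$ as the bisimplicial set $\bar Y$ which is constant in the second variable, $\bar Y_{n,m} = Y_n$. Then $\hat f : \Ch(f) \to \bar Y$ is a morphism of bisimplicial sets whose restriction to the $n$-th row $\Ch(f)_{n,\bullet} \to \bar Y_{n,\bullet} = Y_n$ is exactly the weak equivalence $\hat f_n$ of the previous step. In other words, $\hat f$ is a weak equivalence in each of the rows. Since $\bar Y$ is constant in the second variable, one checks directly that $\diag \bar Y = Y$ (the face and degeneracy maps of the diagonal reduce to those of $Y$ because the second-variable operators are identities).

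Finally I would invoke the standard comparison theorem for bisimplicial sets: a morphism of bisimplicial sets that is a weak equivalence in each row induces a weak equivalence on diagonals (see, e.g., \cite[Ch. IV, Prop. 1.7]{goerss2009simplicial}). Applied to $\hat f : \Ch(f) \to \bar Y$ this yields that $\diag(\hat f) : \diag \Ch(f) \to \diag \bar Y = Y$ is a weak equivalence, which is the claim. The only point requiring care is the bookkeeping: verifying that the identification $\Ch(f)_{n,\bullet} \cong \Ch(f_n)$ is compatible with the simplicial structure in the second variable (immediate from the definition of the \v{C}ech nerve as iterated fiber products with projections and diagonals), and citing the bisimplicial comparison lemma in the correct form --- namely that \emph{no} Reedy or fibrancy hypothesis is needed, plain levelwise weak equivalence in one variable suffices for the diagonal statement. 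There is no substantive obstacle beyond this.
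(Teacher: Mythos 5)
Your proof is correct and follows essentially the same route as the paper: apply Lemma \ref{lemma:surjective:Chech} rowwise to get a levelwise weak equivalence $\Ch(f)_{n,\bullet} \to Y_n$, then invoke the Goerss--Jardine diagonal lemma for bisimplicial sets. The paper states this more tersely (and cites a slightly different proposition number in \cite{goerss2009simplicial}), but your added bookkeeping --- identifying $\Ch(f)_{n,\bullet}$ with $\Ch(f_n)$, forming the bisimplicial target $\bar Y$, and noting $\diag \bar Y = Y$ --- is exactly what the paper's argument leaves implicit; you are also right that no Reedy or fibrancy hypothesis is required for the diagonal comparison.
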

\begin{proof} By Lemma \ref{lemma:surjective:Chech}, for any $n,$ the map 
 $(\hat{f}_n) =  \hat{f}_{n, \bullet}:\Ch(f)_{n,\bullet}\to Y_{n}$ is a homotopy equivalence, where $Y_n$ is treated as a constant simplicial set. Then the result follows from  \cite[Prop.1.9]{goerss2009simplicial}. 
\end{proof}

Let $X$ be a simplicial set and $\UU=(U_i)_{i\in I}$ be a family of its simplicial subsets. For a subset $\sigma \subseteq I,$ we set $U_\sigma = \bigcap_{i\in \sigma} U_i.$ We denote by $N_\UU$ the nerve complex of this family 
\begin{equation}
N_\UU = \{\sigma \subseteq I\mid U_\sigma \neq \emptyset,\: \sigma \text{ is non-empty finite}\}.
\end{equation}
The poset of simplices of $N_\UU$ is denoted by $P_\UU.$ 

\begin{definition}[\v{C}ech nerve of a cover] Let $X$ be a simplicial set and $\UU=(U_i)_{i\in I}$ be its cover i.e. a family of simplicial subsets such that $X=\bigcup_{i\in I}U_i$. 
If we take $Y=\coprod_{i\in I}U_i,$ then the inclusions $U_i\to X$ induce a component-wise surjective simplicial map $f:Y\to X.$ We set $\Ch(\UU):=\Ch(f).$ Since    $U_{i_0}\times_X  \dots \times_X U_{i_m}  \cong U_{i_0} \cap \dots \cap U_{i_m},$ we obtain 
\begin{equation}
\Ch(\UU)_{\bullet,m} \cong \coprod_{\substack{(i_0,\dots,i_m)\in I^{m+1} : \\ \{ i_0,\dots,i_m \}\in N_\UU}} U_{i_0} \cap \dots \cap U_{i_m}.  
\end{equation}
\end{definition}

\begin{corollary}\label{cor:diag:cover}
For a cover $\UU=(U_i)_{i\in I}$ of a simplicial set $X,$ the inclusions $U_{i_0} \cap \dots \cap U_{i_m} \hookrightarrow X$ induce a weak equivalence  
\begin{equation}
\diag \Ch(\UU) \overset{\sim}\longrightarrow X.
\end{equation}
\end{corollary}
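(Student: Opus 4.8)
The plan is to deduce this directly from Proposition \ref{prop:chech:diagonal} by exhibiting the \v{C}ech nerve of the cover $\UU$ as the \v{C}ech nerve of a suitable component-wise surjective map of simplicial sets. First I would form the coproduct $Y = \coprod_{i\in I} U_i$ and observe that, since $X = \bigcup_{i\in I} U_i$, the map $f\colon Y \to X$ assembled from the inclusions $U_i \hookrightarrow X$ is surjective in every simplicial degree: a simplex of $X_n$ lies in some $U_i$, hence is hit by the copy of $U_i$ in $Y_n$. By the definition of $\Ch(\UU)$ immediately preceding the statement, $\Ch(\UU) = \Ch(f)$, so the canonical map $\hat f\colon \Ch(\UU) \to X$ is exactly the map appearing in Proposition \ref{prop:chech:diagonal}. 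Applying that proposition yields the weak equivalence $\diag\Ch(\UU) \overset{\sim}\to X$.

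The remaining point is purely bookkeeping: to check that, under this identification, the maps $\diag\Ch(\UU) \to X$ are indeed induced by the inclusions $U_{i_0}\cap\dots\cap U_{i_m} \hookrightarrow X$ as claimed. For this I would use the identification $U_{i_0}\times_X \dots \times_X U_{i_m} \cong U_{i_0}\cap\dots\cap U_{i_m}$ (valid because the $U_i$ are simplicial subsets of $X$, so a tuple of simplices agreeing on their images in $X$ is a single common simplex lying in each $U_{i_j}$), together with the displayed description
\begin{equation}
\Ch(\UU)_{\bullet,m} \cong \coprod_{\substack{(i_0,\dots,i_m)\in I^{m+1} : \\ \{i_0,\dots,i_m\}\in N_\UU}} U_{i_0}\cap\dots\cap U_{i_m}
\end{equation}
given just before the corollary. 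Under this, the map $\hat f$ restricted to each summand is precisely the inclusion $U_{i_0}\cap\dots\cap U_{i_m}\hookrightarrow X$, and taking the diagonal does not change this on the level of the component maps. Hence the weak equivalence produced is induced by the stated inclusions.

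I expect no serious obstacle here: the statement is a direct specialization of Proposition \ref{prop:chech:diagonal} once one notes that a cover gives a component-wise surjection. The only mildly delicate step is verifying the fiber-product identification $U_{i_0}\times_X\dots\times_X U_{i_m}\cong U_{i_0}\cap\dots\cap U_{i_m}$, which holds because intersections and pullbacks of subobjects coincide in the (component-wise) category of simplicial sets, so even this is routine.
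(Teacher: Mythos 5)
Your proposal is correct and is essentially the paper's own argument: the definition of $\Ch(\UU)$ immediately preceding the corollary already records that the assembled map $f\colon\coprod_i U_i\to X$ is component-wise surjective and that $\Ch(\UU)=\Ch(f)$, so the corollary is a direct application of Proposition \ref{prop:chech:diagonal}. The fiber-product identification $U_{i_0}\times_X\dots\times_X U_{i_m}\cong U_{i_0}\cap\dots\cap U_{i_m}$ you verify is likewise stated in the paper's definition, so your bookkeeping matches theirs.
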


\begin{theorem}[Nerve theorem for simplicial sets]\label{th:simplicial_nerve_theorem}
Let $X$ be a simplicial set and $\UU=(U_i)_{i\in I}$ be a family of its simplicial subsets such that $X=\bigcup_{i\in I} U_i$ and  $U_\sigma$ is weakly contractible for any $\sigma\in N_\UU.$ Then there is a homotopy equivalence
\begin{equation}
|X| \simeq |N_\UU|. 
\end{equation}
\end{theorem}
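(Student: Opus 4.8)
The plan is to exploit the \v{C}ech nerve machinery already established in Corollary \ref{cor:diag:cover}. The starting point is the weak equivalence $\diag \Ch(\UU) \overset{\sim}\to X$, which realizes $X$ as the diagonal of a bisimplicial set whose $(\bullet,m)$-level is the disjoint union $\coprod U_{i_0}\cap\dots\cap U_{i_m}$ over $(m+1)$-tuples $(i_0,\dots,i_m)$ with $\{i_0,\dots,i_m\}\in N_\UU$. Since the diagonal of a bisimplicial set is weakly equivalent to its homotopy colimit over $\Delta^{op}$ in either simplicial direction (by a Bousfield--Kan type argument, or directly via \cite[Prop.~1.9]{goerss2009simplicial} applied level-wise), it suffices to understand the homotopy type of each simplicial set $\Ch(\UU)_{\bullet,m}$ and then assemble.

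First I would observe that each intersection $U_\sigma$, for $\sigma\in N_\UU$, is weakly contractible by hypothesis, and that $\Ch(\UU)_{\bullet,m}$ is the disjoint union of copies of such $U_\sigma$ indexed by tuples $(i_0,\dots,i_m)$ whose underlying set lies in $N_\UU$. Hence each $\Ch(\UU)_{\bullet,m}$ is weakly equivalent (via the unique maps collapsing each contractible piece to a point) to the \emph{discrete} simplicial set $\coprod_{\{i_0,\dots,i_m\}\in N_\UU} *$, which is exactly the set of $m$-simplices of $N_\UU$ viewed as a simplicial set — more precisely, of $\Ch(\text{cover of }I)$ or directly of the simplicial-set incarnation of the simplicial complex $N_\UU$. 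These collapse maps are compatible with the face and degeneracy maps in the $m$-direction, so they assemble into a map of bisimplicial sets from $\Ch(\UU)_{\bullet,\bullet}$ to the constant-in-the-first-variable bisimplicial set $[m]\mapsto (N_\UU)_m$, which is a levelwise weak equivalence in the first variable. Taking diagonals (and using that a levelwise weak equivalence of bisimplicial sets induces a weak equivalence on diagonals, \cite[Prop.~1.9]{goerss2009simplicial}) gives $X\simeq \diag \Ch(\UU) \simeq \SS(N_\UU)$, where $\SS(N_\UU)$ is the simplicial set associated to the simplicial complex $N_\UU$ as in Section \ref{sec:ss_sc}. Finally, Theorem \ref{th:geometric:realisation:SSK} yields $|\SS(N_\UU)| \simeq |N_\UU|$, and composing the homotopy equivalences gives $|X|\simeq |N_\UU|$.

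The main obstacle I anticipate is the bookkeeping in the step where the level-wise contractions are shown to be compatible with the simplicial structure in the $m$-direction — i.e.\ producing an honest map of bisimplicial sets $\Ch(\UU)\to N_\UU^{\mathrm{const}}$ rather than just a level-wise collection of weak equivalences. One clean way to handle this is to note that the collapse $\coprod_\sigma U_\sigma \to \coprod_\sigma *$ is natural in the indexing data because the face maps in the $m$-direction are induced by deleting an index $i_k$ (an inclusion $U_{i_0}\cap\dots\cap U_{i_m}\hookrightarrow U_{i_0}\cap\dots\widehat{U_{i_k}}\dots\cap U_{i_m}$), and the degeneracies by repeating an index (an identity on the intersection); both commute with collapsing to a point. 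Once this naturality is in place, the rest is a routine invocation of the already-cited homotopy-colimit/diagonal comparison results, so I would not expect any genuine difficulty beyond being careful that all the weak equivalences point in the same direction and that \cite[Prop.~1.9]{goerss2009simplicial} applies (it does, since weak equivalences of simplicial sets are exactly the levelwise input it requires).
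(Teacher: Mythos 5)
Your proposal is correct and follows essentially the same route as the paper's proof: collapse each contractible intersection in the \v{C}ech nerve $\Ch(\UU)$ to a point to obtain a levelwise weak equivalence to the constant-in-one-direction bisimplicial set $\SS(N_\UU)$, apply \cite[Prop.~1.9]{goerss2009simplicial} to pass to diagonals, combine with the weak equivalence $\diag\Ch(\UU)\overset{\sim}\to X$, and finish with Theorem~\ref{th:geometric:realisation:SSK}. The naturality issue you flag at the end is the same point the paper handles implicitly, and your resolution (faces are inclusions, degeneracies are identities) is the right one.
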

\begin{proof}
Note that for the simplicial set associated with the simplicial complex $N_\UU,$ there is a formula
\begin{equation}
\SS(N_\UU)_m \cong \coprod_{\substack{(i_0,\dots,i_m)\in I^{m+1} : \\ \{ i_0,\dots,i_m \}\in N_\UU}} *.   
\end{equation}
Therefore, if we denote by $S$ a bisimplicial set such that $S_{n,m}=\SS(N_\UU)_m,$ we obtain a morphism of bisimplicial sets $\Ch(\UU) \to S,$ sending $U_{i_0} \cap \dots \cap U_{i_m}$ to $*.$ Since $\Ch(\UU)_{\bullet,m} \to S_m$ is a weak equivalence for any $m$, by \cite[Prop.1.9]{goerss2009simplicial}, we obtain that the morphism $\diag( \Ch(\UU) ) \to \SS(N_\UU)$ is a weak equivalence. Therefore the result follows from Proposition \ref{prop:chech:diagonal} and Proposition \ref{th:geometric:realisation:SSK}. 
\end{proof}

\begin{proposition}
Let  $\varphi:X\to Y$ be a morphism of simplicial sets and  $\UU=(U_i)_{i\in I}$ and $\VV=(V_i)_{i\in I}$ are covers of $X$ and $Y$ respectively such that $\varphi(U_i)\subseteq V_i.$ Assume that for any $i_0,\dots,i_n\in I$ the map 
\begin{equation}
U_{i_0}\cap \dots \cap U_{i_n} \overset{\sim}\longrightarrow V_{i_0} \cap \dots \cap V_{i_n} 
\end{equation}
is a weak equivalence. Then $\varphi$ is a weak equivalence. 
\end{proposition}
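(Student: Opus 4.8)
The plan is to run the \v{C}ech-nerve machinery of the appendix for both covers in parallel and then compare along $\varphi$, using the two-out-of-three property at the end. First I would record that since $\varphi(U_i)\subseteq V_i$ for every $i\in I$, for every non-empty finite $\sigma\subseteq I$ we get $\varphi(U_\sigma)=\varphi\big(\bigcap_{i\in\sigma}U_i\big)\subseteq\bigcap_{i\in\sigma}\varphi(U_i)\subseteq\bigcap_{i\in\sigma}V_i=V_\sigma$, so $\varphi$ restricts to the maps $U_\sigma\to V_\sigma$ that are assumed to be weak equivalences. Writing $Y_\UU=\coprod_{i\in I}U_i$ and $Y_\VV=\coprod_{i\in I}V_i$, the square with top row $Y_\UU\to Y_\VV$, bottom row $X\xrightarrow{\varphi}Y$, and vertical maps the canonical component-wise surjections commutes; taking \v{C}ech nerves of the two vertical maps then produces a morphism of bisimplicial sets $\Ch(\UU)\to\Ch(\VV)$ lying over $\hat f\colon\Ch(\UU)\to X$ and $\hat f\colon\Ch(\VV)\to Y$.

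Next I would compute this morphism slicewise in the second variable. By the description in the appendix, $\Ch(\UU)_{\bullet,m}\cong\coprod_{(i_0,\dots,i_m)}\big(U_{i_0}\cap\dots\cap U_{i_m}\big)$, where the coproduct may be taken over all $(i_0,\dots,i_m)\in I^{m+1}$ — a tuple whose underlying set is not in $N_\UU$ contributes the empty simplicial set, so enlarging the index set beyond $N_\UU$ changes nothing — and similarly for $\VV$. Tracing through these identifications, the induced map $\Ch(\UU)_{\bullet,m}\to\Ch(\VV)_{\bullet,m}$ is the coproduct of the restrictions $U_{i_0}\cap\dots\cap U_{i_m}\to V_{i_0}\cap\dots\cap V_{i_m}$. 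Each summand is a weak equivalence by hypothesis — when its source is empty the hypothesis forces its target to be empty, since $\emptyset$ is not weakly equivalent to a non-empty simplicial set — and a coproduct of weak equivalences of simplicial sets is again a weak equivalence, so $\Ch(\UU)_{\bullet,m}\to\Ch(\VV)_{\bullet,m}$ is a weak equivalence for every $m\geq0$.

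Then I would invoke \cite[Prop.1.9]{goerss2009simplicial}, exactly as in the proofs of Proposition \ref{prop:chech:diagonal} and Theorem \ref{th:simplicial_nerve_theorem}: a morphism of bisimplicial sets that is a weak equivalence in each second-coordinate slice induces a weak equivalence on diagonals. Hence $\diag\Ch(\UU)\to\diag\Ch(\VV)$ is a weak equivalence, and the commuting square
\[
\begin{tikzcd}
\diag\Ch(\UU) \ar[r,"\sim"] \ar[d,"\sim"] & \diag\Ch(\VV) \ar[d,"\sim"] \\
X \ar[r,"\varphi"] & Y
\end{tikzcd}
\]
— whose vertical arrows are weak equivalences by Corollary \ref{cor:diag:cover} — shows, by two-out-of-three, that $\varphi$ is a weak equivalence. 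The argument is mostly bookkeeping given the appendix; the one step that needs a little care is the slicewise identification of $\Ch(\UU)_{\bullet,m}\to\Ch(\VV)_{\bullet,m}$ with the coproduct of the given intersection maps, together with the minor observation that a weak equivalence with empty source has empty target, which lets us ignore the possibly differing nerve complexes $N_\UU$ and $N_\VV$.
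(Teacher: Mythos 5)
Your argument is correct and is essentially the paper's own proof: form the induced morphism of \v{C}ech nerves, verify it is a slicewise weak equivalence, apply \cite[Prop.~1.9]{goerss2009simplicial} to pass to diagonals, and conclude by two-out-of-three in the commuting square with the vertical equivalences from Corollary~\ref{cor:diag:cover}. The only difference is that you spell out the slicewise identification of $\Ch(\UU)_{\bullet,m}\to\Ch(\VV)_{\bullet,m}$ and the handling of empty intersections, details which the paper leaves implicit.
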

\begin{proof} The result 
\cite[Prop.1.9]{goerss2009simplicial} implies that  $\varphi$ induces a weak equivalence $\diag \Ch(\UU) \overset{\sim}\to \diag \Ch(\VV).$ Then the result follows from Proposition \ref{prop:chech:diagonal} and commutativity of the diagram
\begin{equation}
\begin{tikzcd}
\Ch(\UU) \ar{r}{\sim} \ar{d}{\sim} & \Ch(\VV) \ar{d}{\sim} \\
X \ar{r} & Y
\end{tikzcd}
\end{equation}
\end{proof}

\end{document}